\newcommand{\driverOption}{}
  \renewcommand{\driverOption}{pdftex}
  \renewcommand{\driverOption}{dvips}
\newcommand{\hyperrefDriverOption}{}
	\renewcommand{\hyperrefDriverOption}{pdftex}
	\renewcommand{\hyperrefDriverOption}{hypertex}
	\newcommand{\TM}[1]{\marginpar{\parbox{4cm}{{\small {\bf TM:} #1}}}}
	\newcommand{\RS}[1]{\marginpar{\parbox{4cm}{{\small {\bf RS:} #1}}}}
	\newcommand{\TR}[1]{\marginpar{\parbox{4cm}{{\small {\bf TR:} #1}}}}
	\newcommand{\TM}[1]{}
	\newcommand{\RS}[1]{}
	\newcommand{\TR}[1]{}
\newtheorem{theorem}{Theorem}
\newtheorem{lemma}[theorem]{Lemma}
\newtheorem{proposition}[theorem]{Proposition}
\newtheorem{corollary}[theorem]{Corollary}
\theoremstyle{definition}
\newtheorem{definition}[theorem]{Definition}
\theoremstyle{remark}
\newtheorem{remark}[theorem]{Remark}
\DeclareMathOperator{\CW}{\textsc{ComputeWeights}}
\DeclareMathOperator{\PAINT}{\textsc{Paint}}
\DeclareMathOperator{\ABUILD}{\textsc{AbstractBuild}}
\DeclareMathOperator{\BUILD}{\textsc{Build}}
\DeclareMathOperator{\ASTRAT}{\textsc{AbstractStrategy}}
\DeclareMathOperator{\STRAT}{\textsc{Strategy}}
\long\def\symbolfootnote[#1]#2{\begingroup
\def\thefootnote{\fnsymbol{footnote}}\footnote[#1]{#2}\endgroup}
\begin{document}

\begin{center}

\vspace*{1.5cm}
\LARGE Coloring random graphs online without creating \\ monochromatic subgraphs\footnote{An extended abstract of this work has appeared in the proceedings of SODA '11.}
\vspace{1cm}

\small

\begingroup
\renewcommand\thefootnote{\fnsymbol{footnote}}%
\begin{tabular}{l@{\hspace{2em}}l}
\newlength\TIwidth
\setlength\TIwidth{\widthof{Institute of Theoretical Computer Science}}%
\hbox to \TIwidth{\Large Torsten Mütze\hfill Thomas Rast} & \Large Reto Spöhel\footnotemark[2] \\[2mm]
  Institute of Theoretical Computer Science & Algorithms and Complexity Group\\
  ETH Zürich                       &  Max-Planck-Institut für Informatik\\
  8092 Zürich, Switzerland                    & 66123 Saarbrücken, Germany \vspace{.05mm}\\
  {\small \{{\tt muetzet|trast}\}{\tt @inf.ethz.ch}}       & {\small {\tt rspoehel@mpi-inf.mpg.de}}
\end{tabular}%
\footnotetext[2]{The author was supported by a fellowship of the Swiss National Science Foundation.}%
\endgroup

\vspace{2cm}

\small

\begin{minipage}{0.85\linewidth}
\textsc{Abstract.}
Consider the following random process: The vertices of a binomial random graph $G_{n,p}$ are revealed one by one, and at each step only the edges induced by the already revealed vertices are visible. Our goal is to assign to each vertex one from a fixed number $r$ of available colors immediately and irrevocably without creating a monochromatic copy of some fixed graph $F$ in the process.

Our first main result is that for any $F$ and $r$, the threshold function for this problem is given by $p_0(F,r,n)=n^{-1/m_1^*(F,r)}$, where $m_1^*(F,r)$ denotes the so-called \emph{online vertex-Ramsey density} of $F$ and $r$. This parameter is defined via a purely deterministic two-player game, in which the random process is replaced by an adversary that is subject to certain restrictions inherited from the random setting. Our second main result states that for any $F$ and $r$, the online vertex-Ramsey density $m_1^*(F,r)$ is a computable rational number.

Our lower bound proof is algorithmic, i.e., we obtain polynomial-time online algorithms that succeed in coloring $G_{n,p}$ as desired with probability $1-o(1)$ for any $p(n) = o(n^{-1/m_1^*(F,r)})$.
\end{minipage}

\end{center}

\vspace{5mm}

%\newpage
\tableofcontents
%\newpage

\section{Introduction} \label{sec:introduction}
The study of colorability properties of random graphs has a rich history and has spurred many important developments in random graph theory. Thanks to the efforts of many researchers (e.g.,~\cite{MR1662274, MR1606020, MR951992, MR2442592, MR0369129, MR1112273, MR1122014, MR918398, MR1138433}), very precise bounds on the chromatic number of the random graph are known by now. More recently, also several related coloring notions and their associated `chromatic numbers' have been investigated for the random graph (e.g.,~\cite{MR1773652, MR2387558, MR1370970, MR2575099, MR2532875, MR2396353, MR1182457, MR1146898}).

In this work we are concerned with the following generalized notion of graph coloring: a coloring of a graph $G$ is \emph{valid} with respect to some given graph $F$ if it contains no monochromatic copy of $F$, i.e., if there is no copy of $F$ in $G$ whose vertices all receive the same color. Note that a proper coloring in the usual sense is a coloring that is valid with respect to a single edge. More generally, a coloring that is valid with respect to the star with $\ell$ rays is a coloring in which each color class induces a graph with maximum degree at most $\ell-1$ (this is sometimes called an $(\ell-1)$-improper coloring, see~\cite{MR2575099} and references therein).

The main motivation for studying this notion of colorability comes from Ramsey theory, where one usually considers similarly defined \emph{edge}-colorings. The threshold for the existence of a valid vertex-coloring of the random graph with respect to some given fixed graph $F$ was determined by {\L}uczak, Ruci\'nski, and Voigt~\cite{MR1182457}. To state their result, we introduce some terminology.

As usual, we denote by $\Gnp$ the random graph on $n$ vertices (labelled from $1,\ldots,n$) obtained by including each of the $\binom{n}{2}$ possible edges with probability $p=p(n)$ independently.
We say that a graph $G$ is \emph{$(F,r)$-vertex-Ramsey} if every $r$-coloring of the vertices of $G$ contains a monochromatic copy of $F$, i.e., if $G$ does \emph{not} allow a valid $r$-coloring with respect to $F$. A graph is a \emph{matching} if its maximum degree is 1. We denote the number of edges and vertices of a graph $H$ by $e(H)$ and $v(H)$, respectively.

\begin{theorem}[\cite{MR1182457}]\label{thm:Ramsey}
Let $r\geq 2$ be a fixed integer and $F$ a fixed graph with at least one edge that in the case $r=2$ is not a matching.
Then there exist positive constants $c=c(F,r)$ and $C=C(F,r)$ such that
\[
  \lim_{n\to\infty}\PP[\text{$\Gnp$ is $(F,r)$-vertex-Ramsey}]
  =
  \begin{cases}
    0   &\text{if\/ $p(n) \leq cn^{ - {1}/{m_1(F)}}$ \enspace,} \\
    1   &\text{if\/ $p(n) \geq Cn^{ - {1}/{m_1(F)}}$ \enspace,}
  \end{cases}
\]
where
\begin{equation} \label{eq:m1}
    m_1(F) := \max_{H\seq F:\,v(H)\geq 2}\; \frac{e(H)}{v(H)-1}
    \enspace.
\end{equation}
\end{theorem}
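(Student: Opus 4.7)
The plan is to split the proof into the 0-statement (below the threshold, a valid $r$-coloring exists) and the 1-statement (above the threshold, $\Gnp$ is $(F,r)$-vertex-Ramsey), with both directions organized around the parameter $m_1(F)$. The unifying observation is that for any $H \subseteq F$ with $v(H) \geq 2$, the expected number of copies of $H$ in $\Gnp$ rooted at a prescribed vertex is $\Theta(n^{v(H)-1} p^{e(H)})$; substituting $p = c\,n^{-1/m_1(F)}$ and invoking the definition of $m_1(F)$ reduces this to $O(c^{e(H)})$, uniformly over $H$. Thus below the threshold every vertex lies in only boundedly many copies of $F$ and of its subgraphs, whereas above the threshold such copies are abundant.

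For the 0-statement, I would produce a valid coloring by greedy peeling. The central structural lemma is: with high probability, every subgraph $G' \subseteq \Gnp$ contains a vertex through which at most $r-1$ copies of $F$ pass in $G'$, i.e., $\Gnp$ has \emph{$F$-degeneracy} at most $r-1$. Given this, iteratively peel off such vertices to obtain an ordering $v_1, \dots, v_n$ and color in reverse: when $v_i$ is colored, only copies of $F$ through $v_i$ whose other vertices lie later in the ordering can become monochromatic, and there are at most $r-1$ of them by construction, each forbidding at most one of the $r$ available colors, so a safe choice always exists. To prove the degeneracy lemma, I would combine the rooted-expectation estimate above with a union bound showing that w.h.p.\ $\Gnp$ contains no subgraph on a bounded number of vertices whose $e/(v-1)$ ratio exceeds $m_1(F)$ by any fixed positive margin; choosing $c=c(F,r)$ small enough then drives the $F$-degeneracy down to $r-1$.

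For the 1-statement, I would exhibit a specific $(F,r)$-vertex-Ramsey graph $G^*$ that embeds into $\Gnp$ with high probability once $p \geq C\,n^{-1/m_1(F)}$. Letting $H^* \subseteq F$ achieve the maximum in \eqref{eq:m1}, the gadget is built recursively: at each level, many copies of $F$ are glued along a shared copy of $H^*$ so that by pigeonhole any $r$-coloring of the attached vertices leaves enough of them in a single color class to complete a monochromatic copy of $F$ through the shared $H^*$; stacking several such levels forces monochromaticity irrespective of how the innermost copy of $H^*$ is colored. Because $H^*$ is the densest subgraph of $F$ in the $e/(v-1)$ sense, a standard first- and second-moment computation exploiting the strict balancedness of $G^*$ at the threshold shows $G^* \subseteq \Gnp$ w.h.p.\ for $C$ sufficiently large.

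The main obstacle is the degeneracy lemma driving the 0-statement: upgrading the in-expectation estimate (every fixed vertex is in $O(1)$ copies of $F$) to a uniform structural bound that holds for every subgraph of $\Gnp$. This is precisely where the maximum over $H \subseteq F$ in the definition of $m_1(F)$ becomes indispensable, because the dangerous obstructions are not copies of $F$ itself but excessively dense copies of its subgraphs, and the union bound must simultaneously rule them all out on every relevant scale. The matching exception for $r=2$ is symptomatic of this framework breaking down: a matching has $m_1(F) = 1$ forced by single edges, the threshold degenerates to $n^{-1}$ where $\Gnp$ is essentially a forest, and neither the greedy peeling nor the recursive Ramsey gadget operates in its intended regime.
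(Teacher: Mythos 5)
Theorem~\ref{thm:Ramsey} is cited in the paper from \cite{MR1182457} and not proved there, so there is no in-paper argument to compare against; both directions of your sketch, however, rely on a step that fails.

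For the 0-statement, the $F$-degeneracy lemma you formulate is false at the threshold, not merely hard to prove: take $F=K_3$, $r=2$, $p=cn^{-2/3}$. Since $K_4$ is strictly balanced with $m(K_4)=6/4=3/2=m_1(K_3)$, the number of $K_4$-copies in $\Gnp$ converges to a Poisson random variable with mean $\Theta(c^6)$, so $K_4$ appears with probability bounded away from $0$ as $n\to\infty$, for every fixed $c>0$. Every vertex of $K_4$ lies in three triangles, violating your bound $r-1=1$. The theorem itself survives ($K_4$ admits a valid $2+2$ coloring), but greedy peeling gets stuck; the actual argument has to identify the bounded-size dense obstructions on which the naive degeneracy bound fails and handle them by a separate (finite) case analysis before peeling the rest. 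For the 1-statement, the fixed-gadget route is structurally impossible. Any $(F,r)$-vertex-Ramsey graph $G^*$ satisfies $m(G^*)\geq m_1^o(F,r)$, and the theorem itself forces $m_1^o(F,r)\geq m_1(F)$: a fixed Ramsey gadget with $m(G^*)<m_1(F)$ would appear a.a.s.\ already at some $p\ll n^{-1/m_1(F)}$, contradicting the 0-statement. The paper stresses that $m_1(F)\neq m_1^o(F,r)$ in general, so the inequality is typically strict, in which case the expected count of $G^*$ at $p=Cn^{-1/m_1(F)}$ is $o(1)$ for every constant $C$. Even in the borderline case $m(G^*)=m_1(F)$, a strictly balanced $G^*$ appears with limiting probability $1-e^{-\Theta(C^{e(G^*)})}<1$, which does not tend to $1$. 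The real 1-statement proof must exploit an unboundedly growing web of $F$-copies through shared copies of $H^*$, via multi-round exposure and concentration inequalities, rather than a single fixed witness graph; this is exactly the point where the deterministic parameter $m_1^o(F,r)$ decouples from the probabilistic threshold $m_1(F)$, as the paper emphasizes in Section~\ref{sec:remarks-main-2}.
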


Note that the parameter $m_1(F)$ does not depend on $r$. It is widely believed (see e.g.~\cite{MR2116574}) that the threshold behaviour is even sharper than stated in Theorem~\ref{thm:Ramsey}. Friedgut and Krivelevich~\cite{MR1768845} proved this conjecture for the class of strictly $1$-balanced graphs, i.e.\ for graphs $F$ for which $e(H)/(v(H)-1)<e(F)/(v(F)-1)$ for all proper subgraphs $H\sneq F$ with $v(H)\geq 2$. 

Implicit in the lower bound proof of Theorem~\ref{thm:Ramsey} is the existence of a polynomial-time algorithm that \aas succeeds in finding a valid coloring of $\Gnp$ for $p(n)\leq cn^{-1/m_1(F)}$, where polynomial here and throughout means polynomial in $n$ for $F$ and $r$ fixed.
(Here and throughout, \aas stands for asymptotically almost surely, i.e., with probability tending to 1 as $n$ tends to infinity.)

In this work, we study the same coloring problem in an \emph{online setting}, and derive results of the same generality as those stated in Theorem~\ref{thm:Ramsey} for the offline case.

\subsection{The online setting}
\label{sec:introduction-online-setting}

We consider the following online problem: The vertices of an initially hidden instance of $\Gnp$ are revealed one by one in increasing order, and at each step of the process only the edges induced by the vertices revealed so far are visible. Alternatively, one can think of the random edges leading from each vertex to previous vertices as being generated at the moment the vertex is revealed (each edge being inserted with probability $p$ independently from all other edges). Each vertex has to be colored immediately and irrevocably with one of $r$ available colors as soon as it is revealed, with the goal of avoiding monochromatic copies of a fixed graph $F$ as before.

It follows from standard arguments (see~\cite[Lemma 7]{org-lb}) that this online problem has a threshold $p_0(F,r,n)$ in the following sense: For any function $p(n) = o(p_0)$ there is a strategy that \aas finds an $r$-coloring of $\Gnp$ that is valid with respect to $F$ online, and for any function~$p(n)=\omega(p_0)$ \emph{any} online strategy will \aas fail to do so. (Observe that no computational restrictions are imposed in this definition, i.e., the coloring strategy is \emph{not} required to be an efficient algorithm.)

Note that this is a weaker threshold behaviour than the one stated in Theorem~\ref{thm:Ramsey}. A closer inspection of the arguments in this paper shows that the online thresholds are indeed coarser than the offline thresholds given by Theorem~\ref{thm:Ramsey}: the limiting probability that a valid coloring can be found online is a constant bounded away from $0$ and $1$ whenever $p(n)$ has the same order of magnitude as the threshold $p_0(F,r,n)$. This is a consequence of the fact that the online thresholds turn out to be determined by \emph{local} substructures (see~\cite[Theorem 3.9]{MR1782847}).
\RS{`Coarser thresholds': Das steht nicht explizit im Paper, ist aber trotzdem wahr: für $p(n)$ auf dem Threshold taucht mit konstanter WSK keiner der Witnessgraphen auf (folgt aus der FKG-Ungleichung, siehe Theorem 3.9 im Purple Book); Painter hat also eine positive Überlebens-WSK. Andererseits geht jeder Schritt der multi-round exposure im upper-bound-Beweis mit positiver Wahrscheinlichkeit gut (um das zu zeigen, ersetze man einfach die SMM mit Janson); Painter hat also eine positive WSK zu verlieren.}

The online problem was first studied in~\cite{ovg-combinatorica}, where the following simple strategy was analyzed. Assuming that the colors are numbered from $1$ to $r$, the \emph{greedy strategy} fixes an appropriate choice of subgraphs $H_1, \dots, H_r\seq F$, and at each step uses the highest-numbered color $i$ that does not complete a monochromatic copy of $H_i$ (or color $1$ if no such color exists). Note that this strategy can easily be implemented in polynomial time.

For any graph $F$ and any integer $r\geq 1$ we define the parameter $\ol{m}_1(F,r)$ recursively by
\begin{equation}\label{eq:mFr}
	\ol{m}_1(F,r):=
	\begin{cases}
	  \displaystyle \max_{H\seq F}\frac{e(H)}{v(H)} &\text{if $r = 1$} 
	  	\enspace, \\
	  \displaystyle \max_{H\seq F}\frac{e(H)+\ol{m}_1(F,r-1)}{v(H)}
	  	&\text{if $r \geq 2$} \enspace.
	\end{cases}
\end{equation}

The results of~\cite{ovg-combinatorica} can be stated as follows.

\begin{theorem}[\cite{ovg-combinatorica}] \label{thm:ovg-combinatorica}
For any fixed graph $F$ with at least one edge and any fixed integer $r\geq 2$, the threshold for finding an $r$-coloring of\/ $\Gnp$ that is valid with respect to $F$ online satisfies
\begin{equation*}
  p_0(F,r,n)\geq n^{-1/\ol{m}_1(F,r)} \enspace,
\end{equation*}
where $\ol{m}_1(F,r)$ is defined in~\eqref{eq:mFr}. A polynomial-time algorithm that succeeds \aas for any $p(n)=o(n^{-1/\ol{m}_1(F,r)})$ is given by the greedy strategy.

If $F$ has an induced subgraph~$F^\circ\sneq F$ on~$v(F)-1$ vertices satisfying
\begin{equation} \label{eq:two-round-condition}
  m_1(F^\circ) \leq \ol{m}_1(F,2) \enspace,
\end{equation}
where $m_1(F^\circ)$ is defined in~\eqref{eq:m1}, the greedy strategy is best possible, i.e., the threshold is
\begin{equation*}
  p_0(F,r,n)=n^{-1/\ol{m}_1(F,r)} \enspace.
\end{equation*}
\end{theorem}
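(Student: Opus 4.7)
The theorem has two components: an algorithmic lower bound for $p_0(F,r,n)$ obtained by analyzing the greedy strategy, and a matching upper bound under the additional hypothesis~\eqref{eq:two-round-condition}. For the lower bound, I would instantiate the greedy strategy by choosing subgraphs $H_i^* \seq F$ inductively so that $H_i^*$ attains the maximum in the recursion defining $\ol{m}_1(F,i)$, setting $H_1^* := F$ so that a forced color-$1$ placement that ``fails'' is precisely one creating a monochromatic $F$-copy. When greedy fails at the arriving vertex $v_0$, the revealed graph must contain, through $v_0$, an $H_i^*$-copy whose other vertices all have color $i$ for each $i \in \{1,\ldots,r\}$ (this is the direct reason colors $2,\ldots,r$ are blocked and color $1$ fails). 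Moreover, each non-central vertex $u$ assigned a color $i<r$ was itself forced into color $i$, which recursively requires $H_j^*$-copies through $u$ for all $j>i$, each attaching further sub-witnesses at their color-$j$ vertices. The ``witness graph'' $W$ for failure is the resulting rooted tree-like union of $H_j^*$-copies.

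The plan then is to show by induction on $r$ that every such witness satisfies $e(W)/v(W) \geq \ol{m}_1(F,r)$, where the recursive definition~\eqref{eq:mFr} plugs in exactly: attaching an $H^*_r$-copy and the inductive witness for the remaining $r-1$ colors increases the edge count by $e(H^*_r)$ and the vertex count (after discounting sub-witnesses at color-$r$ vertices, which contribute nothing) in the prescribed ratio. A first moment argument then yields, for each of the finitely many possible witness isomorphism types, the bound $n \cdot n^{v(W)-1} p^{e(W)} = (np^{\ol{m}_1(F,r)})^{v(W)}/ n^{0} \cdot n^{\text{small}}= o(1)$ whenever $p = o(n^{-1/\ol{m}_1(F,r)})$. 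Since each $H_i^*$ has constant size, at each step the greedy strategy needs only check subgraph containment for a fixed-size graph, which takes polynomial time.

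For the matching upper bound under~\eqref{eq:two-round-condition}, I would fix an arbitrary painter strategy and use a multi-round exposure, coupling $G_{n,p}$ with the union of $r$ independent copies of $G_{n,p'}$ for $p' = \Theta(p)$. In the first $r-1$ rounds, I combine pigeonhole with the subgraphs $H_j^*$ from the greedy analysis to iteratively identify large vertex sets all sharing the same color and arranged so that the already-revealed edges mirror the structure of the witness graph $W$ built in the lower-bound proof. The last round exploits the hypothesis $m_1(F^\circ) \leq \ol{m}_1(F,2)$: since the remaining edge density $p'$ lies above the threshold $n^{-1/m_1(F^\circ)}$ from Theorem~\ref{thm:Ramsey}, the offline Ramsey result forces a monochromatic copy of $F^\circ$ among the already same-colored vertices identified in earlier rounds. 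A final random vertex then extends this $F^\circ$ to a monochromatic $F$ with constant probability, which when combined across the $r$ rounds produces \aas failure for $p = \omega(n^{-1/\ol{m}_1(F,r)})$.

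The main obstacle is the upper bound: setting up the multi-round coupling so that the ``constrained'' vertex sets surviving from round to round are \emph{both} large enough to invoke the Ramsey result in the final round \emph{and} structured by the extremal subgraphs $H_j^*$ in a way that lets the final round produce the missing edges of $F$. Condition~\eqref{eq:two-round-condition} is exactly the numerical inequality ensuring these two requirements can be simultaneously met; verifying this alignment for general $r$ via induction on the number of colors, and showing that the corresponding ``good'' events accumulate to constant probability rather than $o(1)$, is the delicate part. The density identity $e(W)/v(W) = \ol{m}_1(F,r)$ in the lower bound, by contrast, is routine once the recursive witness is defined carefully enough to avoid overlaps that artificially inflate its density.
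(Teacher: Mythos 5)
This theorem is cited from the reference \cite{ovg-combinatorica} and is \emph{not} proven in the present paper, so there is no internal proof of the paper to compare your proposal against. Your overall outline (greedy strategy plus a witness-graph first moment argument for the lower bound, and a multi-round exposure combined with the offline Ramsey result for the upper bound) is indeed in the spirit of the methodology used there and described informally in Sections 1.6--1.7 here.

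That said, there is a concrete gap in your lower-bound sketch. You instantiate the greedy subgraphs by setting $H_1^*:=F$ and trace back the recursive witness from the monochromatic $F$-copy that is completed when greedy fails, claiming the density identity $e(W)/v(W)\geq\ol{m}_1(F,r)$ is then ``routine once the recursive witness is defined carefully enough to avoid overlaps.'' This is not correct as stated. Take $F=K_3+K_1$ (a triangle plus an isolated vertex) with $r=2$. Here $\ol{m}_1(F,1)=1$ (attained by the $K_3$-subgraph, \emph{not} by $F$), and $\ol{m}_1(F,2)=4/3$. Tracing back from the full $F$-copy at the failing vertex $v_0$ --- attaching a $K_3$-threat in color $2$ at each of the four vertices of the $F$-copy, including its isolated vertex --- yields a witness with $12$ vertices and $15$ edges, hence density $5/4<4/3$; a first moment argument over such witnesses only proves greedy succeeds for $p=o(n^{-4/5})$, strictly short of the claimed $n^{-3/4}$. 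The correct witness must \emph{discard} the subtree rooted at the isolated vertex of the $F$-copy: the failure-triggering event is really the creation of a monochromatic $K_3$ in color $1$, and the leaner $9$-vertex, $12$-edge witness has density exactly $4/3$. More generally, when $F$ is not balanced (a proper subgraph attains $\max_{H\seq F}e(H)/v(H)$) or disconnected --- both permitted by the theorem --- the ``appropriate choice of subgraphs $H_1,\dots,H_r$'' and the pruning of the witness are precisely where the substance of the proof lies, and your prescription $H_1^*:=F$ together with an unpruned recursive trace-back does not deliver the stated bound. You should also resolve an ambiguity in your indexing: you describe $H_i^*$ as attaining the maximum in the recursion for $\ol{m}_1(F,i)$, under which convention $H_1^*$ would be the density maximizer of $F$, not $F$ itself --- these two prescriptions conflict unless $F$ is balanced.
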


The second part of Theorem~\ref{thm:ovg-combinatorica} applies in particular to the case where $F$ is a clique or a cycle of arbitrary fixed size (in these specific cases, the appropriate choice of subgraphs $H_1, \dots, H_r\seq F$ for the greedy strategy is $H_1=\dots = H_r=F$). Thus for cliques and cycles, explicit threshold functions are known.

It was also pointed out in~\cite{ovg-combinatorica} that the greedy strategy is \emph{not} best possible in general --- as we shall see below, the threshold is significantly higher than what is guaranteed by Theorem~\ref{thm:ovg-combinatorica} already in the innocent-looking case when $F$ is a long path.

Our main result gives a combinatorial characterization of the online threshold that allows us to compute, for any $F$ and $r$, a value $\gamma=\gamma(F,r)$ such that the threshold is given by $p_0(F,r,n)=n^{-\gamma}$. We also obtain polynomial-time coloring algorithms that \aas find valid colorings of $\Gnp$ online in the entire regime below the respective thresholds, i.e., for any $p(n)=o(p_0)$.

\subsection{A general characterization of the online threshold}

Our main result characterizes the general threshold for the online problem in terms of a \emph{deterministic two-player game}, which we describe in the following. The two players are called \emph{Builder} and \emph{Painter}, and the board is a graph that grows in each step of the game. Painter wants to maintain a valid coloring of the board, and her opponent Builder tries to prevent her from doing so by forcing her to create a monochromatic copy of~$F$.

The game starts with an empty board, i.e., no vertices are present at the beginning of the game. In each step, Builder presents a new vertex and a number of edges leading from previous vertices to this new vertex. Painter has to color the new vertex immediately and irrevocably with one of $r$ available colors, and as before she loses as soon as she creates a monochromatic copy of $F$. Note that so far this is the same setting as before, except that we replaced `randomness' by the second player Builder. (Put differently, if Builder presents every possible edge with probability $p$ independently, this is exactly the online process introduced above.)
However, we additionally impose the restriction that Builder is not allowed to present an edge that would create a (not necessarily monochromatic) subgraph $H$ with $e(H)/v(H) > d$ on the board, for some fixed real number $d$ known to both players. In other words, Builder must adhere to the restriction that the evolving board~$B$ satisfies $m(B)\leq d$ at all times, where as usual we define
\begin{equation*}
  m(B):=\max_{H\seq B} \frac{e(H)}{v(H)} \enspace.
\end{equation*}
We will refer to this game as the \emph{deterministic $F$-avoidance game with $r$ colors and density restriction~$d$}.

We say that \emph{Builder has a winning strategy} in this game (for a fixed graph $F$, a fixed number of colors~$r$, and a fixed density restriction $d$) if he can force Painter to create a monochromatic copy of~$F$ within a finite number of steps. Conversely, we say that \emph{Painter has a winning strategy} if she can avoid creating a monochromatic copy of $F$ for an arbitrary number of steps.
Note that if for some fixed $F$ and~$r$, Builder has a winning strategy for some density restriction $d$, then he also has a winning strategy for every density restriction $d'\geq d$. We say that a Painter or Builder strategy is \emph{optimal} if it is a winning strategy simultaneously for all~$d$ for which the respective player has a winning strategy.

For any graph $F$ and any integer $r\geq 2$ we define the \emph{online vertex-Ramsey density} $m_1^*(F,r)$ as
\begin{equation} \label{eq:m1*}
  m_1^*(F,r):=\inf \left\{
    d\in\RR \bigmid
    \leftbox{0.59\displaywidth}{Builder has a winning strategy in the
      deterministic \mbox{$F$-avoidance} game with $r$ colors and density
      restriction $d$}
  \right\} \enspace.
\end{equation}
It is not hard to see that $m_1^*(F,r)$ is indeed well-defined for any $F$ and $r$.
With these definitions in hand, our results can be stated as follows. 

\begin{theorem} \label{thm:main-1}
For any graph $F$ with at least one edge and any integer $r\geq 2$, the online vertex-Ramsey density $m_1^*(F,r)$ is a computable rational number, and the infimum in \eqref{eq:m1*} is attained as a minimum.
\end{theorem}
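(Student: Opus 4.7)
The plan is to explicitly identify, for each $r$, a canonical family of ``witness structures'' that Builder can force Painter to build, and to show that the minimum density over this family both equals $m_1^*(F,r)$ and is a computable rational.

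First, I would define recursively, for each $i = 1,\ldots,r$, a class $\mathcal{F}_i$ of rooted graphs $(G,v)$ with the property that, under the density restriction, Builder can construct a copy of $(G,v)$ and thereby force Painter either to create a monochromatic copy of $F$ or to assign to $v$ one of the colors $\{1,\ldots,i\}$. The class $\mathcal{F}_1$ arises naturally from subgraphs $H \subseteq F$ containing a distinguished root vertex whose presentation leaves Painter no safe assignment in color~$1$. The class $\mathcal{F}_i$ for $i > 1$ is then obtained by attaching appropriately chosen copies of structures from $\mathcal{F}_{i-1}$ to certain vertices of subgraphs of $F$, in such a way that any of the $i$ possible colors for $v$ completes a monochromatic $F$ together with some previously-built block.

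The central quantity is then
\[
  d^*(F,r) \;:=\; \min_{(G,v)\in\mathcal{F}_r} m(G),
\]
and I would establish both directions of equality with $m_1^*(F,r)$. The upper bound $m_1^*(F,r) \le d^*(F,r)$ follows by letting Builder execute the strategy that constructs a minimizing witness structure in $\mathcal{F}_r$; by definition, the board never exceeds density $d^*(F,r)$. For the lower bound $m_1^*(F,r) \ge d^*(F,r)$, I would proceed by strong induction on $r$: from any winning Builder strategy under density restriction $d$, one extracts (by following Painter's worst-case responses in the strategy's game tree) a concrete witness structure in $\mathcal{F}_r$ of density at most $d$, so that $d \ge d^*(F,r)$. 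Together, these bounds yield $m_1^*(F,r) = d^*(F,r)$, and rationality and attainment of the minimum follow since the graphs in $\mathcal{F}_r$ are finite with rational densities $e(G)/v(G)$.

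The main obstacle is that as defined, $\mathcal{F}_r$ could be infinite, in which case ``$\min$'' is not even well-defined and computability fails. The heart of the argument therefore lies in proving a finiteness lemma: without loss of generality one may restrict $\mathcal{F}_r$ to structures whose size is bounded by an explicit function of $v(F)$ and $r$. This would be shown by a pruning procedure on Builder's game tree: any unbounded strategy can be replaced by a bounded one of no larger maximum density, essentially because repeated gluings of identical sub-structures provide no additional leverage. Once this is established, $\mathcal{F}_r$ is finite and effectively enumerable, $d^*(F,r)$ is computable in principle, and both assertions of the theorem follow.
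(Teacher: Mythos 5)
The witness-structure approach you sketch is a reasonable one, and it broadly parallels the informal intuition the paper itself develops in Section~\ref{sec:intuition-computing}: Builder forces larger monochromatic subgraphs of $F$ from smaller ones, and the history of that construction is a rooted graph whose density controls whether the strategy is legal. Your claimed equality $m_1^*(F,r)=d^*(F,r)$ for some minimum over finitely many witnesses would indeed imply rationality, computability, and attainment, and the paper in fact observes exactly this consequence at the end of Section~\ref{sec:proof-thm1}: the existence of a bound $a_{\max}=a_{\max}(F,r)$ on the number of Builder moves immediately yields all three assertions by exhaustive search.

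The problem is that your finiteness lemma --- the step you yourself call ``the heart of the argument'' --- is precisely the entire technical content of the paper, and your justification for it does not withstand scrutiny. You write that unbounded strategies can be pruned to bounded ones of no larger density ``essentially because repeated gluings of identical sub-structures provide no additional leverage.'' This is exactly the assertion that has to be proved, and the reason it is hard is stated explicitly in Section~\ref{sec:generalized-game}: the ordinary density parameter $m(G)=\max_{H\subseteq G} e(H)/v(H)$ does \emph{not} decompose recursively over the blocks attached to a new vertex. The densest subgraph through $v$ in a union of blocks cannot be found by treating the blocks independently, so a larger history graph can in principle give Builder access to smaller admissible density restrictions $d$; there is no obvious termination criterion. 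This is why the paper is forced to replace $m(G)\leq d$ by the two-parameter restriction $\mu_\theta(H)\geq\beta$ of~\eqref{eq:theta-rho-game}, which \emph{is} linear and does decompose over branches once $\theta$ is fixed (Section~\ref{sec:intuition-condensing}); and even then, bounding the recursion requires the machinery of ordered subgraphs, vertex weights, and the carefully sequenced dynamic program $\CW()$, culminating in the explicit constant $a_{\max}$ of~\eqref{eq:amax}. Without something playing the role of that linearization, a ``pruning procedure on Builder's game tree'' has no mechanism to conclude that deeper recursion cannot lower the achievable density, so the proposed proof has an unfilled gap exactly where the real work lies.

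A secondary concern: your recursion is indexed by color classes $i=1,\ldots,r$, so its depth is fixed at $r$. But Builder's natural recursion (Figure~\ref{fig:recursion} of the paper) builds up a monochromatic copy of $F$ one vertex at a time and branches on Painter's responses at every step; its depth is governed by $v(F)$ and the number of ordered subgraphs, not by the number of colors, and may be as large as $r\cdot|\cS(F)|$ rounds. A class $\mathcal{F}_r$ of depth-$r$ structures would capture greedy-like strategies of the kind analyzed in Theorem~\ref{thm:ovg-combinatorica}, but the paper explicitly points out (and demonstrates with long paths) that greedy is not optimal. This indexing would need to be reorganized before the upper- and lower-bound halves of your argument could match up.
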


\begin{theorem} \label{thm:main-2}
For any fixed graph $F$ with at least one edge and any fixed integer $r\geq 2$, the threshold for finding an $r$-coloring of\/ $\Gnp$ that is valid with respect to $F$ online is
\begin{equation} \label{eq:threshold}
  p_0(F,r,n)=n^{-1/{m_1^*(F,r)}} \enspace,
\end{equation}
where $m_1^*(F,r)$ is defined in \eqref{eq:m1*}.
A polynomial-time algorithm that succeeds \aas for any $p(n)=o(p_0)$ can be derived from one of Painter's optimal strategies in the deterministic two-player game. 
\end{theorem}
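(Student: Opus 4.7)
The theorem has two halves: a $0$-statement (Painter succeeds algorithmically when $p=o(p_0)$) and a $1$-statement (every online Painter strategy fails \aas\ when $p=\omega(p_0)$). Both rely on the fact that one step of the online random process---exposing a new vertex together with all of its random back-edges to previously revealed vertices---is structurally identical to a Builder move in the deterministic game. The parameter~$d$ is the bridge between the two worlds: the density restriction $m(B)\le d$ in the game corresponds, up to a first-moment calculation, to the \aas\ edge density of $\Gnp$ when $p$ sits at $n^{-1/d}$.

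\textbf{The $0$-statement.} Fix any rational $d<m_1^*(F,r)$; by definition of $m_1^*(F,r)$, Painter has a winning strategy $\sigma_d$ in the deterministic game with density restriction $d$. For $p(n)=o(n^{-1/d})$ a routine first-moment argument over the finitely many graphs on at most $k$ vertices whose density exceeds $d$ shows that, for a suitable constant $k=k(F,r,d)$, \aas\ every subgraph of $\Gnp$ on at most $k$ vertices has density at most~$d$. Hence the sequence of positions arising in the online process is \aas\ a legal sequence in the deterministic game with density restriction~$d$, and Painter can simply execute $\sigma_d$ to avoid a monochromatic~$F$ forever. For the algorithmic claim one argues that an optimal $\sigma_d$ can be taken to depend only on the isomorphism type of a bounded-size rooted neighbourhood of the new vertex: under the density cap, reachable game positions up to isomorphism can be classified combinatorially, so $\sigma_d$ factors through a finite lookup table that is computed once and then queried in polynomial time per vertex.

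\textbf{The $1$-statement.} By Theorem~\ref{thm:main-1} the infimum in~\eqref{eq:m1*} is attained, so we may fix a density $d^*:=m_1^*(F,r)$ at which Builder has a winning strategy~$\tau$. Since $\tau$ forces a monochromatic copy of~$F$ on every play branch, the union of the (finitely many) boards that $\tau$ might produce against an $r$-colouring Painter forms a finite family $\mathcal{W}=\mathcal{W}(\tau)$ of witness graphs, each of density at most~$d^*$. The core step is a multi-round exposure argument for $p(n)=\omega(n^{-1/d^*})$: partition $[n]$ into as many blocks as $\tau$ has levels and expose the edges block by block; a second-moment / Janson-type calculation shows that the required rooted extensions in $\mathcal{W}$ are \aas\ present in abundance after each round. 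This lets Builder realise his deterministic response to every possible colouring Painter may have produced so far on actual random edges, and a pigeon-hole over Painter's $r^{v(F)}$ possible colour patterns completes the forcing.

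\textbf{Main obstacle.} The hardest part is the $1$-statement. Builder's next move in $\tau$ depends on Painter's online choices, so the multi-round exposure must supply enough copies of every relevant rooted extension \emph{for each possible history}, not just one. Simultaneously one must argue that the \emph{extra} random edges present in $\Gnp$ but not prescribed by $\tau$ do not inadvertently complete a monochromatic copy of $F$ for the \emph{wrong} player, nor violate the density-type invariants on which the witness count relies. A secondary difficulty, on the $0$-statement side, is turning an abstract winning strategy $\sigma_d$ into a locally computable rule; the density cap, together with Theorem~\ref{thm:main-1}'s computability of $m_1^*(F,r)$, is precisely what makes this reduction effective.
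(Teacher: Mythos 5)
Your treatment of the $1$-statement (Builder's strategy is reproduced by $\Gnp$ via a multi-round exposure and second-moment argument) is essentially the route the paper takes, though the paper organizes it more carefully as an induction on the depth of Builder's strategy tree, using a two-round exposure inside each induction step (Lemma~\ref{lemma:ub-prob-induction}). Your worry that stray random edges could ``complete a copy for the wrong player'' is unnecessary: extra edges can only help Builder find a leaf of $\cT$, and the second-moment computation (via the variance bound over the overlap family~$\cD$) already controls the dependencies that stray edges introduce.

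The $0$-statement, however, has a genuine gap, and it is exactly the one the paper warns about in Section~\ref{sec:remarks-main-2}. You assert that because \aas\ every subgraph of~$\Gnp$ on at most $k$ vertices has density $\le d$, ``the sequence of positions arising in the online process is \aas\ a legal sequence in the deterministic game with density restriction~$d$.'' This inference fails. A legal position in the deterministic game requires $m(B)\le d$, i.e., \emph{every} subgraph $H\seq B$ satisfies $e(H)/v(H)\le d$. But the online board grows to $n$ vertices, and for $d>1$ the expected overall density $\binom{n}{2}p/n=\Theta(n^{1-1/d})$ is unbounded. The density restriction is satisfied only \emph{locally}, never globally, so the trajectory of boards is not a legal play of the deterministic game, and there is no ground on which a Painter winning strategy $\sigma_d$ can ``simply be executed.'' This is precisely why the paper cannot invoke an arbitrary winning Painter strategy: it must construct a very specific strategy $\PAINT(F,r,\theta^*,\alpha^*)$ for which a \emph{witness graph invariant} (Lemma~\ref{lemma:witness-graphs}) holds, guaranteeing that whenever a monochromatic copy of $F$ appears, the board contains a subgraph of density $\ge 1/\theta^*$ on at most $\vmax(F,r)$ vertices --- a \emph{bounded-size} witness, which is what allows the first-moment/Markov argument to close the loop. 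Establishing that bound on witness sizes is the heart of the lower bound and consumes most of Sections~\ref{sec:computing-ovrd} and~\ref{sec:lower-bound} (the tie-breaking rule of the strategy, the families $\cH'_s$, and the geometric walk analysis all exist solely to make $\vmax$ finite). Your proposal sidesteps this entirely and therefore does not prove the $0$-statement.

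The remark about turning $\sigma_d$ into a ``finite lookup table via isomorphism types of bounded rooted neighbourhoods'' is too vague to carry weight and, more importantly, presupposes that the board is a legal position --- the very thing that fails. The paper's polynomial-time algorithm instead comes directly from the explicit priority-list structure of $\PAINT()$, which is a by-product of the constructive proof of Theorem~\ref{thm:main-1}, not of an abstract compactness argument.
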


Theorem~\ref{thm:main-2} reduces the problem of determining the threshold of the original probabilistic problem to the purely deterministic combinatorial problem of computing $m_1^*(F,r)$ or, informally speaking, of `solving' the deterministic two-player game. According to Theorem~\ref{thm:main-1}, the latter is possible by a finite computation; note that in the asymptotic setting of Theorem~\ref{thm:main-2}, this is in fact a constant-size computation. 

It follows from the results of \cite{ovg-combinatorica} that for any graph $F$ we have
\begin{equation*}
  \lim_{r\to\infty} m_1^*(F,r)=m_1(F) \enspace.
\end{equation*}
Thus the online thresholds approach the offline threshold given by Theorem~\ref{thm:Ramsey} as the number of colors increases. It also follows from an example presented in~\cite{ovg-combinatorica} that if $F$ is the disjoint union of two graphs $H_1$ and $H_2$, the parameter $m_1^*(F,r)$ may be strictly higher than both $m_1^*(H_1,r)$ and $m_1^*(H_2,r)$. (Such a behaviour cannot occur for the parameter $m_1(F)$ appearing in Theorem~\ref{thm:Ramsey}.)

%\subsection{Remarks on Theorem~\ref{thm:main-1}}
\subsection{Remarks on Theorem~\texorpdfstring{\ref{thm:main-1}}{3}}

To put Theorem~\ref{thm:main-1} into perspective, we mention that none of its three statements (computable, rational, infimum attained as minimum) is known to hold for the offline counterpart of $m_1^*(F,r)$, i.e., for the \emph{vertex-Ramsey density}
\begin{equation*}
  m_1^o(F,r):=\inf\big\{m(G) \,\bigmid[\big]\, \text{$G$ is $(F,r)$-vertex-Ramsey} \big\}
\end{equation*}
introduced in~\cite{MR1248487}. It is also not known whether such statements are true for two analogous parameters related to \emph{edge}-colorings (see~\cite{org-new, MR2152058}).
In fact, even the value of $m_1^o(P_3, 2)$ is unknown 
%$$4/3 \leq m_1^o(P_3, 2)\leq 7/5.$$
--- the authors of \cite{MR1248487} offer 400,000 z{\l}oty (Polish currency in 1993) for its exact determination (here $P_3$ denotes the path on three vertices).

As is the case for many parameters in Ramsey theory, computing the online vertex-Ramsey density $m_1^*(F,r)$ becomes intractable already for moderately large graphs~$F$. 
We have an implementation that computes $m_1^*(F,2)$ for all graphs $F$ with at most 7 vertices in under 10 minutes on an ordinary desktop computer. Using more computational power, we managed to determine $m_1^*(F,2)$ exactly for all non-forests on at most 9 vertices.  (Our implementation is rather inefficient for forests --- we believe that for this special case an adapted program would be much faster, see the remarks in Section~\ref{sec:forests} below.) The program can be downloaded from the authors' websites~\cite{homepage-mrs}.
There might be some room for improvement here, but it seems unrealistic to compute $m_1^*(F,2)$ for, say, general graphs with 20 vertices with our approach in reasonable time. (Recall that the Ramsey number $R(5)$, i.e., the smallest integer $\ell$ such that every edge-coloring of the complete graph on $\ell$ vertices contains a monochromatic $K_5$, is still unknown!) 

%\subsection{Remarks on Theorem~\ref{thm:main-2}} \label{sec:remarks-main-2}
\subsection{Remarks on Theorem~\texorpdfstring{\ref{thm:main-2}}{4}} \label{sec:remarks-main-2}

The intuition behind Theorem~\ref{thm:main-2} is the following: It is well-known \cite{MR620729} (see also~\cite[Section 3]{MR1782847}) that for any fixed graph $G$, \aas the random graph $\Gnp$ with $p(n)=\omega(n^{-1/m(G)})$ contains a copy of $G$. The textbook second moment method proof of this fact can be adapted to show that for $p(n)=\omega(n^{-1/d})$ and any fixed finite Builder strategy for the deterministic two-player game that respects the density restriction $d$, \aas the random process will exactly reproduce the given Builder strategy somewhere on the board. Thus if Builder has a winning strategy for a graph $F$ and some given density restriction $d$, then in the probabilistic process with $p(n)=\omega(n^{-1/d})$, any online algorithm will \aas be forced to create a monochromatic copy of~$F$ somewhere on the board. Consequently, the  threshold of the probabilistic problem satisfies $p_0(F,r,n)\leq n^{-1/d}$. This argument is completely generic in the sense that it does not require any assumptions on the structure of Builder's winning strategy.  The underlying connection between the probabilistic and a deterministic variant of the same problem was first observed in~\cite{org-new} (for the edge-coloring version of the problem studied here), and subsequently applied in~\cite{balogh-butterfield}. The main contribution of the present work is that for the vertex-coloring problem studied here, the best upper bound on the online threshold resulting from this approach is tight (recall~\eqref{eq:m1*} and \eqref{eq:threshold}).

By the argument we just sketched, every winning strategy for Builder in the deterministic game translates to an upper bound on the threshold of the probabilistic problem. It seems to be much harder to prove an equally general statement translating \emph{Painter's} winning strategies in the deterministic game to \emph{lower} bounds on the threshold of the probabilistic problem. The reason for this is that the probabilistic process satisfies the density restriction imposed on Builder only \emph{locally}: Even though the random graph $\Gnp$ with $p(n)=\Theta(n^{-1/d})$ \aas contains no \emph{constant-sized} graphs $G$ with $m(G)>d$, the density of larger subgraphs is unbounded --- in particular, the expected density of the entire random graph is $\binom{n}{2}p/n=\Theta(n^{1-1/d})$, which is unbounded for $d>1$.  Consequently, winning strategies for Painter in the deterministic game do not automatically give rise to successful coloring strategies for the probabilistic problem. In order to nevertheless establish the desired lower bound result we will need a quite detailed understanding of the \emph{structure} of Painter's and Builder's optimal strategies in the deterministic game. 

Theorem~\ref{thm:main-2} establishes a general correspondence between the original probabilistic problem and the deterministic two-player game. We are not aware of any other results that establish a similar correspondence between probabilistic and deterministic variants of the same problem. In particular, such a correspondence does not hold for the offline version of the problem studied here: according to Theorem~\ref{thm:Ramsey}, the threshold for the existence of a valid $r$-coloring w.r.t.\ $F$ in the probabilistic setting is determined by the parameter $m_1(F)$, and not by the parameter $m_1^o(F,r)$ coming from the corresponding deterministic problem (in general we have $m_1(F)\neq m_1^o(F,r)$).

\subsection{Algorithms for efficiently coloring random graphs online} \label{sec:algorithms}

We now describe the structure of the coloring algorithms that arise from our approach, and their relation to Painter's optimal strategies in the deterministic game.

We use the concept of \emph{ordered graphs}. An ordered graph is a graph with an associated ordering of its vertices, where this ordering is interpreted as the order in which these vertices appeared in the probabilistic process or the deterministic game.
We will see that for any graph $F$ and any integer $r$, there exists an optimal Painter strategy (i.e., a strategy that is a winning strategy for any density restriction $d<m_1^*(F,r)$) that can be represented as a \emph{priority list over ordered monochromatic subgraphs of~$F$}. Such a priority list is computed along with $m_1^*(F,r)$ in our approach, and encodes the relative `level of danger' Painter associates with copies of a given ordered subgraph of $F$ in a given color. In the asymptotic setting of the probabilistic process, determining such a priority list is a constant-size computation.

Given this priority list, Painter's strategy is the following: Whenever Builder presents a new vertex, Painter determines for each color the most dangerous ordered graph that would be completed if the new vertex were assigned this color, and then selects the color for which this most dangerous graph is least dangerous among all colors. (Observe that this requires Painter to memorize the order in which the vertices on the board arrived.) Note that this strategy based on a priority list can be easily implemented in polynomial time. 

As we shall see, for any $F$ and $r$ we can compute a priority list such that the strategy represented by it is not only (i) a winning strategy for Painter in the deterministic game with density restriction $d$ for any $d<m_1^*(F,r)$, but also (ii) a (polynomial-time) algorithm that succeeds \aas in finding a valid coloring of $\Gnp$ online for any $p(n)=o(n^{-1/m_1^*(F,r)})$. (Recall from Section~\ref{sec:remarks-main-2} that (i) does not automatically imply (ii)!)

\subsection{Is there an explicit formula for $m_1^*(F,r)$?} \label{sec:forests}

From Theorem~\ref{thm:ovg-combinatorica} and Theorem~\ref{thm:main-2} it follows that for any graph $F$ that has an induced subgraph~$F^\circ\sneq F$ on~$v(F)-1$ vertices satisfying \eqref{eq:two-round-condition},
for any $r\geq 2$ the online vertex-Ramsey density $m_1^*(F,r)$ is given by $\ol{m}_1(F,r)$ as defined in~\eqref{eq:mFr}. (Of course, this can also be proved directly by  considering only the deterministic game.)

The question arises whether also for general graphs $F$ the abstract definition of $m_1^*(F,r)$ in~\eqref{eq:m1*} can be replaced by an explicit formula, perhaps by suitably generalizing the definition \eqref{eq:mFr}. To address this question we point out some of the difficulties involved in the innocent-looking case where $F$ is a long path. The results concerning this special case will be published separately in the companion paper~\cite{path-games}.

We first present a simplified formulation of our results for the case where $F$ is an arbitrary forest. Suppose $d$ is of the form $d=(k-1)/k$ for some integer $k\geq 2$. Then the restriction that Builder is not allowed to create a subgraph of density more than $d$ is equivalent to requiring that Builder creates no cycles and no components (=trees) with more than $k$ vertices. We call this game the \emph{deterministic $F$-avoidance game with $r$ colors and tree size restriction~$k$}.

\begin{corollary}[Forests]\label{cor:forests}
For any fixed forest $F$ with at least one edge and any fixed integer $r\geq 2$, the threshold for finding an $r$-coloring of $\Gnp$ that is valid \wrt $F$ online is 
\begin{equation*}
  p_0(F,r,n)=n^{-1-1/(k^*(F,r)-1)} \enspace,
\end{equation*}
where $k^*(F,r)$ is the smallest integer $k$ such that Builder has a winning strategy in the deterministic $F$-avoidance game with $r$ colors and tree size restriction $k$.
\end{corollary}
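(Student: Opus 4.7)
The plan is to deduce Corollary~\ref{cor:forests} directly from Theorem~\ref{thm:main-2} by translating between the density-based parameter $m_1^*(F,r)$ and the tree-size-based parameter $k^*(F,r)$ in the special case of forests.

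First I would establish an elementary observation about densities of forests. Any graph $H$ with $m(H)<1$ is a forest, since any cycle $C_\ell$ has density exactly $1$. Moreover, for a forest $H$ with components of sizes $v_1\geq v_2\geq\dots\geq v_c$, every subgraph $H'\subseteq H$ satisfies $e(H')/v(H')=1-c(H')/v(H')$, where $c(H')$ denotes the number of components of $H'$; this quantity is maximized by taking $H'$ to be the largest single tree component, giving $m(H)=(v_1-1)/v_1$. I would then conclude that for any integer $k\geq 2$ and any real $d$ with $(k-1)/k\leq d<k/(k+1)$, the collection of admissible boards in the density-$d$ game coincides with the collection of admissible boards in the tree-size-$k$ game---namely, the forests whose largest tree component has at most $k$ vertices. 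Since both players' move sets coincide in the two games, the same player has a winning strategy in each.

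Combining this equivalence with the upward-closedness of Builder's winning set in $d$, it will follow that Builder has a winning strategy in the density-$d$ game if and only if $d\geq(k^*-1)/k^*$, where $k^*:=k^*(F,r)$. Hence the infimum in~\eqref{eq:m1*} is attained and equals $m_1^*(F,r)=(k^*-1)/k^*$. Substituting into Theorem~\ref{thm:main-2} gives
\[
  p_0(F,r,n)=n^{-1/m_1^*(F,r)}=n^{-k^*/(k^*-1)}=n^{-1-1/(k^*-1)} \enspace,
\]
which is the claimed formula.

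I do not anticipate a serious obstacle; the corollary is essentially a dictionary between two equivalent parametrizations of the deterministic game in the forest regime. The one point still to verify is that $k^*(F,r)$ is finite, i.e., that Builder wins the tree-size-$k$ game for some $k$. This is the case for any forest $F$ and any $r\geq 2$: one may take $k$ large enough that some tree on $k$ vertices is $(F,r)$-vertex-Ramsey (such a tree exists by the finiteness of the vertex Ramsey number for forests), which Builder can then simply construct move by move, forcing Painter to produce a monochromatic copy of $F$.
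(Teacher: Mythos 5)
Your main argument is correct and is essentially the proof the paper has in mind: the paragraph preceding the corollary observes precisely the dictionary between a density restriction $d=(k-1)/k$ and a tree-size restriction $k$, and the corollary is then an immediate substitution into Theorem~\ref{thm:main-2} using $m_1^*(F,r)=(k^*-1)/k^*$. Your verification that the admissible boards coincide for all $d\in\bigl[(k-1)/k,\,k/(k+1)\bigr)$ is the right precise statement of this dictionary, and the observation that $k^*\geq 2$ (needed so that $1/(k^*-1)$ makes sense) is correct since Builder can build nothing but isolated vertices when $k=1$.

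However, your closing verification that $k^*(F,r)$ is finite is wrong. You propose to take a tree $T$ that is $(F,r)$-vertex-Ramsey and have Builder simply construct $T$. No such tree exists for any $r\geq 2$ and any $F$ with at least one edge: every tree is bipartite, hence admits a proper $2$-coloring, and a proper $2$-coloring contains no monochromatic edge and a fortiori no monochromatic copy of $F$. So no tree (and more generally no forest) is $(F,r)$-vertex-Ramsey. In fact this is exactly why the online setting is interesting here — the deterministic game has $m_1^*(F,r)<1$ not because some bounded forest is an offline Ramsey witness, but because Painter must commit to colors without seeing the future, and Builder can exploit this (by pigeonholing over many disjoint isolated copies of previously forced monochromatic subgraphs, as sketched in Section~\ref{sec:basic-observations}). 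The finiteness of $k^*$ thus requires an online argument; the paper asserts it without proof ("it is not hard to see"), but the offline Ramsey route you describe does not work. The rest of your derivation goes through once finiteness is established by the correct means.
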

(Corollary~\ref{cor:forests} can also be proved directly by a much simpler proof than the general arguments in this work, reusing ideas of~\cite{org-new}.)

It is not hard to see that $k^*(F,r)$ is indeed well-defined for any forest $F$ and any integer $r\geq 2$. 
It follows from the results in~\cite{ovg-combinatorica} that for any tree $F$ and any integer $r\geq 2$ the greedy strategy (with $H_1=\cdots=H_r=F$) is a winning strategy for Painter in the deterministic game with tree size restriction $k=v(F)^r-1$, i.e., guarantees a lower bound of $k^*(F,r)\geq v(F)^r$. 

For the rest of this section we focus on the case where $F=P_\ell$ is the path on $\ell$ vertices, and $r=2$ colors are available. For this case our general procedure for computing $m_1^*(F,r)$ (or, equivalently if $F$ is a forest, for computing $k^*(F,r)$) can be simplified considerably. We were able to compute $k^*(P_\ell,2)$ for all $\ell\leq 45$. The resulting values are stated in Table~\ref{tab:kstar-Pell}, where the bottom row shows the difference $k^*(P_\ell,2)-\ell^2$, i.e., by how much optimal Painter strategies can improve on the greedy lower bound $v(P_\ell)^2=\ell^2$. The values in Table~\ref{tab:kstar-Pell} and the corresponding optimal Painter strategies seem to follow no discernible pattern. In view of this, it does not seem very likely that there exists an explicit formula for $k^*(P_\ell,2)$, let alone for the parameter $m_1^*(F,r)$ in general. 

The values in Table~\ref{tab:kstar-Pell} also raise the question by how much optimal strategies can improve on the greedy lower bound asymptotically as $\ell\to\infty$. We can show that $k^*(P_\ell,2)=\Omega(\ell^{2.01})$, i.e., there exist Painter strategies that improve on the greedy lower bound by a factor polynomial in $\ell$. On the other hand, we can prove an upper bound of $k^*(P_\ell,2)=\cO(\ell^{2.59})$, which shows that no superpolynomial improvement is possible~\cite{path-games}.

\begin{table}
\scriptsize
\setlength{\tabcolsep}{0.815mm}
\begin{center}
\begin{tabular}{lrrrrrrrrrrrrrrrrrrr}\toprule
$\ell$ & $2,\ldots,27$ & 28 & 29 & 30 & 31 & 32 & 33 & 34 & 35 & 36 & 37 & 38 & 39 & 40 & 41 & 42 & 43 & 44 & 45 \\ \midrule
$k^*(P_\ell,2)$ & $2^2,\ldots,27^2$ & 791 & 841 & 902 & 961 & 1040 & 1089 & 1156 & 1225 & 1323 & 1376 & 1449 & 1521 & 1641 & 1699 & 1796 & 1856 & 1991 & 2057 \\
$k^*(P_\ell,2)-\ell^2$ & 0 & 7 & 0 & 2 & 0 & 16 & 0 & 0 & 0 & 27 & 7 & 5 & 0 & 41 & 18 & 32 & 7 & 55 & 32 \\ \bottomrule
\end{tabular}
\end{center}
\caption{Exact values of $k^*(P_\ell,2)$ for $\ell\leq 45$.} \label{tab:kstar-Pell}
\end{table}

\subsection{Further related work}  The question of finding valid \emph{edge}-colorings of random graphs online was first considered by Friedgut \emph{et al.}, who proved a threshold result for the case where $F$ is a triangle and $r=2$ colors are available~\cite{MR2037068}. In~\cite{org-lb, org-ub}, the greedy strategy was analyzed for the edge-coloring setting, and results similar to Theorem~\ref{thm:ovg-combinatorica} were derived for the case of $r=2$ colors. In~\cite{org-new}, we presented the upper bound approach via deterministic two-player games discussed above; this approach was applied by Balogh and Butterfield to derive new upper bounds for the case where $F$ is a triangle and $r=3$ colors are available~\cite{balogh-butterfield}. It would be very interesting to determine whether a general result analogous to Theorem~\ref{thm:main-2} holds for the edge-coloring setting.

Various edge-coloring Builder-Painter games were studied in the context of deterministic Ramsey theory. The smallest number of moves Builder needs to win in the deterministic edge-coloring game without any restrictions is called the \emph{online (size) Ramsey number of $F$} and was studied by many researchers~\cite{MR701171, MR1249704, MR2594965, MR2445473, MR2152058, MR2381412, MR2411444}. Variants of the game where Builder is subject to various restrictions were studied in~\cite{west-et-al, MR2097326, MR2506387}.

\subsection{Organization of this paper}

Before actually proving Theorem~\ref{thm:main-1} and Theorem~\ref{thm:main-2}, we informally present the main ideas behind our proofs in Section~\ref{sec:ideas}.
In Section~\ref{sec:computing-ovrd} we describe our procedure to compute the online vertex-Ramsey density $m_1^*(F,r)$ for any $F$ and $r$. In this section we formulate two central propositions (Proposition~\ref{prop:Lambda-Builder} and Proposition~\ref{prop:Lambda-Painter} below) which together show that an optimal Builder strategy and an optimal Painter strategy for the deterministic game can be derived from this procedure. The proof of Theorem~\ref{thm:main-1} is based on these two propositions and is also presented in Section~\ref{sec:computing-ovrd}.

In Section~\ref{sec:upper-bound} we prove Proposition~\ref{prop:Lambda-Builder} by deriving an explicit Builder strategy from the procedure presented in Section~\ref{sec:computing-ovrd}, and in Section~\ref{sec:lower-bound} we prove Proposition~\ref{prop:Lambda-Painter} by deriving an explicit Painter strategy from the same procedure. These two sections can be read independently from each other.

In Section~\ref{sec:proof-thm-main-2} we finally turn to the original probabilistic problem and present the proof of Theorem~\ref{thm:main-2}. While the upper bound proof is completely self-contained and can be read independently from all other proofs, the lower bound proof relies very much on our analysis of the deterministic game in the preceding sections.

\section{Proof ideas} \label{sec:ideas}

In this section we aim to give an informal description of the main ideas behind our proofs. We will first focus on our procedure for computing the online vertex-Ramsey density, and then briefly comment on the proofs of Theorem~\ref{thm:main-1} and Theorem~\ref{thm:main-2}.

\subsection{Computing the online vertex-Ramsey density} \label{sec:intuition-computing}

Throughout this section, we focus on the deterministic game and sketch the underlying ideas in our procedure for computing the online vertex-Ramsey density $m_1^*(F,r)$ for given~$F$ and $r$. Note that the following is \emph{not} a proof sketch of Theorem~\ref{thm:main-1} --- rather, our goal in this section is to develop some intuition for how one arrives at the key definitions which stand at the very beginning of our formal arguments. 

\subsubsection{Basic observations} \label{sec:basic-observations}

Consider a family $\{G_1, \dots, G_f\}$ of disjoint copies of the same graph~$G$ on the board, and suppose that Builder adds vertices $v_1, \ldots, v_f$ to the board connecting $v_i$ to $G_i$ in exactly the same way for all $1\leq i\leq f$. Then, by the pigeonhole principle, for a $(1/r)$-fraction of the new vertices, Painter's coloring decision will be the same and result in copies of the same $r$-colored graph $G^+$.
By performing this pigeonholing in each step of his strategy, Builder can thus force Painter to always create \emph{many} copies of one of the $r$ graphs $G^+$ that Painter may choose from. Consequently, in the following we may assume \wolog that whenever Builder manages to enforce an $r$-colored graph $G^+$ on the board, he has as many such copies available as he needs in further steps.

As it turns out, the only type of move that is useful for Builder is of the following form: Assume that for each of the colors $s\in[r]$ the board contains a monochromatic copy of some subgraph $H_s$ of $F$ in color~$s$. Then Builder can force Painter to extend one of these copies to a monochromatic copy of a subgraph $H_\sigma^+$ of $F$ with $v(H_\sigma^+)=v(H_\sigma)+1$ for a color $\sigma\in[r]$ by presenting a new vertex $v$ and connecting it appropriately to the already existing monochromatic copies of $H_1, \dots, H_r$ (see Figure~\ref{fig:recursion}). Furthermore, \wolog Builder will always perform such a step using monochromatic copies of the graphs $H_1,\dots,H_r$ that have evolved independently from each other so far, and that are therefore contained in distinct components of the board (playing like this throughout will not increase the density restriction~$d$ for which Builder's strategy is legal). Proceeding in this fashion, Builder step by step enforces larger monochromatic subgraphs of $F$ from smaller ones, and eventually a monochromatic copy of $F$ (if the density restriction allows it).

\begin{figure}
\centering
\PSforPDF{
 \psfrag{v}{$v$}
 \psfrag{H1}{$H_1$}
 \psfrag{Hs}{$H_\sigma$}
 \psfrag{Hsp}{$H_\sigma^+$}
 \psfrag{Hr}{$H_r$}
 \psfrag{G}{\Large $G$}
 \psfrag{G1}{$G_1$}
 \psfrag{Gs}{$G_\sigma$}
 \psfrag{Gr}{$G_r$}
 \psfrag{ldots}{$\ldots$}
 \includegraphics{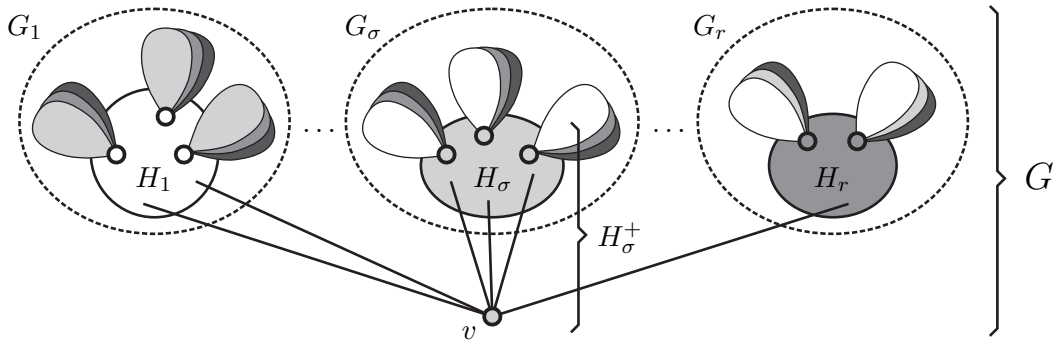}
}
\caption{Builder enforces larger monochromatic subgraphs of $F$ from smaller ones.} \label{fig:recursion}
\end{figure}

Each monochromatic copy of some subgraph $H$ of~$F$ created in this way is contained in a larger `history graph' $G$ that encodes all of Builder's construction steps that lead to the monochromatic copy of $H$. Using the notation from the preceding paragraph, the history graph $G$ of $H_\sigma^+$ arises as the union of the history graphs $G_1,\ldots,G_r$ of the copies of $H_1,\ldots,H_r$ (due to our assumption on how Builder plays, these $r$ history graphs are disjoint from each other), the vertex $v$ and the edges that connect $v$ to the copies of $H_1,\ldots,H_r$ in $G_1,\ldots,G_r$.

\subsubsection{Exploring Builder's options}

The key ingredient in our approach is a systematic exploration from Builder's point of view which monochromatic subgraphs of $F$ he can enforce against a \emph{fixed} Painter strategy. Our final procedure for computing $m_1^*(F,r)$ will have to branch on different coloring decisions of Painter, each branching corresponding to a different Painter strategy, but these branchings do not interfere with the ideas we want to present here. In the following we therefore assume that Painter plays according to a fixed strategy, and explain on an intuitive level how Builder can determine the smallest density restriction $d$ for which he can enforce a monochromatic copy of $F$ against the given Painter strategy.

As a first approach to such a systematic exploration, Builder could maintain for each color $s\in[r]$ a list $\cH_s$ of all subgraphs $H_s$ of $F$ for which he has already enforced a monochromatic copy in color~$s$ against the given Painter strategy, and also record the specific way in which the graph $H_s$ can be enforced by storing the corresponding history graph $G_s$. Builder can then use entries $(H_s,G_s)\in \cH_s$, one for every color $s\in[r]$, to create new entries $(H^+_\sigma,G)\in\cH_\sigma$ in the manner described above (see Figure~\ref{fig:recursion}), and compute for each such step the smallest density restriction $d$ for which this step is legal. (Recall that by appropriate pigeonholing in each step, Builder can create as many copies of each entry as he needs on the board.) There is no obvious termination criterion for this procedure, i.e., without further arguments Builder can never be sure that he found the \emph{smallest} possible density restriction~$d$ for which he can enforce a monochromatic copy of~$F$ against Painter's fixed strategy (it could be that by building larger and larger graphs he discovers new ways to enforce $F$ that are compliant with smaller and smaller density restrictions). In the following we will sketch how this approach can be refined to eventually yield a procedure which is guaranteed to find the smallest such $d$ in a finite number of steps.

\subsubsection{A generalized density restriction} \label{sec:generalized-game}

Note that each new history graph $G$ arising in a given step of Builder has a \emph{recursive} structure. Unfortunately, for computing the smallest admissible density restriction for which this step is legal the recursive structure of~$G$ does not help. However, by suitably generalizing our concept of density restriction the recursive structure of~$G$ can indeed be exploited.

For a fixed real number $\theta>0$ and any graph $H$ we define
\begin{equation} \label{eq:def-mu}
  \mu_\theta(G):=v(H)-e(H)\cdot\theta \enspace,
\end{equation}
and consider the following generalization of the deterministic $F$-avoidance game with $r$ colors and density restriction $d$: For fixed real parameters $\theta>0$ and $\beta$ we require that Builder adheres to the restriction that every subgraph $H$ of the evolving board $B$ with $v(H)\geq 1$ satisfies
\begin{equation} \label{eq:theta-rho-game}
  \mu_\theta(H) \geq \beta \enspace.
\end{equation}
We refer to this game as the \emph{deterministic $F$-avoidance game with $r$ colors and generalized density restriction $(\theta,\beta)$}. For any graph $F$ with at least one edge, any integer $r\geq 2$ and any real number $\theta>0$ we define the parameter
\begin{equation} \label{eq:beta*}
  \beta^*(F,r,\theta):=\sup\left\{\beta\in\RR \bigmid \parbox{0.46\displaywidth}{Builder has a winning strategy in the deterministic $F$-avoidance game with $r$ colors and generalized density restriction $(\theta,\beta)$}\right\} \enspace.
\end{equation}

Before discussing how this generalized game allows us to exploit the recursive structure of Builder's construction steps, let us explain how it relates to the original game with density restriction $d$ that we are actually interested in.

Note that for any $\theta>0$, the game with generalized density restriction $(\theta, 0)$ is equivalent to the game with density restriction $d=1/\theta$. Together with the definition in \eqref{eq:beta*} it follows that if for a given $\theta>0$ we have $\beta^*(F,r,\theta)<0$, then Painter has a winning strategy in the game with density restriction $d=1/\theta$, and if $\beta^*(F,r,\theta)>0$, then Builder has a winning strategy in the game with density restriction $d=1/\theta$. So intuitively speaking, computing the online vertex-Ramsey density $m_1^*(F,r)$ is equivalent to determining the root of $\beta^*(F,r,\theta)$, although it is not clear yet whether such a root exists and whether it is unique.

As it turns out, $\beta^*(F,r,\theta)$ does indeed have a unique root $\theta^*=\theta^*(F,r)$, and the online vertex-Ramsey density $m_1^*(F,r)$ satisfies $m_1^*(F,r)=1/\theta^*$. Furthermore, we can show that the root $\theta^*$ lies in an explicitly given finite set $Q=Q(F,r)$ of rational numbers. Therefore, it is straightforward to compute $m_1^*(F,r)$ provided we can compute $\beta^*(F,r,\theta)$ for given rational values of $\theta$. We describe a procedure that does essentially that, with one major caveat: Observe that if $\beta\geq 0$, then the condition \eqref{eq:theta-rho-game} holds for all subgraphs of the board if and only if it holds for all \emph{connected} subgraphs. Our approach makes crucial use of this observation, and consequently our procedure computes $\beta^*(F,r,\theta)$ exactly for any input parameters $F$, $r$, $\theta$ for which $\beta^*(F,r,\theta)\geq 0$, but returns meaningless negative values on input parameters for which $\beta^*(F,r,\theta)<0$. This makes no difference for our purposes since in order to find the root of $\beta^*(F,r,\theta)$ it suffices to check whether $\beta^*(F,r,\theta)$ equals zero for given values of $\theta\in Q$.

In the following we explain how the generalized density restriction allows us to exploit the recursive structure of the history graphs arising in the game. As before our viewpoint is that we are exploring Builder's options against a fixed strategy of Painter. More precisely, we consider a fixed value of $\theta>0$, and our goal now is to determine the largest value of $\beta$ for which Builder can enforce a monochromatic copy of $F$ in the game with generalized density restriction $(\theta, \beta)$ against the given Painter strategy. Combining this with the already mentioned branching on different strategies of Painter allows us to compute $\beta^*(F,r,\theta)$ as defined in~\eqref{eq:beta*}.

\subsubsection{From history graphs to vertex weights}  \label{sec:intuition-condensing}

We return to considering Builder's construction step in which monochromatic copies of subgraphs $H_1, \dots, H_r$ of $F$ with the corresponding history graphs $G_1, \dots, G_r$ are connected to a new vertex $v$, and Painter's decision to assign color $\sigma$ to $v$ creates a copy of $H^+_\sigma$ in color~$\sigma$ with history graph $G$ (see Figure~\ref{fig:recursion}). 
In order to find the largest $\beta\geq 0$ for which this step is legal in the game with generalized density restriction $(\theta,\beta)$, we need to find the minimal value $\mu_\theta(J)$ among all connected subgraphs~$J$ of $G$ that contain $v$ (recall that we may assume that~$J$ is connected due to the assumption that~$\beta\geq 0$). As $\mu_\theta(J)=v(J)-e(J)\cdot\theta$ as defined in~\eqref{eq:def-mu} is linear in $e(J)$ and $v(J)$, a connected subgraph $J$ of $G$ containing $v$ that minimizes $\mu_\theta(J)$ can be found recursively as follows: determine \emph{independently} for each~$s\in[r]$ the connected subgraph $J_s$ of $G_s+v$ containing $v$ that minimizes $\mu_\theta(J_s)$, where $G_s+v$ denotes the subgraph of $G$ induced by $v$ and all vertices of the copy of $G_s$ in $G$. The graph $J$ we are interested in is then given by the union of the graphs $J_s$ for all $s\in[r]$. (Note that the subgraph $J'$ of $G$ that contains~$v$ and maximizes $e(J')/v(J')$ can \emph{not} be found by independently considering each of the graphs $G_s+v$, $s\in[r]$ --- this is precisely why we introduced the generalized notion of density restriction.) This independence allows us to compute the subgraph $J$ of $G$ that minimizes $\mu_\theta(J)$ \emph{recursively} without remembering the actual structure of the history graphs $G_s$, $s\in[r]$. All the information that is necessary to do the same minimization in future steps (when the copy of $H_\sigma^+$ is extended to form larger subgraphs of $F$ in color $\sigma$) can be stored by assigning the value $\sum_{s\in[r]\setminus\{\sigma\}}(\mu_\theta(J_s)-1)$ to the vertex $v$ in $H_\sigma^+$ (the $-1$ in the sum accounts for the fact that all the graphs $G_s+v$, $s\in[r]$, share the vertex $v$). In other words, we can condense the `history' behind each of the vertices of a monochromatic copy of some subgraph of~$F$ into a single number. (Recall that we consider $\theta>0$ to be fixed --- this is crucial in all of the above.)
 
As a consequence, when maintaining the lists $\cH_s$, $s\in[r]$, Builder no longer needs to store the entire history graph associated with some monochromatic subgraph $H$ of $F$ on one of these lists, but can store all the necessary information as a simple vertex-weighting of $H$. This greatly reduces the amount of information Builder needs to keep track of, but does not yet solve the issue that there is no obvious termination criterion for Builder's exploration (Builder might still keep constructing new non-trivial entries forever).

\subsubsection{Unique vertex weights via vertex orderings}

In general, it may and will happen that the same subgraph $H$ of $F$ appears several times on one of Builder's lists with different vertex-weightings in such a way that none of these entries is redundant --- depending on how $H$ is used in future steps, different vertex-weightings of the same graph might be desirable from Builder's view. In other words, there is no unique best way of enforcing a copy of $H$ in a given color for Builder.

It turns out, however, that different useful vertex-weightings can only arise if Builder presents the vertices of $H$ in different orders (there are $v(H)!$ many different orders). For a fixed such order, there is a well-defined best vertex-weighting that Builder can achieve when enforcing $H$ in that particular order. Thus to explore his options completely Builder only needs to compute \emph{finite} lists $\cH_s$, one for every color $s\in[r]$, which contain one entry for each vertex-ordering of every subgraph $H$ of $F$.

This does not quite solve the issues we mentioned yet --- it could still occur that Builder needs to recompute the vertex-weighting for a given entry many times because he finds better and better ways to enforce a given graph $H$ in a particular order. To prevent this from happening, we need to be quite careful about the order in which we compute the entries of the lists $\cH_s$ --- essentially we start by considering the game with generalized density restriction $(\theta,\beta)$ for the given fixed $\theta>0$ and a very large $\beta$, and then successively lower $\beta$ by the minimal amount that makes new options available to Builder. In each step we compute the weights for all graphs that Builder can create respecting the current generalized density restriction $(\theta,\beta)$. This guarantees that we need to compute the weights for each graph only once, and therefore finally allows Builder to explore his options completely by a \emph{finite} procedure.

\subsubsection{Tying it all together}

Along the lines sketched in the previous sections, we can compute $\beta^*(F,r,\theta)$ by dynamic programming over vertex-ordered subgraphs of $F$ (provided that $\beta^*(F,r,\theta)$ is non-negative for the given $\theta>0$, see the remarks in Section~\ref{sec:generalized-game}), branching on Painter's decisions as appropriate. The online vertex-Ramsey density $m_1^*(F,r)$ can then be derived from $\beta^*(F,r,\theta)$ as explained in Section~\ref{sec:generalized-game}. As this is now a finite procedure, it also follows that the supremum in~\eqref{eq:beta*} is attained as a maximum, which with some further arguments also implies that the infimum in~\eqref{eq:m1*} is attained as a minimum.

%\subsection{About the proof of Theorem~\ref{thm:main-1}} \label{sec:proof-thm1}
\subsection{About the proof of Theorem~\texorpdfstring{\ref{thm:main-1}}{3}} \label{sec:proof-thm1}

For any graph $F$ and any integer $r$, let $\tm(F,r)$ denote the value computed by the procedure outlined in Section~\ref{sec:intuition-computing}. We prove that $\tm(F,r)$ equals $m_1^*(F,r)$ as defined in~\eqref{eq:m1*} by constructing explicit winning strategies for Builder and Painter, for arbitrary density restrictions $d\geq \tm(F,r)$ and $d< \tm(F,r)$, respectively.

For Builder such a strategy follows from the general principles underlying the procedure sketched in Section~\ref{sec:intuition-computing}: all steps of the dynamic program which is at the heart of our approach can be interpreted as actual construction steps on the board of the deterministic game. 
 
For Painter, such a strategy can be recovered from the branching on Painter's decisions performed in our procedure --- we show that the decisions corresponding to a `worst' path in the branching tree (viewed from Builder's perspective) give rise to a Painter strategy that succeeds in avoiding a monochromatic copy of $F$ against \emph{any} Builder strategy. This Painter strategy can be encoded by a priority list as described in Section~\ref{sec:algorithms}.

To prove the success of this strategy, we use a \emph{witness graph argument}: Essentially, we show inductively that whenever a monochromatic copy of some ordered subgraph $H$ of $F$ in some color $s\in[r]$ appears on the board, then this copy is contained in a graph that is at least as dense as indicated by the weights computed for $H$ and the color $s$ by the dynamic program in our procedure. (Recall from Section~\ref{sec:intuition-computing} that these weights basically encode the density of the history graph corresponding to the best way for Builder to enforce a monochromatic copy of $H$ in color $s$.) This invariant holds in particular for all vertex-orderings of the graph~$F$ and all colors $s\in[r]$, and implies that whenever a monochromatic copy of $F$ is completed, the board contains a graph that violates the density restriction imposed on Builder.

The proof of Theorem~\ref{thm:main-1} we just sketched also shows that there exists an integer $a_{\max}=a_{\max}(F,r)$ such that for any given density restriction $d$ Builder never needs more than $a_{\max}$ steps to enforce a monochromatic copy of $F$, if he is able to do so at all. Note that this statement alone directly implies all three assertions of Theorem~\ref{thm:main-1}, as it shows that $m_1^*(F,r)$ can also be computed trivially by exhaustive search over the finitely many possible ways Builder and Painter can play in $a_{\max}$ steps of the game.

%\subsection{About the proof of Theorem~\ref{thm:main-2}} \label{sec:proof-thm2}
\subsection{About the proof of Theorem~\texorpdfstring{\ref{thm:main-2}}{4}} \label{sec:proof-thm2}

We have already discussed the proof of the upper bound part of Theorem~\ref{thm:main-2} in Section~\ref{sec:remarks-main-2}; as mentioned this proof is self-contained and does not depend on the rest of this work. The proof of the lower bound part is much more involved and relies on the same witness graph approach as the argument for Painter's success in the deterministic game described in the previous section. However, there is the additional issue that, as explained in Section~\ref{sec:remarks-main-2}, the random graph $\Gnp$ with $p(n)=o(n^{-1/m_1^*(F,r)})$ satisfies a density restriction of $d=m_1^*(F,r)$ only locally and \emph{not} globally. Consequently, in order to apply the witness graph argument outlined above to the probabilistic setting of Theorem~\ref{thm:main-2}, we also need to show that the size of the witness graphs resulting from our arguments is bounded by some constant $\vmax=\vmax(F,r)$ (and not, say, linear in $n$). Unfortunately, we cannot show this for \emph{all} priority lists that represent optimal strategies for Painter in the deterministic game.
However, by applying a number of further technical refinements to the procedure described in Section~\ref{sec:intuition-computing}, we can guarantee that it only computes priority lists for which a constant $\vmax$ as desired indeed exists. It follows with the same witness graph argument as before that these priority lists represent polynomial-time coloring algorithms that \aas succeed in finding a valid coloring of $\Gnp$ online for any $p(n)=o(n^{-1/m_1^*(F,r)})$.

\section{Computing the online vertex-Ramsey density} \label{sec:computing-ovrd}

%\subsection{Proof of Theorem~\ref{thm:main-1}}
\subsection{Proof of Theorem~\texorpdfstring{\ref{thm:main-1}}{3}}

Recall the definition of the deterministic $F$-avoidance game with $r$ colors and generalized density restriction $(\theta,\beta)$ from Section~\ref{sec:generalized-game}, and recall further that, at least intuitively, computing the online vertex-Ramsey density $m_1^*(F,r)$ is equivalent to determining the root of $\beta^*(F,r, \theta)$ as defined in ~\eqref{eq:beta*} (where existence and uniqueness of this root are not clear yet).

As already mentioned, we are going to derive a procedure that returns $\beta^*(F,r,\theta)$ for any $\theta>0$ for which $\beta^*(F,r,\theta)\geq 0$, and a meaningless negative value for any $\theta>0$ for which $\beta^*(F,r,\theta)< 0$. This procedure will be described in Section~\ref{sec:algorithm}, and its output will be denoted by $\Lambda_\theta(F,r)$. We will see that the function $\Lambda_\theta(F,r)$ is well-defined for any real number $\theta>0$, and for rational values of $\theta$ it can be computed using only integer arithmetic. Most of the remainder of this paper will be devoted to the proofs of the following two key statements.

\begin{proposition}[Builder strategy from $\Lambda_\theta(F,r)$] \label{prop:Lambda-Builder}
Let $F$ be a graph with at least one edge and $r\geq 2$ an integer. There is a constant $\amax=\amax(F,r)$ such that the following holds: For any real numbers $\theta>0$ and $\beta\geq 0$ with
\begin{equation} \label{eq:Lambda-geq-beta}
  \Lambda_\theta(F,r)\geq \beta \enspace,
\end{equation}
where $\Lambda_\theta()$ is defined in \eqref{eq:def-Lambda} below, Builder can enforce a monochromatic copy of\/ $F$ in the deterministic $F$-avoidance game with $r$ colors and generalized density restriction $(\theta,\beta)$ in at most $\amax$ steps, regardless of how Painter plays.
\end{proposition}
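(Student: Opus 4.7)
The plan is to extract an explicit Builder strategy directly from the trace of the procedure that computes $\Lambda_\theta(F,r)$, as outlined in Section~\ref{sec:intuition-computing}. The procedure is a dynamic program whose entries are ordered subgraphs $H\seq F$ equipped with a vertex-weighting that records, for prescribed vertex subsets $U\seq V(H)$, the minimum of $\mu_\theta(J)$ over connected subgraphs $J$ of an associated history graph $G$ containing $U$. Each entry is produced by a combination step: given entries $(H_1,w_1),\ldots,(H_r,w_r)\in \cH_1\times\cdots\times \cH_r$, one attaches a fresh vertex $v$ to chosen vertex sets in each $H_s$, and since Painter must pick some color $\sigma\in[r]$ one records the resulting entry $(H^+_\sigma,w)\in \cH_\sigma$. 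Under the hypothesis $\Lambda_\theta(F,r)\geq \beta\geq 0$, the procedure yields an entry whose underlying graph is (an ordering of) $F$, together with a derivation tree $T$ of bounded size encoding all intermediate combination steps that produced it.

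First I would turn $T$ into an actual Builder strategy using the pigeonholing trick from Section~\ref{sec:basic-observations}. I process the nodes of $T$ in post-order: at an internal node combining entries for $H_1,\ldots,H_r$ into an entry for $H^+_\sigma$, Builder needs several disjoint copies of each $H_s$ already present on the board, so that after he presents fresh vertices adjacent to $r$-tuples of such copies, a constant fraction of the new vertices end up in the intended color $\sigma$ by the pigeonhole principle. Propagating the required copy multiplicities from the root down to the leaves of $T$ inflates the total number of moves by a factor at most $r^{\mathrm{depth}(T)}$, giving a quantity $\amax=\amax(F,r)$ that depends only on $F$ and $r$: the size and depth of $T$ are bounded because the procedure itself terminates after finitely many iterations (the stored weights decrease in discrete jumps from an initial upper bound down to $\Lambda_\theta(F,r)$, and only finitely many ordered subgraphs of $F$ arise).

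The core of the proof is verifying that this strategy never violates the generalized density restriction $\mu_\theta(H)\geq \beta$ for every $H\seq B$ with $v(H)\geq 1$, where $B$ is the evolving board. Here I would exploit two observations. First, since $\beta\geq 0$ and $\mu_\theta$ is superadditive on disconnected graphs, it suffices to check the restriction on \emph{connected} subgraphs. Second, every connected subgraph of $B$ lies inside one of the history graphs produced by the construction, so I would prove by induction on $T$ the invariant that for every DP entry $(H,w)$ with history graph $G$, the minimum of $\mu_\theta(J)$ over connected $J\seq G$ containing a prescribed vertex subset $U$ equals the value $w(U)$ computed by the DP. The recursive step follows the decomposition in Figure~\ref{fig:recursion}: a minimizing $J$ splits as the union of connected pieces $J_s\seq G_s+v$ sharing the newly presented vertex $v$, and by linearity of $\mu_\theta$ the minimizations over the $J_s$ decouple. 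Combined with the invariant that the stored weights are bounded below by the current value of the parameter in the DP (and hence by $\Lambda_\theta(F,r)\geq\beta$), this yields the density verification.

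The main obstacle I expect is making the correspondence between the DP's abstract vertex weights and the concrete geometric quantity ``minimum of $\mu_\theta$ over connected extensions of $U$ inside the history graph'' fully rigorous: one has to track exactly which vertex subsets $U\seq V(H)$ are relevant as future attachment points, handle the case where a minimizing $J$ touches only a subset of the children $G_s$ (so that some $J_s$ is empty), and account correctly for the double counting of the shared vertex $v$. Once this bookkeeping is set up, the pigeonholing argument for turning $T$ into a legal play on the board and the explicit bound on $\amax$ are routine.
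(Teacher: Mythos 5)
Your proposal follows essentially the same route as the paper's: extract an AbstractBuilder strategy from the trace of the dynamic program, inflate it to an actual Builder strategy by pigeonholing over disjoint copies (the paper's Lemma~\ref{lemma:translation}), and verify the generalized density restriction by an inductive invariant that relates each stored vertex weight to the $\mu_\theta$-density of subgraphs of the history graph (the paper's Lemma~\ref{lemma:abuild-restriction}). The observation that $\beta\geq 0$ lets one restrict to connected subgraphs, and the recursive decomposition along Figure~\ref{fig:recursion}, are both used in exactly this way.

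Where the proposal underestimates the difficulty is in treating the DP trace as a single fixed derivation tree~$T$ whose nodes Builder can process bottom-up against a cooperative Painter. The procedure $\CW()$ explores one branch $\alpha$ of Painter's decisions at a time and then minimizes over $\alpha$; in the actual game Builder does not know $\alpha$ in advance and must adapt. Two consequences are not captured by your sketch. First, in each round of the actual strategy Builder presents new vertices attached to \emph{$r$-tuples} of candidate copies (one for each color), and the color he continues with is determined only after observing Painter's responses across the whole round; the naive ``constant fraction lands in the intended color $\sigma$'' pigeonhole is not enough, because there is no single intended $\sigma$. The paper needs the nontrivial Lemma~\ref{lemma:cross-product-coloring} (a Hales--Jewett-type dominating-color lemma over $\cC_1^{i,j_1}\times\cdots\times\cC_r^{i,j_r}$) precisely to select a posteriori a color $\sigmahat$ for which \emph{all} required copies in $\cC_\sigmahat^{i,j_\sigmahat}$ have been realized. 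Second, the DP records weights for ``secondary'' entries in the sets $\cC^{i,j,k}$ that do not directly correspond to a construction step with the claimed weight; the paper's Lemma~\ref{lemma:partner} shows each such entry has a reordered ``partner'' in $\cC^{i,j}$ with at least as favorable weights, which Builder builds instead. Without something like these two lemmas, the correspondence between the DP's weights and an actual Builder strategy does not close, so the last paragraph's claim that the conversion from $T$ to a legal play is ``routine'' is too strong.
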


\begin{proposition}[Painter strategy from $\Lambda_\theta(F,r)$] \label{prop:Lambda-Painter}
Let $F$ be a graph with at least one edge, $r\geq 2$ an integer, and\/ $\theta>0$ and $\beta\geq 0$ real numbers such that
\begin{equation} \label{eq:Lambda-less-beta}
  \Lambda_\theta(F,r)<\beta \enspace,
\end{equation}
where $\Lambda_\theta()$ is defined in \eqref{eq:def-Lambda} below. \\
Then Painter can avoid creating a monochromatic copy of\/ $F$ in the deterministic $F$-avoidance game with $r$ colors and generalized density restriction $(\theta,\beta)$, regardless of how Builder plays.
\end{proposition}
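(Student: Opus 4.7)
The plan is to extract Painter's strategy directly from the same procedure that defines $\Lambda_\theta(F,r)$. Since the procedure sketched in Section~\ref{sec:intuition-computing} branches on every possible coloring decision of Painter and maximizes over Builder's construction options, the value $\Lambda_\theta(F,r)$ can be read as the outcome of an adversarial game tree; by selecting, at each Painter-branching, the color that is \emph{worst} from Builder's viewpoint (i.e., that would maximize the resulting weight), one obtains a deterministic Painter rule. As anticipated in Section~\ref{sec:algorithms}, this rule can be encoded as a priority list $L$ over vertex-ordered monochromatic subgraphs of $F$, with the order induced by the weights $\omega_\theta(H,s)$ that the dynamic program attaches to each ordered subgraph $H$ of $F$ in color $s$. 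Painter's move is then: when a new vertex $v$ is presented, for each color $s\in\{1,\dots,r\}$ determine the ordered subgraph of $F$ in color $s$ completed at $v$ that is highest on $L$, and pick the color for which this "most dangerous completion" is least dangerous.

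The heart of the proof would be the following \emph{witness graph invariant}, established by induction on the number of rounds played: whenever a monochromatic copy of an ordered subgraph $H$ of $F$ in some color $s$ has appeared on the board under this Painter strategy, there exists a connected subgraph $J$ of the current board containing that copy with
\[
  \mu_\theta(J) \;\le\; \omega_\theta(H,s).
\]
For the induction step, let $v$ be the vertex Builder just presented and let $H$ be an ordered subgraph of $F$ in color $s$ completed at $v$. Painter coloured $v$ with $s$ because, according to $L$, the ordered subgraphs $H_{s'}$ of $F$ that would have been completed at $v$ in each other colour $s'\ne s$ are all more dangerous, i.e., have larger weight than $H$. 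Applying the induction hypothesis to each $H_{s'}$ yields witness subgraphs $J_{s'}$ in the board before the current step. Glueing these $r-1$ witnesses together at $v$ (together with the edges Builder drew from $v$ into them) produces a candidate $J$; the accounting of $\mu_\theta$ on $J$ mirrors the recursion of Section~\ref{sec:intuition-condensing} exactly, and the bound $\mu_\theta(J)\le\omega_\theta(H,s)$ follows from the recursion that the procedure uses to compute $\omega_\theta(H,s)$.

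Granted the invariant, the proposition is immediate: if Builder ever forced Painter to create a monochromatic copy of $F$ in some colour $s$ and some ordering, applying the invariant with $H=F$ would yield a subgraph $J$ of the board with $\mu_\theta(J)\le\omega_\theta(F,s)\le\Lambda_\theta(F,r)<\beta$, contradicting the density restriction $\mu_\theta(J)\ge\beta$ that Builder must obey. Hence Painter is never forced to complete a monochromatic $F$.

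The main obstacle will be the induction step of the witness graph invariant, and it has two delicate aspects. First, the witnesses $J_{s'}$ obtained from the induction hypothesis must be genuinely disjoint (apart from $v$) so that $\mu_\theta$ decomposes additively with the correct correction for the shared vertex; the procedure is designed (cf.\ Section~\ref{sec:basic-observations}) so that Builder's canonical construction steps attach $v$ to copies living in distinct components of the board, but here Painter must be shown to win against \emph{arbitrary} Builder play, including moves that add extra edges from $v$ into already-connected regions. Handling such moves is where the assumption $\beta\ge 0$ is used: since every subgraph of the board satisfies $\mu_\theta\ge\beta\ge 0$, one can pass to a suitable connected sub-witness without increasing $\mu_\theta$, and extra edges from $v$ can only make $\mu_\theta$ smaller, which is in Painter's favour. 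Second, one must carefully check that the priority list $L$ extracted from the DP faithfully realises the "min over Painter's choices" step — in particular, that Painter's decisions remain consistent across all possible ordered subgraphs completed simultaneously at $v$; this is essentially bookkeeping but is the point at which the ordering of vertices, and hence Painter's memory of arrival times, becomes indispensable.
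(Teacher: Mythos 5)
Your proposal follows essentially the same route as the paper: a priority-list Painter strategy is read off from the dynamic program, a witness-graph invariant of the form $\mu_\theta(H')\leq\lambda_\theta(H,w_{(H,\pi,s)})$ is established by induction over Painter's moves, and the proposition follows by comparing that bound at a monochromatic $F$ with the density restriction $\mu_\theta\geq\beta$. The one substantive difference is how the overlap of the glued witnesses is handled. You invoke $\beta\geq 0$ \emph{inside} the induction, passing to connected sub-witnesses and using the density restriction to argue that intersections contribute nonnegatively; the paper instead proves its Lemma~\ref{lemma:witness-graphs} \emph{without} assuming any density restriction, stating a disjunction (either a bounded-size subgraph of negative $\mu_\theta$ exists, or the witness bound holds) and bounding $v(H')\leq\vmax$. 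Your simplification is correct and sufficient for Proposition~\ref{prop:Lambda-Painter}, but the paper's stronger, density-free formulation is needed later for the probabilistic lower bound of Theorem~\ref{thm:main-2}, where $\Gnp$ violates the density restriction globally; the paper flags exactly this in Remark~\ref{remark:vmax}. One small slip worth noting: your phrase ``more dangerous, i.e., have larger weight'' is backwards if $\omega_\theta(H,s)$ denotes $\lambda_\theta(H,w_{(H,\pi,s)})$ as in your invariant --- in the paper's convention a \emph{smaller} $\lambda_\theta$-value means more dangerous, and Painter picks the color $\sigma$ maximizing $\min_{(H,\pi)}\lambda_\theta$. The inductive step also requires the quantitative inequality of the paper's Lemma~\ref{lemma:sufficient-weight}, which is where the algorithmic structure of $\CW()$ enters nontrivially; you correctly identify this as the delicate part to be filled in.
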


Before going into any details about the procedure that defines $\Lambda_\theta(F,r)$, we show how Proposition~\ref{prop:Lambda-Builder} and Proposition~\ref{prop:Lambda-Painter} imply Theorem~\ref{thm:main-1}. 

For technical reasons, our formal arguments do not rely on the parameter $\beta^*()$ defined in~\eqref{eq:beta*}, but on a related parameter that we introduce now.
For any graph $F$ with at least one edge, any integer $r\geq 2$, any real number $\theta>0$ and any integer $a\geq \amin:=r(v(F)-1)+1$, we define
\begin{equation} \label{eq:beta'}
  \beta'(F,r,\theta,a):=\sup\left\{\beta\in\RR \bigmid \parbox{0.50\displaywidth}{Builder has a winning strategy in the deterministic $F$-avoidance game with $r$ colors and generalized density restriction $(\theta,\beta)$ in at most $a$ steps}\right\} \enspace.
\end{equation}
Here the supremum is over a nonempty set of values because presenting the complete graph on $\amin$ vertices sequentially is a winning strategy for Builder that satisfies the generalized density restriction $(\theta,\beta)$ for any $\beta\leq \min\{ k - \binom{k}{2}\cdot\theta \mid 1\leq k\leq \amin \}$.
Note that for all $F$, $r$, and $\theta$ as before we have
\begin{equation} \label{eq:convergence}
  \beta^*(F,r,\theta)=\sup_{a\geq \amin}\,\beta'(F,r,\theta,a)= \lim_{a\to\infty}\beta'(F,r,\theta,a) \enspace.
\end{equation}
As in the definition of $\beta'()$ in~\eqref{eq:beta'} there is only a finite number of possible Builder strategies to consider, it is not hard to derive the following properties of $\beta'()$.

\begin{lemma}[Properties of $\beta'(F,r,\theta,a)$] \label{lemma:beta*}
For any graph $F$ with at least one edge, any integer $r\geq 2$, any real number $\theta>0$ and any integer $a\geq \amin$, the supremum in~\eqref{eq:beta'} is attained as a maximum. For fixed $F$, $r$, and $a$ as before, $\beta'(F,r,\theta,a)$ viewed as a function of $\theta>0$ is continuous, non-increasing, piecewise linear, and has a unique root, which is contained in the set 
\begin{equation} \label{eq:Qa}
  \textstyle Q(a):=\{\;0<\frac{v}{e}<2 \;\mid \; v,e\in\NN \;\wedge\; 1\leq v\leq a\; \wedge \; 1\leq e\leq \binom{v}{2}\; \} \enspace.
\end{equation}
\end{lemma}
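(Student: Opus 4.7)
The plan is to reduce the claim to a finite minimax and then read off all the stated properties from the fact that each $\mu_\theta(H)=v(H)-e(H)\theta$ is linear in $\theta$. Any Builder strategy that forces a monochromatic $F$ in at most $a$ steps is a finite game tree: at each of the at most $a$ levels Builder picks a new vertex and one of at most $2^{a-1}$ edge sets to earlier vertices, and Painter responds with one of $r$ colors. There are thus only finitely many winning strategies (every leaf must exhibit a monochromatic copy of $F$); denote this finite family by $\cS=\cS(F,r,a)$. For each $S\in\cS$, let $\cH_S$ be the (finite) collection of subgraphs $H$ with $v(H)\geq 1$ of any board arising on any play of $S$. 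Then $S$ is legal at $(\theta,\beta)$ precisely when $\beta\leq\beta_S(\theta):=\min_{H\in\cH_S}\mu_\theta(H)$, so
\begin{equation*}
  \beta'(F,r,\theta,a) \;=\; \max_{S\in\cS}\beta_S(\theta)
\end{equation*}
is a maximum over a finite set, proving the first assertion.

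Each $\beta_S$ is a minimum of finitely many linear functions of non-positive slope $-e(H)\leq 0$, hence continuous, non-increasing and piecewise linear; taking the maximum over $\cS$ preserves all three properties, giving the structural part of the second assertion. For a sign change I would exhibit a positive and a negative value. On the positive side, Builder's sequential construction of $K_{\amin}$ is winning by pigeonhole, and every subgraph $H\seq K_{\amin}$ satisfies $\mu_\theta(H)\geq v(H)(1-(\amin-1)\theta/2)>0$ for $\theta<2/(\amin-1)$, so $\beta'(\theta)>0$ for sufficiently small $\theta$. On the negative side, at $(\theta,\beta)=(2,0)$ every legal board must be a (partial) matching, since any connected subgraph on $\geq 3$ vertices has $\mu_2<0$; against such boards Painter can color greedily, picking for each new vertex a color different from that of its at most one earlier neighbour (possible because $r\geq 2$), so no monochromatic edge --- let alone a monochromatic copy of $F$ --- ever arises, giving $\beta'(2)<0$. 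By continuity a root exists in $(0,2)$.

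For uniqueness and for membership in $Q(a)$, I would observe that $\beta'$ is piecewise linear with only finitely many breakpoints and on each linear piece coincides with a single $\mu_\theta(H_0)=v(H_0)-e(H_0)\theta$ for some $H_0$ drawn from some $\cH_S$. Were $\beta'$ to vanish on a non-degenerate interval, on some sub-piece we would have $v(H_0)-e(H_0)\theta\equiv 0$ for a fixed $H_0$, forcing $v(H_0)=e(H_0)=0$ and contradicting $v(H_0)\geq 1$. Hence the root $\theta^*$ is unique and $\beta'$ strictly decreases through it. Reading off the linear piece of $\beta'$ immediately to the left of $\theta^*$, we get $\theta^*=v(H_0)/e(H_0)$; since boards have at most $a$ vertices, $1\leq v(H_0)\leq a$ and $1\leq e(H_0)\leq\binom{v(H_0)}{2}$, and $\theta^*<2$ forces $v(H_0)/e(H_0)<2$, placing $\theta^*$ in $Q(a)$. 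I expect the main obstacle to be precisely this uniqueness step, as it is the only place where one must exploit the rigidity of linear functions beyond mere finiteness; the rest amounts to bookkeeping around a finite minimax.
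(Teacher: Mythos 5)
Your proposal is correct and follows the same overall structure as the paper's proof: identify Builder strategies in at most $a$ steps with finite game trees, write $\beta'(F,r,\theta,a)$ as a finite max of finite mins of the linear functions $\mu_\theta(H)$, and read off attainment of the supremum, continuity, monotonicity, and piecewise linearity. Two sub-arguments differ in execution, though not in substance. For uniqueness of the root, the paper shows each $f_\cT$ is constant equal to $1$ for small $\theta$ and strictly decreasing thereafter, whereas you argue directly that $\beta'$ cannot vanish on a non-degenerate interval because a linear function $v(H_0)-e(H_0)\theta$ with $v(H_0)\geq1$ cannot be identically zero; both work, and yours is slightly more self-contained. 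For the bound $\theta^*<2$, the paper argues from Builder's side — every winning strategy forces a (not necessarily monochromatic) $P_3$ on the board, so $\mu_\theta(P_3)=3-2\theta$ enters the min and pulls the root of each $f_\cT$ down to at most $3/2$ — whereas you argue from Painter's side, observing that at density restriction $1/2$ the board is always a matching and Painter can avoid monochromatic edges by a greedy $2$-coloring, giving $\beta'(2)<0$; these are dual formulations of the same fact. Your identification of $\theta^*=v(H_0)/e(H_0)$ from the active linear piece and the check $1\leq e(H_0)\leq\binom{v(H_0)}{2}$, $v(H_0)\leq a$, matches the paper's conclusion that $\theta^*\in Q(a)$.
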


\begin{proof} We identify Builder's strategies in the deterministic two-player game with $r$ colors with finite $r$-ary rooted trees, where each node at depth $k$ of such a tree is an $r$-colored graph on $k$ vertices, representing the board after the $k$-th step of the game. Specifically, the tree $\cT$ representing a given Builder strategy is constructed as follows: The root of $\cT$ is the null graph (the graph whose vertex set is empty). The $r$ children of any node $B$ at depth $k$ of $\cT$ are obtained by adding the $(k+1)$-th vertex of Builder's strategy to $B$ (together with the edges that connect this vertex to previously added vertices according to Builder's strategy) and coloring it with one of the $r$ available colors. Continuing like this, we construct $\cT$, representing any situation in which Builder stops playing by a leaf of $\cT$.

Note that in this formalization, a given tree $\cT$ represents a generic strategy for Builder (in the deterministic game with $r$ colors) that may or may not satisfy a given generalized density restriction $(\theta, \beta)$, and that can be thought of as a strategy for the~`$F$-avoidance' game for any given graph $F$. We say that $\cT$ is a \emph{winning strategy} for Builder in a specific $F$-avoidance game if and only if every leaf of~$\cT$ contains a monochromatic copy of $F$. We say that a Builder strategy $\cT$ is a \emph{legal strategy} in the game with generalized density restriction $(\theta,\beta)$ if and only if~\eqref{eq:theta-rho-game} is satisfied for every subgraph $H$ with $v(H)\geq 1$ of every node $B$ in $\cT$.

Let $F$, $r$ and $a\geq \amin$ be given. As the number of steps of the game is bounded by $a$, there is only a finite family $\fT=\fT(r,a)$ of different Builder strategies, obtained by exhaustive enumeration of all possible ways to add a new vertex to the board. Let $\fW= \fW(F,r,a)\seq \fT$ denote the set of winning strategies for Builder for the given $F$, and recall that for $a\geq \amin$ the family $\fW$ is nonempty.

Note that for any winning strategy $\cT\in\fW$ and for any fixed $\theta>0$,
\begin{equation} \label{eq:fT}
  f_\cT(\theta):=\min_{\substack{B\in\cT \\ H\seq B: v(H)\geq 1}} \mu_\theta(H)
\end{equation}
is the maximal value of $\beta$ such that $\cT$ is a legal strategy in the game with generalized density restriction $(\theta,\beta)$. Optimizing over the (finite and nonempty) set of winning strategies, we obtain $\beta'(F,r,\theta, a)$ as defined in~\eqref{eq:beta'} as
\begin{equation} \label{eq:beta'-max-fT}
  \beta'(F,r,\theta,a)=\max_{\cT\in\fW}f_\cT(\theta) \enspace.
\end{equation}
We conclude that the supremum in \eqref{eq:beta'} is attained as a maximum.
In the following we derive the claimed properties of $\beta'(F,r,\theta,a)$ as a function of $\theta>0$ by considering the functions $f_\cT(\theta)$, $\cT\in\fW$.

Using \eqref{eq:fT} and combining the properties of the linear functions $\mu_\theta(H)$ for all $H\seq B$ with $v(H)\geq 1$ and all $B\in\cT$ it is not hard to see that for any $\cT\in\fW$ the function $f_\cT(\theta)$ satisfies the following properties:
\begin{itemize}
\item $f_\cT(\theta)$ is continuous and piecewise linear.
\item There is an $\eps=\eps(\cT)>0$ such that $f_\cT(\theta)=1$ for all $0<\theta\leq\eps$ and $f_\cT(\theta)$ is strictly decreasing for all $\theta\geq \eps$.
\item $f_\cT(\theta)$ has a unique root in the set $\{\frac{v}{e} \;\mid\; v,e\in\NN \;\wedge\; 1\leq v\leq a\; \wedge \; 1\leq e\leq \binom{v}{2}\}$.
\end{itemize}

Note that the root of $f_\cT(\theta)$ is strictly smaller than 2: For any winning strategy $\cT\in\fW$, there is a leaf $B$ in $\cT$ that contains a (not necessarily monochromatic) copy of $P_3$ as a subgraph. This is trivially true if $P_3\seq F$ (as every leaf of $\cT$ contains a monochromatic copy of $F$). If $P_3\sneq F$, then $F$ is a matching, and any strategy where Painter colors endpoints of isolated edges on the board with different colors corresponds to a root-leaf path in $\cT$ that does not end with a matching (as otherwise $\cT$ would not be a winning strategy for Builder). Thus in either case the graph $H=P_3$ is a subgraph of some node $B$ of $\cT$, and consequently the minimization in \eqref{eq:fT} includes the function $\mu_\theta(H)=3-2\cdot\theta$, whose root is strictly smaller than 2.

It follows with~\eqref{eq:beta'-max-fT} that also $\beta'(F,r,\theta,a)$ satisfies the three properties listed above, and that its root is strictly smaller than 2. Combining those properties shows that $\beta'(F,r,\theta,a)$ satisfies the conditions claimed in the lemma.
\end{proof}

Using Proposition~\ref{prop:Lambda-Builder}, Proposition~\ref{prop:Lambda-Painter} and Lemma~\ref{lemma:beta*}, we will prove the following explicit version of Theorem~\ref{thm:main-1}. 

\begin{theorem}[Explicit Version of Theorem~\ref{thm:main-1}] \label{thm:main-1'}
For any graph $F$ with at least one edge and any integer $r\geq 2$, the online vertex-Ramsey density $m_1^*(F,r)$ defined in \eqref{eq:m1*} satisfies
\begin{equation} \label{eq:m1*-via-theta*}
  m_1^*(F,r)=1/\theta^* \enspace,
\end{equation}
where $\theta^*=\theta^*(F,r)$ is the unique solution of
\begin{equation} \label{eq:root-of-Lambda}
  \Lambda_\theta(F,r) \stackrel{!}{=} 0
\end{equation}
and $\Lambda_\theta()$ is defined in \eqref{eq:def-Lambda} below. \\
Moreover, $\theta^*$ is a rational number from the set $Q(\amax)$, where $Q()$ is defined in \eqref{eq:Qa} and $\amax$ is the constant guaranteed by Proposition~\ref{prop:Lambda-Builder}. Furthermore, the infimum in \eqref{eq:m1*} is attained as a minimum.
\end{theorem}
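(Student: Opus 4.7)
The plan is to tie the root-finding problem~\eqref{eq:root-of-Lambda} for $\Lambda_\theta(F,r)$ to Lemma~\ref{lemma:beta*} applied to $\beta'(F,r,\theta,\amax)$, and then translate back to $m_1^*(F,r)$ by invoking the two propositions at $\beta = 0$. The first step is to show that on the region where $\Lambda_\theta(F,r) \geq 0$,
\begin{equation*}
  \Lambda_\theta(F,r) \;=\; \beta'(F,r,\theta,\amax) \;=\; \beta^*(F,r,\theta),
\end{equation*}
and moreover that $\Lambda_\theta(F,r)$ and $\beta'(F,r,\theta,\amax)$ share the same sign for every $\theta > 0$. Proposition~\ref{prop:Lambda-Builder} applied with $\beta := \Lambda_\theta(F,r) \geq 0$ yields $\beta'(F,r,\theta,\amax) \geq \Lambda_\theta(F,r)$, while Proposition~\ref{prop:Lambda-Painter} applied to every $\beta > \max\{\Lambda_\theta(F,r),0\}$ gives $\beta^*(F,r,\theta) \leq \max\{\Lambda_\theta(F,r),0\}$. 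Since the chain of bounds $\beta' \leq \beta^*$ is immediate from~\eqref{eq:convergence}, the three quantities collapse to a common value whenever $\Lambda_\theta \geq 0$. Taking the contrapositive of the Builder bound gives that $\beta'(F,r,\theta,\amax) < 0$ forces $\Lambda_\theta(F,r) < 0$, so the two functions cross zero at exactly the same value of $\theta$.

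Next, I would invoke Lemma~\ref{lemma:beta*} with $a = \amax$ to conclude that $\theta \mapsto \beta'(F,r,\theta,\amax)$ is continuous, non-increasing, piecewise linear, and has a unique root $\theta^* \in Q(\amax)$. Combined with the sign equivalence just established, $\Lambda_\theta(F,r)$ is strictly positive on $(0,\theta^*)$, vanishes at $\theta^*$, and is strictly negative on $(\theta^*,\infty)$. This settles existence and uniqueness of the solution to~\eqref{eq:root-of-Lambda} and shows that $\theta^*$ is rational and lies in $Q(\amax)$, as asserted.

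Finally, since $\Lambda_{\theta^*}(F,r) = 0$, Proposition~\ref{prop:Lambda-Builder} with $\beta = 0$ produces a Builder winning strategy in the game with density restriction $d = 1/\theta^*$, which shows simultaneously that $m_1^*(F,r) \leq 1/\theta^*$ and that the infimum in~\eqref{eq:m1*} is attained as a minimum. Conversely, for every $\theta > \theta^*$ we have $\Lambda_\theta(F,r) < 0$, so Proposition~\ref{prop:Lambda-Painter} with $\beta = 0$ supplies Painter with a winning strategy for density restriction $d = 1/\theta < 1/\theta^*$; hence no such $d$ belongs to the set on the right-hand side of~\eqref{eq:m1*}, yielding $m_1^*(F,r) \geq 1/\theta^*$. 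The main subtlety throughout is the asymmetry between Proposition~\ref{prop:Lambda-Builder} (which allows $\beta \leq \Lambda_\theta$) and Proposition~\ref{prop:Lambda-Painter} (which requires $\beta > \Lambda_\theta$) combined with the caveat that $\Lambda_\theta$ is meaningless when negative. This is handled by using the sign-equivalence argument to transfer sign information from the well-behaved quantity $\beta'(F,r,\theta,\amax)$ to $\Lambda_\theta(F,r)$ before invoking the propositions at the critical value $\beta = 0$.
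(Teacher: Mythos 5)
Your route is the same as the paper's: bridge $\Lambda_\theta(F,r)$ to the well-behaved quantity $\beta'(F,r,\theta,\amax)$ via the two propositions, then pull $\theta^*$, rationality, and the minimum-attainment back from Lemma~\ref{lemma:beta*}. The chain $\Lambda_\theta \leq \beta' \leq \beta^* \leq \max\{\Lambda_\theta,0\}$ and its consequences are all sound, and from it you correctly deduce that $\Lambda_\theta > 0$ on $(0,\theta^*)$ (since $0 < \beta' \leq \max\{\Lambda_\theta,0\}$ forces $\Lambda_\theta > 0$) and $\Lambda_\theta < 0$ on $(\theta^*,\infty)$ (contrapositive of the Builder bound).

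However, there is a genuine gap at the single point $\theta = \theta^*$, and it matters. Your inequalities leave open the possibility that $\beta'(F,r,\theta^*,\amax) = 0$ while $\Lambda_{\theta^*}(F,r) < 0$: in that case $\max\{\Lambda_{\theta^*},0\} = 0$, so $\beta' \leq \beta^* \leq 0$ is satisfied with $\beta' = \beta^* = 0$, and nothing in what you have derived contradicts $\Lambda_{\theta^*} < 0$. Thus ``the two functions cross zero at exactly the same value'' does not follow from the implications you establish, and your concluding sentence ``since $\Lambda_{\theta^*}(F,r) = 0$, Proposition~\ref{prop:Lambda-Builder} with $\beta = 0$ \dots'' assumes precisely what is at issue; if $\Lambda_{\theta^*} < 0$, then the hypothesis $\Lambda_{\theta^*} \geq \beta = 0$ of Proposition~\ref{prop:Lambda-Builder} fails and the whole final step collapses, and moreover equation~\eqref{eq:root-of-Lambda} would have \emph{no} solution. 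Left-continuity of $\Lambda_\theta$ at $\theta^*$ would save you, but this is not established anywhere and is not obvious from the definition via Algorithm~\ref{algo:cw}, whose combinatorial state depends discontinuously on $\theta$.

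The missing ingredient is the first assertion of Lemma~\ref{lemma:beta*} --- that the supremum in~\eqref{eq:beta'} is \emph{attained as a maximum} --- which you cite but never use for this purpose. The fix: since $\beta'(F,r,\theta^*,\amax) = 0 \geq 0$, Lemma~\ref{lemma:beta*} gives Builder a winning strategy with generalized density restriction $(\theta^*,0)$; if $\Lambda_{\theta^*} < 0$, Proposition~\ref{prop:Lambda-Painter} applied with $\beta = 0$ (note $\beta \geq 0$ and $\Lambda_{\theta^*} < \beta$) would give Painter a winning strategy against the same density restriction, a contradiction. Hence $\Lambda_{\theta^*} \geq 0$, and then your collapse argument yields $\Lambda_{\theta^*} = \beta'(\theta^*) = 0$. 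This is exactly the step the paper carries out explicitly, and once it is inserted your proof is complete and equivalent to the paper's.
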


Theorem~\ref{thm:main-1} is an immediate consequence of Theorem~\ref{thm:main-1'}, observing that the solution of the equation~\eqref{eq:root-of-Lambda} can be computed by evaluating $\Lambda_\theta(F,r)$ for all (finitely many) rational $\theta\in Q(\amax)$ (the constant $\amax$ is given explicitly in the proof of Proposition~\ref{prop:Lambda-Builder}, see~\eqref{eq:amax} below).

\begin{proof}[Proof of Theorem~\ref{thm:main-1'}] Throughout the proof we consider $F$ and $r$ fixed and let $\amax=\amax(F,r)$ denote the constant guaranteed by Proposition~\ref{prop:Lambda-Builder}.

Proposition~\ref{prop:Lambda-Builder} and Proposition~\ref{prop:Lambda-Painter} imply that for any given $\theta>0$ for which $\Lambda_\theta(F,r)\geq 0$, the parameter $\Lambda_\theta(F,r)$ is the maximal value of $\beta$ for which Builder can win the deterministic game with generalized density restriction $(\theta,\beta)$, and if he can win then he needs at most $\amax$ steps to enforce a monochromatic copy of $F$, i.e., $\Lambda_\theta(F,r)$ coincides with $\beta'(F,r,\theta,\amax)$ as defined in~\eqref{eq:beta'}.

Recall that according to Lemma~\ref{lemma:beta*} the supremum in \eqref{eq:beta'} is always attained as a maximum, i.e., for any $\theta>0$ Builder has a winning strategy in the game with generalized density restriction $(\theta,\beta'(F,r,\theta,\amax))$.
Thus if $\beta'(F,r,\theta,\amax)\geq 0$ we must have that $\Lambda_\theta(F,r)\geq \beta'(F,r,\theta,\amax)$ as otherwise we could apply Proposition~\ref{prop:Lambda-Painter} with $\beta=\beta'$ to obtain a contradiction. Hence also $\Lambda_\theta(F,r)$ is non-negative in that case.

It follows that for any $\theta>0$ the two functions $\Lambda_\theta(F,r)$ and $\beta'(F,r,\theta,\amax)$ either coincide or are both negative. Thus in particular they have the same set of roots, which by Lemma~\ref{lemma:beta*} consists of a single rational number $\theta^*=\theta^*(F,r)$ from the set $Q(\amax)$.

Applying Proposition~\ref{prop:Lambda-Builder} with $\theta=\theta^*$ and $\beta=0$ yields that Builder has a winning strategy in the game with generalized density restriction $(\theta^*,0)$ (in at most $\amax$ steps). Conversely, for any $\theta>\theta^*$ we obtain with Lemma~\ref{lemma:beta*} that $\beta'(F,r,\theta,\amax)$ is negative which, as discussed above, implies that also $\Lambda_\theta(F,r)$ is negative. Consequently we may apply Proposition~\ref{prop:Lambda-Painter} with $\beta=0$ to infer that Painter has a winning strategy in the game with generalized density restriction $(\theta,0)$.

Recalling that for any $\theta>0$ the game with generalized density restriction $(\theta,0)$ is equivalent to the original deterministic game with density restriction $d=1/\theta$, we may restate our findings as follows: Builder has a winning strategy in the game with density restriction $d=1/\theta^*$ (in at most $\amax$ steps), and for any $d<1/\theta^*$ Painter has a winning strategy in the game with density restriction $d$. I.e., the online vertex-Ramsey density defined in~\eqref{eq:m1*} satisfies $m^*_1(F,r)=1/\theta^*$, and the infimum in \eqref{eq:m1*} is attained as a minimum.
\end{proof}

\begin{remark}
Analogously to the second paragraph of the preceding proof it follows that for any given $\theta>0$ for which $\Lambda_\theta(F,r)\geq 0$, also $\beta^*(F,r, \theta)$ as defined in~\eqref{eq:beta*} coincides with $\Lambda_\theta(F,r)=\beta'(F,r,\theta,\amax)$. Thus the unique root $\theta^*$ of $\Lambda_\theta(F,r)=\beta'(F,r,\theta,\amax)$ is also a root of $\beta^*(F,r, \theta)$.

Furthermore, the observation that the non-increasing functions $\beta'(F,r,\theta,a)$, $a\geq \amin$, have a slope of at most $-1$ around their respective roots implies with~\eqref{eq:convergence} that the pointwise limit $\beta^*(F,r, \theta)$ has at most one root. Thus $\theta^*$ is indeed also the unique root of $\beta^*(F,r,\theta)$, as claimed in Section \ref{sec:generalized-game}.
\end{remark}

\subsection{Definitions and notations}

In order to present our procedure for computing the values $\Lambda_\theta(F,r)$ satisfying Proposition~\ref{prop:Lambda-Builder} and Proposition~\ref{prop:Lambda-Painter}, we need to introduce a number of definitions and notations. Along with the definitions we give some intuition how those formal objects implement the ideas outlined in Section~\ref{sec:intuition-computing}.

To simplify notation, for a graph $H$ and any vertex $v$ of $H$ we abbreviate $v\in V(H)$ to $v\in H$. For a graph $H$ and any set of vertices $U\seq V(H)$, we denote by $H\setminus U$ the graph obtained from $H$ by removing all vertices in $U$ and all edges incident to them. To indicate removal of a single vertex $v\in H$ we abbreviate $H\setminus\{v\}$ to $H\setminus v$.

\subsubsection{Weighted graphs}

A \emph{vertex-weighted graph} is a graph $H$ with a weight function $w:V(H)\rightarrow\RR$. We refer to the values $w(u)$, $u\in H$, as \emph{vertex weights}. Throughout this work, these vertex weights represent contributions to the linear function $\mu_\theta()$ defined in \eqref{eq:def-mu} that are obtained from `condensing' history graphs as outlined in Section~\ref{sec:intuition-condensing}. They will always be non-positive.

For a fixed real number $\theta>0$, any graph $H$, any vertex $v\in H$ and any weight function $w:V(H)\setminus\{v\}\rightarrow\RR$ we define the value
\begin{equation} \label{eq:def-d}
  d_\theta(H,v,w):=\min_{J\seq H: v\in J} \Big(\sum_{u\in J\setminus v} \big(1+w(u)\big)-e(J)\cdot\theta\Big) \enspace,
\end{equation}
where the minimization is over all subgraphs $J$ of $H$ that contain the vertex $v$. As this minimization includes the graph $J$ that consists only of the isolated vertex $v$, we always have $d_\theta(H,v,w)\leq 0$. Note that the minimum in \eqref{eq:def-d} is always attained by an \emph{induced} subgraph $J\seq H$. For convenience we will also use this notation for weight functions $w$ whose domain is strictly larger than the set $V(H)\setminus\{v\}$. Of course, for the value of $d_\theta(H,v,w)$ only the values $w(u)$ of vertices $u\in H\setminus v$ are relevant.

The intuition behind the value $d_\theta(H,v,w)$ is the following: Assume that a copy of $H\setminus v$ is used as one of the graphs $H_s$ in Figure~\ref{fig:recursion}, and Painter selects a color $\sigma\in[r]$ such that a copy of some other graph $H_\sigma$ is extended to a copy of $H_\sigma^+$. Then $H$ becomes part of the history graph $G$ of $H_\sigma^+$, and the recursive contribution to the value $\mu_\theta(J)$ (as defined in~\eqref{eq:def-mu}) of a subgraph $J\seq G$ minimizing $\mu_\theta(J)$ is exactly $d_\theta(H,v,w)$ if $v$ is included in $J$. In our dynamic program, this will be recorded by adding a term of $d_\theta(H,v,w)$ to the vertex weight of $v$ in $H_\sigma^+$ (and this is also how the vertex weights $w$ of $H\setminus v$ were computed in earlier steps).

For a fixed real number $\theta>0$, any graph $H$ and any weight function $w:V(H)\rightarrow\RR\cup\{-\infty\}$ we define
\begin{equation} \label{eq:def-lambda}
  \lambda_\theta(H,w):=\sum_{u\in H} \big(1+w(u)\big)-e(H)\cdot\theta \enspace.
\end{equation}
As it is the case for the definition of $d_\theta()$ in \eqref{eq:def-d}, it is also convenient here to allow weight functions $w$ whose domain is strictly larger than the set $V(H)$. Of course, for the value of $\lambda_\theta(H,w)$ only the values $w(u)$ of vertices $u\in H$ are relevant. Observe that $\lambda_\theta(H,w)$ defined in \eqref{eq:def-lambda} can be written recursively for every vertex $v\in H$ as
\begin{equation} \label{eq:lambda-recursive}
 \lambda_\theta(H,w)=\lambda_\theta(H\setminus v,w)+1+w(v)-\deg_H(v)\cdot\theta \enspace,
\end{equation}
where $\deg_H(v)$ denotes the degree of $v$ in $H$. This will be used several times in our arguments.

Using \eqref{eq:def-mu}, $\lambda_\theta(H,w)$ defined in \eqref{eq:def-lambda} can also be written as $\lambda_\theta(H,w)= \mu_\theta(H) + \sum_{u\in H} w(u)$, which intuitively means the following: If we imagine $H$ to be at the center of a large history graph $G$, the parameter $\lambda_\theta(H,w)$ corresponds to the value $\mu_\theta(J)$ of the graph $J$ obtained by attaching to each vertex $v\in H$ the $r-1$ subgraphs that minimize $\mu_\theta(J_s)$ among all subgraphs $J_s$ containing $v$ in each of the $r-1$ branches of the history graph $G$. 

\subsubsection{Ordered graphs}
For any graph $H$, $h:=v(H)$, a \emph{vertex ordering} is a bijective mapping $\pi:V(H)\rightarrow \{1,\ldots,h\}$, conveniently denoted by its preimages, $\pi=(\pi^{-1}(1),\ldots,\pi^{-1}(h))$. An \emph{ordered graph} is a pair $(H,\pi)$, where $H$ is a graph and $\pi$ is an ordering of its vertices. In the context of the $F$-avoidance game we interpret the ordering $\pi=(v_1,\ldots,v_h)$ as the order in which the vertices of $H$ appeared in the game, where $v_h$ is the vertex that appeared first (we refer to it as the \emph{oldest} vertex) and $v_1$ is the vertex that appeared last (we refer to it as the \emph{youngest} vertex). We use $\Pi(V(H))$ to denote the set of all vertex orderings $\pi$ of $H$.

For an ordered graph $(H,\pi)$ and any subgraph $J\seq H$, we denote by $\pi|_J$ the order on the vertices of $J$ induced by $\pi$. For any set $U\seq V(H)$ we use $\pi\setminus U$ as a shorthand notation for $\pi|_{H\setminus U}$. To indicate removal of a single vertex $v\in H$ we abbreviate $\pi\setminus\{v\}$ to $\pi\setminus v$.

Moreover, we define
\begin{equation} \label{eq:def-S}
  \cS(F):=\big\{[(H,\pi)]_\sim\bigmid[\big] H\seq F \text{ with } v(H)\geq 1 \text{ and } \pi\in\Pi(V(H))\big\}
\end{equation}
as the family of all isomorphism classes of ordered subgraphs of $F$, where we write $(H,\pi)\sim(H',\pi')$ if $(H,\pi)$ and $(H',\pi')$ are isomorphic as ordered graphs. For simplicity we refer to the elements $[(H,\pi)]_\sim\in\cS(F)$ in the following always as graphs $(H,\pi)\in\cS(F)$.
It is convenient to think of the graphs in $\cS(F)$ as nodes of a rooted tree $\cT(F)$ with root node $(K_1,(v_1))$ (an isolated vertex), where for each node $(H,\pi)\in\cS(F)$, $\pi=(v_1,\ldots,v_h)$, with $v(H)\geq 2$ the parent node is given by $(H\setminus v_1,\pi\setminus v_1)$.
For any subset $\cH\seq\cS(F)$ we define the set $\cC(\cH,F)\seq\cS(F)$ as
\begin{equation} \label{eq:def-C}
  \cC(\cH,F):=\begin{cases} \big\{(K_1,(v_1))\big\} & \text{if $\cH=\emptyset$} \\
                            \big\{(H,\pi=(v_1,\ldots,v_h))\in\cS(F)\setminus \cH \,\bigmid[\big]\, (H\setminus v_1,\pi\setminus v_1)\in\cH\big\} & \text{otherwise} \enspace.
              \end{cases}
\end{equation}
Note that $\cC(\cH,F)$ is exactly the set of nodes of $\cT(F)$ that are children of some node in $\cH$, but that are not contained in $\cH$. Figure~\ref{fig:tk3} shows the tree $\cT(F)$ for $F=K_3$ and illustrates the definition in \eqref{eq:def-C}.

\begin{figure}
\centering
\PSforPDF{
 \psfrag{h}{$\cH$}
 \psfrag{th}{$\cC(\cH,K_3)$}
 \psfrag{tk3}{\Large $\cT(K_3)$}
 \psfrag{1}{\small 1}
 \psfrag{2}{\small 2}
 \psfrag{3}{\small 3}
 \includegraphics{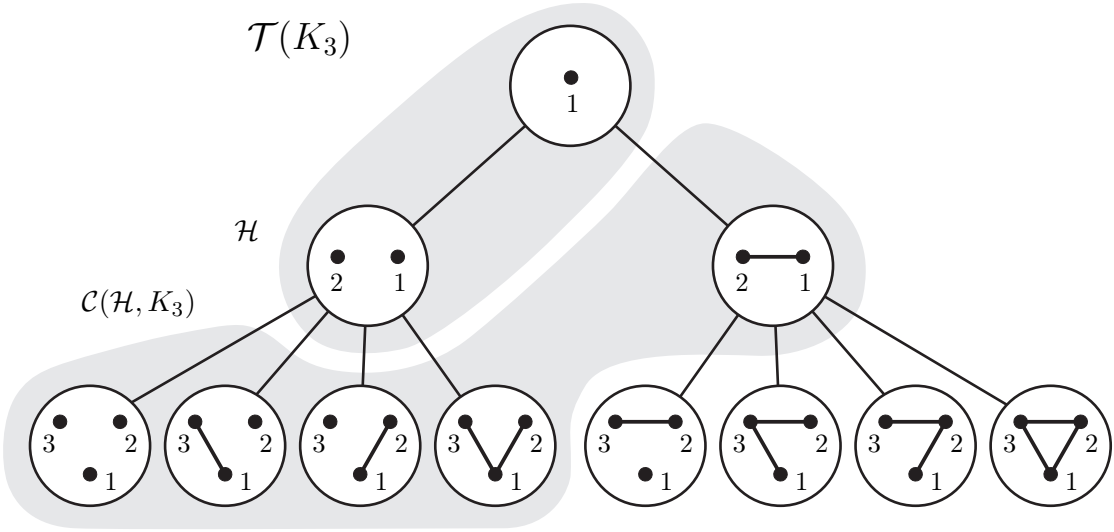}
}
\caption{Illustration of the tree $\cT(K_3)$ and the definition in \eqref{eq:def-C} (the shaded regions represent subsets of nodes of $\cT(K_3)$).} \label{fig:tk3}
\end{figure}

\begin{remark} \label{remark:induced-subgraphs}
Note that for two graphs $H_1\subsetneq H_2$ with $v(H_1)=v(H_2)$, a monochromatic copy of $H_1$ on the board can never evolve into a copy of $H_2$ later in the game, as new edges appear only incident to newly added vertices. As a consequence, we could restrict our attention to \emph{induced} subgraphs of~$F$ in all of our arguments. While changing the definition of $\cS(F)$ in \eqref{eq:def-S} accordingly would indeed lead to some algorithmic savings (see Section~\ref{sec:implementation}), for our formal arguments we find it more convenient to include \emph{all} subgraphs of $F$ in the definition~\eqref{eq:def-S}. Otherwise, unnecessary distraction would arise everytime an \emph{induced} subgraph is mentioned in a proof.
\end{remark}

\subsection{The algorithm} \label{sec:algorithm}

In the following we present an algorithm $\CW()$, whose output is then used to define the function $\Lambda_\theta(F,r)$ that is referred to in Proposition~\ref{prop:Lambda-Builder} and Proposition~\ref{prop:Lambda-Painter}.

Beside the graph $F$ and the number of colors $r$, the algorithm has two more input parameters: the parameter $\theta$ from the generalized density restriction (see Section~\ref{sec:generalized-game}), and a finite sequence $\alpha\in[r]^{r\cdot|\cS(F)|}$ with the following interpretation:
As indicated in Section~\ref{sec:intuition-computing}, the underlying idea of the algorithm is to explore systematically from Builder's point of view which monochromatic ordered subgraphs of $F$ he can enforce if Painter plays according to a fixed strategy.
Step by step Builder enforces larger monochromatic subgraphs from smaller ones, and the appropriate vertex weights for these graphs are computed by dynamic programming. The sequence $\alpha$ encodes Painter's coloring decisions in the order they occur in the course of the algorithm (i.e., it represents a fixed Painter strategy), where an entry of this sequence may correspond to several coloring decisions of Painter for which she uses the same color. (Our proofs show that she would not gain anything by using different colors for these decisions.)

The algorithm maintains for each color $s\in[r]$ a family $\cH_s\seq\cS(F)$ of ordered subgraphs of $F$ and a function $w_s:\cH_s\rightarrow\RR$. The families $\cH_s$ correspond to the ordered subgraphs of $F$ for which Builder has already enforced a monochromatic copy in color $s$. In the course of the algorithm, the families $\cH_s$ are successively enlarged.
Initially, we have $\cH_s=\emptyset$ for all $s\in[r]$, and at each step the candidate graphs to be added to the families $\cH_s$ are given by the sets $\cC(\cH_s,F)$ defined in~\eqref{eq:def-C}; these correspond to the graphs that Builder can construct by adding a single vertex to a graph he has already enforced. Consequently, throughout the algorithm the families $\cH_s$, $s\in[r]$, viewed as subsets of nodes of the tree $\cT(F)$ defined after~\eqref{eq:def-S}, grow downwards from the root.

For each $s\in[r]$ and each ordered graph $(H,\pi)\in\cH_s$, $\pi=(v_1,\ldots,v_h)$, the function $w_s:\cH_s\rightarrow\RR$ maintained by the algorithm induces a weight function $w_{(H,\pi,s)}: V(H) \rightarrow \RR$ as follows: The weight $w_{(H,\pi,s)}(v_1)$ of the youngest vertex $v_1$ is given directly by $w_s(H,\pi)$; the weight $w_{(H,\pi,s)}(v_2)$ of the second-youngest vertex $v_2$ is given by $w_s(H\setminus v_1,\pi\setminus v_1)$, i.e., by the value of $w_s$ for the parent of $(H,\pi)$ in $\cT(F)$; and so on. The full weight function $w_{(H,\pi,s)}: V(H) \rightarrow \RR$ is therefore obtained by considering the value of $w_s$ for all graphs on the path from $(H,\pi)$ to the root $(K_1,(v_1))$ of the tree $\cT(F)$, and each graph $(H,\pi)\in\cH_s$ inherits all vertex weights except that of the youngest vertex from his ancestors in $\cT(F)$.

More formally, and extending this construction to all graphs $(H,\pi)\in\cS(F)$, we define for each $s\in[r]$ and each $(H,\pi)\in\cS(F)$, $\pi=(v_1,\ldots,v_h)$, the weight function
\begin{equation} \label{eq:def-w}
  w_{(H,\pi,s)}(v_i) :=
  \left\{
    \begin{alignedat}{3}
      &w_s(H\setminus\{v_1,\ldots&&,v_{i-1}\},\pi\setminus\{v_1,\ldots,v_{i-1}\}) \\
      &&& \text{if } (H\setminus\{v_1,\ldots,v_{i-1}\},\pi\setminus\{v_1,\ldots,v_{i-1}\})\in\cH_s \enspace, \\
      &-\infty
      && \text{otherwise} \mathstrut \enspace.
    \end{alignedat}
  \right.
\end{equation}
This notation will also be used in the formulation of~$\CW()$ in Algorithm~\ref{algo:cw} below. (We shall see that the algorithm never encounters the value $-\infty$ during its execution.) Note that an ordered graph $(H,\pi)\in\cS(F)$ has vertices of weight $-\infty$ if and only if $(H,\pi)\in \cS(F)\setminus \cH_s$ for the corresponding $s\in[r]$, which intuitively means that Builder has not yet enforced a monochromatic copy of $(H,\pi)$ in color $s$. 

The families $\cH_s\seq \cS(F)$ and the functions $w_s:\cH_s\rightarrow \RR$, $s\in[r]$, are extended step by step in the course of the algorithm, and their final values are returned when the algorithm terminates.

%\incmargin{1em}
\begin{algorithm} \label{algo:cw}
\DontPrintSemicolon
\SetEndCharOfAlgoLine{}
\LinesNumbered
\SetNlSty{}{}{}
\SetArgSty{}
\caption{$\CW(F,r,\theta,\alpha)$}
\vspace{.2em}
\SetKw{Continue}{continue}
\KwIn{a graph $F$ with at least one edge, an integer $r\geq 2$, a real number $\theta>0$, a sequence $\alpha\in[r]^{r\cdot |\cS(F)|}$}
\KwOut{an $r$-tuple $\big((\cH_s,w_s)\big)_{s\in[r]}$, where $\cH_s\seq\cS(F)$ and $w_s:\cH_s\rightarrow\RR$ for all $s\in[r]$}
\vspace{.2em}
\ForEach{$s\in[r]$}{
  $\cH_s:=\emptyset$ \label{cw:H-init} \;
  $\forall d\in\RR: \cC_s(d):=\emptyset$ \label{cw:Cd-init} \;
}
$i:=0$ \;
\Repeat({(*)}){$\cH_s=\cS(F)$ for some $s\in[r]$ \label{cw:terminate} }{
  $i:=i+1$ \;
  \ForEach{$s\in[r]$ \label{cw:least-dangerous-threat-loop}}{
    $\displaystyle d_s^i:=\max_{(H,\pi=(v_1,\ldots,v_h))\in \cC(\cH_s,F)} d_\theta(H,v_1,w_{(H,\pi,s)})$ \label{cw:least-dangerous-threat} \;
  }
  $\sigma:=\alpha_i$ \label{cw:select-color} \;
  $w^i:=\sum_{s\in[r]\setminus\{\sigma\}} d_s^i$ \label{cw:primary-weight} \;
  $j:=0$ \;
  \Repeat({(**)}){all $(H,\pi)\in \cC(\cH_\sigma,F)$, $\pi=(v_1,\ldots,v_h)$, satisfy $d_\theta(H,v_1,w_{(H,\pi,\sigma)})<d_\sigma^i$ \label{cw:this-round-complete} }{
    $j:=j+1$ \;
    $\cC^{i,j}:=\big\{(H,\pi=(v_1,\ldots,v_h))\in \cC(\cH_\sigma,F)\bigmid[\big] d_\theta(H,v_1,w_{(H,\pi,\sigma)})=d_\sigma^i\big\}$ \label{cw:primary-threats} \;
    \If{$j=1$}{
      $\cC_\sigma(d_\sigma^i):=\cC^{i,1}$ \label{cw:forward-threats} \;
    }
    \ForEach{$(H,\pi)\in\cC^{i,j}$ \label{cw:set-primary-weight-loop}}{
      $w_\sigma(H,\pi):=w^i$ \label{cw:set-primary-weight} \;
    }
    $\cH_\sigma:=\cH_\sigma\cup\cC^{i,j}$ \label{cw:complete-primary-threats} \;
    $k:=0$ \;
    \Repeat({(***)}){$\cC^{i,j,k}=\emptyset$ \label{cw:secondary-threats-complete} }{
      $k:=k+1$ \;
      $\cT^{i,j,k}:=\big\{(H,\pi=(v_1,\ldots,v_h))\in \cC(\cH_\sigma,F)\bigmid[\big] d_\theta(H,v_1,w_{(H,\pi,\sigma)})\geq d_\sigma^i\big\}$ \label{cw:potential-secondary-threats} \;
      $\cC^{i,j,k}:=\emptyset$ \label{cw:secondary-threats-init} \;
      \ForEach{$(H,\pi)\in\cT^{i,j,k}$, $\pi=(v_1,\ldots,v_h)$, \label{cw:through-potential-secondary-threats} }{
        \If{$d_\theta(H,v_1,w_{(H,\pi,\sigma)})>d_\sigma^i$ or $\nexists J\seq H:v_1\in J\wedge (J,\pi|_J)\in\cC_\sigma(d_\sigma^i)$ \label{cw:secondary-threat-condition} }{
          \If{$\nexists J\seq H:v_1\in J\wedge (J,\pi|_J)\in\cC_\sigma(d_\theta(H,v_1,w_{(H,\pi,\sigma)}))$ \label{cw:condition-lower-index} }{
            $\ihat:=\max\{1\leq\ibar\leq i \mid \alpha_\ibar=\sigma \wedge d_\theta(H,v_1,w_{(H,\pi,\sigma)})<d_\sigma^\ibar \big\}$ \label{cw:set-weight-index-lower} \;
          }
          \Else{
            $\ihat:=\max\{1\leq\ibar\leq i \mid \alpha_\ibar=\sigma \wedge d_\theta(H,v_1,w_{(H,\pi,\sigma)})\leq d_\sigma^\ibar \big\}$ \label{cw:set-weight-index-higher} \;
          }
          $w_\sigma(H,\pi):=w^\ihat$ \label{cw:set-secondary-weight} \;
          $\cC^{i,j,k}:=\cC^{i,j,k}\cup\{(H,\pi)\}$ \label{cw:secondary-threats} \;
        }
      }
      $\cH_\sigma:=\cH_\sigma\cup\cC^{i,j,k}$ \label{cw:complete-secondary-threats} \;
    }
  }
}
\Return $\big((\cH_s,w_s)\big)_{s\in[r]}$ \;
\end{algorithm}
%\decmargin{1em}

Consider now the algorithm $\CW()$ as given in Algorithm~\ref{algo:cw}. In the following we will try to convey an intuitive understanding of its operation, building on the informal remarks given in Section~\ref{sec:intuition-computing}.

The algorithm works in rounds, and each round corresponds to relaxing the generalized density restriction $(\theta, \beta)$ by slightly lowering $\beta$, and then fully exploring Builder's options that become available as a consequence. Each iteration of the repeat-loop~(*) is one such round. 

At the beginning of the $i$-th round, for every color $s\in[r]$ the maximal $d_\theta()$-value among all graphs in $\cC(\cH_s,F)$, denoted by $d_s^i$, is determined (lines~\ref{cw:least-dangerous-threat-loop}--\ref{cw:least-dangerous-threat}). Here the sets $\cC(\cH_s,F)$ correspond to all graphs in color $s$ that Builder could try to enforce next, and considering for each color a graph that \emph{maximizes} $d_\theta()$ yields a new construction step for which $\beta$ needs to be lowered \emph{least} in order for that step to be compliant with the generalized density restriction $(\theta,\beta)$. (Specifically, $\beta$ needs to be lowered to $\beta_i:=1+\sum_{s\in[r]}d_s^i$; note however that this successive lowering of $\beta$ ist not done explicitly in the algorithm.) 
 
The $i$-th entry of the sequence $\alpha$ is then used to determine Painter's coloring decision $\sigma:=\alpha_i$ for this construction step (line~\ref{cw:select-color}), and the rest of the round consists of updating the families $\cH_s$ and the functions $w_s$ with all the information that can be extracted from that decision. In fact, only the family $\cH_\sigma$ grows; the families $\cH_s$, $s\in[r]\setminus\{\sigma\}$, do not change. 

The value $w^i:=\sum_{s\in[r]\setminus\{\sigma\}} d_s^i$ defined in line~\ref{cw:primary-weight} corresponds to the weight that needs to be assigned to the youngest vertex of every graph that is completed in color $\sigma$ as a direct consequence of Painter's coloring decision. When the repeat-loop~(**) is executed for the first time, those graphs are added to $\cH_\sigma$ via the set $\cC^{i,1}\seq\cS(F)$, and the function $w_\sigma: \cH_\sigma \to \RR$ is updated by assigning the value $w^i$ to the newly created graphs (lines~\ref{cw:primary-threats}--\ref{cw:complete-primary-threats}). For technical reasons, these graphs are also stored separately in a set $\cC_\sigma(d^i_\sigma)$ that will be relevant later in the algorithm.

The remainder of the $i$-th round explores options that became available to Builder as a result of the graphs in $\cC^{i,1}$ being added to $\cH_\sigma$. These graphs can now be used themselves for further construction steps, and the graphs created in those construction steps can be used even further, etc. Some of these new potential construction steps are not legal for the current generalized density restriction $(\theta, \beta_i)$, and will therefore only be explored in later rounds when (intuitively) $\beta$ is lowered further. However, some of these are indeed legal for the current value of $\beta$, and it turns out that the previous decisions of Painter already imply which colors Painter should use in each of those construction steps (!). These indirect consequences of Painter's decision to use color $\sigma=\alpha_i$ in round $i$ are explored in the repeat-loop~(***), and in the repeat-loop~(**) when it is executed for $j\geq 2$. The resulting graphs are added to $\cH_\sigma$ via the sets $\cT^{i,j,k},\cC^{i,j,k}\seq\cS(F)$ in the repeat-loop~(***), and via the sets $\cC^{i,j}\seq\cS(F)$, $j\geq 2$, in the repeat-loop~(**). This exploration of indirect consequences involves some technicalities for which we cannot give much intuition; see however the remarks in the first three paragraphs of Section~\ref{sec:implementation} below. Note that the sets $\cC_\sigma(d^i_\sigma)$ defined in line~\ref{cw:forward-threats} (in this or an earlier round) come back into play in lines~\ref{cw:secondary-threat-condition}--\ref{cw:condition-lower-index}.

The $i$-th round terminates as soon as all ordered graphs $(H,\pi)\in\cC(\cH_\sigma,F)$, $\pi=(v_1,\ldots,v_h)$, satisfy $d_\theta(H,v_1,w_{(H,\pi,\sigma)})<d_\sigma^i$ (line~\ref{cw:this-round-complete}). This corresponds to Builder having exhausted all his legal options in the game with generalized density restriction $(\theta, \beta_i)$ (recall that $\beta_i=1+\sum_{s\in[r]}d_s^i$). The $(i+1)$-th round of the algorithm will then consider the game with generalized density restriction $(\theta, \beta_{i+1})$ for some $\beta_{i+1}<\beta_i$.

The algorithm terminates as soon as one of the families $\cH_s$, $s\in[r]$, contains all ordered subgraphs of~$F$, i.e., $\cH_s=\cS(F)$ (line~\ref{cw:terminate}). This corresponds to Builder having enforced copies of all ordered subgraphs of~$F$ in color $s$ (in particular, monochromatic copies of $F$ in all possible vertex orderings).

We defer the formal arguments that $\CW()$ is a well-defined algorithm and terminates correctly to Section~\ref{sec:basic-properties}, where we will prove the following claim.

\begin{lemma}[Well-definedness and termination of algorithm] \label{lemma:algo-well-defined} 
All expressions that occur in the algorithm $\CW()$ are well-defined, all numerical values and all sets that occur are finite, and on any input as specified the algorithm terminates correctly after at most $r\cdot|\cS(F)|$ iterations of the repeat-loop~(*).
\end{lemma}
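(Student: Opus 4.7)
The plan is to prove the three claims of the lemma simultaneously by an inductive analysis of the execution of $\CW()$, with the outer loop index $i$ as the main induction variable and the iterations of the loops~(**) and~(***) handled by a nested induction on the pairs $(j,k)$. At each step of the induction, I maintain a downward closure invariant for the sets $\cH_s$, which will feed the well-definedness statements, together with a strict monotonicity property of the quantities $d_\sigma^i$, which will drive the existence of the indices $\ihat$ and the termination bounds.

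First, I would establish the invariant that for every $s \in [r]$ and at every moment during execution, $\cH_s$ is downward closed in the tree $\cT(F)$: if $(H,\pi) \in \cH_s$ with $v(H) \geq 2$ and $\pi=(v_1,\ldots,v_h)$, then $(H \setminus v_1, \pi \setminus v_1) \in \cH_s$. This is preserved trivially, since every addition to $\cH_\sigma$ in lines~\ref{cw:complete-primary-threats} and~\ref{cw:complete-secondary-threats} goes via a subset of $\cC(\cH_\sigma,F)$, whose members by the definition~\eqref{eq:def-C} have their parent already in the current $\cH_\sigma$. Combined with the recursive form of~\eqref{eq:def-w}, downward closure implies that whenever the algorithm evaluates $d_\theta(H,v_1,w_{(H,\pi,\sigma)})$ for a graph $(H,\pi) \in \cC(\cH_\sigma,F)$ in lines~\ref{cw:least-dangerous-threat},~\ref{cw:primary-threats},~\ref{cw:potential-secondary-threats} or~\ref{cw:secondary-threat-condition}, every vertex $v_i$ with $i\geq 2$ has already been assigned a finite real weight by a prior execution of line~\ref{cw:set-primary-weight} or line~\ref{cw:set-secondary-weight}, so the minimum in~\eqref{eq:def-d} is taken over finitely many finite quantities and is itself a finite real number.

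Next I would verify that the various maxima computed by the algorithm are over non-empty sets. For line~\ref{cw:least-dangerous-threat} this is immediate from the termination check in line~\ref{cw:terminate}, which ensures $\cH_s \subsetneq \cS(F)$ for every $s$ when the body of~(*) is entered, so $\cC(\cH_s,F)$ is non-empty by~\eqref{eq:def-C}. The more delicate point—which I expect to be the main obstacle of the proof—is showing that the sets of admissible indices $\ibar$ in lines~\ref{cw:set-weight-index-lower} and~\ref{cw:set-weight-index-higher} are non-empty. The key observation is that the subsequence $(d_\sigma^i)$ taken over those rounds with $\alpha_i = \sigma$ is strictly decreasing: the exit condition of~(**) in line~\ref{cw:this-round-complete} forces every $(H,\pi) \in \cC(\cH_\sigma,F)$ at the end of such a round to satisfy $d_\theta < d_\sigma^i$, and $\cH_\sigma$ is not modified in any intervening round with $\alpha_{i'} \neq \sigma$. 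To conclude existence of $\ihat$ one distinguishes the two branches: in the ``else'' case of line~\ref{cw:condition-lower-index}, the exhibited subgraph $J$ with $(J,\pi|_J) \in \cC_\sigma(d_\theta(H,v_1,w_{(H,\pi,\sigma)}))$ identifies a previous round $\ibar$ in which line~\ref{cw:forward-threats} was executed with $d_\sigma^\ibar = d_\theta(H,v_1,w_{(H,\pi,\sigma)})$, so $\ibar$ satisfies the requirement of line~\ref{cw:set-weight-index-higher}; in the branch of line~\ref{cw:set-weight-index-lower}, the absence of such a $J$ combined with the non-emptiness argument for $\cC_\sigma(d_\sigma^i)$ and the strict decrease of $(d_\sigma^i)$ produces a valid $\ibar < i$ satisfying the strict inequality.

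It remains to handle finiteness and termination. Finiteness of all numerical values follows by the same induction: every weight $w_\sigma(H,\pi)$ and every quantity $d_s^i$ or $w^i$ is a rational-linear combination of previously computed finite values and $\theta$, and all sets that appear are subsets of the finite set $\cS(F)$. For the innermost loop~(***), each iteration in which the exit condition in line~\ref{cw:secondary-threats-complete} fails puts the non-empty set $\cC^{i,j,k}$ into $\cH_\sigma$ in line~\ref{cw:complete-secondary-threats}, so~(***) terminates after at most $|\cS(F)|$ iterations; the same bound applies to~(**) through the set $\cC^{i,j}$, whose non-emptiness in every non-terminal iteration follows because graphs $(H,\pi) \in \cC(\cH_\sigma,F)$ with $d_\theta > d_\sigma^i$ would have been swallowed by the preceding execution of~(***) via line~\ref{cw:secondary-threats}, leaving only candidates with $d_\theta = d_\sigma^i$ to violate the exit condition of line~\ref{cw:this-round-complete}. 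Finally, each iteration of the outer loop~(*) adds at least the non-empty set $\cC^{i,1}$ to $\cH_{\alpha_i}$; since the total number of elements that can be added across all $r$ families is bounded by $r\cdot |\cS(F)|$, the loop must fire the termination condition of line~\ref{cw:terminate} within $r\cdot |\cS(F)|$ iterations, which matches both the length of the input sequence $\alpha$ and the bound stated in the lemma.
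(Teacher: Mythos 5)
Your overall architecture (induction on the outer round index, a downward-closure invariant for the $\cH_s$, strict monotonicity of the $d_\sigma^i$ driving both well-definedness and termination) matches the paper's, and most of the individual pieces are correct. Two points are worth noting.

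For the \emph{else} branch (line~\ref{cw:set-weight-index-higher}) your argument is a genuinely different and arguably cleaner route than the paper's: you directly read off the desired index $\ibar$ from the round in which $\cC_\sigma(d_\theta(H,v_1,w_{(H,\pi,\sigma)}))$ was populated, whereas the paper obtains $\ihat$ by noting that the earliest $\sigma$-round $\icheck$ always qualifies because $d_\sigma^{\icheck}=0$ and $d_\theta(\cdot)\leq 0$. Both work.

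For the \emph{if} branch (line~\ref{cw:set-weight-index-lower}), however, your argument has a real gap. You assert that ``the absence of such a $J$ combined with the non-emptiness argument for $\cC_\sigma(d_\sigma^i)$ and the strict decrease of $(d_\sigma^i)$ produces a valid $\ibar<i$,'' but this does not actually follow: strict decrease alone says nothing about the relation between $d_\theta(H,v_1,w_{(H,\pi,\sigma)})$ and any of the values $d_\sigma^{\ibar}$. What you need, and never state, is the anchoring observation: in the first round $\icheck$ with $\alpha_{\icheck}=\sigma$ we have $\cC(\emptyset,F)=\{(K_1,(v_1))\}$, hence $d_\sigma^{\icheck}=d_\theta(K_1,v_1,\cdot)=0$ and $(K_1,(v_1))\in\cC_\sigma(0)$. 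Since every $d_\theta$-value is $\leq 0$ (the definition~\eqref{eq:def-d} includes $J=\{v_1\}$), you then split on whether $d_\theta(H,v_1,w_{(H,\pi,\sigma)})$ is strictly negative or zero: if strictly negative, $\ibar=\icheck$ works since $d_\sigma^{\icheck}=0$; if zero, the single vertex $v_1$ witnesses a $J\subseteq H$ with $v_1\in J$ and $(J,\pi|_J)=(K_1,(v_1))\in\cC_\sigma(0)=\cC_\sigma(d_\theta(H,v_1,w_{(H,\pi,\sigma)}))$, contradicting the ``if'' condition, so this case cannot occur. Without this baseline, the non-emptiness of the maximized-over set in line~\ref{cw:set-weight-index-lower} is not established, and that is exactly the most delicate well-definedness claim in the lemma. (There is also a secondary subtlety that the paper flags explicitly and you leave implicit: the argument tacitly assumes the sets $\cC_s(d)$ are never overwritten after being set; this follows from the same monotonicity of the $d_\sigma^i$, but it should be noted.)
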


With Algorithm~\ref{algo:cw} in hand, we now define the parameter $\Lambda_\theta(F,r)$ for which we will prove Proposition~\ref{prop:Lambda-Builder} and Proposition~\ref{prop:Lambda-Painter}.

For a fixed real number $\theta>0$, any graph $F$ with at least one edge and any integer $r\geq 2$ we define
\begin{equation} \label{eq:def-Lambda}
  \newcommand\mystrut{\vphantom{]^\{}}
  \Lambda_\theta(F,r):=
  \min_{\alpha\in[r]^{r\cdot|\cS(F)|}}
  \max_{\substack{s\in[r]\mystrut \\ \pi\in\Pi(V(F))}}
  \min_{H\seq F:v(H)\geq 1\mystrut} \lambda_\theta(H,w_{(H,\pi|_H,s)}) \enspace,
\end{equation}
where $\lambda_\theta()$ is defined in \eqref{eq:def-lambda}, and $w_{(H,\pi,s)}()$ is defined for all $(H,\pi)\in\cS(F)$ and all $s\in[r]$ in \eqref{eq:def-w} using the results $\big((\cH_s,w_s)\big)_{s\in[r]}:=\CW(F,r,\theta,\alpha)$ of Algorithm~\ref{algo:cw}.

We defer the formal arguments that $\Lambda_\theta(F,r)$ is well-defined to Section~\ref{sec:basic-properties}, where we will prove the following claim.

\begin{lemma}[Well-definedness of $\Lambda_\theta(F,r)$] \label{lemma:Lambda-well-defined}
For any real number $\theta>0$, any graph $F$ with at least one edge and any integer $r\geq 2$, the parameter $\Lambda_\theta(F,r)$ defined in~\eqref{eq:def-Lambda} is a well-defined finite value.
\end{lemma}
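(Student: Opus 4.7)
The plan is to check separately that (a) each of the three quantifiers in~\eqref{eq:def-Lambda} ranges over a finite, nonempty set so that the extrema are attained, and (b) the resulting value lies in $\RR$ rather than in $\{\pm\infty\}$. Part (a) is immediate from the assumption that $F$ has at least one vertex: the outer index set $[r]^{r\cdot|\cS(F)|}$, the middle index set $[r]\times\Pi(V(F))$, and the inner index set $\{H\seq F:v(H)\geq 1\}$ are all finite and nonempty.

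For part (b), the only way finiteness could fail is that some vertex weight $w_{(H,\pi|_H,s)}(v)$ equals $-\infty$ via the case distinction in~\eqref{eq:def-w}, which would drag $\lambda_\theta(H,w_{(H,\pi|_H,s)})$ down to $-\infty$ via~\eqref{eq:def-lambda}. Observe that $\lambda_\theta$ never attains $+\infty$, so the inner minimum and the outer maximum a priori take values in $\{-\infty\}\cup\RR$; it therefore suffices to produce, for each fixed $\alpha$, at least one pair $(s,\pi)$ for which all the inner $\lambda_\theta$-values are finite. The natural candidate is the color $s^*\in[r]$ guaranteed by Lemma~\ref{lemma:algo-well-defined} to satisfy $\cH_{s^*}=\cS(F)$ upon termination of $\CW(F,r,\theta,\alpha)$: for this $s^*$ and any $\pi\in\Pi(V(F))$, every tail $(H\setminus\{v_1,\ldots,v_{i-1}\},\pi|_H\setminus\{v_1,\ldots,v_{i-1}\})$ of a $\pi|_H$-ordered subgraph of $F$ is itself an element of $\cS(F)=\cH_{s^*}$, so by~\eqref{eq:def-w} every entry of the weight function $w_{(H,\pi|_H,s^*)}$ is a value $w_{s^*}(\cdot)$ that was actually assigned at some point during the run of the algorithm.

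The final step is to confirm that the stored values $w_{s^*}(\cdot)$ are finite real numbers rather than infinities. Each such value was assigned in line~\ref{cw:set-primary-weight} or line~\ref{cw:set-secondary-weight} as some quantity $w^i=\sum_{s\in[r]\setminus\{\sigma\}}d^i_s$ computed in line~\ref{cw:primary-weight}, and the individual $d^i_s$ from line~\ref{cw:least-dangerous-threat} are maxima over nonempty finite families (already in the very first iteration $\cC(\cH_s,F)=\{(K_1,(v_1))\}$ yields $d^1_s=0$, and later iterations only enlarge these sets). A routine induction on the round index $i$ then shows that every stored weight $w_{s^*}(\cdot)$ is finite; in fact this is subsumed by the assertion in Lemma~\ref{lemma:algo-well-defined} that all numerical values occurring in the algorithm are finite. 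Putting the pieces together, $\lambda_\theta(H,w_{(H,\pi|_H,s^*)})\in\RR$ for every $H$, hence the inner minimum is a finite real number, the outer maximum is at least this finite value and can never be $+\infty$, and the minimum over the finite set of $\alpha$'s is again finite. The main (though entirely routine) obstacle is this inductive finiteness of the stored weights; everything else is just an unpacking of~\eqref{eq:def-w} and~\eqref{eq:def-lambda} together with an invocation of Lemma~\ref{lemma:algo-well-defined}.
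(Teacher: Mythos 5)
Your proof is correct and takes essentially the same route as the paper: identify the color $s^*$ with $\cH_{s^*}=\cS(F)$ guaranteed by the termination condition, and note that for this color all weights in~\eqref{eq:def-w} are finite (the paper cites Lemma~\ref{lemma:finite-weights} directly for this, which is the cleaner form of the finiteness you re-derive inline and which you correctly observe is also subsumed by Lemma~\ref{lemma:algo-well-defined}). One small imprecision worth noting: you write that the families $\cC(\cH_s,F)$ ``only enlarge'' across iterations, but by the definition in~\eqref{eq:def-C} these are the children of $\cH_s$ in $\cT(F)$ that are not yet in $\cH_s$, so the set changes shape rather than monotonically growing as $\cH_s$ grows; what actually matters, and what your argument really uses, is that $\cC(\cH_s,F)$ remains nonempty as long as $\cH_s\neq\cS(F)$, which holds by the termination condition in line~\ref{cw:terminate} together with Lemma~\ref{lemma:closure-Hs}.
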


Note that for rational values of $\theta>0$, the parameter $\Lambda_\theta(F,r)$ can be computed using only integer arithmetic.

Before we begin with the technical analysis of the algorithm $\CW()$ in Sections~\ref{sec:basic-properties}---\ref{sec:further-properties}, we give a few remarks about its implementation in the next section.

\subsection{Simplifications and implementation of the algorithm} \label{sec:implementation}

By Theorem~\ref{thm:main-1'}, we can compute the online vertex-Ramsey density $m_1^*(F,r)$ as the inverse of the root of the parameter $\Lambda_\theta(F,r)$ defined in \eqref{eq:def-Lambda}, where this definition involves the return values of the algorithm $\CW()$. As it turns out, the algorithm $\CW()$ can be simplified considerably if one is only interested in computing the online vertex-Ramsey density $m_1^*(F,r)$ for given $F$ and $r$ (and not in proving Theorem~\ref{thm:main-1} or Theorem~\ref{thm:main-2}, or in computing explicit winning strategies for Builder and Painter). The program to compute $m_1^*(F,r)$ that is available from the authors' websites \cite{homepage-mrs} uses such a simplified version of the pseudocode above.
In the following we outline the most important steps in this simplification.

First of all, the sets $\cC_s(d)$ defined in line~\ref{cw:forward-threats} and the case distinctions inside the repeat-loop~(***) whether certain subgraphs are contained in those sets or not can be omitted, as they are only used for proving the lower bound part of Theorem~\ref{thm:main-2}, our result for the probabilistic problem. Specifically, these extra technicalities are needed to bound the size of the witness graphs for certain coloring strategies that are derived from the algorithm $\CW()$ --- recall from Section~\ref{sec:proof-thm1} and Section~\ref{sec:proof-thm2} that such a bound is unimportant for the deterministic game, but crucial for the original probabilistic problem (see also Lemma~\ref{lemma:witness-graphs} and Remark~\ref{remark:vmax} below).

In a second step the algorithm can be simplified even further: As it turns out, the entire repeat-loop~(***) can be omitted; i.e.,  we do not need to compute any vertex weights for graphs $(H,\pi)\in\cC(\cH_\sigma,F)$, $\pi=(v_1,\ldots,v_h)$, that satisfy $d_\theta(H,v_1,w_{(H,\pi,\sigma)})>d_\sigma^i$ for the current value of $d_\sigma^i$, and we do not need to add such graphs to the corresponding family $\cH_\sigma$ (thus for the color $\sigma$ the algorithm will ignore the entire subtree of $\cT(F)$ rooted at $(H,\pi)$). The reason for this is that such graphs are essentially useless for Builder, and therefore we do not need to consider them in our systematic exploration of Builder's options (see Lemma~\ref{lemma:partner} and Algorithm~\ref{algo:builder} below).

Yet another simplification follows from Lemma~\ref{lemma:Lambda-dsi-sum} below: Combining \eqref{eq:def-Lambda} and \eqref{eq:further-properties} shows that we can change the return value of the algorithm $\CW()$ to the sum on the right hand side of \eqref{eq:further-properties} (note that this sum is exactly $\beta_{\icheck}$, as used in our informal description of the algorithm). Thus we may stop the algorithm as soon as for some $\pi\in\Pi(V(F))$ the graph $(F,\pi)$ is added to one of the families $\cH_s$, $s\in[r]$, which may happen considerably earlier than the termination condition in line~\ref{cw:terminate}.

Further major savings are achieved by considering only induced subgraphs in the definition~\eqref{eq:def-S}, as pointed out in~Remark~\ref{remark:induced-subgraphs}.
 
Some of these modifications might change the values returned by the algorithm $\CW(F,r,\theta,\alpha)$ for a specific sequence $\alpha$ (as the families $\cH_s$, $s\in[r]$, may evolve differently in the course of the algorithm, the entries of $\alpha$ get a different semantic), but not the value of $\Lambda_\theta(F,r)$ as defined in~\eqref{eq:def-Lambda}.

We conclude this section by sketching some ideas to further speed up the computation of $m_1^*(F,r)$ that do not directly relate to the pseudocode given in Algorithm~\ref{algo:cw}.

When evaluating $\Lambda_\theta(F,r)$ for a given $\theta\in(0,2)$, rather than calling the algorithm $\CW()$ for each possible input sequence $\alpha\in[r]^{r\cdot|\cS(F)|}$ separately, we call it only once and, in each iteration, branch on all $r$ values the variable $\sigma$ can assume in line~\ref{cw:select-color}. Since most of the branches of the resulting recursion tree end after much fewer than $r\cdot|\cS(F)|$ iterations, this allows us to evaluate the minimization in \eqref{eq:def-Lambda} and hence the value of $\Lambda_\theta(F,r)$ much more efficiently. 

By Theorem~\ref{thm:main-1'} we have $m_1^*(F,r)=1/\theta^*$, where $\theta^*=\theta^*(F,r)$ is the unique root of $\Lambda_\theta(F,r)$ defined in \eqref{eq:def-Lambda}, which is guaranteed to be in the finite set $Q(\amax)$.
In order to efficiently search for $\theta^*$ in $Q(\amax)$, we can exploit that the function $\Lambda_\theta(F,r)$ changes its sign from positive to negative at $\theta^*$. Specifically, in order to compute $\theta^*$, we alternate between shrinking the possible interval for the root $\theta^*$ by binary search (starting with the interval $(0,2)$), and evaluating $\Lambda_\theta(F,r)$ for all rational values of $\theta$ inside the current interval up to a certain size of the denominator.

\subsection{Basic properties of the algorithm} \label{sec:basic-properties}

In this section we establish a number of basic properties of the algorithm $\CW()$, including several important monotonicity properties. We also provide the proofs for Lemma~\ref{lemma:algo-well-defined} and Lemma~\ref{lemma:Lambda-well-defined}.

We begin by proving that the families $\cH_s$ grow downward from the root in the tree $\cT(F)$ throughout the algorithm, as already mentioned.

\begin{lemma}[Closure property of families $\cH_s$] \label{lemma:closure-Hs}
Throughout the algorithm $\CW()$ and for each $s\in[r]$ we have that if $(H,\pi)$, $\pi=(v_1,\ldots,v_h)$, $h\geq 2$, is in $\cH_s$, then $(H\setminus v_1,\pi\setminus v_1)$ is also in $\cH_s$. In particular, if $\cH_s\neq\emptyset$ then $(K_1,(v_1))\in\cH_s$. 
\end{lemma}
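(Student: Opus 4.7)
The plan is to establish the first statement by induction on the sequence of updates made to the families $\cH_s$ in the course of $\CW()$, and then to deduce the second statement from the first by a short subsidiary induction on $h = v(H)$. No real obstacle is anticipated; the whole lemma is essentially a matter of reading off the definition of $\cC(\cH,F)$ in~\eqref{eq:def-C} together with the fact that the only places in the pseudocode at which any $\cH_s$ is altered are lines~\ref{cw:complete-primary-threats} and~\ref{cw:complete-secondary-threats}. In particular, no quantitative control over the vertex weights $w_s$ or the $d_\theta$-values is required.

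For the main induction, the base case is trivial since line~\ref{cw:H-init} initialises every $\cH_s$ to $\emptyset$, so the closure property holds vacuously. For the inductive step I would observe that each update has the form $\cH_\sigma := \cH_\sigma \cup X$ with $X \subseteq \cC(\cH_\sigma, F)$, evaluated with respect to the value of $\cH_\sigma$ just before the update. If $\cH_\sigma$ was empty before the update then by~\eqref{eq:def-C} we have $X \subseteq \{(K_1,(v_1))\}$, and this single-vertex graph satisfies the first claim vacuously (the hypothesis $h \geq 2$ fails). Otherwise $\cH_\sigma$ was already nonempty and, again by~\eqref{eq:def-C}, every $(H,\pi=(v_1,\ldots,v_h)) \in X$ satisfies $(H\setminus v_1, \pi\setminus v_1) \in \cH_\sigma$ already before the update, hence also after. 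Combining this with the inductive hypothesis on the elements that were already in $\cH_\sigma$ yields the closure property for the enlarged family.

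For the second claim, I would argue as follows: suppose $\cH_s \neq \emptyset$ and pick any $(H,\pi) \in \cH_s$ of smallest vertex count $h$. If $h = 1$ then $(H,\pi)$ is (the unique isomorphism class represented by) $(K_1,(v_1))$ and we are done; if $h \geq 2$ then by the closure property just established, $(H\setminus v_1, \pi\setminus v_1) \in \cH_s$ has strictly fewer vertices, contradicting the minimal choice of $(H,\pi)$. Thus the minimum is attained at $h = 1$, yielding $(K_1,(v_1)) \in \cH_s$.
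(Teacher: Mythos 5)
Your proof is correct and follows essentially the same approach as the paper: both arguments observe that every graph added to $\cH_s$ passes through $\cC(\cH_s,F)$ (via lines~\ref{cw:primary-threats} and~\ref{cw:potential-secondary-threats}), so the closure property is immediate from the definition in~\eqref{eq:def-C}. You spell out the induction over updates and the minimality argument for the $(K_1,(v_1))$ claim a bit more explicitly, but the substance is identical.
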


\begin{proof}
Observe that graphs are only added to $\cH_s$ in lines~\ref{cw:complete-primary-threats} and \ref{cw:complete-secondary-threats} of iterations for which $\alpha_i=s$, via the sets $\cC^{i,j}$ and $\cC^{i,j,k}\seq\cT^{i,j,k}$. Thus by the definition of $\cC^{i,j}$ in line~\ref{cw:primary-threats} and of $\cT^{i,j,k}$ in line~\ref{cw:potential-secondary-threats}, only graphs that are currently in~$\cC(\cH_s,F)$ are added to $\cH_s$. The claim thus follows from the definition of $\cC(\cH_s,F)$ in \eqref{eq:def-C}.
\end{proof}

Next we prove the first part of Lemma~\ref{lemma:algo-well-defined}, which states that all expressions that occur in the algorithm $\CW()$ are well-defined and that all numerical values and all sets that occur are finite. (We ignore the assignment $\sigma:=\alpha_i$ in line~\ref{cw:select-color} for the time being --- well-definedness of that assignment is immediate once we have proven the second part of Lemma~\ref{lemma:algo-well-defined}, namely that the algorithm $\CW()$ terminates correctly after at most $r\cdot|\cS(F)|$ rounds.)

\begin{proof}[Proof of Lemma~\ref{lemma:algo-well-defined} (Well-definedness of algorithm)] 
We need to argue that the expression $d_\theta(H,v_1,w_{(H,\pi,s)})$ (wherever it occurs) is well-defined, and that the maximum in line~\ref{cw:least-dangerous-threat}, line~\ref{cw:set-weight-index-lower} and line~\ref{cw:set-weight-index-higher} is always over a nonempty set.

Note that $w_\sigma(H,\pi)$ is defined in line~\ref{cw:set-primary-weight} or line~\ref{cw:set-secondary-weight}, just before $(H,\pi)$ is added to $\cH_\sigma$ in line~\ref{cw:complete-primary-threats} or line~\ref{cw:complete-secondary-threats}, respectively. Combining this with Lemma~\ref{lemma:closure-Hs} and using the definition in \eqref{eq:def-w}, it follows that the function $w_{(H,\pi,s)}$ defines finite vertex weights for all vertices of every graph $(H,\pi)\in\cH_s$. Consequently, for every graph $(H,\pi)\in \cC(\cH_s,F)$, $\pi=(v_1,\ldots,v_h)$, the function $w_{(H,\pi,s)}$ defines finite weights for \emph{all but the youngest vertex $v_1$}. As throughout the algorithm the function $d_\theta(H,v_1,w_{(H,\pi,s)})$ is evaluated only for graphs from the corresponding set $\cC(\cH_s,F)$, it follows that this expression (wherever it occurs) is indeed well-defined (recall the definition of $d_\theta()$ in \eqref{eq:def-d}).

As long as none of the families $\cH_s$, $s\in[r]$, contains all graphs from $\cS(F)$, the sets $\cC(\cH_s,F)$ are nonempty. Thus the termination condition in line~\ref{cw:terminate} ensures that the maximum in line~\ref{cw:least-dangerous-threat} is always over a nonempty set and therefore well-defined.

\label{page:K1}
For any $\sigma\in[r]$, let $\icheck$ be the smallest integer $i\geq 1$ for which $\alpha_i=\sigma$ and note that the $\icheck$-th iteration of the repeat-loop~(*) is the first one where graphs are added to the family $\cH_\sigma$ (initially, we have $\cH_\sigma=\emptyset$). As $\cC(\emptyset,F)=\{(K_1,(v_1))\}$ and $d_\theta(K_1,v_1,w_{(K_1,(v_1),\sigma)})=0$, by the definitions in lines~\ref{cw:least-dangerous-threat}, \ref{cw:primary-threats}, and \ref{cw:forward-threats} we have  $d_\sigma^\icheck=0$, and $(K_1,(v_1))$ is contained in $\cC^{\icheck,1}=\cC_\sigma(0)$. By the definition of $d_\theta()$ in \eqref{eq:def-d}, $d_\theta(H,v_1,w_{(H,\pi,\sigma)})$ on the right hand side of line~\ref{cw:set-weight-index-higher} is always non-positive, i.e.\ less than or equal to $d_\sigma^\icheck=0$, implying that the maximum in line~\ref{cw:set-weight-index-higher} is always over a nonempty set and therefore well-defined (this set contains at least the integer $\icheck$). We can argue analogously for the set on the right hand side of line~\ref{cw:set-weight-index-lower} if $d_\theta(H,v_1,w_{(H,\pi,\sigma)})<0$. On the other hand, if $d_\theta(H,v_1,w_{(H,\pi,\sigma)})=0$ then by what we said before the subgraph $J\seq H$ that consists only of the vertex $v_1$ is contained in $\cC_\sigma(0)=\cC_\sigma(d_\theta(H,v_1,w_{(H,\pi,\sigma)}))$, and thus the condition in line~\ref{cw:condition-lower-index} is violated.

For this last argument we used that no graphs are ever removed from the sets $\cC_s(d)$, $d\in\RR$. We have not shown this yet formally; however, it follows from Lemma~\ref{lemma:di-wi-monotonicity} below that after the initialization in line~\ref{cw:Cd-init}, each set $\cC_s(d)$ is modified at most once, namely in line~\ref{cw:forward-threats} of the unique iteration $i$ for which $\alpha_i=s$ and $d^i_s=d$. (If the reader is worried about this forward reference, he is welcome to substitute line~\ref{cw:forward-threats} by $\cC_\sigma(d_\sigma^i):=\cC_\sigma(d_\sigma^i)\cup\cC^{i,1}$ for the time being, i.e., until Lemma~\ref{lemma:di-wi-monotonicity} is proven.)
\end{proof}

Having established that all numerical values assigned in the algorithm $\CW()$ are finite, we state the following observations for further reference.

\begin{lemma}[Finite and non-positive weights] \label{lemma:finite-weights}
Throughout the algorithm $\CW()$, for each $s\in[r]$ we have that for each $(H,\pi)\in\cH_s$, $\pi=(v_1,\ldots,v_h)$, all vertex weights $w_{(H,\pi,s)}(v_i)$ defined in~\eqref{eq:def-w} are finite non-positive values, while for each $(H,\pi)\in\cS(F)\setminus\cH_s$, at least one of the vertex weights is $-\infty$.

Consequently, for all $(H,\pi)\in\cH_s$, $\lambda_\theta(H,w_{(H,\pi,s)})$ defined in~\eqref{eq:def-lambda} is a finite value bounded by $v(F)$, while for all $(H,\pi)\in\cS(F)\setminus\cH_s$, we have $\lambda_\theta(H,w_{(H,\pi,s)})=-\infty$.
\end{lemma}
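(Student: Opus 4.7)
The plan is to establish the first assertion by induction on the number of weight assignments performed by $\CW()$, with the invariant that at every moment during the execution, for each $s\in[r]$ and each $(H,\pi)\in\cH_s$ the value $w_s(H,\pi)$ is a finite non-positive real number. Granting this invariant, both halves of the first sentence follow quickly from the definition in~\eqref{eq:def-w}. Indeed, if $(H,\pi)\in\cH_s$ with $\pi=(v_1,\ldots,v_h)$, then iterated application of Lemma~\ref{lemma:closure-Hs} shows that every ancestor $(H\setminus\{v_1,\ldots,v_{i-1}\},\pi\setminus\{v_1,\ldots,v_{i-1}\})$ of $(H,\pi)$ in $\cT(F)$ lies in $\cH_s$ as well, so by the invariant each $w_{(H,\pi,s)}(v_i)$ is finite and non-positive. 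Conversely, if $(H,\pi)\notin\cH_s$, the special case $i=1$ of~\eqref{eq:def-w}, in which the ``shortened'' graph is $(H,\pi)$ itself, immediately yields $w_{(H,\pi,s)}(v_1)=-\infty$.

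To carry out the induction, I would inspect the two places at which weights are ever assigned, namely line~\ref{cw:set-primary-weight} and line~\ref{cw:set-secondary-weight}, where $w_\sigma(H,\pi)$ is set to $w^i$ or $w^\ihat$, respectively. Since $\ihat\leq i$ and by the induction hypothesis $w^\ibar$ was finite and non-positive at the time it was defined, it suffices to show that the newly computed value $w^i=\sum_{s\in[r]\setminus\{\sigma\}}d_s^i$ of line~\ref{cw:primary-weight} is finite and non-positive. By the definition of $d_\theta()$ in~\eqref{eq:def-d}, the minimization already includes the trivial subgraph $J=\{v_1\}$, contributing the empty sum $0$ minus $0\cdot\theta=0$; hence $d_\theta(H,v_1,w)\leq 0$ for any weight function $w$, which gives $d_s^i\leq 0$. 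For finiteness of $d_s^i$, observe that the maximum in line~\ref{cw:least-dangerous-threat} is taken over the (nonempty) set $\cC(\cH_s,F)$, and for every $(H,\pi)\in\cC(\cH_s,F)$ we have $(H\setminus v_1,\pi\setminus v_1)\in\cH_s$, so Lemma~\ref{lemma:closure-Hs} again places every proper ancestor of $(H,\pi)$ in $\cH_s$. By the invariant, the weights $w_{(H,\pi,s)}(v_i)$ for $i\geq 2$ are therefore all finite, and since $d_\theta(H,v_1,w_{(H,\pi,s)})$ depends only on these weights and not on $w_{(H,\pi,s)}(v_1)$, it is a finite non-positive number. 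Summing over $[r]\setminus\{\sigma\}$ keeps $w^i$ finite and non-positive, closing the induction.

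The ``consequently'' part is then a direct substitution into~\eqref{eq:def-lambda}. For $(H,\pi)\in\cH_s$, every $w_{(H,\pi,s)}(u)\in(-\infty,0]$, so
\[
  \lambda_\theta(H,w_{(H,\pi,s)})
  =\sum_{u\in H}\bigl(1+w_{(H,\pi,s)}(u)\bigr)-e(H)\cdot\theta
  \leq v(H)-e(H)\cdot\theta\leq v(H)\leq v(F),
\]
while the sum is finite since each summand is, so $\lambda_\theta(H,w_{(H,\pi,s)})$ is a finite value bounded by $v(F)$. For $(H,\pi)\in\cS(F)\setminus\cH_s$, the first assertion gives at least one summand equal to $-\infty$, and since no summand equals $+\infty$ (all finite weights are $\leq 0$), the sum evaluates to $-\infty$.

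I expect the entire argument to be essentially routine bookkeeping; the only mildly delicate point is ensuring that the induction hypothesis really is available at every moment we compute $d_s^i$, which is exactly what the closure property from Lemma~\ref{lemma:closure-Hs} buys us by guaranteeing that all ``ancestor lookups'' needed inside $d_\theta(H,v_1,w_{(H,\pi,s)})$ hit graphs that were placed in $\cH_s$ in earlier iterations and whose weights were therefore already shown to be finite and non-positive.
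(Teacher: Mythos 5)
Your proof is correct and takes essentially the same route as the paper. The only real difference is organizational: the paper delegates the finiteness of the assigned weights to the already-proven Lemma~\ref{lemma:algo-well-defined} (well-definedness of the algorithm), whereas you inline that finiteness argument as part of your own induction on weight assignments; the key observations (non-positivity from the trivial subgraph $J=\{v_1\}$ in the minimization defining $d_\theta()$, finiteness via Lemma~\ref{lemma:closure-Hs} and the fact that $d_\theta(H,v_1,w_{(H,\pi,s)})$ ignores the weight of the youngest vertex) are identical. A small slip: in the middle of your argument you write ``by the induction hypothesis $w^\ibar$ was finite and non-positive'' where $w^\ihat$ is clearly meant, and it is worth noting that the induction hypothesis is strictly speaking about the stored values $w_s(\cdot,\cdot)$, not the intermediate quantities $w^\ihat$ --- but these were themselves computed from $d_s^{\ihat}$ values that the hypothesis controls, so the gap is cosmetic.
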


\begin{proof}
By the definition of $d_\theta()$ in \eqref{eq:def-d}, $d_\theta(H,v_1,w_{(H,\pi,s)})$ on the right hand side of line~\ref{cw:least-dangerous-threat} is always non-positive and finite, from which we conclude, using the definitions in line~\ref{cw:primary-weight}, \ref{cw:set-primary-weight} and \ref{cw:set-secondary-weight}, that $w_s(H,\pi)$ is a non-positive finite value for all $s\in[r]$ and all $(H,\pi)\in\cH_s$. The first part of the statement now follows from the definition in~\eqref{eq:def-w} and Lemma~\ref{lemma:closure-Hs}. The second part follows from the first part using the definition in~\eqref{eq:def-lambda}.
\end{proof}

The next lemma establishes two important monotonicity properties, which will be used in many of the upcoming proofs.

\begin{lemma}[Monotonicity of $d_s^i$ and $w^i$ in $i$] \label{lemma:di-wi-monotonicity}
Let $\sigma\in[r]$ and $\alpha\in[r]^{r\cdot|\cS(F)|}$ be the input sequence of the algorithm $\CW()$.
Throughout the algorithm, if $\alpha_i=\sigma$, then the variables $d_s^i, d_s^{i+1}$, $s\in[r]$, defined in line~\ref{cw:least-dangerous-threat} satisfy
\begin{equation*}
  d_\sigma^{i+1}<d_\sigma^i \quad\text{and}\quad d_s^{i+1}=d_s^i \quad \text{for all $s\in[r]\setminus\{\sigma\}$} \enspace.
\end{equation*}
Moreover, if $\alpha_\ibar=\alpha_i=\sigma$ for some $\ibar< i$, then the variables $w^\ibar,w^i$ defined in line~\ref{cw:primary-weight} satisfy
\begin{equation*}
  w^i\leq w^\ibar \enspace,
\end{equation*}
with equality if and only if $\alpha_\ibar=\alpha_{\ibar+1}=\cdots=\alpha_i=\sigma$. 
\end{lemma}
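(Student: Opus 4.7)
The plan is to verify both parts by direct inspection of the pseudocode, using only the exit conditions of the inner loops and the definition of $d_s^i$ and $w^i$.

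For the first statement I would first observe that within a single execution of iteration $i$ of repeat-loop~(*), all modifications to the algorithm's state happen exclusively to $\cH_\sigma$, to the function $w_\sigma$, and to the auxiliary sets $\cC_\sigma(\cdot)$ (lines~\ref{cw:set-primary-weight}, \ref{cw:complete-primary-threats}, \ref{cw:set-secondary-weight}, \ref{cw:complete-secondary-threats}, and~\ref{cw:forward-threats}). Consequently, for every $s\in[r]\setminus\{\sigma\}$, neither $\cH_s$ nor $w_s$ changes during iteration~$i$; by~\eqref{eq:def-w} the derived weight functions $w_{(H,\pi,s)}$ are unchanged for all $(H,\pi)\in\cS(F)$, and the candidate set $\cC(\cH_s,F)$ is unchanged as well. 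Comparing the maximizations in line~\ref{cw:least-dangerous-threat} as performed at the start of iteration~$i$ and of iteration~$i+1$ then yields $d_s^{i+1}=d_s^i$ for all $s\neq\sigma$. For the strict inequality $d_\sigma^{i+1}<d_\sigma^i$, I would invoke the exit condition of the inner repeat-loop~(**) in line~\ref{cw:this-round-complete}: when iteration $i$ of the outer loop finishes, every $(H,\pi)\in \cC(\cH_\sigma,F)$ with respect to the \emph{updated} $\cH_\sigma$ and $w_\sigma$ satisfies $d_\theta(H,v_1,w_{(H,\pi,\sigma)})<d_\sigma^i$. As iteration $i+1$ is reached, the termination condition of repeat-loop~(*) in line~\ref{cw:terminate} is not met, so $\cH_\sigma\sneq \cS(F)$ and by Lemma~\ref{lemma:closure-Hs} the set $\cC(\cH_\sigma,F)$ is nonempty. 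Therefore the maximum taken in line~\ref{cw:least-dangerous-threat} at the start of iteration $i+1$ is over a nonempty set of values each of which is strictly smaller than $d_\sigma^i$, giving $d_\sigma^{i+1}<d_\sigma^i$.

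For the second statement I would simply iterate the first one along the index range in between $\bar\imath$ and $i$. Let $\bar\imath<i$ with $\alpha_{\bar\imath}=\alpha_i=\sigma$ be given, and fix $s\in[r]\setminus\{\sigma\}$. Applying the first claim to each iteration $j\in\{\bar\imath,\bar\imath+1,\ldots,i-1\}$, the finite sequence $d_s^{\bar\imath},d_s^{\bar\imath+1},\ldots,d_s^i$ is non-increasing, and is strictly decreasing at index $j$ precisely when $\alpha_j=s$, while it is stationary when $\alpha_j\neq s$. Summing $d_s^i\leq d_s^{\bar\imath}$ over all $s\neq\sigma$ and recalling the definition of $w^i$ in line~\ref{cw:primary-weight} gives $w^i\leq w^{\bar\imath}$. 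Equality holds iff $d_s^i=d_s^{\bar\imath}$ for every $s\in[r]\setminus\{\sigma\}$, which by the previous observation is the case iff none of the indices $j\in\{\bar\imath,\ldots,i-1\}$ satisfies $\alpha_j\neq\sigma$; combined with $\alpha_{\bar\imath}=\alpha_i=\sigma$ this is exactly the condition $\alpha_{\bar\imath}=\alpha_{\bar\imath+1}=\cdots=\alpha_i=\sigma$.

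The only real obstacle is the bookkeeping: one has to identify exactly which quantities ($\cH_s$, $w_s$, the induced $w_{(H,\pi,s)}$, and $\cC(\cH_s,F)$) are touched inside one iteration of repeat-loop~(*), and in what order the exit conditions of the nested repeat-loops are applied. Once these have been settled, the rest of the proof consists of a one-line comparison of maximizations (for the first claim) and a telescoping sum (for the second).
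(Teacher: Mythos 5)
Your proof is correct and takes the same route as the paper: the paper's (very terse) argument also rests on the observation that only $\cH_\sigma$ and $w_\sigma$ are modified during iteration $i$, on the exit condition in line~\ref{cw:this-round-complete}, and on reducing the $w^i$-monotonicity to the $d_s^i$-monotonicity via line~\ref{cw:primary-weight}. You have simply filled in the bookkeeping that the paper leaves implicit.
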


Of course, the variables $d_s^i$ and $d_s^{i+1}$ referred to in Lemma~\ref{lemma:di-wi-monotonicity} are defined only if the number of iterations of the repeat-loop~(*) is at least $i+1$. Similarly, $w^i$ and $w^\ibar$ are defined only if the number of iterations is at least $i$. Otherwise the statement of the lemma is void.

\begin{proof}
The first part of the lemma follows from the definition of $d_\sigma^i$ in line~\ref{cw:least-dangerous-threat}, the termination condition in line~\ref{cw:this-round-complete}, and the fact that none of the families $\cH_s$, $s\in[r]\setminus\{\sigma\}$, is modified within the $i$-th iteration of the repeat-loop~(*).
The second part of the lemma follows from the first part and the definition of $w^i$ in line~\ref{cw:primary-weight}.
\end{proof}

For the proof that $\CW()$ terminates correctly after at most $r\cdot|\cS(F)|$ iterations of the repeat-loop~(*) we will need the following auxiliary statement.

\begin{lemma}[End of repeat-loop~(**)] \label{lemma:end-**}
Throughout the algorithm $\CW()$, at the end of each iteration of the repeat-loop~(**), all graphs $(H,\pi)\in\cC(\cH_\sigma,F)$, $\pi=(v_1,\ldots,v_h)$, satisfy
\begin{equation} \label{eq:end-**}
  d_\theta(H,v_1,w_{(H,\pi,\sigma)})\leq d_\sigma^i \enspace.
\end{equation}
For those graphs satisfying \eqref{eq:end-**} with equality there is a subgraph $J\seq H$ with $v_1\in J$ and $(J,\pi|_J)\in\cC_\sigma(d_\sigma^i)$.
\end{lemma}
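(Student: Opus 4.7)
The plan is to analyze the state of the algorithm at the moment each iteration of the repeat-loop~(**) ends, which coincides with the completion of the inner repeat-loop~(***). Fix an iteration $i$ of the outer loop (*), an iteration $j$ of (**), and let $K$ denote the number of iterations performed by (***) in this iteration of (**). The termination condition in line~\ref{cw:secondary-threats-complete} ensures $\cC^{i,j,K}=\emptyset$.

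First, I would observe that $\cH_\sigma$ is updated within (***) only through the assignment in line~\ref{cw:complete-secondary-threats}, so $\cC^{i,j,K}=\emptyset$ implies that no graph was added to $\cH_\sigma$ during the $K$-th pass. Consequently, the fringe $\cC(\cH_\sigma,F)$ at the end of (***) coincides with its state at the beginning of that pass, i.e., with the moment when $\cT^{i,j,K}$ was computed in line~\ref{cw:potential-secondary-threats}. This step is crucial because it guarantees that every graph currently in the fringe was actually inspected during the final pass, so no late-arriving children can slip past unchecked.

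Next, I would unpack the consequence of $\cC^{i,j,K}=\emptyset$: the foreach-loop over $\cT^{i,j,K}$ in line~\ref{cw:through-potential-secondary-threats} adds nothing, so every $(H,\pi)\in\cT^{i,j,K}$ must fail the condition in line~\ref{cw:secondary-threat-condition}. Negating that condition yields $d_\theta(H,v_1,w_{(H,\pi,\sigma)})\leq d_\sigma^i$ together with the existence of a subgraph $J\seq H$ with $v_1\in J$ and $(J,\pi|_J)\in\cC_\sigma(d_\sigma^i)$. Combined with the defining inequality $d_\theta\geq d_\sigma^i$ for membership in $\cT^{i,j,K}$ (line~\ref{cw:potential-secondary-threats}), this forces the equality $d_\theta(H,v_1,w_{(H,\pi,\sigma)})=d_\sigma^i$ and supplies the subgraph $J$ required for the second assertion of the lemma. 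For the fringe graphs lying outside $\cT^{i,j,K}$ one has the strict inequality $d_\theta<d_\sigma^i$ directly from the definition of $\cT^{i,j,K}$, so the first assertion $d_\theta\leq d_\sigma^i$ holds for every $(H,\pi)\in\cC(\cH_\sigma,F)$. Note that this reasoning is insensitive to $j$, and that $d_\sigma^i$ is not modified anywhere inside (**), so the argument is uniform across all iterations.

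Main obstacle: the argument is essentially a logical unwinding of the nested definitions of $\cT^{i,j,k}$, $\cC^{i,j,k}$, and the condition in line~\ref{cw:secondary-threat-condition}, so the real difficulty lies in keeping the indices and auxiliary sets straight rather than in producing a combinatorial insight. The one point that genuinely requires care is the observation above that the fringe does not change during the terminating pass $k=K$; without it, the characterisation extracted from the termination condition would apply only to a subset of the current fringe, and the possibility of uninspected children in the tree $\cT(F)$ would remain.
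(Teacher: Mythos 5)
Your proof is correct and takes essentially the same approach as the paper's: you unwind the termination condition of repeat-loop~(***) and negate the condition in line~\ref{cw:secondary-threat-condition}, reading off both assertions of the lemma directly. The only difference is that you make explicit the small but sound observation — taken for granted in the paper's terse argument — that since $\cC^{i,j,K}=\emptyset$, the fringe $\cC(\cH_\sigma,F)$ does not change between the computation of $\cT^{i,j,K}$ in line~\ref{cw:potential-secondary-threats} and the end of the loop, so the negated conditions indeed apply to every graph currently in the fringe.
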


\begin{proof} The $j$-th iteration of the repeat-loop~(**) ends as soon as the repeat-loop~(***) terminates. Due to the condition in line~\ref{cw:secondary-threats-complete} this happens in the first iteration $k$ for which the set $\cC^{i,j,k}\seq \cT^{i,j,k}$ is empty, which means that all graphs currently in~$\cC(\cH_\sigma,F)$ violate the condition in the definition of $\cT^{i,j,k}$ in line~\ref{cw:potential-secondary-threats}, or the condition in line~\ref{cw:secondary-threat-condition}. This implies the claim.
\end{proof}

We now prove the second part of Lemma~\ref{lemma:algo-well-defined}, namely that the algorithm $\CW()$ terminates correctly after at most $r\cdot|\cS(F)|$ rounds.

\begin{proof} [Proof of Lemma~\ref{lemma:algo-well-defined} (Termination of algorithm)]
Before bounding the number of iterations of the repeat-loop~(*) we need to argue that the inner two repeat-loops always terminate.

Let $\sigma\in[r]$ and suppose that we have $\alpha_i=\sigma$ in the current iteration $i$ of the repeat-loop~(*). Note that in each iteration of the repeat-loop~(***) except the last one, at least one element is added to the family $H_\sigma$ via the set $\cC^{i,j,k}$ in~line~\ref{cw:complete-secondary-threats}. Since throughout the algorithm, $\cH_\sigma$ is a subfamily of $\cS(F)$, and since no graphs are ever deleted from $\cH_\sigma$, the repeat-loop~(***) terminates after at most $|\cS(F)|+1$ iterations.

It follows directly from the definition of $d_\sigma^i$ in line~\ref{cw:least-dangerous-threat} that in the first iteration $j=1$ of the repeat-loop~(**), the set $\cC^{i,1}$ defined in line~\ref{cw:primary-threats} is nonempty. As a consequence of the first part of Lemma~\ref{lemma:end-**} and the termination condition in line~\ref{cw:this-round-complete}, the set $\cC^{i,j}$ is also nonempty in all later iterations $j>1$. Therefore, in each iteration of the repeat-loop~(**), at least one element is added to the family $\cH_\sigma$ via the set $\cC^{i,j}$ in line~\ref{cw:complete-primary-threats}, and thus similarly to above the repeat-loop~(**) terminates after at most $|\cS(F)|$ iterations. \label{page:j-loop}

The above also implies that in each iteration of the repeat-loop~(*), the size of exactly one of the families $\cH_s$, $s\in[r]$, increases by at least one. Considering the condition in line~\ref{cw:terminate}, we conclude that the algorithm terminates after at most $r\cdot|\cS(F)|$ iterations of the repeat-loop~(*).
\end{proof}

We proceed by proving Lemma~\ref{lemma:Lambda-well-defined}, which states that $\Lambda_\theta()$ defined in~\eqref{eq:def-Lambda} is a well-defined finite value.

\begin{proof}[Proof of Lemma~\ref{lemma:Lambda-well-defined}]
Due to the termination condition in line~\ref{cw:terminate} of $\CW()$, for each possible input sequence $\alpha\in[r]^{r\cdot |\cS(F)|}$ there is an $s\in[r]$ for which the family $\cH_s$ returned by $\CW()$ equals $\cS(F)$. By Lemma~\ref{lemma:finite-weights}, for this $s$ the parameter $\lambda_\theta(H,w_{(H,\pi,s)})$ is a finite value for all $(H,\pi)\in\cS(F)$. Thus for any fixed $\alpha\in[r]^{r\cdot |\cS(F)|}$ the maximization over all $s\in[r]$ in~\eqref{eq:def-Lambda} yields a finite value, and consequently also the outer minimization over all (finitely many) sequences $\alpha\in[r]^{r\cdot |\cS(F)|}$ in~\eqref{eq:def-Lambda} yields a finite value.
\end{proof}

We continue with a simple invariant that holds at the beginning of each iteration of the main loop of~$\CW()$.

\begin{lemma}[Beginning of repeat-loop~(*)] \label{lemma:beginning-loop-*}
Throughout the algorithm $\CW()$, at the beginning of the $i$-th iteration of the repeat-loop~(*), for every $s\in[r]$ all graphs $(H,\pi)\in\cH_s$, $\pi=(v_1,\ldots,v_h)$, satisfy $d_\theta(H,v_1,w_{(H,\pi,s)})>d_s^i$.
\end{lemma}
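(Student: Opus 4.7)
The plan is induction on $i$. The base case $i=1$ is vacuous since line~\ref{cw:H-init} initializes every $\cH_s$ to the empty set, so there are no graphs to verify the claim for. For the inductive step, assume the claim holds at the beginning of iteration $i$, set $\sigma := \alpha_i$, and split into the cases $s \neq \sigma$ and $s = \sigma$, exploiting that only $\cH_\sigma$ and the function $w_\sigma$ are touched during iteration $i$.

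For $s \neq \sigma$, Lemma~\ref{lemma:di-wi-monotonicity} gives $d_s^{i+1} = d_s^i$, and since neither $\cH_s$ nor any weight $w_{(H,\pi,s)}$ with $(H,\pi)\in\cH_s$ is modified during iteration $i$, the inductive hypothesis transfers verbatim. For $s = \sigma$, Lemma~\ref{lemma:di-wi-monotonicity} yields $d_\sigma^{i+1} < d_\sigma^i$, so it suffices to show that every $(H,\pi)\in\cH_\sigma$ present at the end of iteration $i$ satisfies $d_\theta(H,v_1,w_{(H,\pi,\sigma)}) \geq d_\sigma^i$ (which then yields the desired strict inequality against $d_\sigma^{i+1}$). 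I would distinguish between old and newly added members of $\cH_\sigma$.

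For a graph $(H,\pi)$ that was already in $\cH_\sigma$ at the start of iteration $i$, the inductive hypothesis gives $d_\theta(H,v_1,w_{(H,\pi,\sigma)}) > d_\sigma^i$, provided this value has not changed during iteration $i$. This is the only subtle point: by~\eqref{eq:def-d}, $d_\theta(H,v_1,\cdot)$ is independent of the weight at $v_1$, and by~\eqref{eq:def-w} together with Lemma~\ref{lemma:closure-Hs} the remaining weights $w_{(H,\pi,\sigma)}(v_2),\ldots,w_{(H,\pi,\sigma)}(v_h)$ are values of $w_\sigma$ at strict ancestors of $(H,\pi)$ in $\cT(F)$, which were fixed at the moment those ancestors entered $\cH_\sigma$ and are never overwritten afterwards (because assignments to $w_\sigma$ in lines~\ref{cw:set-primary-weight} and~\ref{cw:set-secondary-weight} act only on graphs currently in $\cC(\cH_\sigma,F)$, i.e., on graphs not yet in $\cH_\sigma$). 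Hence the value of $d_\theta(H,v_1,w_{(H,\pi,\sigma)})$ is frozen once $(H,\pi)$ enters $\cH_\sigma$.

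For a graph $(H,\pi)$ newly added to $\cH_\sigma$ during iteration $i$, there are two subcases according to where it was added. If $(H,\pi) \in \cC^{i,j}$ for some $j$ (added in line~\ref{cw:complete-primary-threats}), then the defining condition in line~\ref{cw:primary-threats} gives $d_\theta(H,v_1,w_{(H,\pi,\sigma)}) = d_\sigma^i$. If instead $(H,\pi) \in \cC^{i,j,k} \subseteq \cT^{i,j,k}$ for some $j,k$ (added in line~\ref{cw:complete-secondary-threats}), then the defining condition of $\cT^{i,j,k}$ in line~\ref{cw:potential-secondary-threats} gives $d_\theta(H,v_1,w_{(H,\pi,\sigma)}) \geq d_\sigma^i$. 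In either case, combining with $d_\sigma^{i+1} < d_\sigma^i$ yields $d_\theta(H,v_1,w_{(H,\pi,\sigma)}) > d_\sigma^{i+1}$, completing the induction. The main obstacle is the already-highlighted constancy of $d_\theta$-values across iterations, which reduces to the write-once nature of $w_\sigma$ and the independence of $d_\theta(H,v_1,\cdot)$ from the weight at $v_1$.
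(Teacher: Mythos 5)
Your proof is correct and rests on the same two facts the paper uses: the conditions in lines~\ref{cw:primary-threats} and~\ref{cw:potential-secondary-threats} under which a graph enters $\cH_\sigma$ guarantee $d_\theta(H,v_1,w_{(H,\pi,\sigma)})\geq d_\sigma^\ibar$ at the iteration $\ibar$ of insertion, and the strict monotonicity of $d_\sigma^i$ from Lemma~\ref{lemma:di-wi-monotonicity}. The paper argues directly by naming that iteration $\ibar<i$ and chaining $d_\theta(H,v_1,w_{(H,\pi,s)})\geq d_s^\ibar>d_s^{\ibar+1}\geq d_s^i$, whereas you repackage the same idea as an induction on $i$; the two are logically equivalent. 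One thing you do that the paper leaves tacit is spell out why $d_\theta(H,v_1,w_{(H,\pi,\sigma)})$ does not drift after $(H,\pi)$ enters $\cH_\sigma$ — namely that $w_\sigma$ is write-once (it is assigned only for graphs still in $\cC(\cH_\sigma,F)$, immediately before they are added to $\cH_\sigma$) and that by~\eqref{eq:def-d} the quantity $d_\theta(H,v_1,\cdot)$ is independent of the weight at $v_1$. This is a legitimate gap-fill rather than a divergence in strategy.
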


\begin{proof}
Fix $i$, $s\in[r]$, and $(H,\pi)\in \cH_s$ as in the lemma, and let $\ibar<i$ denote the iteration in which $(H,\pi)$ was added to $\cH_s$. We must have $\alpha_\ibar=s$, and $(H,\pi)$ was added to $\cH_s$ either via one of the sets $\cC^{\ibar,j}$ in line~\ref{cw:complete-primary-threats}, or via one of the sets $\cC^{\ibar,j,k}\seq\cT^{\ibar,j,k}$ in line~\ref{cw:complete-secondary-threats}. By the conditions in the definition of $\cC^{\ibar,j}$ in line~\ref{cw:primary-threats} and of $\cT^{\ibar,j,k}$ in line~\ref{cw:potential-secondary-threats} we have $d_\theta(H,v_1,w_{(H,\pi,s)})\geq d_s^\ibar > d_s^{\ibar+1} \geq  d_s^i$, where the last two inequalities follow from the first part of Lemma~\ref{lemma:di-wi-monotonicity}.
\end{proof}

The next lemma takes a closer look at the $d_\theta()$-value of graphs that are added to the families $\cH_s$ via one of the sets $\cC^{i,j,k}$ in the repeat-loop (***).

\begin{lemma}[Sandwiched $d_\theta()$-values for graphs in $\cC^{i,j,k}$] \label{lemma:d-sandwiched}
Let $\sigma\in[r]$ and $\alpha\in[r]^{r\cdot|\cS(F)|}$ be the input sequence of the algorithm $\CW()$.
Throughout the algorithm, if $\alpha_i=\sigma$, then for any graph $(H,\pi)$, $\pi=(v_1,\ldots,v_h)$, that is added to the set $\cC^{i,j,k}$ in line~\ref{cw:secondary-threats} (for this graph $w_\sigma(H,\pi)$ is defined in line~\ref{cw:set-secondary-weight} by setting it to $w^\ihat$) the following holds: If $\ihat$ is defined in line~\ref{cw:set-weight-index-lower}, we have
\begin{subequations} \label{eq:d-between}
\begin{equation} \label{eq:d-between-leq-l}
  d_\sigma^{\ihat+1} \leq d_\theta(H,v_1,w_{(H,\pi,\sigma)}) < d_\sigma^\ihat \enspace.
\end{equation}
Otherwise, i.e.\ if $\ihat$ is defined in line~\ref{cw:set-weight-index-higher}, we have
\begin{equation} \label{eq:d-between-l-leq}
  d_\sigma^{\ihat+1} < d_\theta(H,v_1,w_{(H,\pi,\sigma)}) \leq d_\sigma^\ihat \enspace.
\end{equation}
\end{subequations}
\end{lemma}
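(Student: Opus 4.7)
The plan is to read off the four inequalities directly from the sets that define $\ihat$ in lines~\ref{cw:set-weight-index-lower} and~\ref{cw:set-weight-index-higher}, combined with the monotonicity of $(d_\sigma^i)_i$ supplied by Lemma~\ref{lemma:di-wi-monotonicity} (namely $d_\sigma^{i+1}=d_\sigma^i$ when $\alpha_i\neq\sigma$ and $d_\sigma^{i+1}<d_\sigma^i$ when $\alpha_i=\sigma$). The upper bounds in~\eqref{eq:d-between-leq-l} and~\eqref{eq:d-between-l-leq} are immediate: the strict (respectively weak) inequality appearing inside the max in line~\ref{cw:set-weight-index-lower} (respectively line~\ref{cw:set-weight-index-higher}) is inherited by the maximizer $\ihat$. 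All the work thus lies in the two lower bounds.

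For the lower bounds my first step is to establish that $\ihat<i$ in both branches. Since $(H,\pi)\in\cT^{i,j,k}$, line~\ref{cw:potential-secondary-threats} gives $d_\theta(H,v_1,w_{(H,\pi,\sigma)})\geq d_\sigma^i$, and combined with the hypothesis $\alpha_i=\sigma$ this immediately excludes $\ibar=i$ from the strict-inequality set of line~\ref{cw:set-weight-index-lower}. For the line~\ref{cw:set-weight-index-higher} branch I would argue that entering the else-branch of line~\ref{cw:condition-lower-index} actually forces the strict inequality $d_\theta(H,v_1,w_{(H,\pi,\sigma)})>d_\sigma^i$: otherwise $d_\theta=d_\sigma^i$ would identify $\cC_\sigma(d_\theta)$ with $\cC_\sigma(d_\sigma^i)$, so the witness $J$ guaranteed by the else-branch would satisfy $(J,\pi|_J)\in\cC_\sigma(d_\sigma^i)$ and falsify the outer if-condition in line~\ref{cw:secondary-threat-condition} (since $d_\theta>d_\sigma^i$ must then fail as well), contradicting our having entered that if-block. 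Hence also here $\ibar=i$ is disqualified from the weak-inequality set, and $\ihat<i$.

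With $\ihat<i$ in hand I would pick $\ibar^*$ as the smallest index in $\{\ihat+1,\dots,i\}$ with $\alpha_{\ibar^*}=\sigma$, which exists because $\alpha_i=\sigma$. Lemma~\ref{lemma:di-wi-monotonicity} then gives that $d_\sigma^{\ihat+1}=d_\sigma^{\ihat+2}=\dots=d_\sigma^{\ibar^*}$, and by maximality of $\ihat$ the index $\ibar^*$ fails the defining condition: in the line~\ref{cw:set-weight-index-lower} case this yields $d_\theta(H,v_1,w_{(H,\pi,\sigma)})\geq d_\sigma^{\ibar^*}=d_\sigma^{\ihat+1}$, completing~\eqref{eq:d-between-leq-l}, and in the line~\ref{cw:set-weight-index-higher} case it yields $d_\theta(H,v_1,w_{(H,\pi,\sigma)})>d_\sigma^{\ibar^*}=d_\sigma^{\ihat+1}$, completing~\eqref{eq:d-between-l-leq}. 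The only mildly subtle point in the whole argument is the implication that line~\ref{cw:set-weight-index-higher} forces $d_\theta>d_\sigma^i$ strictly rather than merely $\geq$; once that is pinned down the rest is a direct reading of the max definitions together with a one-line use of the monotonicity lemma.
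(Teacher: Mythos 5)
Your proof is correct and follows essentially the same route as the paper's: you read off the upper bounds directly from the definition of $\ihat$, establish $\ihat<i$ in both branches (via the membership condition of $\cT^{i,j,k}$ for the first, and by showing $d_\theta > d_\sigma^i$ via line~\ref{cw:secondary-threat-condition} for the second), and then obtain the lower bounds by taking the next $\sigma$-index after $\ihat$ and applying the maximality of $\ihat$ together with the monotonicity from Lemma~\ref{lemma:di-wi-monotonicity}. Your $\ibar^*$ is exactly the paper's $\ihat+\nu$, and your explicit unwinding of why $\ihat=i$ is impossible in the else-branch of line~\ref{cw:condition-lower-index} is a helpful clarification of what the paper leaves terse.
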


\begin{proof}
Clearly, the definition of $\ihat$ in line~\ref{cw:set-weight-index-lower} implies
\begin{equation} \label{eq:d-sec-threat-ub}
  d_\theta(H,v_1,w_{(H,\pi,\sigma)}) < d_\sigma^\ihat \enspace.
\end{equation}
As $(H,\pi)$ is in $\cC^{i,j,k}\seq \cT^{i,j,k}$, by the definition in line~\ref{cw:potential-secondary-threats} we have $d_\theta(H,v_1,w_{(H,\pi,\sigma)})\geq d_\sigma^i$, and consequently $\ihat<i$. Consider the smallest integer $\nu\geq 1$ for which $\alpha_{\ihat+\nu}=\alpha_\ihat=\sigma$, and note that $\ihat+\nu\leq i$.
Again by the definition of $\ihat$ in line~\ref{cw:set-weight-index-lower} we have
\begin{equation} \label{eq:d-sec-threat-lb}
  d_\sigma^{\ihat+\nu} \leq d_\theta(H,v_1,w_{(H,\pi,\sigma)}) \enspace.
\end{equation}
Moreover, by the choice of $\nu$ and the first part of Lemma~\ref{lemma:di-wi-monotonicity} we have
\begin{equation} \label{eq:d-no-change}
  d_\sigma^{\ihat+\nu}=d_\sigma^{\ihat+1} \enspace.
\end{equation}
Combining \eqref{eq:d-sec-threat-lb} and \eqref{eq:d-no-change} yields the first inequality in \eqref{eq:d-between-leq-l}, and together with ~\eqref{eq:d-sec-threat-ub} proves the first part of the lemma. The second part follows similarly by using the definition of $\ihat$ in line~\ref{cw:set-weight-index-higher} and by interchanging $<$ with $\leq$ in \eqref{eq:d-sec-threat-ub} and \eqref{eq:d-sec-threat-lb} (also in this case we must have $\ihat<i$, as otherwise we would have $d_\theta(H,v_1,w_{(H,\pi,\sigma)})=d_\sigma^i$, a contradiction to the condition in line~\ref{cw:secondary-threat-condition} and the negation of the condition in line~\ref{cw:condition-lower-index}).
\end{proof}

The next lemma captures another important monotonicity condition: the lower the $d_\theta()$-value of an ordered graph in a particular color is, the smaller is the weight assigned to its youngest vertex.

\begin{lemma}[$d_\theta()$-value vs.\ weight monotonicity] \label{lemma:danger-weight-monotonicity}
Throughout the algorithm $\CW()$, for every $s\in[r]$ and any two graphs $(H,\pi), (J,\tau)\in\cH_s$, $\pi=(v_1,\ldots,v_h)$, $\tau=(u_1,\ldots,u_c)$, the following two properties hold:
\begin{itemize}
\item
If
\begin{equation*}
  d_\theta(H,v_1,w_{(H,\pi,s)})<d_\theta(J,u_1,w_{(J,\tau,s)})
\end{equation*}
then we have
\begin{equation*}
  w_{(H,\pi,s)}(v_1)\leq w_{(J,\tau,s)}(u_1) \enspace.
\end{equation*}

\item
If
\begin{equation*}
  d_\theta(H,v_1,w_{(H,\pi,s)})=d_\theta(J,u_1,w_{(J,\tau,s)})
  \quad \text{and} \quad
  w_{(H,\pi,s)}(v_1)<w_{(J,\tau,s)}(u_1)
\end{equation*}
then $w_{(H,\pi,s)}(v_1)\eqBy{eq:def-w} w_s(H,\pi)$ is defined either in line~\ref{cw:set-primary-weight} or in line~\ref{cw:set-secondary-weight} with $\ihat$ defined in line~\ref{cw:set-weight-index-higher}, and $w_{(J,\tau,s)}(u_1)=w_s(J,\tau)$ is defined in line~\ref{cw:set-secondary-weight} with $\ihat$ defined in line~\ref{cw:set-weight-index-lower}.
\end{itemize}
\end{lemma}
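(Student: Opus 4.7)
The plan is to control $w_s(H,\pi)$ and $w_s(J,\tau)$ through the iteration indices at which these values are set. For any $(H,\pi) \in \cH_s$, we have $w_s(H,\pi) = w^{i(H,\pi,s)}$ for a uniquely determined iteration index $i(H,\pi,s)$: either $(H,\pi)$ was added to $\cH_s$ via $\cC^{i,j}$ in line~\ref{cw:complete-primary-threats} with weight $w^i$ set in line~\ref{cw:set-primary-weight} (Case A), or it was added via $\cC^{i,j,k}$ in line~\ref{cw:complete-secondary-threats} with weight $w^\ihat$ set in line~\ref{cw:set-secondary-weight}, where $\ihat$ is defined either in line~\ref{cw:set-weight-index-lower} (Case B) or in line~\ref{cw:set-weight-index-higher} (Case C). Lemma~\ref{lemma:d-sandwiched} and the definition of $\cC^{i,j}$ in line~\ref{cw:primary-threats} give the following relation between $d_\theta(H,v_1,w_{(H,\pi,s)})$ and $d_s^{i(H,\pi,s)}$: equality in Case A, strict inequality $<d_s^{i(H,\pi,s)}$ in Case B (with $\ge d_s^{i(H,\pi,s)+1}$), and $\le d_s^{i(H,\pi,s)}$ in Case C (with $>d_s^{i(H,\pi,s)+1}$); moreover in Cases B and C, the choice of $\ihat$ in line~\ref{cw:set-weight-index-lower} resp.\ line~\ref{cw:set-weight-index-higher} is defined as a \emph{maximum} over all previous indices satisfying the corresponding inequality.

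For the first assertion, writing $i_H:=i(H,\pi,s)$ and $i_J:=i(J,\tau,s)$, I will show that $d_\theta(H,v_1,w_{(H,\pi,s)}) < d_\theta(J,u_1,w_{(J,\tau,s)})$ forces $i_H \ge i_J$. Suppose otherwise that $i_H < i_J$. In all three cases we have $d_\theta(H,v_1,w_{(H,\pi,s)}) \le d_s^{i_H}$, while by the maximality built into the definition of $i_J$ (or, in Case A for $J$, simply by $d_\theta(J,u_1,w_{(J,\tau,s)}) = d_s^{i_J}$), every index $\ibar > i_H$ with $\alpha_\ibar = s$ satisfies $d_s^\ibar \le d_\theta(H,v_1,w_{(H,\pi,s)})$ (with strict inequality if $(H,\pi)$ is in Case C), contradicting $d_s^{i_J} \ge d_\theta(J,u_1,w_{(J,\tau,s)}) > d_\theta(H,v_1,w_{(H,\pi,s)})$. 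Once $i_H \ge i_J$ is established, the monotonicity $w^i \le w^\ibar$ for $i \ge \ibar$ from Lemma~\ref{lemma:di-wi-monotonicity} gives $w_{(H,\pi,s)}(v_1) = w^{i_H} \le w^{i_J} = w_{(J,\tau,s)}(u_1)$.

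For the second assertion, from $w_{(H,\pi,s)}(v_1) < w_{(J,\tau,s)}(u_1)$ together with Lemma~\ref{lemma:di-wi-monotonicity} we obtain $i_H > i_J$ strictly. Combined with $d_\theta(H,v_1,w_{(H,\pi,s)}) = d_\theta(J,u_1,w_{(J,\tau,s)})$ and the sandwich inequalities listed above, I will rule out Cases A and C for $(J,\tau)$: in either of those cases the maximality defining $i_J$ forces all $\ibar > i_J$ with $\alpha_\ibar = s$ to satisfy $d_s^\ibar < d_\theta(J,u_1,w_{(J,\tau,s)}) = d_\theta(H,v_1,w_{(H,\pi,s)})$, which is incompatible with $d_\theta(H,v_1,w_{(H,\pi,s)}) \le d_s^{i_H}$. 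Hence $(J,\tau)$ is in Case B. Similarly, $(H,\pi)$ cannot be in Case B, because Case B for $(J,\tau)$ pins down $d_s^\ibar \le d_\theta(J,u_1,w_{(J,\tau,s)})$ for all $\ibar > i_J$ with $\alpha_\ibar = s$, giving $d_\theta(H,v_1,w_{(H,\pi,s)}) < d_s^{i_H} \le d_\theta(H,v_1,w_{(H,\pi,s)})$, a contradiction.

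I expect the main obstacle to lie in the second assertion, namely in bookkeeping the strict-versus-non-strict inequalities across the three cases and their interplay with the maximality clauses in lines~\ref{cw:set-weight-index-lower} and \ref{cw:set-weight-index-higher}. The first assertion is essentially a translation of Lemma~\ref{lemma:di-wi-monotonicity} once the index $i(H,\pi,s)$ is shown to be monotone in $d_\theta$; the second requires identifying exactly which of the three defining cases can coexist, which is purely a consequence of the boundary behaviour in Lemma~\ref{lemma:d-sandwiched}, but needs to be checked case by case.
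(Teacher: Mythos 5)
Your proof is essentially correct and arrives at the same conclusion by a more pedestrian but valid route. The paper packages the same information into a geometric object: it builds a set $R_\sigma\subseteq\mathbb{R}^2$ which is the union of horizontal segments $\{((1-t)d_\sigma^i+t\,d_\sigma^{i+1},\,w^i):t\in[0,1]\}$ over iterations $i$ with $\alpha_i=\sigma$, observes via Lemma~\ref{lemma:di-wi-monotonicity} that $R_\sigma$ is monotone (larger $x$-coordinate implies not-smaller $y$-coordinate), and then shows via \eqref{eq:d-w-pt}--\eqref{eq:d-w-st} that each pair $\bigl(d_\theta(H,v_1,w_{(H,\pi,\sigma)}),\,w_{(H,\pi,\sigma)}(v_1)\bigr)$ lies in $R_\sigma$, so the first part is a one-liner; the second part is an examination of which segment-boundary a point must sit on. Your version works directly with the iteration index $i(H,\pi,s)$ at which the weight was set and the sandwich inequalities of Lemma~\ref{lemma:d-sandwiched}, which is exactly the data encoded by the segments of $R_\sigma$: the index identifies the segment, and \eqref{eq:d-between-leq-l}--\eqref{eq:d-between-l-leq} determine where inside (or on the boundary of) it the point lies. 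What the paper's formulation buys is concision and a picture that does the case analysis implicitly; what your formulation buys is a fully elementary argument that never leaves the combinatorics of the algorithm.

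One notational slip worth fixing: in the first assertion you write ``by the maximality built into the definition of $i_J$'' when justifying that every $\ibar>i_H$ with $\alpha_\ibar=s$ satisfies $d_s^\ibar\le d_\theta(H,v_1,w_{(H,\pi,s)})$; the relevant index here is $i_H$, not $i_J$. Moreover the maximality clause of lines~\ref{cw:set-weight-index-lower}--\ref{cw:set-weight-index-higher} only constrains indices $\ibar$ up to the iteration $i$ in which $(H,\pi)$ is added, so by itself it does not cover all $\ibar>i_H$; the clean justification (which you already cite and which closes the gap) is the lower sandwich bound $d_s^{i_H+1}\le d_\theta(H,v_1,w_{(H,\pi,s)})$ from Lemma~\ref{lemma:d-sandwiched} together with the non-increasing property of $d_s^\bullet$. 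The same remark applies to ``the maximality defining $i_J$'' in your second paragraph. Once these attributions are straightened out, the case analysis is correct and complete.
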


For the second part of Lemma~\ref{lemma:danger-weight-monotonicity} note that $w_s(H,\pi)$ and $w_s(J,\tau)$ are defined exactly once in the course of the algorithm (each in some iteration of the various repeat-loops), just before the corresponding graph $(H,\pi)$ or $(J,\tau)$ is added to the family $\cH_s$.
 
\begin{proof}
Let $\imax$ denote the total number of iterations of the repeat-loop~(*).
Fix some $\sigma\in[r]$ and consider the set $R_\sigma\in\mathbb{R}^2$ defined by
\begin{equation} \label{eq:R-sigma}
  R_\sigma:=\bigcup_{1\leq i\leq \imax:\alpha_i=\sigma} \Big\{ \big((1-t)\cdot d_\sigma^i+t\cdot d_\sigma^{i+1},w^i\big) \bigmid[\big] t\in[0,1] \Big\} \enspace,
\end{equation}
where we use the convention $d_\sigma^{\imax+1}:=d_\sigma^\imax$ if $\alpha_\imax=\sigma$.
By Lemma~\ref{lemma:di-wi-monotonicity} we have for any two pairs $(x,y),(x',y')\in R_\sigma$
\begin{equation} \label{eq:R-monotonicity}
   x<x' \;\Longrightarrow\; y\leq y' \enspace.
\end{equation}
Now fix some graph $(H,\pi)\in\cH_\sigma$, $\pi=(v_1,\ldots,v_h)$, and consider the iteration $i$ of the repeat-loop~(*) where $\alpha_i=\sigma$ and where $(H,\pi)$ is added to the family $\cH_\sigma$. If $(H,\pi)$ is added to $\cH_\sigma$ in line~\ref{cw:complete-primary-threats} then we have
\begin{equation} \label{eq:d-w-pt}
  d_\theta(H,v_1,w_{(H,\pi,\sigma)})=d_\sigma^i \quad \text{and} \quad w_{(H,\pi,\sigma)}(v_1)\eqBy{eq:def-w} w_\sigma(H,\pi)=w^i
\end{equation}
by the definitions in line~\ref{cw:primary-threats} and line~\ref{cw:set-primary-weight}.
If on the other hand $(H,\pi)$ is added to $\cH_\sigma$ in line~\ref{cw:complete-secondary-threats} then we obtain with Lemma~\ref{lemma:d-sandwiched} that
\begin{equation} \label{eq:d-w-st}
  d_\sigma^{\ihat+1}\leq d_\theta(H,v_1,w_{(H,\pi,\sigma)}) \leq d_\sigma^\ihat \quad \text{and} \quad w_{(H,\pi,\sigma)}(v_1)\eqBy{eq:def-w} w_\sigma(H,\pi)=w^\ihat
\end{equation}
for some $\ihat\leq i$ with $\alpha_\ihat=\sigma$ (defined either in line~\ref{cw:set-weight-index-lower} or in line~\ref{cw:set-weight-index-higher}).

Combining \eqref{eq:R-sigma}, \eqref{eq:d-w-pt} and \eqref{eq:d-w-st} shows that any graph $(H,\pi)\in\cH_\sigma$, $\pi=(v_1,\ldots,v_h)$, satisfies
\begin{equation} \label{eq:d-w-in-R}
  \big(d_\theta(H,v_1,w_{(H,\pi,\sigma)}),w_{(H,\pi,\sigma)}(v_1)\big)\in R_\sigma \enspace.
\end{equation}
The first part of the claim now follows from \eqref{eq:R-monotonicity} and \eqref{eq:d-w-in-R}.

The second part of the claim follows from \eqref{eq:R-sigma}, \eqref{eq:d-w-in-R} and Lemma~\ref{lemma:di-wi-monotonicity} by examining where in the set $R_\sigma$ the point $\big(d_\theta(H,v_1,w_{(H,\pi,\sigma)}),w_{(H,\pi,\sigma)}(v_1)\big)$ can possibly be located, depending on whether $(H,\pi)$ is added to $\cH_\sigma$ in line~\ref{cw:complete-primary-threats} (then $w_\sigma(H,\pi)$ is defined in line~\ref{cw:set-primary-weight}) or in line~\ref{cw:complete-secondary-threats} (then $w_\sigma(H,\pi)$ is defined in line~\ref{cw:set-secondary-weight}), where the cases whether $\ihat$ is defined in line~\ref{cw:set-weight-index-lower} or in line~\ref{cw:set-weight-index-higher} have to be distinguished using \eqref{eq:d-between-leq-l} and \eqref{eq:d-between-l-leq} from Lemma~\ref{lemma:d-sandwiched}.
\end{proof}

In the following we will repeatedly use the following auxiliary statement, which is an immediate consequence of the definition in \eqref{eq:def-d}.

\begin{lemma}[Weights vs.\ $d_\theta()$-value monotonicity] \label{lemma:d-subgraph-monotonicity}
Let\/ $H$ be a graph, $v\in H$ and $J\seq H$ with $v\in J$. Moreover, let $w_H:V(H)\setminus\{v\}\rightarrow\RR$ and $w_J:V(J)\setminus\{v\}\rightarrow\RR$ with $w_H(u)\leq w_J(u)$ for all $u\in J\setminus v$. Then we have
 $d_\theta(H,v,w_H)\leq d_\theta(J,v,w_J)$.
\end{lemma}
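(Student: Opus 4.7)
The plan is to give a one-line witness argument directly from the definition~\eqref{eq:def-d}. Recall that $d_\theta(J,v,w_J)$ is a minimum over all subgraphs of $J$ containing $v$; let $J^* \seq J$ with $v \in J^*$ be a subgraph attaining this minimum, so that
\[
  d_\theta(J,v,w_J) = \sum_{u\in J^*\setminus v}\bigl(1+w_J(u)\bigr) - e(J^*)\cdot\theta \enspace.
\]

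Since $J \seq H$, the graph $J^*$ is also a subgraph of $H$ containing $v$, and therefore is a valid candidate in the minimization defining $d_\theta(H,v,w_H)$. This gives
\[
  d_\theta(H,v,w_H) \leq \sum_{u\in J^*\setminus v}\bigl(1+w_H(u)\bigr) - e(J^*)\cdot\theta \enspace.
\]

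Applying the hypothesis $w_H(u)\leq w_J(u)$ for each $u \in J^*\setminus v \seq J\setminus v$ termwise to the sum on the right-hand side yields the chain
\[
  d_\theta(H,v,w_H)
  \leq \sum_{u\in J^*\setminus v}\bigl(1+w_H(u)\bigr) - e(J^*)\cdot\theta
  \leq \sum_{u\in J^*\setminus v}\bigl(1+w_J(u)\bigr) - e(J^*)\cdot\theta
  = d_\theta(J,v,w_J) \enspace,
\]
which is the desired inequality. There is no real obstacle here: the statement is essentially a monotonicity check for an $\min$ of linear forms, and both the expansion of the domain of minimization ($J^* \seq J \seq H$) and the pointwise decrease of the summands ($w_H \leq w_J$) push the value in the same direction.
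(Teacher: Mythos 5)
Your argument is correct and is exactly the one the paper has in mind: the paper states this lemma without proof, calling it ``an immediate consequence of the definition in \eqref{eq:def-d}.'' Choosing a minimizer $J^*\seq J$ with $v\in J^*$, observing that $J^*$ is also a competitor in the minimization defining $d_\theta(H,v,w_H)$ since $J\seq H$, and then applying the pointwise inequality $w_H\le w_J$ term by term is precisely the intended justification.
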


The next lemma establishes an important monotonicity condition for the vertex weights with respect to taking (ordered) subgraphs: the weights of a subgraph are always at least as high as the weights of the entire graph.

\begin{lemma}[Subgraph weight monotonicity] \label{lemma:subgraph-monotonicity}
Throughout the algorithm $\CW()$, for every $s\in[r]$, if $(H,\pi)\in\cH_s$, then for every subgraph $J\seq H$ we have $(J,\pi|_J)\in\cH_s$ and $w_{(H,\pi,s)}(u)\leq w_{(J,\pi|_J,s)}(u)$ for all $u\in J$.
\end{lemma}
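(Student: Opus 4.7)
The plan is to prove the lemma by strong induction on the sequence of additions to the families $\cH_s$ during the execution of $\CW()$. Initially all $\cH_s$ are empty, so the claim holds vacuously; since graphs are never removed and weights of already-existing entries are never modified, it suffices to verify, at the moment a new graph $(H, \pi) \in \cH_\sigma$ with $\pi = (v_1, \ldots, v_h)$ is inserted, that the claim holds for every subgraph $J \seq H$. I would split on whether $v_1 \in J$. If $v_1 \notin J$, then $J \seq H \setminus v_1$ and $\pi|_J = (\pi \setminus v_1)|_J$; by Lemma~\ref{lemma:closure-Hs}, $(H \setminus v_1, \pi \setminus v_1)$ already lies in $\cH_\sigma$, and the inductive hypothesis applied to it yields both $(J, \pi|_J) \in \cH_\sigma$ and the required weight inequalities. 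Comparing with~\eqref{eq:def-w} shows that these are precisely the inequalities demanded for $(H, \pi)$, since $w_{(H, \pi, \sigma)}$ and $w_{(H \setminus v_1, \pi \setminus v_1, \sigma)}$ agree on $V(H) \setminus \{v_1\}$ (and analogously on the $J$-side).

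For the case $v_1 \in J$, the preceding paragraph applied to each $u \in V(J) \setminus \{v_1\}$ yields the weight inequality at those vertices, and Lemma~\ref{lemma:d-subgraph-monotonicity} then gives
\begin{equation*}
  d_\theta(H, v_1, w_{(H, \pi, \sigma)}) \;\leq\; d_\theta(J, v_1, w_{(J, \pi|_J, \sigma)}) \enspace. \tag{$\ast$}
\end{equation*}
The parent $(J \setminus v_1, \pi|_J \setminus v_1)$ is in $\cH_\sigma$ by the previous case, so $(J, \pi|_J) \in \cH_\sigma \cup \cC(\cH_\sigma, F)$. In the latter event,~($\ast$) forces $d_\theta(J, v_1, w_{(J, \pi|_J, \sigma)}) \geq d_\sigma^i$ for the current iteration $i$ of the repeat-loop~(*), so Lemma~\ref{lemma:end-**} together with the termination conditions of the repeat-loops~(**) and~(***) implies that $(J, \pi|_J)$ is added to $\cH_\sigma$ no later than the end of iteration $i$. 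It only remains to establish the weight inequality at the youngest vertex $v_1$, i.e.\ $w_{(H, \pi, \sigma)}(v_1) \leq w_{(J, \pi|_J, \sigma)}(v_1)$.

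This last step is the main obstacle. When~($\ast$) is strict, it follows directly from the first part of Lemma~\ref{lemma:danger-weight-monotonicity}. The delicate case is equality in~($\ast$), which I would attack by contradiction: assuming $w_{(H, \pi, \sigma)}(v_1) > w_{(J, \pi|_J, \sigma)}(v_1)$, the second part of Lemma~\ref{lemma:danger-weight-monotonicity} would force $w_\sigma(H, \pi)$ to have been assigned in line~\ref{cw:set-secondary-weight} with $\ihat$ taken from line~\ref{cw:set-weight-index-lower}, while $w_\sigma(J, \pi|_J)$ was assigned either in line~\ref{cw:set-primary-weight} or in line~\ref{cw:set-secondary-weight} with $\ihat$ taken from line~\ref{cw:set-weight-index-higher}. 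In each possibility one exhibits a subgraph $J'' \seq J$ containing $v_1$ with $(J'', \pi|_{J''}) \in \cC_\sigma(d_\theta(J, v_1, w_{(J, \pi|_J, \sigma)})) = \cC_\sigma(d_\theta(H, v_1, w_{(H, \pi, \sigma)}))$ --- either by the negation of the condition in line~\ref{cw:condition-lower-index} for $(J, \pi|_J)$, or via line~\ref{cw:forward-threats} with $J'' = J$ itself. Since $J \seq H$, the same $J''$ then violates the condition in line~\ref{cw:condition-lower-index} for $(H, \pi)$, contradicting the choice of $\ihat$ for $(H, \pi)$ via line~\ref{cw:set-weight-index-lower}. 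The most intricate bookkeeping appears when $(J, \pi|_J)$ itself enters $\cH_\sigma$ via $\cC^{\ibar, j}$ with $j \geq 2$, so that $(J, \pi|_J)$ is not placed directly in any $\cC_\sigma(d)$; locating the witness $J''$ in that sub-case is the main technical subtlety and requires tracking which earlier insertions in iteration $\ibar$ populated the set $\cC_\sigma(d)$.
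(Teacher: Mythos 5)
Your overall approach is essentially the same as the paper's: reduce to subgraphs $J$ containing the youngest vertex $v_1$, establish the weight inequality at vertices other than $v_1$ via the induction hypothesis, deduce $(\ast)$ from Lemma~\ref{lemma:d-subgraph-monotonicity}, and handle the weight at $v_1$ by Lemma~\ref{lemma:danger-weight-monotonicity} (part~1 for strict $(\ast)$, part~2 plus a contradiction involving $\cC_\sigma(d)$ for tight $(\ast)$). That part of your argument is sound.

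There is, however, a genuine gap in the membership claim. The lemma asserts an invariant maintained \emph{throughout} the algorithm: at \emph{every} moment when $(H,\pi)\in\cH_\sigma$, all its ordered subgraphs must already be in $\cH_\sigma$ with finite weights (recall from Lemma~\ref{lemma:finite-weights} and~\eqref{eq:def-w} that a graph outside $\cH_\sigma$ has a $-\infty$ weight, which would immediately falsify $w_{(H,\pi,\sigma)}(u)\leq w_{(J,\pi|_J,\sigma)}(u)$). You argue only that $(J,\pi|_J)$ is added ``no later than the end of iteration $i$''. That leaves open the possibility that $(J,\pi|_J)$ is inserted via $\cC^{i,j,k'}$ or $\cC^{i,j'}$ with $j'>j$ (or $k'>k$) after $(H,\pi)$ already entered $\cH_\sigma$, giving a window where the invariant would fail with $w_{(J,\pi|_J,\sigma)}(v_1)=-\infty$. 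Closing this requires showing that $(J,\pi|_J)$ is in $\cH_\sigma$ \emph{at or before} the exact moment $(H,\pi)$ is inserted — either already present, or added in the very same set $\cC^{i,j}$ resp.\ $\cC^{i,j,k}$. The paper does this with a four-way split on $(H,\pi)\in\cC^{i,j}$ vs.\ $\cC^{i,j,k}$ and on $(\ast)$ strict vs.\ tight, using the definition of $d_\sigma^i$ and Lemma~\ref{lemma:end-**} to force ``already present'' in the strict cases, and the conditions in lines~\ref{cw:secondary-threat-condition} and~\ref{cw:condition-lower-index} to force ``added together with $(H,\pi)$'' in the tight cases (the tight $\cC^{i,j,k}$ case being the hardest, where the same $\Jbar\in\cC_\sigma(d_\sigma^i)$ that you invoke for the weight contradiction also shows $(J,\pi|_J)$ cannot be left behind in $\cT^{i,j,k}\setminus\cC^{i,j,k}$). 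Without that timing argument, the invariant is not established, and Lemma~\ref{lemma:danger-weight-monotonicity}, which requires both graphs to be in $\cH_\sigma$ simultaneously, cannot be applied at the moment you need it.
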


\begin{proof}
We will prove the following auxiliary claim: for every $s\in[r]$, if $(H,\pi)\in\cH_s$, $\pi=(v_1,\ldots,v_h)$, then for every subgraph $J\seq H$ with $v_1\in J$ we have $(J,\pi|_J)\in\cH_s$ and $w_{(H,\pi,s)}(u)\leq w_{(J,\pi|_J,s)}(u)$ for all $u\in J$. This implies the original claim, where the subgraphs $J\seq H$ are not required to contain the youngest vertex $v_1$, as follows: if $(H,\pi)\in\cH_s$, $\pi=(v_1,\ldots,v_h)$, and $J\seq H$ is \emph{any} subgraph of $H$, then by Lemma~\ref{lemma:closure-Hs} we also have $(H^{-c},\pi^{-c})\in\cH_s$ where $c:=\min\{i\mid v_i\in J\}-1$ and $(H^{-c},\pi^{-c}):=(H\setminus\{v_1,\ldots,v_c\},\pi\setminus\{v_1,\ldots,v_c\})$. Moreover, $(J,\pi|_J)$ contains the youngest vertex $v_{c+1}$ of $(H^{-c},\pi^{-c})$. Therefore, applying the auxiliary claim to $(H^{-c},\pi^{-c})$ and $(J,\pi|_J)$, together with the observation that $w_{(H,\pi,s)}(u)=w_{(H^{-c},\pi^{-c},s)}(u)$ for all $u\in H^{-c}$ completes the argument.

To prove the auxiliary claim we argue by induction over the number of vertices of $H$. The claim clearly holds if $H$ consists only of a single vertex, as then $J=H$ is the only subgraph of $H$. For the induction step let $\sigma\in[r]$ and consider a graph $(H,\pi)\in\cH_\sigma$, $\pi=(v_1,\ldots,v_h)$, with at least two vertices. We consider the iteration $i$ of the repeat-loop~(*) where $\alpha_i=\sigma$ and where $(H,\pi)$ is added to the family $\cH_\sigma$.
Let $J$ be a subgraph of $H$ with $v_1\in J$. By Lemma~\ref{lemma:closure-Hs} we have that $(H\setminus v_1,\pi\setminus v_1)\in\cH_\sigma$ at this point, and thus we know by induction that
\begin{equation} \label{eq:J-minus-Hs}
  (J\setminus v_1,\pi|_{J\setminus v_1})\in\cH_\sigma
\end{equation}
and that
\begin{equation} \label{eq:ineq-vertex-weights-ind}
  w_{(H,\pi,\sigma)}(u)\leq w_{(J,\pi|_J,\sigma)}(u) \quad \text{for all $u\in J\setminus v_1$} \enspace.
\end{equation}
To complete the proof we only need to show two things: Firstly, that $(J,\pi|_J)$ is either already contained in $\cH_\sigma$ or added to this set together with $(H,\pi)$ at the latest, and secondly, that $w_{(H,\pi,\sigma)}(v_1)\leq w_{(J,\pi|_J,\sigma)}(v_1)$ for the last vertex $v_1$.

Recall that graphs are only added to $\cH_\sigma$ via one of the sets $\cC^{i,j}$ in line~\ref{cw:complete-primary-threats}, or via one of the sets $\cC^{i,j,k}\seq \cT^{i,j,k}$ in line~\ref{cw:complete-secondary-threats}. If $(H,\pi)$ is contained in one of the sets $\cC^{i,j}$, then by the definition in line~\ref{cw:primary-threats} we have
\begin{subequations}
\begin{equation} \label{eq:d-H-eq-d-i}
  d_\theta(H,v_1,w_{(H,\pi,\sigma)})=d_\sigma^i \enspace,
\end{equation}
whereas if $(H,\pi)$ is contained in one of the sets $\cC^{i,j,k}$, then by the definition in line~\ref{cw:potential-secondary-threats} we have
\begin{equation} \label{eq:d-H-ge-d-i}
  d_\theta(H,v_1,w_{(H,\pi,\sigma)})\geq d_\sigma^i \enspace.
\end{equation}
\end{subequations}

Applying Lemma~\ref{lemma:d-subgraph-monotonicity} using~\eqref{eq:ineq-vertex-weights-ind} shows that
\begin{equation} \label{eq:ineq-d-values}
  d_\theta(H,v_1,w_{(H,\pi,\sigma)})\leq d_\theta(J,v_1,w_{(J,\pi|_J,\sigma)}) \enspace.
\end{equation}
We will distinguish the cases where the inequality \eqref{eq:ineq-d-values} is strict,
\begin{subequations}
\begin{equation} \label{eq:ineq-d-values-strict}
  d_\theta(H,v_1,w_{(H,\pi,\sigma)})<d_\theta(J,v_1,w_{(J,\pi|_J,\sigma)}) \enspace,
\end{equation}
and where it is tight,
\begin{equation} \label{eq:ineq-d-values-tight}
  d_\theta(H,v_1,w_{(H,\pi,\sigma)})=d_\theta(J,v_1,w_{(J,\pi|_J,\sigma)}) \enspace.
\end{equation}
\end{subequations}
Altogether we distinguish four cases: whether $(H,\pi)$ is contained in one of the sets $\cC^{i,j}$ or $\cC^{i,j,k}$, and whether the inequality \eqref{eq:ineq-d-values} is strict or tight.

\begin{itemize}
\item $(H,\pi)\in\cC^{i,j}$ and inequality \eqref{eq:ineq-d-values} is strict. Combining \eqref{eq:d-H-eq-d-i} and \eqref{eq:ineq-d-values-strict} yields
\begin{equation} \label{eq:d-J-gt-d-sigma-i}
  d_\theta(J,v_1,w_{(J,\pi|_J,\sigma)})>d_\sigma^i \enspace.
\end{equation}
By the definition of $d_\sigma^i$ in line~\ref{cw:least-dangerous-threat}, it follows from \eqref{eq:d-J-gt-d-sigma-i} that if $(J,\pi|_J)$ was not already contained in $\cH_\sigma$ at the beginning of the repeat-loop~(**) (in the $i$-th iteration of the repeat-loop~(*)), then at this point $(J\setminus v_1,\pi|_{J\setminus v_1})$ was not contained in $\cH_\sigma$ either. By \eqref{eq:J-minus-Hs} there must then be some $j'<j$ such that $(J\setminus v_1,\pi|_{J\setminus v_1})$ was added to $\cH_\sigma$ in the $j'$-th iteration of the repeat-loop~(**). Combining the first part of Lemma~\ref{lemma:end-**} and \eqref{eq:d-J-gt-d-sigma-i} shows that also $(J,\pi|_J)$ was added to $\cH_\sigma$ in the $j'$-th iteration of the repeat-loop~(**). In any case $(J,\pi|_J)$ is already contained in $\cH_\sigma$ when $(H,\pi)$ is added to this set. Applying the first part of Lemma~\ref{lemma:danger-weight-monotonicity} using \eqref{eq:ineq-d-values-strict} yields that $w_{(H,\pi,\sigma)}(v_1)\leq w_{(J,\pi|_J,\sigma)}(v_1)$, completing the inductive step in this case.

\item $(H,\pi)\in\cC^{i,j,k}$ and inequality \eqref{eq:ineq-d-values} is strict. Combining \eqref{eq:d-H-ge-d-i} and \eqref{eq:ineq-d-values-strict} yields $d_\theta(J,v_1,w_{(J,\pi|_J,\sigma)})>d_\sigma^i$. Therefore, if $(J,\pi|_J)$ is not already contained in $\cH_\sigma$ when $(H,\pi)$ is added to this set via the set $\cC^{i,j,k}$, then $(J,\pi|_J)$ is contained in $\cC^{i,j,k}$ as well (recall \eqref{eq:J-minus-Hs} and the definition in line~\ref{cw:potential-secondary-threats} and note that $(J,\pi|_J)$ satisfies the first condition in line~\ref{cw:secondary-threat-condition}), and added to $\cH_\sigma$ together with $(H,\pi)$. To complete the inductive step apply again the first part of Lemma~\ref{lemma:danger-weight-monotonicity} using \eqref{eq:ineq-d-values-strict}.

\item $(H,\pi)\in\cC^{i,j}$ and inequality \eqref{eq:ineq-d-values} is tight. Combining \eqref{eq:d-H-eq-d-i} and \eqref{eq:ineq-d-values-tight} yields $d_\theta(J,v_1,w_{(J,\pi|_J,\sigma)})=d_\sigma^i$. Therefore, if $(J,\pi|_J)$ is not already contained in $\cH_\sigma$ when $(H,\pi)$ is added to this set via the set $\cC^{i,j}$, then $(J,\pi|_J)$ is contained in $\cC^{i,j}$ as well (recall \eqref{eq:J-minus-Hs} and the definition in line~\ref{cw:primary-threats}), and added to $\cH_\sigma$ together with $(H,\pi)$. The definitions in line~\ref{cw:set-primary-weight}, \ref{cw:set-weight-index-lower}, \ref{cw:set-weight-index-higher} and \ref{cw:set-secondary-weight} show that in any case
\begin{equation} \label{eq:w-v1-H-J}
  w_{(H,\pi,\sigma)}(v_1)\eqBy{eq:def-w} w_\sigma(H,\pi) =w^i \quad \text{and} \quad
  w_{(J,\pi|_J,\sigma)}(v_1)\eqBy{eq:def-w} w_\sigma(J,\pi|_J)=w^\ibar 
\end{equation}
for some $\ibar\leq i$ with $\alpha_\ibar=\sigma$. Applying the second part of Lemma~\ref{lemma:di-wi-monotonicity} using \eqref{eq:w-v1-H-J} yields that $w_{(H,\pi,\sigma)}(v_1)\leq w_{(J,\pi|_J,\sigma)}(v_1)$, completing the inductive step in this case.

\item $(H,\pi)\in\cC^{i,j,k}$ and inequality \eqref{eq:ineq-d-values} is tight. Combining \eqref{eq:d-H-ge-d-i} and \eqref{eq:ineq-d-values-tight} yields
\begin{equation} \label{eq:d-J-geq-d-sigma-i}
  d_\theta(J,v_1,w_{(J,\pi|_J,\sigma)})\geq d_\sigma^i \enspace.
\end{equation}
Therefore, if $(J,\pi|_J)$ is not already contained in $\cH_\sigma$ when $(H,\pi)$ is added to this set via the set $\cC^{i,j,k}$, then $(J,\pi|_J)$ is contained in $\cT^{i,j,k}$ as well (recall \eqref{eq:J-minus-Hs} and the definition in line~\ref{cw:potential-secondary-threats}). Suppose for the sake of contradiction that $(J,\pi|_J)$ was not transferred from $\cT^{i,j,k}$ to $\cC^{i,j,k}$, i.e., that it violated the condition in line~\ref{cw:secondary-threat-condition}. By~\eqref{eq:d-J-geq-d-sigma-i} and the first condition in line~\ref{cw:secondary-threat-condition} we have
\begin{equation} \label{eq:d-J-d-sigma}
  d_\theta(J,v_1,w_{(J,\pi|_J,\sigma)})=d_\sigma^i\enspace,
\end{equation}
 and by the second condition in line~\ref{cw:secondary-threat-condition} there is a subgraph $\Jbar\seq J$ with $v_1\in\Jbar$ and
\begin{equation} \label{eq:Jbar-Tsigma}
  (\Jbar,\pi|_\Jbar)\in\cC_\sigma(d_\sigma^i)\enspace.
\end{equation}
Combining \eqref{eq:d-H-ge-d-i}, \eqref{eq:ineq-d-values} and \eqref{eq:d-J-d-sigma} shows that
\begin{equation} \label{eq:d-H-sandwiched}
  d_\theta(H,v_1,w_{(H,\pi,\sigma)})=d_\sigma^i \enspace.
\end{equation}
Clearly $\Jbar$ is a subgraph of $H$ that contains the youngest vertex $v_1$, which combined with \eqref{eq:Jbar-Tsigma} and \eqref{eq:d-H-sandwiched} contradicts the fact that $(H,\pi)$ satisfies the condition in line~\ref{cw:secondary-threat-condition} (only graphs satisfying this condition are transferred from $\cT^{i,j,k}$ to $\cC^{i,j,k}$). Hence the graph $(J,\pi|_J)$ is either already contained in $\cH_\sigma$ or added to this set together with $(H,\pi)$ via the set $\cC^{i,j,k}$ at the latest.

It remains to show that $w_{(H,\pi,\sigma)}(v_1)\leq w_{(J,\pi|_J,\sigma)}(v_1)$. Suppose for the sake of contradiction that this inequality is violated. Using \eqref{eq:ineq-d-values-tight} and the second part of Lemma~\ref{lemma:danger-weight-monotonicity} this implies that $w_\sigma(H,\pi)$ is defined in line~\ref{cw:set-secondary-weight} with $\ihat$ defined in line~\ref{cw:set-weight-index-lower}, and that $w_\sigma(J,\pi|_J)$ is defined either in line~\ref{cw:set-primary-weight} or in line~\ref{cw:set-secondary-weight} with $\ihat$ defined in line~\ref{cw:set-weight-index-higher}. In the following we show that none of those cases can occur, as in each case, similarly to above, the existence of a graph $\Jbar\seq H$ with $v_1\in\Jbar$ and $(\Jbar,\pi|_\Jbar)\in\cC_\sigma(d_\theta(H,v_1,w_{(H,\pi,\sigma)}))$ causes $(H,\pi)$ to violate the condition in line~\ref{cw:condition-lower-index} (thus causing a contradiction).
First consider the case that $w_\sigma(J,\pi|_J)$ is defined in line~\ref{cw:set-primary-weight}, i.e., $(J,\pi|_J)$ is an element of one of the sets $\cC^{\ibar,\jbar}$ defined in some iteration $\ibar\leq i$.
From the definition in line~\ref{cw:primary-threats} we know that
\begin{equation} \label{eq:d-J-d-H-d-sigma-ibar} d_\sigma^\ibar=d_\theta(J,v_1,w_{(J,\pi|_J,\sigma)})\eqBy{eq:ineq-d-values-tight} d_\theta(H,v_1,w_{(H,\pi,\sigma)}) \enspace.
\end{equation}
We distinguish the subcases $\jbar=1$ and $\jbar\geq 2$. If $\jbar=1$ then $(J,\pi|_J)$ was added to the set $\cC_\sigma(d_\sigma^\ibar)$ in line~\ref{cw:forward-threats}. Using \eqref{eq:d-J-d-H-d-sigma-ibar} it follows that $(J,\pi|_J)\in\cC_\sigma(d_\theta(H,v_1,w_{(H,\pi,\sigma)}))$, showing that the graph $\Jbar=J$ itself causes $(H,\pi)$ to violate the condition in line~\ref{cw:condition-lower-index}. Similary, if $\jbar\geq 2$, then by the second part of Lemma~\ref{lemma:end-**} there is a subgraph $\Jbar\seq J$ with $v_1\in \Jbar$ and $(\Jbar,\pi|_\Jbar)\in\cC_\sigma(d_\sigma^\ibar)$. Using again \eqref{eq:d-J-d-H-d-sigma-ibar} it follows that $(\Jbar,\pi|_\Jbar)\in\cC_\sigma(d_\theta(H,v_1,w_{(H,\pi,\sigma)}))$, causing a contradiction also in this case.
Now consider the case that $w_\sigma(J,\pi|_J)$ is defined in line~\ref{cw:set-secondary-weight} with $\ihat$ defined in line~\ref{cw:set-weight-index-higher}. Then by the condition in line~\ref{cw:condition-lower-index} there is a subgraph $\Jbar\seq J$ with $v_1\in\Jbar$ and $(\Jbar,\pi|_\Jbar)\in\cC_\sigma(d_\theta(J,v_1,w_{(J,\pi|_J,\sigma)}))$. Using \eqref{eq:ineq-d-values-tight} it follows that $(\Jbar,\pi|_\Jbar)\in\cC_\sigma(d_\theta(H,v_1,w_{(H,\pi,\sigma)}))$, causing $(H,\pi)$ to violate the condition in line~\ref{cw:condition-lower-index}. This completes the proof.
\end{itemize}
\end{proof}

\subsection{Graphs in $\cC^{i,j,k}$ irrelevant for Builder} \label{sec:partner}

When proving Proposition~\ref{prop:Lambda-Builder} in Section~\ref{sec:upper-bound} we develop a Builder strategy along the lines of the algorithm $\CW()$. The following lemma will be crucial in this: It shows that Builder does not need to enforce any ordered graph $(H,\pi)\in\cS(F)$ that is added to one of the families $\cH_s$ via one of the sets $\cC^{i,j,k}$, as for each such graph there is an alternative ordering $\pi'\in\Pi(V(H))$ of its vertices such that $(H,\pi')$ has already been added to $\cH_s$ via the set $\cC^{i,j}$ with weights that are at least as good for Builder.

\begin{lemma}[Partners between $\cC^{i,j}$ and $\cC^{i,j,k}$] \label{lemma:partner}
Let $\sigma\in[r]$ and consider some iteration $i$ of the repeat-loop~(*) with $\alpha_i=\sigma$ of the algorithm $\CW()$. In every iteration $j\geq 1$ of the repeat-loop~(**), the set $\cC^{i,j}$ defined in line~\ref{cw:primary-threats} and the sets $\cC^{i,j,k}$, $k\geq 1$,  defined in line~\ref{cw:secondary-threats-init} and \ref{cw:secondary-threats} during each iteration of the repeat-loop (***), satisfy the following: For any graph $(H,\pi)\in\cC^{i,j,k}$, $\pi=(v_1,\ldots,v_h)$, the graph $(H,\pi')$, defined by $\pi':=(v_{k+1},v_1,v_2,\ldots,v_k,v_{k+2},\ldots,v_h)$, is contained in $\cC^{i,j}$ and satisfies $w_{(H,\pi,\sigma)}(u)\leq w_{(H,\pi',\sigma)}(u)$ for all $u\in H$.
\end{lemma}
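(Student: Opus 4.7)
The plan is to proceed by induction on $(j,k)$ ordered lexicographically within the fixed iteration $i$ of repeat-loop~(*) (with $\alpha_i=\sigma$), taking as inductive hypothesis the full statement of the lemma for all pairs $(j',k')$ strictly preceding $(j,k)$. Fix $(H,\pi)\in\cC^{i,j,k}$ with $\pi=(v_1,\ldots,v_h)$ and let $\pi'$ denote the partner ordering. The weight comparison at a vertex $v_m$ with $m\geq k+2$ is immediate, since both orderings remove the same set $\{v_1,\ldots,v_{m-1}\}$ of youngest vertices in these positions and induce the same ordering on the complement, so the two corresponding $w_\sigma$-values coincide. For $v_m$ with $1\leq m\leq k$, the value $w_{(H,\pi',\sigma)}(v_m)$ is the $w_\sigma$-value of the ordered subgraph obtained from the one relevant for $w_{(H,\pi,\sigma)}(v_m)$ by additionally deleting $v_{k+1}$, so subgraph weight monotonicity (Lemma~\ref{lemma:subgraph-monotonicity}) yields the desired inequality.

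Next I would establish that $(H,\pi')\in\cC^{i,j}$. This amounts to verifying three things: (a) $(H\setminus v_{k+1},\pi\setminus v_{k+1})\in\cH_\sigma$ at the start of the $j$-th iteration of repeat-loop~(**); (b) $(H,\pi')\notin\cH_\sigma$ at that moment; and (c) $d_\theta(H,v_{k+1},w_{(H,\pi',\sigma)})=d_\sigma^i$. Claim~(a) is the most delicate. If $(H\setminus v_1,\pi\setminus v_1)$ was already in $\cH_\sigma$ at the start of iteration~$j$, then Lemma~\ref{lemma:subgraph-monotonicity} and Lemma~\ref{lemma:closure-Hs} together provide the required membership after a suitable relabeling. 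Otherwise $(H\setminus v_1,\pi\setminus v_1)$ must have been added earlier via some $\cC^{i,j'',k''}$ with $(j'',k'')$ strictly preceding $(j,k)$, and invoking the inductive hypothesis on $(H\setminus v_1,\pi\setminus v_1)$ produces its own partner in $\cC^{i,j''}$, from which one can extract the desired ordered subgraph $(H\setminus v_{k+1},\pi\setminus v_{k+1})$ in $\cH_\sigma$ at an early enough point. Claim~(c) then follows by combining the weight comparisons above with Lemma~\ref{lemma:d-subgraph-monotonicity} and the characterization of $d_\sigma^i$ as the current maximum over $\cC(\cH_\sigma,F)$ in line~\ref{cw:least-dangerous-threat}. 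Claim~(b) should be handled via the exclusion conditions in lines~\ref{cw:secondary-threat-condition}--\ref{cw:condition-lower-index}, which force $(H,\pi')$ not to have been processed yet.

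The hardest step will be the weight comparison at $v_{k+1}$. Here $w_{(H,\pi,\sigma)}(v_{k+1})=w_\sigma(G,\tau)$ where $(G,\tau):=(H\setminus\{v_1,\ldots,v_k\},\pi\setminus\{v_1,\ldots,v_k\})$, while $w_{(H,\pi',\sigma)}(v_{k+1})=w_\sigma(H,\pi')$, which equals $w^i$ once $(H,\pi')$ has been identified as a member of $\cC^{i,j}$. Because Lemma~\ref{lemma:di-wi-monotonicity} says $w^i$ is non-increasing in $i$ and decreases strictly whenever the colour changes, an earlier arrival of $(G,\tau)$ in $\cH_\sigma$ would a priori give it a strictly larger weight than $w^i$, in the wrong direction for the claim. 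The proof must therefore show that $(G,\tau)$ was in fact introduced already with weight exactly $w^i$, which I expect to force by chasing ancestors of $(H,\pi')$ along the tree $\cT(F)$ and invoking the inductive hypothesis on smaller partner configurations together with Lemmas~\ref{lemma:di-wi-monotonicity} and~\ref{lemma:danger-weight-monotonicity}. Carrying out this bookkeeping of when each relevant ordered subgraph entered $\cH_\sigma$ and with which assigned weight is the crux of the argument.
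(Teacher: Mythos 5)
Your overall framework is the paper's own: induct on $k$ within a fixed $j$-iteration (your $(j,k)$ lexicographic induction is the same in effect), and your weight comparisons at $v_m$ for $m\geq k+2$ (coinciding $w_\sigma$-values) and for $1\leq m\leq k$ (via Lemma~\ref{lemma:subgraph-monotonicity}) are both correct. However, the two pieces you yourself flag as delicate are precisely where the substance lies, and the sketches you give for them do not work. For claim~(a), deducing $(H\setminus v_{k+1},\pi\setminus v_{k+1})\in\cH_\sigma$ from $(H\setminus v_1,\pi\setminus v_1)\in\cH_\sigma$ via Lemma~\ref{lemma:subgraph-monotonicity} and Lemma~\ref{lemma:closure-Hs} ``after a suitable relabeling'' is not a valid step: those lemmas only give membership of ordered graphs obtained from $(H\setminus v_1,\pi\setminus v_1)$ by \emph{deleting} further vertices, whereas $(H\setminus v_{k+1},\pi\setminus v_{k+1})$ contains $v_1$ and is not a subgraph of $H\setminus v_1$. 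The paper's route is instead to apply the inductive hypothesis to $(H\setminus v_1,\pi\setminus v_1)$ (which, by the structure of the loop, lies in $\cC^{i,j}$ when $k=1$ and in $\cC^{i,j,k-1}$ when $k\geq 2$), obtain its partner $(H\setminus v_1,\pi^*)\in\cC^{i,j}$, conclude $(H\setminus\{v_1,v_{k+1}\},\pi\setminus\{v_1,v_{k+1}\})\in\cH_\sigma$, establish $d_\theta(H\setminus v_{k+1},v_1,w_{(H\setminus v_{k+1},\pi\setminus v_{k+1},\sigma)})\geq d_\sigma^i$, and finally run a case analysis (strict vs.\ tight, $j=1$ vs.\ $j>1$) using Lemma~\ref{lemma:end-**} and the exclusion tests in lines~\ref{cw:secondary-threat-condition}--\ref{cw:condition-lower-index}; none of this is replaceable by relabeling.

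For the weight at $v_{k+1}$, your diagnosis that $w_\sigma(G,\tau)$ with $(G,\tau)=(H\setminus\{v_1,\ldots,v_k\},\pi\setminus\{v_1,\ldots,v_k\})$ must in fact equal $w^i$ is correct, but the method you suggest --- chasing the entry times of $(G,\tau)$ and its ancestors in $\cT(F)$ and reconstructing weights via Lemmas~\ref{lemma:di-wi-monotonicity} and~\ref{lemma:danger-weight-monotonicity} --- does not give the inequality in the right direction: monotonicity only yields $w_\sigma(G,\tau)\geq w^i$, the opposite of what is needed, and there is no a priori control over the round in which $(G,\tau)$ entered $\cH_\sigma$. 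The paper sidesteps this entirely: the inductive hypothesis at $(H\setminus v_1,\pi\setminus v_1)$ already asserts the weight inequality at \emph{every} vertex, including $v_{k+1}$, so $w_\sigma(G,\tau)=w_{(H\setminus v_1,\pi\setminus v_1,\sigma)}(v_{k+1})\leq w_{(H\setminus v_1,\pi^*,\sigma)}(v_{k+1})=w^i$, the last equality holding because $v_{k+1}$ is the youngest vertex of the partner $(H\setminus v_1,\pi^*)\in\cC^{i,j}$ and gets assigned $w^i$ in line~\ref{cw:set-primary-weight}. Similarly, the lower-bound direction of claim~(c), $d_\theta(H,v_{k+1},w_{(H,\pi',\sigma)})\geq d_\sigma^i$, is not a consequence of $d_\sigma^i$ being the round's maximum (the maximum over $\cC(\cH_\sigma,F)$ drops as $\cH_\sigma$ grows); it requires the paper's contradiction argument via the $d_\theta$-minimizing subgraph $J'$ and the inequality $d_\theta(H,v_1,w_{(H,\pi,\sigma)})\geq d_\sigma^i$. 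These three points are genuine gaps, not routine bookkeeping.
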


\begin{proof}
For the reader's convenience, Figure~\ref{fig:partner} illustrates the notations used throughout the proof.

\begin{figure}
\centering
\PSforPDF{
 \psfrag{tf}{\Large $\cT(F)$}
 \psfrag{ib}{Induction base}
 \psfrag{ih}{Induction hypothesis}
 \psfrag{is}{Induction step}
 \psfrag{k0}{$k=0$}
 \psfrag{k1}{$k=1$}
 \psfrag{k2}{$k=2$}
 \psfrag{tij}{$\cC^{i,j}$}
 \psfrag{tij1}{$\in\cC^{i,j,1}$}
 \psfrag{tij2}{$\in\cC^{i,j,2}$}
 \psfrag{tijkm1}{$\in\cC^{i,j,k-1}$}
 \psfrag{tijk}{$\in\cC^{i,j,k}$}
 \psfrag{v1}{$v_1$}
 \psfrag{v2}{$v_2$}
 \psfrag{vkp1}{$v_{k+1}$}
 \psfrag{wi}{$w^i$}
 \psfrag{wih}{$w^\ibar$}
 \psfrag{pi}{$\pi=(v_1,\ldots,v_h)$}
 \psfrag{pis}{$\pi^*=(v_{k+1},v_2,\ldots,v_k,v_{k+2},\ldots,v_h)$}
 \psfrag{pip}{$\pi'=(v_{k+1},v_1,\ldots,v_k,v_{k+2},\ldots,v_h)$} 
 \psfrag{h}{$(H,\pi)$}
 \psfrag{hp}{$(H,\pi')$}
 \psfrag{hmv1}{$(H\setminus v_1,\pi\setminus v_1)$}
 \psfrag{hmv1p}{$(H\setminus v_1,\pi^*)$}
 \psfrag{hmm}{\parbox{6cm}{$\;\;\;\;(H\setminus\{v_1,v_{k+1}\},\pi^*\setminus v_{k+1})$ \\
                           $=(H\setminus\{v_1,v_{k+1}\},\pi\setminus\{v_1,v_{k+1}\})$ \\
                           $=(H\setminus\{v_1,v_{k+1}\},\pi'\setminus\{v_1,v_{k+1}\})$}}
 \psfrag{hmvkp1}{$(H\setminus v_{k+1},\pi\setminus v_{k+1})$}
 \includegraphics{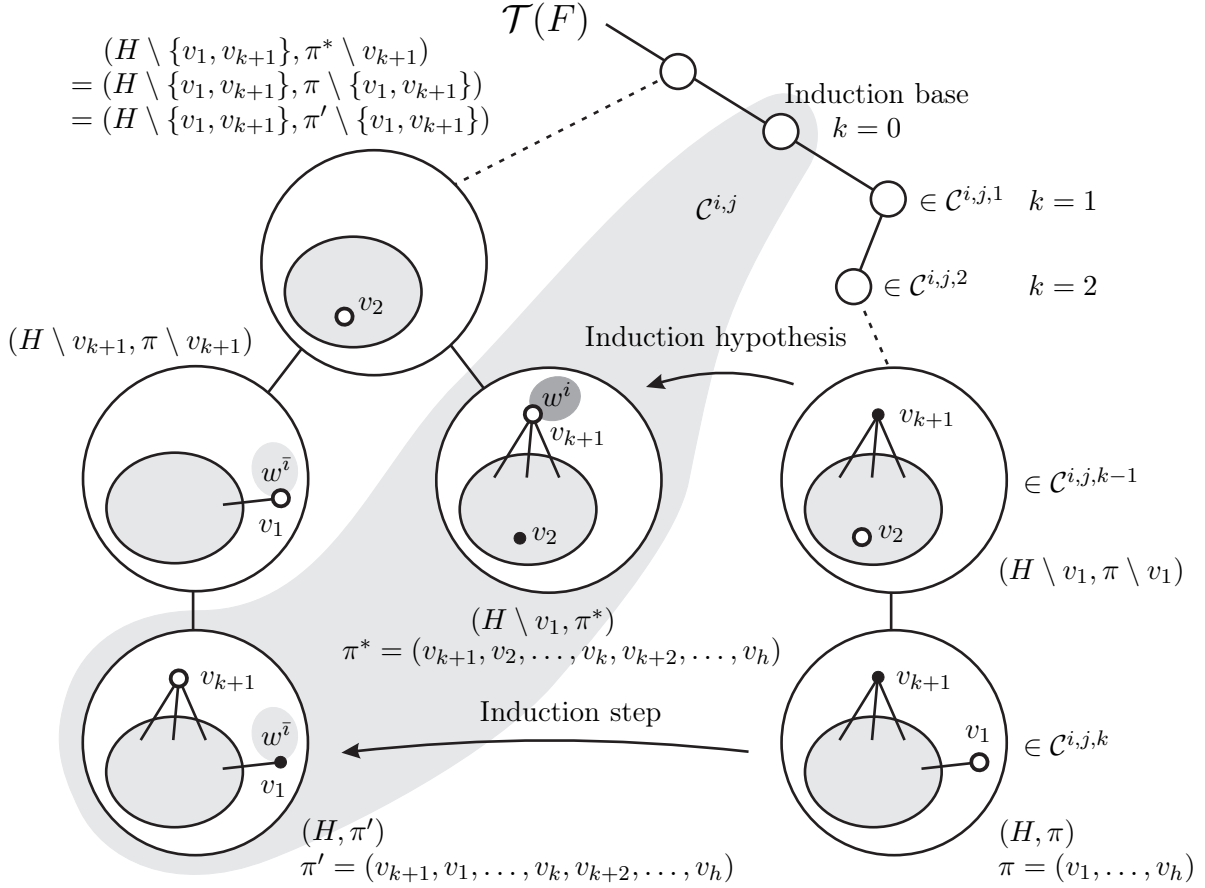}
}
\caption{Notations used in the proof of Lemma~\ref{lemma:partner}. For each ordered graph, the respective youngest vertex is emphasized.} \label{fig:partner}
\end{figure}

We shall prove the following more technical claim: Let $\sigma\in[r]$ and consider some iteration $i$ of the repeat-loop~(*) with $\alpha_i=\sigma$ and some iteration $j$ of the repeat-loop~(**). For $k=0$ and any graph $(H,\pi)\in\cC^{i,j}$, $\pi=(v_1,\ldots,v_h)$, and for $k\geq 1$ and any graph $(H,\pi)\in\cC^{i,j,k}$, $\pi=(v_1,\ldots,v_h)$, the graph $(H,\pi')$, defined by $\pi':=(v_{k+1},v_1,v_2,\ldots,v_k,v_{k+2},\ldots,v_h)$, is contained in $\cC^{i,j}$ and satisfies $w_{(H,\pi,\sigma)}(u)\leq w_{(H,\pi',\sigma)}(u)$ for all $u\in H$.

We will argue at the end of the proof that any graph $(H,\pi)$ contained in one of the sets $\cC^{i,j,k}$ has at least three vertices, ensuring that all subgraphs used in the following arguments have at least one vertex.

To prove the auxiliary claim we consider a fixed iteration $j\geq 1$ of the repeat-loop (**) and argue by induction over $k$,  the number of iterations of the repeat-loop~(***). We choose the state before the beginning of the first iteration $(k=0)$ as our induction base. In this case $\pi'=\pi$ and the claim is trivially true.

For the induction step consider a graph $(H,\pi)\in\cC^{i,j,k}$, $\pi=(v_1,\ldots,v_h)$, that is added to $\cH_\sigma$ in the $k$-th iteration of the repeat-loop~(***) ($k\geq 1$). By the definition of $\cT^{i,j,k}$ in line~\ref{cw:potential-secondary-threats} (recall that $\cC^{i,j,k}\seq \cT^{i,j,k}$) we clearly have
\begin{equation} \label{eq:danger-lower}
  d_\theta(H,v_1,w_{(H,\pi,\sigma)})\geq d_\sigma^i \enspace.
\end{equation}
As $(H,\pi)$ is obtained from $(H\setminus v_1,\pi\setminus v_1)$ by adding $v_1$ as the youngest vertex and all edges incident to it, we have
\begin{equation} \label{eq:H-H-minus-weights}
  w_{(H,\pi,\sigma)}(u)=w_{(H\setminus v_1,\pi\setminus v_1,\sigma)}(u) \quad \text{for all $u\in H\setminus v_1$} \enspace.
\end{equation}
If $j=1$, the definition of $d_\sigma^i$ in line~\ref{cw:least-dangerous-threat} ensures that at the beginning of the $j$-th iteration of the repeat-loop~(**) we have $d_\theta(J,u_1,w_{(J,\tau,\sigma)})\leq d_\sigma^i$ for all $(J,\tau)\in \cC(\cH_\sigma,F)$, $\tau=(u_1,\ldots,u_c)$. If $j>1$, the same statement is true by the first part of Lemma~\ref{lemma:end-**}. Thus if the inequality \eqref{eq:danger-lower} is strict, then $(H,\pi)$ was not in $\cC(\cH_\sigma,F)$ at the beginning of the $j$-th iteration of the repeat-loop~(**), and thus $(H\setminus v_1,\pi\setminus v_1)$ was not in $\cH_\sigma$ at this point.
If $k=1$ this means that $(H\setminus v_1,\pi\setminus v_1)$ must have been added to $\cH_\sigma$ via the set $\cC^{i,j}$. The same conclusion holds if $k=1$ and the inequality \eqref{eq:danger-lower} is tight, as otherwise $(H,\pi)$ would have qualified for inclusion in $\cC^{i,j}$.

Note that the conditions for inclusion into $\cT^{i,j,k}$ and $\cC^{i,j,k}$ in lines \ref{cw:potential-secondary-threats} and~\ref{cw:secondary-threat-condition} do not change during the entire repeat-loop~(***) except for the requirement that $(H,\pi)$ is in the current set $\cC(\cH_\sigma,F)$. Thus if $k\geq 2$ then $(H\setminus v_1,\pi\setminus v_1)$ must have been added to $\cH_\sigma$ via $\cC^{i,j,k-1}$ in the $(k-1)$-th iteration of the repeat-loop~(***).

In all cases we can apply the induction hypothesis and conclude that the graph $(H\setminus v_1,\pi^*)$, defined by $\pi^*:=(v_{k+1},v_2,v_3\ldots,v_k,v_{k+2},\ldots,v_h)$ (to be understood as $\pi\setminus v_1$ if $k=1$) is contained in $\cC^{i,j}$ and satisfies
\begin{equation} \label{eq:ineq-weight-ind}
  w_{(H\setminus v_1,\pi\setminus v_1,\sigma)}(u)\leq w_{(H\setminus v_1,\pi^*,\sigma)}(u) \quad \text{for all $u\in H\setminus v_1$} \enspace.
\end{equation}
By the definition of $\cC^{i,j}$ in line~\ref{cw:primary-threats}, $(H\setminus v_1,\pi^*)$ satisfies
\begin{equation} \label{eq:d-H-pi-star}
  d_\theta(H\setminus v_1,v_{k+1},w_{(H\setminus v_1,\pi^*,\sigma)})=d_\sigma^i\enspace,
\end{equation}
and the weight of its youngest vertex $v_{k+1}$ is set to
\begin{equation} \label{eq:weight-H-pi-star}
  w_{(H\setminus v_1,\pi^*,\sigma)}(v_{k+1})\eqBy{eq:def-w} w_\sigma(H\setminus v_1,\pi^*)=w^i
\end{equation}
in line~\ref{cw:set-primary-weight}.

Consider now the graph $(H\setminus v_{k+1},\pi\setminus v_{k+1})$ and observe that it is a subgraph of $(H,\pi)$. Applying Lemma~\ref{lemma:d-subgraph-monotonicity} and Lemma~\ref{lemma:subgraph-monotonicity} hence yields
\begin{equation} \label{eq:d-H-minus-vs}
  d_\theta(H\setminus v_{k+1},v_1,w_{(H\setminus v_{k+1},\pi\setminus v_{k+1},\sigma)}) \geq d_\theta(H,v_1,w_{(H,\pi,\sigma)}) \geBy{eq:danger-lower} d_\sigma^i \enspace.
\end{equation}
As $(H\setminus v_1,\pi^*)\in\cC^{i,j}$ we must have had
\begin{equation} \label{eq:H-minus-v1-vkp1-Hsigma}
  (H\setminus\{v_1,v_{k+1}\},\pi^*\setminus v_{k+1})=(H\setminus\{v_1,v_{k+1}\},\pi\setminus\{v_1,v_{k+1}\})\in\cH_\sigma
\end{equation}
at the beginning of the $j$-th iteration of the repeat-loop~(**).

As our next intermediate step we will show that the graph $(H\setminus v_{k+1},\pi\setminus v_{k+1})$ (which is obtained from the graph in \eqref{eq:H-minus-v1-vkp1-Hsigma} by adding $v_1$ as the youngest vertex and all edges incident to it) is contained in $\cH_\sigma$ at the beginning of the $j$-th iteration of the repeat-loop~(**) as well. We first consider the case that one of the inequalities in \eqref{eq:d-H-minus-vs} is strict, i.e., we have
\begin{equation} \label{eq:d-H-minus-vkp1-strict}
  d_\theta(H\setminus v_{k+1},v_1,w_{(H\setminus v_{k+1},\pi\setminus v_{k+1},\sigma)}) > d_\sigma^i \enspace.
\end{equation}
If $j=1$, it follows from \eqref{eq:H-minus-v1-vkp1-Hsigma} and \eqref{eq:d-H-minus-vkp1-strict} that $(H\setminus v_{k+1},\pi\setminus v_{k+1})$ is indeed contained in $\cH_\sigma$ at the beginning of the $j$-th iteration of the repeat-loop~(**), as otherwise we would obtain a contradiction to the definition of $d_\sigma^i$ in line~\ref{cw:least-dangerous-threat}. The same conclusion holds for $j>1$ by using \eqref{eq:H-minus-v1-vkp1-Hsigma}, \eqref{eq:d-H-minus-vkp1-strict} and the first part of Lemma~\ref{lemma:end-**}. Now consider the case that all inequalities in \eqref{eq:d-H-minus-vs} are tight, i.e., we have
\begin{equation} \label{eq:d-H-minus-vkp1-tight}
  d_\theta(H\setminus v_{k+1},v_1,w_{(H\setminus v_{k+1},\pi\setminus v_{k+1},\sigma)})
  = d_\theta(H,v_1,w_{(H,\pi,\sigma)}) = d_\sigma^i \enspace.
\end{equation}
Suppose that $j=1$ and that $(H\setminus v_{k+1},\pi\setminus v_{k+1})$ was not already contained in $\cH_\sigma$ at the beginning of the first iteration of the repeat-loop~(**). Then by \eqref{eq:H-minus-v1-vkp1-Hsigma} and \eqref{eq:d-H-minus-vkp1-tight}, this graph would be added to $\cC^{i,1}=\cC_\sigma(d_\sigma^i)$ in line~\ref{cw:primary-threats}. As $(H\setminus v_{k+1},\pi\setminus v_{k+1})$ is a subgraph of $(H,\pi)$ that contains the vertex $v_1$, this observation together with the second equality in \eqref{eq:d-H-minus-vkp1-tight} contradicts the fact $(H,\pi)$ satisfies the condition in line~\ref{cw:secondary-threat-condition}. The remaining subcase $j>1$ can be proven analogously by using the second part of Lemma~\ref{lemma:end-**}.

Therefore, we indeed have
\begin{equation} \label{eq:H-minus-vkps-H-sigma}
  (H\setminus v_{k+1},\pi\setminus v_{k+1})\in\cH_\sigma
\end{equation}
at the beginning of the $j$-th iteration of the repeat-loop~(**). The weight assigned to the youngest vertex $v_1$ of this graph is
\begin{equation} \label{eq:weight-v1-1}
  w_{(H\setminus v_{k+1},\pi\setminus v_{k+1},\sigma)}(v_1) = w^\ibar \geq w^i
\end{equation}
for some $\ibar\leq i$ with $\alpha_\ibar=\sigma$, where the last inequality follows from the second part of Lemma~\ref{lemma:di-wi-monotonicity}.

We will show that the graph $(H,\pi')$, defined by $\pi':=(v_{k+1},v_1,\ldots,v_k,v_{k+2},\ldots,v_h)$ (which is obtained from $(H\setminus v_{k+1},\pi\setminus v_{k+1})$ by adding $v_{k+1}$ as the youngest vertex and all edges incident to it), satisfies the inductive claim: We first demonstrate that this graph is contained in $\cC^{i,j}$ and then prove the claimed inequality between the vertex weights of $(H,\pi)$ and $(H,\pi')$.

As a first step towards this goal we show that $d_\theta(H,v_{k+1},w_{(H,\pi',\sigma)})=d_\sigma^i$.

Clearly we have $(H\setminus\{v_1,v_{k+1}\},\pi^*\setminus v_{k+1})=(H\setminus\{v_1,v_{k+1}\},\pi'\setminus\{v_1,v_{k+1}\})$, implying that
\begin{equation} \label{eq:H-minus-H-weights}
  w_{(H\setminus v_1,\pi^*,\sigma)}(u)=w_{(H,\pi',\sigma)}(u) \quad \text{for all $u\in H\setminus\{v_1,v_{k+1}\}$} \enspace.
\end{equation}
As $(H,\pi')$ is obtained from $(H\setminus v_{k+1},\pi\setminus v_{k+1})$ by adding $v_{k+1}$ as the youngest vertex and all edges incident to it, we have
\begin{equation} \label{eq:weight-v1-2}
  w_{(H,\pi',\sigma)}(v_1)=w_{(H\setminus v_{k+1},\pi\setminus v_{k+1},\sigma)}(v_1) \geBy{eq:weight-v1-1} w^i \enspace.
\end{equation}

Observe that $(H,\pi')$ is a supergraph of $(H\setminus v_1,\pi^*)$, so applying Lemma~\ref{lemma:d-subgraph-monotonicity} and Lemma~\ref{lemma:subgraph-monotonicity} yields
\begin{equation} \label{eq:d-H-vkp1-ineq}
  d_\theta(H,v_{k+1},w_{(H,\pi',\sigma)}) \leq d_\theta(H\setminus v_1,v_{k+1},w_{(H\setminus v_1,\pi^*,\sigma)}) \eqBy{eq:d-H-pi-star} d_\sigma^i \enspace.
\end{equation}
Now suppose for the sake of contradiction that the first inequality in \eqref{eq:d-H-vkp1-ineq} is strict, i.e., we have
\begin{equation} \label{eq:d-H-vkp1-ineq-strict}
  d_\theta(H,v_{k+1},w_{(H,\pi',\sigma)}) < d_\sigma^i \enspace.
\end{equation}
By \eqref{eq:def-d} and \eqref{eq:H-minus-H-weights} this implies that some
\begin{equation} \label{eq:argmin}
  J'\in\argmin_{J\seq H:v_{k+1}\in J}\Big(\sum_{u\in J\setminus v_{k+1}}\big(1+w_{(H,\pi',\sigma)}(u)\big)-e(J)\cdot\theta\Big)
\end{equation}
includes the vertex $v_1$. But then we would have
\begin{align*}
  d_\theta(H,v_1,w_{(H,\pi,\sigma)})
    &\leBy{eq:def-d}
      \sum_{u\in J'\setminus v_1} \big(1+w_{(H,\pi,\sigma)}(u)\big)-e(J')\cdot\theta \\
    &\leByM{\eqref{eq:H-H-minus-weights},\eqref{eq:ineq-weight-ind}}
      \sum_{u\in J'\setminus v_1} \big(1+w_{(H\setminus v_1,\pi^*,\sigma)}(u)\big)-e(J')\cdot\theta \\
    &= \sum_{u\in J'\setminus \{v_1,v_{k+1}\}} \big(1+w_{(H\setminus v_1,\pi^*,\sigma)}(u)\big)+\big(1+\underbrace{w_{(H\setminus v_1,\pi^*,\sigma)}(v_{k+1})}_{\eqBy{eq:weight-H-pi-star} w^i}\big)-e(J')\cdot\theta \\
    &\leByM{\eqref{eq:H-minus-H-weights},\eqref{eq:weight-v1-2}}
      \sum_{u\in J'\setminus v_{k+1}} \big(1+w_{(H,\pi',\sigma)}(u)\big)-e(J')\cdot\theta \\
    &\eqByM{\eqref{eq:def-d},\eqref{eq:argmin}} d_\theta(H,v_{k+1},w_{(H,\pi',\sigma)}) \lBy{eq:d-H-vkp1-ineq-strict} d_\sigma^i \enspace,
\end{align*}
contradicting \eqref{eq:danger-lower}. Hence \eqref{eq:d-H-vkp1-ineq} holds with equality and we have indeed
\begin{equation} \label{eq:d-H-pip}
  d_\theta(H,v_{k+1},w_{(H,\pi',\sigma)})=d_\sigma^i \enspace.
\end{equation}
Clearly, as $(H\setminus v_1,\pi^*)$ is contained in $\cC^{i,j}$, it follows that this graph is not contained in $\cH_\sigma$ at the beginning of the $j$-th iteration of the repeat-loop~(*). As $(H,\pi')$ is a supergraph of $(H\setminus v_1,\pi^*)$, it follows from Lemma~\ref{lemma:subgraph-monotonicity} that $(H,\pi')$ is not contained in $\cH_\sigma$ at this point either. Hence, by \eqref{eq:H-minus-vkps-H-sigma}, \eqref{eq:d-H-pip} and the definition of $\cC^{i,j}$ in line~\ref{cw:primary-threats} we have $(H,\pi')\in\cC^{i,j}$.

It remains to check the claimed inequality between the vertex weights of $(H,\pi)$ and $(H,\pi')$. Note that $(H\setminus v_{k+1},\pi\setminus v_{k+1})$ is a subgraph of $(H,\pi)$. We can hence apply Lemma~\ref{lemma:subgraph-monotonicity} and, observing that $(H,\pi')$ is obtained from $(H\setminus v_{k+1},\pi\setminus v_{k+1})$ by adding $v_{k+1}$ as the youngest vertex and all edges incident to it, obtain the desired inequality for all vertices in the set $\{v_1,\ldots,v_h\}\setminus\{v_{k+1}\}$. For the vertex $v_{k+1}$, note that \eqref{eq:H-H-minus-weights}, \eqref{eq:ineq-weight-ind} and \eqref{eq:weight-H-pi-star} together yield $w_{(H,\pi,\sigma)}(v_{k+1})\leq w^i$ and that $w_{(H,\pi',\sigma)}(v_{k+1})\eqBy{eq:def-w} w_\sigma(H,\pi')=w^i$ by the definition in line~\ref{cw:set-primary-weight}.

This completes the inductive proof of Lemma~\ref{lemma:partner}.

It remains to show that every graph $(H,\pi)\in\cC^{i,j,k}$ has at least three vertices, and that therefore all graphs used in the above arguments are well-defined and have at least one vertex. We show that all graphs $(H,\pi)\in\cS(F)$ on at most two vertices that are ever added to the family $\cH_\sigma$ in the course of the algorithm are added to it via one of the sets $\cC^{i,j}$ defined in line~\ref{cw:primary-threats}: As argued in the proof of Lemma~\ref{lemma:algo-well-defined} on page~\pageref{page:K1}, this is true for the graph $(K_1,(v_1))$ (an isolated vertex), which is added via the set $\cC^{\icheck,1}$ in the first iteration $\icheck$ for which $\alpha_\icheck=\sigma$.
Once we have $\cH_\sigma=\{(K_1,(v_1))\}$, the only two ordered subgraphs of $F$ on two vertices, a single edge and two isolated vertices, are contained in $\cC(\cH_\sigma,F)$. Using this fact together with the observation that $(K_1,(v_1))$ is contained in $\cC_\sigma(0)$ and is a subgraph of both of them, it is easy to check that each of those two graphs can only be added to $\cH_\sigma$ via one of the sets $\cC^{i,j}$: If the $d_\theta()$-value of one of these graphs is equal to 0, then it is added via $\cC^{\icheck,2}$. Otherwise its $d_\theta()$-value is strictly smaller than 0 and it is added via $\cC^{i,1}$ for some $i>\icheck$.
\end{proof}

\subsection{Further properties of the algorithm} \label{sec:further-properties}

The next lemma implies in particular that for a graph $(H,\pi)\in\cH_s$, $\pi=(v_1,\ldots,v_h)$ and a subgraph $J\seq H$, $v_1\in J$, minimizing the right hand side of the definition of $d_\theta(H,v_1,w_{(H,\pi,s)})$ in~\eqref{eq:def-d}, the inequality stated in Lemma~\ref{lemma:subgraph-monotonicity} is in fact an equality. As a consequence, in all situations where the vertex weights of a subgraph $J\seq H$ are relevant, these weights only depend on $(J,\pi|_J)$ and not on the `context' $H$. This is far from clear a priori, and in fact not true for arbitrary subgraphs $J\seq H$.

\begin{lemma}[Irrelevant context of $d_\theta()$-minimizing subgraphs] \label{lemma:irrelevant-context-d}
Throughout the algorithm $\textsc{Compute}\-\textsc{Weights}()$, for every $s\in[r]$, any graph $(H,\pi)\in\cH_s\cup\cC(\cH_s,F)$, $\pi=(v_1,\ldots,v_h)$, and any graph $\Jhat$ from the family
\begin{equation} \label{eq:Jhat-argmin2}
  \argmin_{J\seq H:v_1\in J} \Big(\sum_{u\in J\setminus v_1} \big(1+w_{(H,\pi,s)}(u)\big)-e(J)\cdot\theta\Big)
\end{equation}
we have
\begin{subequations} \label{eq:argmin-weights}
\begin{equation} \label{eq:w-H-w-Jhat-v2-vj}
  w_{(H,\pi,s)}(u)=w_{(\Jhat,\pi|_\Jhat,s)}(u) \quad \text{for all $u\in \Jhat\setminus v_1$} \enspace.
\end{equation}
Moreover, if $(H,\pi)\in\cH_s$, then we have
\begin{equation} \label{eq:w-H-w-Jhat-v1}
  w_{(H,\pi,s)}(v_1)=w_{(\Jhat,\pi|_\Jhat,s)}(v_1) \enspace.
\end{equation}
\end{subequations}
\end{lemma}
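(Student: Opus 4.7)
The plan is a strong induction on $v(H)$, proving both parts of the lemma simultaneously. The base case $v(H) = 1$ is immediate since then $\Jhat = H = (K_1, (v_1))$ and every vertex weight is just the value $w_s(K_1, (v_1))$. For the inductive step, note that since $\Jhat \seq H$, Lemma~\ref{lemma:subgraph-monotonicity} immediately yields the ``$\leq$'' direction of each claimed identity and, when $(H, \pi) \in \cH_s$, also places $(\Jhat, \pi|_\Jhat)$ in $\cH_s$. It therefore suffices to establish the reverse inequalities.

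For part~(a), fix $u \in \Jhat \setminus v_1$ and let $H_u$ denote the subgraph of $H$ induced by $u$ together with all vertices older than $u$ in $\pi$, equipped with the inherited ordering $\pi_u$, and set $\Jhat_u := \Jhat \cap H_u$. Lemma~\ref{lemma:closure-Hs} (applied to $(H, \pi)$ if $(H, \pi) \in \cH_s$, and otherwise to $(H \setminus v_1, \pi \setminus v_1) \in \cH_s$, which exists because $(H, \pi) \in \cC(\cH_s, F)$) gives $(H_u, \pi_u) \in \cH_s$, and by the definition in~\eqref{eq:def-w} the two weights to be compared equal $w_s(H_u, \pi_u)$ and $w_s(\Jhat_u, \pi|_{\Jhat_u})$, respectively. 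The key intermediate step is to show that $\Jhat_u$ is itself a minimizer of $d_\theta(H_u, u, w_{(H_u, \pi_u, s)})$: if some induced $J^* \seq H_u$ with $u \in J^*$ were strictly better, then the induced subgraph of $H$ on vertex set $V(J^*) \cup (V(\Jhat) \setminus V(\Jhat_u))$ would contain $v_1$ and beat $\Jhat$ in the minimization defining $d_\theta(H, v_1, w_{(H, \pi, s)})$, contradicting the optimality of $\Jhat$ (here one uses the observation right after~\eqref{eq:def-d} that the minimum is always attained on an induced subgraph). With $\Jhat_u$ identified as a minimizer, part~(b) of the inductive hypothesis applied to $(H_u, \pi_u)$, which has fewer vertices than $H$ since $u \neq v_1$, yields $w_s(H_u, \pi_u) = w_s(\Jhat_u, \pi|_{\Jhat_u})$, i.e.\ exactly the identity~\eqref{eq:w-H-w-Jhat-v2-vj}.

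For part~(b), with $(H, \pi) \in \cH_s$ given, I would use the identities from part~(a) (just established for $(H, \pi)$) to see that the value of the minimization defining $d_\theta(\Jhat, v_1, w_{(\Jhat, \pi|_\Jhat, s)})$ at $J = \Jhat$ equals $d_\theta(H, v_1, w_{(H, \pi, s)})$; combined with the opposite inequality from Lemma~\ref{lemma:subgraph-monotonicity} together with Lemma~\ref{lemma:d-subgraph-monotonicity}, these two $d_\theta$-values coincide. The second part of Lemma~\ref{lemma:danger-weight-monotonicity} then pins down the only scenario in which $w_{(H, \pi, s)}(v_1) < w_{(\Jhat, \pi|_\Jhat, s)}(v_1)$ could hold: $w_s(\Jhat, \pi|_\Jhat)$ must have been set in line~\ref{cw:set-secondary-weight} with $\ihat$ defined in line~\ref{cw:set-weight-index-lower}, so by line~\ref{cw:condition-lower-index} no subgraph of $\Jhat$ containing $v_1$ lies in $\cC_\sigma(d_\theta(\Jhat, v_1, w_{(\Jhat, \pi|_\Jhat, s)}))$. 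Inspecting lines~\ref{cw:forward-threats}--\ref{cw:set-weight-index-higher} in the corresponding case for $H$, together with Lemma~\ref{lemma:end-**}, exhibits a subgraph $\Jbar \seq H$ with $v_1 \in \Jbar$ and $(\Jbar, \pi|_\Jbar) \in \cC_\sigma(d_\theta(H, v_1, w_{(H, \pi, s)}))$; part~(a) is then invoked once more to transport $\Jbar$ into $\Jhat$ (so that the intersection with $\Jhat$ preserves the relevant $d_\theta$-level and hence the $\cC_\sigma$-membership), producing a subgraph of $\Jhat$ that contradicts line~\ref{cw:condition-lower-index}.

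The main obstacle will be making the substitution argument in part~(a) genuinely contradictory: the ``cross edges'' between $V(J^*)$ and $V(\Jhat) \setminus V(\Jhat_u)$ in $H$ need not coincide with those between $V(\Jhat_u)$ and $V(\Jhat) \setminus V(\Jhat_u)$, so without additional care the swap may introduce an unwanted edge contribution that is large enough to cancel the strict improvement assumed of $J^*$. Resolving this will likely require either restricting attention to a specific (e.g.\ vertex-minimal) minimizer $J^*$ or a slightly more refined inductive statement that tracks structural information about the optimal subgraphs. The transport step at the end of part~(b) is a secondary technical difficulty of the same flavor: showing that intersecting the $H$-witness $\Jbar$ with $\Jhat$ remains at the same $d_\theta$-level should follow from part~(a) combined with Lemma~\ref{lemma:d-subgraph-monotonicity}, but the exact bookkeeping of $\cC_\sigma$ in line~\ref{cw:forward-threats} will need to be revisited.
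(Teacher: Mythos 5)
Your overall structure (induction on $v(H)$, with part~(a) feeding part~(b)) mirrors the paper's, but the key step you flag as ``the main obstacle'' is in fact fatal, not merely technical: the intermediate claim that $\Jhat_u = \Jhat\cap H_u$ is a minimizer of the restricted $d_\theta$-minimization is \emph{false} in general. A small concrete counterexample: take $\theta = 1$, vertices $v_1$ (youngest), $v_2, v_3, v_4$ (oldest) in $H$, edges $\{v_1,v_3\}$ and $\{v_1,v_4\}$, $v_2$ isolated, and weights $w(v_2)=-0.1$, $w(v_3)=w(v_4)=-0.5$. Then the full argmin in~\eqref{eq:Jhat-argmin2} is achieved (uniquely) by $\Jhat$ on vertex set $\{v_1,v_3,v_4\}$ with value $-1$, so $\Jhat_{v_3} = \{v_3,v_4\}$ (no edge), with restricted value $1+w(v_4)=0.5$; but the singleton $\{v_3\}$ gives $0$. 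So $\Jhat_{v_3}$ is strictly suboptimal, and part~(b) of the inductive hypothesis simply cannot be applied to it. The vertex $v_4$ earns its place in $\Jhat$ only because of its edge to the younger vertex $v_1$, and that leverage disappears in the restricted problem. No refinement of the swap argument can rescue a false statement.

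The paper's fix, captured as Lemma~\ref{lemma:argmin-intersection}, takes a different target: instead of trying to show the \emph{restriction} of $\Jhat$ is a restricted minimizer, one takes an \emph{arbitrary} restricted minimizer $\tJ\seq H\setminus\{v_1,\ldots,v_{k-1}\}$ with $v_k\in\tJ$ and proves that $\tJ\cap\Jhat$ is also a restricted minimizer. The proof is a union/intersection (submodularity) identity on the not-necessarily-induced union $\Jhat\cup\tJ$: since $e(\Jhat\cup\tJ) = e(\Jhat)+e(\tJ)-e(\Jhat\cap\tJ)$ and the vertex count is likewise inclusion-exclusion-additive, one gets $\lambda_\theta^-(\Jhat\cup\tJ,v_1,w)=\lambda_\theta^-(\Jhat,v_1,w)+\lambda_\theta^-(\tJ,v_k,w)-\lambda_\theta^-(\tJ\cap\Jhat,v_k,w)$, and the optimality of $\Jhat$ forces $\lambda_\theta^-(\tJ\cap\Jhat,v_k,w)\leq\lambda_\theta^-(\tJ,v_k,w)$. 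This is exactly what you were after, but it is achieved without any cross-edge bookkeeping --- you never compare two different neighbourhoods of $V(\Jhat)\setminus V(\Jhat_u)$. The paper then picks the \emph{largest} index $k$ of disagreement so that the older-vertex weights already agree (your~\eqref{eq:w-H-leq-w-Jhat} holds with equality for $i>k$ by the choice of $k$), which lets it transfer $\tJ\cap\Jhat$ from the $H$-world to the $\Jhat$-world and invoke part~(b) of the inductive hypothesis \emph{twice}, once for each ambient graph, to reach the contradiction. You would need to adopt the same intersection strategy in your ``transport'' step of part~(b) as well, where the same subtlety recurs.
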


Note that by Lemma~\ref{lemma:finite-weights} and Lemma~\ref{lemma:subgraph-monotonicity}, all vertex weights referred to in the formulation of Lemma~\ref{lemma:irrelevant-context-d} are finite values. We will not mention this explicitly again in the following.

The following two auxiliary statements are only used for proving Lemma~\ref{lemma:irrelevant-context-d}.

\begin{lemma} \label{lemma:argmin-intersection}
Let $H$ be a graph with $V(H)=\{v_1,\ldots,v_h\}$, $w:V(H)\setminus\{v_1\}\rightarrow\RR$ an arbitrary weight function, $\theta>0$ a real number, and let $\Jhat$ be a graph from the family
\begin{equation} \label{eq:Jhat-argmin}
  \argmin_{J\seq H:v_1\in J} \Big(\sum_{u\in J\setminus v_1} \big(1+w(u)\big)-e(J)\cdot\theta\Big) \enspace.
\end{equation}
Moreover, let $v_k$ be a vertex contained in $\Jhat$ and $\tJ$ a graph from the family
\begin{equation} \label{eq:tJ-argmin}
  \argmin_{J\seq H\setminus\{v_1,\ldots,v_{k-1}\}:v_k\in J} \Big(\sum_{u\in J\setminus v_k} \big(1+w(u)\big)-e(J)\cdot\theta\Big) \enspace.
\end{equation}
Then the graph $\tJ\cap\Jhat$ is also contained in the family \eqref{eq:tJ-argmin}.
In particular, for two graphs $J',J''$ from the family \eqref{eq:Jhat-argmin} the graph $J'\cap J''$ is also contained in \eqref{eq:Jhat-argmin}.
\end{lemma}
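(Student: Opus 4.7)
The plan is to exploit a submodular-style identity on the lattice of subgraphs of $H$ ordered by inclusion, comparing the pair $(\hat J,\tilde J)$ with the pair $(\hat J\cup\tilde J,\hat J\cap\tilde J)$. The two objective functions (let me call them $f(J):=\sum_{u\in J\setminus v_1}(1+w(u))-e(J)\theta$ for the first family and $g(J):=\sum_{u\in J\setminus v_k}(1+w(u))-e(J)\theta$ for the second) are each \emph{linear} in the vertex and edge counts, so standard inclusion--exclusion identities should yield the equation $f(\hat J\cup\tilde J)+g(\hat J\cap\tilde J)=f(\hat J)+g(\tilde J)$. Together with the minimality of $\hat J$ and $\tilde J$ in their respective families, this forces $g(\hat J\cap\tilde J)=g(\tilde J)$, i.e., $\tilde J\cap\hat J$ lies in the argmin of~\eqref{eq:tJ-argmin}. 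The ``in particular'' statement is then the special case $k=1$, $v_k=v_1$, where the two families coincide.

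I would proceed in the following concrete steps. First, I would verify \emph{admissibility}: $\hat J\cup\tilde J$ contains $v_1$ (inherited from $\hat J$) and is a subgraph of $H$, so it is a competitor in~\eqref{eq:Jhat-argmin}; and $\hat J\cap\tilde J$ contains $v_k$ (inherited from both, since $v_k\in\hat J$ by hypothesis and $v_k\in\tilde J$ by definition) and sits inside $\tilde J\seq H\setminus\{v_1,\dots,v_{k-1}\}$, so it is a competitor in~\eqref{eq:tJ-argmin}. By the minimality of $\hat J$ and $\tilde J$, this yields the two inequalities
\begin{equation*}
  f(\hat J)\leq f(\hat J\cup\tilde J)\qquad\text{and}\qquad g(\tilde J)\leq g(\hat J\cap\tilde J).
\end{equation*}
Second, I would establish the vertex-count identity by splitting the contribution of each vertex according to whether it lies in $V(\hat J)\cap V(\tilde J)$, $V(\hat J)\setminus V(\tilde J)$, or $V(\tilde J)\setminus V(\hat J)$, taking care of the two excluded vertices $v_1$ and $v_k$ separately; the key point is that (for $k\geq 2$) $v_1\notin\tilde J$, so the excluded vertex $v_1$ never enters the intersection. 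Third, I would use the standard inclusion--exclusion $|E(\hat J)\cup E(\tilde J)|+|E(\hat J)\cap E(\tilde J)|=|E(\hat J)|+|E(\tilde J)|$, combined with the fact that $E(\hat J\cup\tilde J)=E(\hat J)\cup E(\tilde J)$ and $E(\hat J\cap\tilde J)=E(\hat J)\cap E(\tilde J)$ (valid even for non-induced subgraphs, since edges carry their endpoints with them), to deduce the edge-count identity $e(\hat J\cup\tilde J)+e(\hat J\cap\tilde J)=e(\hat J)+e(\tilde J)$. Putting the vertex and edge identities together gives the announced equation, and the two admissibility inequalities then force $g(\hat J\cap\tilde J)=g(\tilde J)$.

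The only real bookkeeping obstacle is the vertex-counting step, because the two functionals $f$ and $g$ have different excluded vertices ($v_1$ for $f$, $v_k$ for $g$), so a naive application of $|A\cup B|+|A\cap B|=|A|+|B|$ does not directly apply; one has to check that the asymmetry between the $v_1$-exclusion (which appears on the $\hat J\cup\tilde J$ side) and the $v_k$-exclusion (which appears on the $\hat J\cap\tilde J$ side) precisely matches the asymmetry on the right-hand side. This matches out cleanly once one notes that $v_1\notin\tilde J$ and $v_k\in\hat J\cap\tilde J$, but it is the step most prone to a sign error. Everything else is essentially a direct computation.
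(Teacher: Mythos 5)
Your proposal is correct and uses the same modular (inclusion--exclusion) identity on the subgraph lattice together with the two minimality inequalities that the paper's proof relies on; the paper states the identity in the equivalent form $\lambda_\theta^-(\hat J\cup\tilde J,v_1,w)=\lambda_\theta^-(\hat J,v_1,w)+\lambda_\theta^-(\tilde J,v_k,w)-\lambda_\theta^-(\hat J\cap\tilde J,v_k,w)$ and otherwise argues exactly as you do. The bookkeeping concern you flag about the mismatched excluded vertices does resolve as you say, since $v_1\notin\tilde J$ (for $k\geq 2$) and $v_k\in\hat J\cap\tilde J$.
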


\begin{proof}
In order to simplify notation, we introduce for a real number $\theta>0$, any graph $H$, any vertex $v\in H$ and any weight function $w:V(H)\setminus\{v\}\rightarrow\RR$, the function
\begin{equation} \label{eq:def-lambda-minus}
  \lambda_\theta^-(H,v,w):=\sum_{u\in H\setminus v} \big(1+w(u)\big)-e(H)\cdot\theta \enspace.
\end{equation}
As for the definition of $d_\theta()$ in \eqref{eq:def-d}, it is also convenient here to allow functions $w$ in the third argument of $\lambda_\theta^-()$ whose domain is strictly larger than the set $V(H)\setminus\{v\}$. Of course, for the value of $\lambda_\theta^-(H,v,w)$ only the values $w(u)$ for all $u\in H\setminus v$ are relevant.

By the choice of $\tJ$ in \eqref{eq:tJ-argmin} and by \eqref{eq:def-lambda-minus}, we have
\begin{equation} \label{eq:lm-tJ}
  \lambda_\theta^-(\tJ,v_k,w)\leq \lambda_\theta^-(\tJ\cap\Jhat,v_k,w) \enspace.
\end{equation}
This inequality, however, must be tight, as otherwise the second inequality in
\begin{equation*}
  \lambda_\theta^-(\Jhat\cup\tJ,v_1,w) \eqBy{eq:def-lambda-minus}
  \lambda_\theta^-(\Jhat,v_1,w)+\lambda_\theta^-(\tJ,v_k,w)-\lambda_\theta^-(\tJ\cap\Jhat,v_k,w) \leBy{eq:lm-tJ} \lambda_\theta^-(\Jhat,v_1,w) 
\end{equation*}
would be strict, contradicting the choice of $\Jhat$ in \eqref{eq:Jhat-argmin}. This proves the lemma.
\end{proof}

The next auxiliary statement will be used to prove Lemma~\ref{lemma:irrelevant-context-d} by induction.

\begin{lemma} \label{lemma:argmin-youngest-weights}
The following invariant holds throughout the algorithm $\CW()$. Let $s\in[r]$ and let $(H,\pi)$, $\pi=(v_1,\ldots,v_h)$, be a graph in $\cH_s$, and suppose that every graph $J'$ from the family
\begin{equation} \label{eq:J'-argmin}
  \argmin_{J\seq H:v_1\in J} \Big(\sum_{u\in J\setminus v_1} \big(1+w_{(H,\pi,s)}(u)\big)-e(J)\cdot\theta\Big)
\end{equation}
satisfies
\begin{equation} \label{eq:w-H-w-J'-v2-vj}
  w_{(H,\pi,s)}(u)=w_{(J',\pi|_{J'},s)}(u) \quad \text{for all $u\in J'\setminus v_1$} \enspace.
\end{equation}
Then every such graph $J'$ satisfies
\begin{equation} \label{eq:w-H-w-J'-v1}
  w_{(H,\pi,s)}(v_1)=w_{(J',\pi|_{J'},s)}(v_1) \enspace.
\end{equation}
\end{lemma}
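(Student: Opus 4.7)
My plan is to argue by contradiction. Suppose some minimizer $J'$ of the family in~\eqref{eq:J'-argmin} satisfies $w_{(H,\pi,s)}(v_1)<w_{(J',\pi|_{J'},s)}(v_1)$. The first step is to establish that the corresponding $d_\theta$-values coincide; writing $d^*:=d_\theta(H,v_1,w_{(H,\pi,s)})$, the inequality $d^*\leq d_\theta(J',v_1,w_{(J',\pi|_{J'},s)})$ follows from Lemma~\ref{lemma:subgraph-monotonicity} combined with Lemma~\ref{lemma:d-subgraph-monotonicity}, while the reverse inequality follows by evaluating the min in the definition of $d_\theta(J',v_1,\cdot)$ at $J=J'$ itself and invoking~\eqref{eq:w-H-w-J'-v2-vj}.

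Given this equality of $d_\theta$-values, I apply the second part of Lemma~\ref{lemma:danger-weight-monotonicity}. This forces $w_s(J',\pi|_{J'})$ to have been defined via line~\ref{cw:set-weight-index-lower} of $\CW()$, so the condition in line~\ref{cw:condition-lower-index} was satisfied for $(J',\pi|_{J'})$ at the iteration $i_{J'}$ when it entered $\cH_s$: no subgraph $\bar J\seq J'$ containing $v_1$ has $(\bar J,\pi|_{\bar J})\in \cC_s(d^*)$ at that moment. Simultaneously, $w_s(H,\pi)$ was defined via line~\ref{cw:set-primary-weight} or via line~\ref{cw:set-weight-index-higher}; in either case, using line~\ref{cw:forward-threats} together with Lemma~\ref{lemma:end-**}, respectively the negation of the condition in line~\ref{cw:condition-lower-index} itself, I obtain a subgraph $\bar J^H\seq H$ with $v_1\in \bar J^H$ and $(\bar J^H,\pi|_{\bar J^H})\in\cC^{i',1}=\cC_s(d^*)$, where $i'$ denotes the unique iteration with $\alpha_{i'}=s$ and $d_s^{i'}=d^*$.

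The contradiction then arises via the graph $\bar J^H\cap J'$. I would first check that $\bar J^H$ itself is a minimizer in~\eqref{eq:J'-argmin} by sandwiching the sum $\sum_{u\in \bar J^H\setminus v_1}(1+w_{(H,\pi,s)}(u))-e(\bar J^H)\theta$ between $d^*$ from below (as a candidate for the minimum) and $d^*$ from above (via Lemma~\ref{lemma:subgraph-monotonicity} together with the fact that $\bar J^H$ attains $d^*$ in its own context by membership in $\cC^{i',1}$). Lemma~\ref{lemma:argmin-intersection} then gives that $\bar J^H\cap J'$ is also a minimizer, and the hypothesis~\eqref{eq:w-H-w-J'-v2-vj} applied to this minimizer yields $d_\theta(\bar J^H\cap J',v_1,w_{(\bar J^H\cap J',\pi|_{\bar J^H\cap J'},s)})=d^*=d_s^{i'}$. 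The final claim is that $(\bar J^H\cap J',\pi|_{\bar J^H\cap J'})\in\cC^{i',1}$, which suffices for the contradiction since $\bar J^H\cap J'\seq J'$ contains $v_1$ and $\cC_s(d^*)$ remains equal to $\cC^{i',1}$ at all iterations $\geq i'$, in particular at $i_{J'}$.

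The main obstacle is exactly this last membership claim: one must verify that at the very start of iteration $i'$ the graph $(\bar J^H\cap J'\setminus v_1,\pi|_{\bar J^H\cap J'\setminus v_1})$ is already in $\cH_s$ while $(\bar J^H\cap J',\pi|_{\bar J^H\cap J'})$ is not. The first follows from $(\bar J^H,\pi|_{\bar J^H})\in\cC^{i',1}\seq \cC(\cH_s,F)$ at that moment via Lemma~\ref{lemma:subgraph-monotonicity}, and the second from Lemma~\ref{lemma:di-wi-monotonicity}: were it added in some earlier iteration $i''<i'$ with $\alpha_{i''}=s$, then $d_s^{i''}>d^*$, which is incompatible with the $d_\theta$-value $d^*$ required for inclusion in any of the sets $\cC^{i'',j}$ or $\cT^{i'',j,k}$.
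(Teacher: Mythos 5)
Your overall strategy mirrors the paper's: argue by contradiction, use Lemma~\ref{lemma:danger-weight-monotonicity} to pin down exactly how $w_s(H,\pi)$ and $w_s(J',\pi|_{J'})$ were assigned, produce a graph in $\cC_s(d^*)$ with $v_1$ that is also a subgraph of $J'$, and thereby contradict the condition in line~\ref{cw:condition-lower-index}. The first step (equality of $d_\theta$-values) and the final membership argument for the intersection graph (it is in $\cC(\cH_s,F)$ at the start of iteration $i'$, with $d_\theta$-value exactly $d_s^{i'}$, hence in $\cC^{i',1}=\cC_s(d^*)$, which persists) are both sound.

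However, there is a genuine gap in the intermediate step where you claim $\bar J^H$ is a minimizer of~\eqref{eq:J'-argmin}. Your sandwich has a wrong side: the chain
\[
  \sum_{u\in \bar J^H\setminus v_1}\bigl(1+w_{(H,\pi,s)}(u)\bigr)-e(\bar J^H)\theta
  \;\leq\;
  \sum_{u\in \bar J^H\setminus v_1}\bigl(1+w_{(\bar J^H,\pi|_{\bar J^H},s)}(u)\bigr)-e(\bar J^H)\theta
\]
(by Lemma~\ref{lemma:subgraph-monotonicity}) lands you on a quantity that, by definition~\eqref{eq:def-d}, is \emph{at least} $d_\theta(\bar J^H,v_1,w_{(\bar J^H,\pi|_{\bar J^H},s)})=d^*$, not at most. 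Membership of $(\bar J^H,\pi|_{\bar J^H})$ in $\cC^{i',1}$ tells you its $d_\theta$-value equals $d_s^{i'}$; it does \emph{not} tell you that $\bar J^H$ is the minimizer of its own $d_\theta$-problem, so the displayed sum may well exceed $d^*$ strictly, and the sandwich fails. Without $\bar J^H$ being a minimizer, you cannot invoke Lemma~\ref{lemma:argmin-intersection} to conclude that $\bar J^H\cap J'$ is one either.

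The fix is exactly what the paper does: instead of $\bar J^H$, take a graph $\Jhat$ from the family $\argmin_{J\seq \bar J^H:\,v_1\in J}\sum_{u\in J\setminus v_1}\bigl(1+w_{(\bar J^H,\pi|_{\bar J^H},s)}(u)\bigr)-e(J)\theta$. For this $\Jhat$ the displayed sum equals $d^*$ by construction, Lemma~\ref{lemma:subgraph-monotonicity} then puts the corresponding sum with weights $w_{(H,\pi,s)}$ between $d_\theta(H,v_1,w_{(H,\pi,s)})=d^*$ and $d^*$, so $\Jhat$ really is in~\eqref{eq:J'-argmin}. Now apply Lemma~\ref{lemma:argmin-intersection} to $J'$ and $\Jhat$, and replace $\bar J^H\cap J'$ by $\Jhat\cap J'$ throughout the final paragraph; since $\Jhat\cap J'\seq\bar J^H$, the membership argument you gave (ancestor $\Jhat\cap J'\setminus v_1$ is in $\cH_s$ at the start of iteration $i'$ via Lemma~\ref{lemma:subgraph-monotonicity}, and $\Jhat\cap J'$ itself is not, via Lemma~\ref{lemma:di-wi-monotonicity}) goes through unchanged, yielding $(\Jhat\cap J',\pi|_{\Jhat\cap J'})\in\cC^{i',1}=\cC_s(d^*)$ and the desired contradiction. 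With that replacement your proof is essentially the paper's argument, merely unifying its three bullet cases into one.
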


\begin{proof}
Let $J'$ be a graph from the family \eqref{eq:J'-argmin} and note that
\begin{equation} \label{eq:d-H-d-J'}
  d_\theta(H,v_1,w_{(H,\pi,s)})
  \eqByM{\eqref{eq:def-d},\eqref{eq:J'-argmin}} d_\theta(J',v_1,w_{(H,\pi,s)})
  \eqBy{eq:w-H-w-J'-v2-vj} d_\theta(J',v_1,w_{(J',\pi|_{J'},s)}) \enspace.
\end{equation}
By Lemma~\ref{lemma:subgraph-monotonicity} we clearly have $w_{(H,\pi,s)}(v_1)\leq w_{(J',\pi|_{J'},s)}(v_1)$. Consequently, using \eqref{eq:d-H-d-J'} and applying the second part of Lemma~\ref{lemma:danger-weight-monotonicity}, the only way that $w_{(H,\pi,s)}(v_1)$ can be different from  $w_{(J',\pi|_{J'},s)}(v_1)$ is if $w_s(H,\pi)$ is defined either in line~\ref{cw:set-primary-weight} or in line~\ref{cw:set-secondary-weight} with $\ihat$ defined in line~\ref{cw:set-weight-index-higher}, and $w_s(J',\pi|_{J'})$ is defined in line~\ref{cw:set-secondary-weight} with $\ihat$ defined in line~\ref{cw:set-weight-index-lower}. We will show that none of those cases can occur.

\begin{itemize}
\item
First consider the case that $w_s(H,\pi)$ is defined in line~\ref{cw:set-primary-weight} in some iteration $i$ of the repeat-loop~(*) (for which $\alpha_i=s$) and the first iteration $j=1$ of the repeat-loop~(**), i.e., $(H,\pi)$ is contained in $\cC^{i,1}$ and satisfies $d_\theta(H,v_1,w_{(H,\pi,s)})=d_s^i$ and $w_s(H,\pi)=w^i$. Then by \eqref{eq:d-H-d-J'} and by Lemma~\ref{lemma:beginning-loop-*} and Lemma~\ref{lemma:subgraph-monotonicity} the graph $(J',\pi|_{J'})$ must be contained in $\cC^{i,1}$ as well and is added to $\cH_\sigma$ together with $(H,\pi)$. Hence we have $w_s(J',\pi|_{J'})=w^i$ by the definition in line~\ref{cw:set-primary-weight}, proving \eqref{eq:w-H-w-J'-v1} in this case.

\item
Now consider the case that $w_s(H,\pi)$ is defined in line~\ref{cw:set-primary-weight} in some iteration $i$ of the repeat-loop~(*) (for which $\alpha_i=s$) and some iteration $j>1$ of the repeat-loop~(**), i.e., $(H,\pi)$ is contained in $\cC^{i,j}$ and satisfies
\begin{equation} \label{eq:d-H-dsi}
  d_\theta(H,v_1,w_{(H,\pi,s)})=d_s^i\enspace.
\end{equation} 
Then $(H,\pi)$ must have been in $\cC(\cH_s,F)$ at the end of the previous iteration of the repeat-loop (**), and by the second part of Lemma~\ref{lemma:end-**} there is a subgraph $\Jbar\seq H$ with $v_1\in \Jbar$ and $(\Jbar,\pi|_\Jbar)\in\cC_s(d_s^i)$. From the definitions in line~\ref{cw:primary-threats} and line~\ref{cw:forward-threats} it follows that
\begin{equation} \label{eq:d-Jbar-dsi}
  d_\theta(\Jbar,v_1,w_{(\Jbar,\pi|_\Jbar,s)})=d_s^i \enspace.
\end{equation}
Fix some graph $J''$ from the family
\begin{equation} \label{eq:J''-argmin}
  \argmin_{J\seq \Jbar:v_1\in J} \Big(\sum_{u\in J\setminus v_1} \big(1+w_{(\Jbar,\pi|_\Jbar,s)}(u)\big)-e(J)\cdot\theta\Big)
\end{equation}
and note that
\begin{equation} \label{eq:d-Jbar-sum-J''}
  d_\theta(\Jbar,v_1,w_{(\Jbar,\pi|_\Jbar,s)}) \eqByM{\eqref{eq:def-d},\eqref{eq:J''-argmin}}
  \sum_{u\in J''\setminus v_1} \big(1+w_{(\Jbar,\pi|_\Jbar,s)}(u)\big)-e(J'')\cdot\theta \enspace.
\end{equation}
By Lemma~\ref{lemma:subgraph-monotonicity} we have
\begin{equation} \label{eq:w-H-w-Jbar}
  w_{(H,\pi,s)}(u)\leq w_{(\Jbar,\pi|_\Jbar,s)}(u) \quad \text{for all $u\in \Jbar$} \enspace.
\end{equation}
We hence have
\begin{equation*}
  d_\theta(H,v_1,w_{(H,\pi,s)})
  \leBy{eq:def-d} \sum_{u\in J''\setminus v_1} \big(1+w_{(H,\pi,s)}(u)\big)-e(J'')\cdot\theta
  \leByM{\eqref{eq:d-Jbar-sum-J''},\eqref{eq:w-H-w-Jbar}} d_\theta(\Jbar,v_1,w_{(\Jbar,\pi|_\Jbar,s)}) \enspace,
\end{equation*}
which combined with \eqref{eq:d-H-dsi} and \eqref{eq:d-Jbar-dsi} shows that the graph $(J'',\pi|_{J''})$ is contained in the family \eqref{eq:J'-argmin}. Applying Lemma~\ref{lemma:argmin-intersection} yields that the graph $(J'\cap J'',\pi|_{J'\cap J''})$ is also contained in the family \eqref{eq:J'-argmin}. Analogously to \eqref{eq:d-H-d-J'} we have
\begin{equation*}
  d_\theta(H,v_1,w_{(H,\pi,s)})=d_\theta(J'\cap J'',v_1,w_{(J'\cap J'',\pi|_{J'\cap J''},s)}) \enspace,
\end{equation*}
which combined with \eqref{eq:d-H-dsi} shows that
\begin{equation} \label{eq:d-J''-dsi}
  d_\theta(J'\cap J'',v_1,w_{(J'\cap J'',\pi|_{J'\cap J''},s)})=d_s^i \enspace.
\end{equation}
Clearly $J'\cap J''$ is a subgraph of $\Jbar$ that contains the youngest vertex $v_1$. Using this observation and \eqref{eq:d-J''-dsi} and applying Lemma~\ref{lemma:beginning-loop-*} and Lemma~\ref{lemma:subgraph-monotonicity}, the fact that $(\Jbar,\pi|_\Jbar)$ is contained in $\cC_s(d_s^i)$ implies that $(J'\cap J'',\pi|_{J'\cap J''})$ is contained in $\cC_s(d_s^i)$ as well.
But as $J'\cap J''$ is also a subgraph of $J'$ (that contains the youngest vertex $v_1$), this implies with~\eqref{eq:d-H-d-J'} and~\eqref{eq:d-H-dsi} that the graph $(J',\pi|_{J'})$ violates the condition in line~\ref{cw:condition-lower-index}, which yields the desired contradiction.

\item
Finally consider the case that $w_s(H,\pi)$ is defined in line~\ref{cw:set-secondary-weight} (in some iteration $i$ of the repeat-loop~(*)) with $\ihat$ defined in line~\ref{cw:set-weight-index-higher}. By the conditions in line~\ref{cw:secondary-threat-condition} and line~\ref{cw:condition-lower-index} we have $d_\theta(H,v_1,w_{(H,\pi,s)})>d_s^i$ and there is a graph $\Jbar\seq H$ with $v_1\in \Jbar$ and $(\Jbar,\pi|_\Jbar)\in\cC_s(d_\theta(H,v_1,w_{(H,\pi,s)}))$.
Using the definitions in line~\ref{cw:primary-threats} and line~\ref{cw:forward-threats}, as well as the first part of Lemma~\ref{lemma:di-wi-monotonicity}, it follows that
\begin{equation*}
  d_\theta(H,v_1,w_{(H,\pi,s)})=d_s^\ibar
\end{equation*}
and
\begin{equation*}
  d_\theta(\Jbar,v_1,w_{(\Jbar,\pi|_\Jbar,s)})=d_s^\ibar
\end{equation*}
for some $\ibar<i$. From here the proof continues analogously to the second case (where $d_s^i$ needs to be replaced by $d_s^\ibar$), concluding that $(J',\pi|_{J'})$ must violate the condition in line~\ref{cw:condition-lower-index}, which again yields the desired contradiction.
\end{itemize}
\end{proof}

\begin{proof}[Proof of Lemma~\ref{lemma:irrelevant-context-d}]
We argue by induction over the number of vertices of $H$. The claim clearly holds if $H$ consists only of a single vertex, as then $\Jhat=H$ is the only graph contained in the family \eqref{eq:Jhat-argmin2}. This settles the base of the induction.

For the induction step suppose that $H$ has at least two vertices, and let $\Jhat$ be a graph from the family \eqref{eq:Jhat-argmin2}. Clearly, $(H\setminus v_1,\pi\setminus v_1)$ is contained in $\cH_s$ (recall the definition of $\cC()$ in \eqref{eq:def-C}). Applying Lemma~\ref{lemma:subgraph-monotonicity} we obtain that $(\Jhat\setminus v_1,\pi|_{\Jhat\setminus v_1})$ is contained in $\cH_s$ as well and that
\begin{equation} \label{eq:w-H-leq-w-Jhat}
  w_{(H,\pi,s)}(v_i)\leq w_{(\Jhat,\pi|_\Jhat,s)}(v_i) \quad \text{for all $v_i\in \Jhat\setminus v_1$} \enspace.
\end{equation}
We will first show that this inequality is tight for all $v_i\in\Jhat\setminus v_1$ (which is exactly the statement of \eqref{eq:w-H-w-Jhat-v2-vj}).
Suppose for the sake of contradiction that the inequality in \eqref{eq:w-H-leq-w-Jhat} is strict for some $v_i\in\Jhat\setminus v_1$, and choose the largest index $k$ for which this is the case, i.e.\
\begin{equation} \label{eq:w-vk-strict}
  w_{(H,\pi,s)}(v_k)<w_{(\Jhat,\pi|_\Jhat,s)}(v_k)
\end{equation}
and
\begin{equation} \label{eq:older-weights-equal}
  w_{(H,\pi,s)}(v_i)=w_{(\Jhat,\pi|_\Jhat,s)}(v_i) \quad \text{for all $v_i\in \Jhat\setminus\{v_1,\ldots,v_k\}$}
\end{equation}
(we clearly have $k\geq 2$).
Fix a graph $\tJ$ from the family
\begin{equation} \label{eq:tJ-argmin2}
  \argmin_{J\seq H\setminus\{v_1,\ldots,v_{k-1}\}:v_k\in J} \Big(\sum_{u\in J\setminus v_k} \big(1+w_{(H,\pi,s)}(u)\big)-e(J)\cdot\theta\Big)
\end{equation}
and observe that by Lemma~\ref{lemma:argmin-intersection}, also the graph $\tJ\cap\Jhat$ is contained in the family \eqref{eq:tJ-argmin2}. By \eqref{eq:older-weights-equal} the same graph $\tJ\cap\Jhat$ is also contained in the family
\begin{equation*}
  \argmin_{J\seq \Jhat\setminus\{v_1,\ldots,v_{k-1}\}:v_k\in J} \Big(\sum_{u\in J\setminus v_k} \big(1+w_{(\Jhat,\pi|_\Jhat,s)}(u)\big)-e(J)\cdot\theta\Big) \enspace.
\end{equation*}
By induction, we therefore have $w_{(\tJ\cap\Jhat,\pi|_{\tJ\cap\Jhat},s)}(v_k)=w_{(H,\pi,s)}(v_k)$ and $w_{(\tJ\cap\Jhat,\pi|_{\tJ\cap\Jhat},s)}(v_k)=w_{(\Jhat,\pi|_\Jhat,s)}(v_k)$, which together contradicts \eqref{eq:w-vk-strict} and shows that \eqref{eq:w-H-leq-w-Jhat} holds with equality for all $v_i\in \Jhat\setminus v_1$, thus proving \eqref{eq:w-H-w-Jhat-v2-vj}.

The relation \eqref{eq:w-H-w-Jhat-v1} follows from \eqref{eq:w-H-w-Jhat-v2-vj} by applying Lemma~\ref{lemma:argmin-youngest-weights}.
\end{proof}

Lemma~\ref{lemma:irrelevant-context-d} allows us to derive the next statement, which is similar in spirit but considers $\lambda_\theta()$-values instead of $d_\theta()$-values.

\begin{lemma}[Irrelevant context of $\lambda_\theta()$-minimizing subgraphs] \label{lemma:irrelevant-context-lambda}
For every $s\in[r]$ and any graph $(H,\pi)\in\cH_s$, $\pi=(v_1,\ldots,v_h)$, we have
\begin{equation*}
  \min_{J\seq H} \lambda_\theta(J,w_{(H,\pi,s)}) = \min_{J\seq H} \lambda_\theta(J,w_{(J,\pi|_J,s)}) \enspace.
\end{equation*}
\end{lemma}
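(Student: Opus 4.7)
The plan is to prove the two inequalities separately. The direction $\min_{J\seq H} \lambda_\theta(J,w_{(H,\pi,s)}) \leq \min_{J\seq H} \lambda_\theta(J,w_{(J,\pi|_J,s)})$ is immediate: for every $J\seq H$, Lemma~\ref{lemma:subgraph-monotonicity} gives $w_{(H,\pi,s)}(u)\leq w_{(J,\pi|_J,s)}(u)$ for all $u\in J$, which (via the definition~\eqref{eq:def-lambda}) implies $\lambda_\theta(J,w_{(H,\pi,s)})\leq \lambda_\theta(J,w_{(J,\pi|_J,s)})$, and taking the minimum over $J$ on both sides yields the claim.

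For the reverse inequality, I would argue by induction on $v(H)$. The base case $v(H)=1$ is trivial since then $J=H$ is the only subgraph and the two weight functions coincide. For the inductive step, fix a minimizer $J^*\seq H$ of $\lambda_\theta(J,w_{(H,\pi,s)})$ and distinguish whether $v_1\in J^*$ or not.

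If $v_1\in J^*$, the key observation is that, for every $J\seq H$ with $v_1\in J$,
\begin{equation*}
  \lambda_\theta(J,w_{(H,\pi,s)}) \;=\; \bigl(1+w_{(H,\pi,s)}(v_1)\bigr) + \Bigl(\sum_{u\in J\setminus v_1}\bigl(1+w_{(H,\pi,s)}(u)\bigr)-e(J)\cdot\theta\Bigr),
\end{equation*}
so minimizing $\lambda_\theta(J,w_{(H,\pi,s)})$ over $\{J\seq H : v_1\in J\}$ is equivalent (up to the additive constant $1+w_{(H,\pi,s)}(v_1)$) to the minimization appearing in the argmin~\eqref{eq:Jhat-argmin2} of Lemma~\ref{lemma:irrelevant-context-d}. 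Consequently $J^*$ lies in that argmin, and since $(H,\pi)\in\cH_s$, both parts~\eqref{eq:w-H-w-Jhat-v2-vj} and \eqref{eq:w-H-w-Jhat-v1} of Lemma~\ref{lemma:irrelevant-context-d} give $w_{(H,\pi,s)}(u)=w_{(J^*,\pi|_{J^*},s)}(u)$ for all $u\in J^*$. Hence $\lambda_\theta(J^*,w_{(H,\pi,s)})=\lambda_\theta(J^*,w_{(J^*,\pi|_{J^*},s)})\geq \min_{J\seq H}\lambda_\theta(J,w_{(J,\pi|_J,s)})$, as required.

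If $v_1\notin J^*$, then $J^*\seq H\setminus v_1$. The definition \eqref{eq:def-w} shows that for every $i\geq 2$ both $w_{(H,\pi,s)}(v_i)$ and $w_{(H\setminus v_1,\pi\setminus v_1,s)}(v_i)$ equal the same value $w_s(H\setminus\{v_1,\dots,v_{i-1}\},\pi\setminus\{v_1,\dots,v_{i-1}\})$, so the two weight functions agree on $V(H\setminus v_1)$. In particular $\lambda_\theta(J^*,w_{(H,\pi,s)})=\lambda_\theta(J^*,w_{(H\setminus v_1,\pi\setminus v_1,s)})\geq \min_{J\seq H\setminus v_1}\lambda_\theta(J,w_{(H\setminus v_1,\pi\setminus v_1,s)})$. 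By Lemma~\ref{lemma:closure-Hs} we have $(H\setminus v_1,\pi\setminus v_1)\in\cH_s$, so the inductive hypothesis applies and turns the right-hand side into $\min_{J\seq H\setminus v_1}\lambda_\theta(J,w_{(J,\pi|_J,s)})\geq \min_{J\seq H}\lambda_\theta(J,w_{(J,\pi|_J,s)})$, finishing the induction. The main obstacle is the case $v_1\in J^*$, where one has to identify the right link between $\lambda_\theta$ and $d_\theta$ in order to invoke Lemma~\ref{lemma:irrelevant-context-d}; once this additive-constant observation is spotted, everything else is bookkeeping.
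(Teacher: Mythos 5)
Your proof is correct, and the key idea is the same as the paper's: observe that minimizing $\lambda_\theta(J,w_{(H,\pi,s)})$ over $v_1$-containing subgraphs differs from the $d_\theta$-argmin of Lemma~\ref{lemma:irrelevant-context-d} only by the additive constant $1+w_{(H,\pi,s)}(v_1)$, so the minimizer lies in that argmin and Lemma~\ref{lemma:irrelevant-context-d} forces the weight functions to agree there. Where you handle the minimizer $J^*$ not containing $v_1$ by explicit induction on $v(H)$ (peeling off $v_1$ and using the agreement of $w_{(H,\pi,s)}$ and $w_{(H\setminus v_1,\pi\setminus v_1,s)}$ on $V(H\setminus v_1)$ plus Lemma~\ref{lemma:closure-Hs}), the paper instead records the decomposition $\min_{J\seq H}=\min_{1\leq i\leq h}\min_{J\seq H\setminus\{v_1,\ldots,v_{i-1}\}:v_i\in J}$ and applies the $v_1$-containing case to each graph $(H\setminus\{v_1,\ldots,v_{i-1}\},\pi\setminus\{v_1,\ldots,v_{i-1}\})$; these are two packagings of the identical argument, and your inductive version is arguably slightly more explicit about the bookkeeping.
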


\begin{proof}
Using the definition of $\lambda_\theta()$ in \eqref{eq:def-lambda}, we obtain from Lemma~\ref{lemma:subgraph-monotonicity} that
\begin{equation} \label{eq:lambdas-w-H-w-J}
  \min_{J\seq H:v_1\in J} \lambda_\theta(J,w_{(H,\pi,s)}) \leq \min_{J\seq H:v_1\in J} \lambda_\theta(J,w_{(J,\pi|_J,s)}) \enspace.
\end{equation}
From
\begin{equation} \label{eq:lambda-J-w-H}
  \min_{J\seq H:v_1\in J} \lambda_\theta(J,w_{(H,\pi,s)}) \eqBy{eq:def-lambda} \min_{J\seq H:v_1\in J} \Big(\sum_{u\in J\setminus v_1} \big(1+w_{(H,\pi,s)}(u)\big)-e(J)\cdot\theta\Big) + 1+w_{(H,\pi,s)}(v_1)
\end{equation}
it follows that the minimum on the left hand side of \eqref{eq:lambdas-w-H-w-J} is attained for some graph
\begin{equation} \label{eq:J-hat-argmin}
  \Jhat\in\argmin_{J\seq H:v_1\in J} \Big(\sum_{u\in J\setminus v_1} \big(1+w_{(H,\pi,s)}(u)\big)-e(J)\cdot\theta\Big) \enspace.
\end{equation}
We can hence apply Lemma~\ref{lemma:irrelevant-context-d} and obtain
\begin{equation*}
  \min_{J\seq H:v_1\in J} \lambda_\theta(J,w_{(H,\pi,s)})
  \eqByM{\eqref{eq:lambda-J-w-H},\eqref{eq:J-hat-argmin}} \sum_{u\in\Jhat} (1+w_{(H,\pi,s)}(u))-e(\Jhat)\cdot\theta
  \eqByM{\eqref{eq:def-lambda},\eqref{eq:argmin-weights}} \lambda_\theta(\Jhat,w_{(\Jhat,\pi|_\Jhat,s)}) \enspace,
\end{equation*}
which shows that the inequality in \eqref{eq:lambdas-w-H-w-J} is tight.
The lemma now follows by combining the resulting identity with the identity
\begin{equation*}
  \min_{J\seq H} \lambda_\theta(J,w_{(H,\pi,s)})=
  \min_{\substack{1\leq i\leq h \\ J\seq H\setminus\{v_1,\ldots,v_{i-1}\}:v_i\in J}} \lambda_\theta(J,w_{(H,\pi,s)}) \enspace.
\end{equation*}
\end{proof}

We are now ready to state and prove the relation between $\Lambda_\theta(F,r)$ as defined in~\eqref{eq:def-Lambda} and the parameter $\beta_i=1+\sum_{s\in[r]}d_s^i$ used in our informal explanation of the algorithm $\CW()$ in Section~\ref{sec:algorithm}.

\begin{lemma}[Relation between $\Lambda_\theta(F,r)$ and $d_s^i$] \label{lemma:Lambda-dsi-sum}
For any input sequence $\alpha\in[r]^{r\cdot|\cS(F)|}$ of the algorithm $\CW()$ we have
\begin{equation} \label{eq:further-properties}
  \max_{\substack{s\in[r] \\ \pi\in\Pi(V(F))}}
  \min_{H\seq F\mathstrut} \lambda_\theta(H,w_{(H,\pi|_H,s)})
  = 1+\sum_{s\in[r]} d_s^\icheck \enspace,
\end{equation}
where $\icheck$ is the smallest integer $i$ for which $(F,\pi)\in\cC^{i,j}$ for some $\pi\in\Pi(V(F))$ and some integer $j\geq 1$, and $d_s^i$ and\/ $\cC^{i,j}$ are defined in line~\ref{cw:least-dangerous-threat} and line~\ref{cw:primary-threats}.
\end{lemma}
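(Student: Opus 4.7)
The plan is to establish the two inequalities ``$\geq$'' and ``$\leq$'' in~\eqref{eq:further-properties} separately. Abbreviate $\Sigma_i := 1+\sum_{s\in[r]}d_s^i$, so that the right-hand side of~\eqref{eq:further-properties} is $\Sigma_\icheck$; Lemma~\ref{lemma:di-wi-monotonicity} shows that $\Sigma_i$ is strictly decreasing in $i$. Fix $\sigma := \alpha_\icheck$ and an ordering $\pi^*=(v_1,\ldots,v_h)$ with $(F,\pi^*)\in\cC^{\icheck,j}$ for some $j\geq 1$; this pair will be the witness realizing the maximum on the left-hand side.

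For the direction ``$\geq$'' I plan to show $\min_{H\subseteq F}\lambda_\theta(H,w_{(H,\pi^*|_H,\sigma)})=\Sigma_\icheck$. Writing $w := w_{(F,\pi^*,\sigma)}$, Lemma~\ref{lemma:irrelevant-context-lambda} reduces this to evaluating $\min_{H\subseteq F}\lambda_\theta(H,w)$, and I will split this minimum according to the position $k\in\{1,\ldots,h\}$ of the youngest vertex of $H$ in $\pi^*$. Letting $G_k:=F\setminus\{v_1,\ldots,v_{k-1}\}$, a direct expansion using~\eqref{eq:def-lambda} and~\eqref{eq:def-d} gives
\begin{equation*}
  \min_{\substack{H\subseteq G_k \\ v_k\in H}}\lambda_\theta(H,w)=1+w(v_k)+d_\theta(G_k,v_k,w) \enspace,
\end{equation*}
and because $w$ coincides on $G_k$ with $w_{(G_k,\pi^*\setminus\{v_1,\ldots,v_{k-1}\},\sigma)}$ by definition~\eqref{eq:def-w}, the weight function in the $d_\theta$-term may be replaced accordingly. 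For $k=1$ the defining properties of $\cC^{\icheck,j}$ in line~\ref{cw:primary-threats} and the weight assignments in lines~\ref{cw:primary-weight} and~\ref{cw:set-primary-weight} produce the value $1+w^\icheck+d_\sigma^\icheck=\Sigma_\icheck$. For $k\geq 2$, $(G_k,\pi^*\setminus\{v_1,\ldots,v_{k-1}\})\in\cH_\sigma$ by Lemma~\ref{lemma:closure-Hs} and was added to $\cH_\sigma$ at some iteration $i_k\leq\icheck$; a case distinction on whether it was added via some $\cC^{i_k,j_k}$ (giving the inner value $\Sigma_{i_k}$ exactly) or via some $\cC^{i_k,j_k,k_k}$ (where the defining condition in line~\ref{cw:potential-secondary-threats} together with Lemma~\ref{lemma:di-wi-monotonicity} bounds the inner value from below by $\Sigma_{i_k}$) shows that in both sub-cases the inner minimum is at least $\Sigma_{i_k}\geq\Sigma_\icheck$. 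Taking the overall minimum over $k$ yields the claimed equality, with the minimum attained at $k=1$.

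For the direction ``$\leq$'' I plan to show $\min_H\lambda_\theta(H,w_{(H,\pi|_H,s)})\leq\Sigma_\icheck$ for every pair $(s,\pi)$. If $(F,\pi)\notin\cH_s$ at termination, the inner minimum is $-\infty$ by Lemma~\ref{lemma:finite-weights}. Otherwise $(F,\pi)$ was added to $\cH_s$ at some iteration $i^*\geq\icheck$ (using the minimality of $\icheck$ together with Lemma~\ref{lemma:partner}, which forces every ordered copy of $F$ added via some $\cC^{i,j,k}$ to have a companion in $\cC^{i,j}$). If $(F,\pi)\in\cC^{i^*,j^*}$, the $k=1$ computation from the previous paragraph (with $\icheck$ replaced by $i^*$) exhibits some $H\subseteq F$ with $\lambda_\theta(H,w_{(H,\pi|_H,s)})=\Sigma_{i^*}$. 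If instead $(F,\pi)\in\cC^{i^*,j^*,k^*}$, Lemma~\ref{lemma:partner} supplies a partner $(F,\pi')\in\cC^{i^*,j^*}$ with $w_{(F,\pi,s)}(u)\leq w_{(F,\pi',s)}(u)$ for all $u\in F$, so applying Lemma~\ref{lemma:irrelevant-context-lambda} to both $(F,\pi)$ and $(F,\pi')$ together with this entrywise inequality gives $\min_H\lambda_\theta(H,w_{(H,\pi|_H,s)})\leq\min_H\lambda_\theta(H,w_{(H,\pi'|_H,s)})\leq\Sigma_{i^*}$. In either sub-case, the monotonicity of $\Sigma_i$ finishes the bound via $\Sigma_{i^*}\leq\Sigma_\icheck$.

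The main technical obstacle is the $k\geq 2$ step of the ``$\geq$'' direction: a priori, subgraphs of $F$ avoiding $v_1$ might bring the minimum below $\Sigma_\icheck$, and one must carefully couple the weight $w_\sigma(G_k,\cdot)=w^{i_k}$ (respectively $w^{\ihat_k}$ in the secondary-threat case) with the $d_\theta$-value recorded when $(G_k,\cdot)$ was added to $\cH_\sigma$, and then invoke the sharp monotonicity of $\Sigma_i$ from Lemma~\ref{lemma:di-wi-monotonicity} to conclude $\Sigma_{i_k}\geq\Sigma_\icheck$.
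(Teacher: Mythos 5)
Your proposal is correct and takes essentially the same approach as the paper: both decompose the inner minimum by the position of the youngest vertex of $H$ in $\pi$, use Lemma~\ref{lemma:di-wi-monotonicity} to track the monotone quantity $1+\sum_{s}d_s^i$, invoke Lemma~\ref{lemma:partner} to handle ordered copies of $F$ added via the sets $\cC^{i,j,k}$, and pass through Lemma~\ref{lemma:irrelevant-context-lambda} to switch between the weight functions $w_{(F,\pi,s)}$ and $w_{(H,\pi|_H,s)}$. You merely split the argument into explicit ``$\geq$'' and ``$\leq$'' halves where the paper treats all $(s,\pi)$ uniformly and then maximizes over $i$ at the end; the ingredients and the essential case analysis are identical.
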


\begin{proof}
Throughout the proof, we will repeatedly use that, as a consequence of the first part of Lemma~\ref{lemma:di-wi-monotonicity}, each of the values $d_t^i$, $t\in[r]$, is non-increasing with $i$, and that the sum
\begin{equation} \label{eq:sum-d}
  1+\sum_{t\in[r]}d_t^i
\end{equation}
is decreasing with $i$.

For every $s\in[r]$ and any graph $(H,\pi)\in\cH_s$, $\pi=(v_1,\ldots,v_h)$, we have
\begin{equation} \label{eq:min-lambda-d-w-v1}
  \min_{J\seq H:v_1\in J} \lambda_\theta(J,w_{(H,\pi,s)}) \eqByM{\eqref{eq:def-d},\eqref{eq:def-lambda}}
  d_\theta(H,v_1,w_{(H,\pi,s)})+1+w_{(H,\pi,s)}(v_1) \enspace.
\end{equation}
If $(H,\pi)\in\cC^{i,j}$ for some integers $i,j\geq 1$ with $\alpha_i=s$, then by using the definition in line~\ref{cw:primary-threats} and by combining \eqref{eq:def-w} with the definitions in line~\ref{cw:primary-weight} and \ref{cw:set-primary-weight} we obtain from \eqref{eq:min-lambda-d-w-v1} that
\begin{subequations} \label{eq:min-lambda}
\begin{equation} \label{eq:min-lambda-pt}
  \min_{J\seq H:v_1\in J} \lambda_\theta(J,w_{(H,\pi,s)}) = 1+\sum_{t\in[r]}d_t^i \enspace.
\end{equation}
Similarly, if $(H,\pi)\in\cC^{i,j,k}$ for some integers $i,j,k\geq 1$ with $\alpha_i=s$, then by using the definition in line~\ref{cw:potential-secondary-threats} (recall that $\cC^{i,j,k}\seq\cT^{i,j,k}$) and by combining \eqref{eq:def-w} with the definitions in line~\ref{cw:primary-weight} and \ref{cw:set-secondary-weight} we obtain from \eqref{eq:min-lambda-d-w-v1}, using the monotonicity of the values $d_t^i$ in $i$, that
\begin{equation} \label{eq:min-lambda-st}
  \min_{J\seq H:v_1\in J} \lambda_\theta(J,w_{(H,\pi,s)}) \geq 1+\sum_{t\in[r]}d_t^i \enspace.
\end{equation}
\end{subequations}

By Lemma~\ref{lemma:finite-weights}, in the maximization in \eqref{eq:further-properties} it suffices to consider those $s\in[r]$ and vertex orderings $\pi\in\Pi(V(F))$, $\pi=(v_1,\ldots,v_f)$, for which $(F,\pi)\in\cH_s$. We clearly have
\begin{equation} \label{eq:min-lambda-rewrite}
  \min_{H\seq F} \lambda_\theta(H,w_{(F,\pi,s)}) =
  \min_{\substack{1\leq c\leq f \\ H\seq F\setminus\{v_1,\ldots,v_{c-1}\}:v_c\in H}} \lambda_\theta(H,w_{(F,\pi,s)}) \enspace.
\end{equation}
If $(F,\pi)\in\cC^{i,j}$ for some integers $i,j\geq 1$ with $\alpha_i=s$, then by \eqref{eq:min-lambda} and the monotonicity of the sum \eqref{eq:sum-d} in $i$, the minimum on the right hand side of \eqref{eq:min-lambda-rewrite} is attained for $c=1$, yielding
\begin{equation} \label{eq:min-lambda-sum-d-1}
  \min_{H\seq F} \lambda_\theta(H,w_{(F,\pi,s)}) = 1+\sum_{t\in[r]}d_t^i \enspace.
\end{equation}
If $(F,\pi)\in\cC^{i,j,k}$ for some integers $i,j,k\geq 1$ with $\alpha_i=s$, then by Lemma~\ref{lemma:partner}, the graph $(F,\pi')$, defined by $\pi':=(v_{k+1},v_1,v_2,\ldots,v_k,v_{k+2},\ldots,v_f)$, is contained in $\cC^{i,j}$ and satisfies
\begin{equation} \label{eq:w-F-pi-pi'}
  w_{(F,\pi,s)}(u)\leq w_{(F,\pi',s)}(u) \quad \text{for all $u\in F$} \enspace,
\end{equation}
implying that
\begin{equation} \label{eq:min-lambda-sum-d-2}
  \min_{H\seq F} \lambda_\theta(H,w_{(F,\pi,s)}) \leBy{eq:w-F-pi-pi'}
  \min_{H\seq F} \lambda_\theta(H,w_{(F,\pi',s)}) \eqBy{eq:min-lambda-sum-d-1}
  1+\sum_{t\in[r]}d_t^i \enspace.
\end{equation}
%(from the monotonicity of the sum \eqref{eq:sum-d} in $i$ and from \eqref{eq:min-lambda} it follows that this inequality is actually tight).

By Lemma~\ref{lemma:irrelevant-context-lambda} we can replace the weight function $w_{(F,\pi,s)}$ on the left hand side of \eqref{eq:min-lambda-sum-d-1} by $w_{(H,\pi|_H,s)}$ and the weight functions $w_{(F,\pi,s)}$ and $w_{(F,\pi',s)}$ in \eqref{eq:min-lambda-sum-d-2} by $w_{(H,\pi|_H,s)}$ and $w_{(H,\pi'|_H,s)}$, respectively. From the two modified equations the claim follows immediately, observing that as a consequence of the monotonicity of the sum \eqref{eq:sum-d} in $i$, their respective right hand sides are maximized for $i=\icheck$ as defined in the lemma.
\end{proof}

\section{Builder in the deterministic game} \label{sec:upper-bound}

In this section we prove Proposition~\ref{prop:Lambda-Builder} by explicitly constructing, for $F$, $r$, $\theta$ and $\beta$ as in the proposition, a Builder strategy that enforces a monochromatic copy of $F$ in the deterministic game with $r$ colors in at most $\amax$ steps (where $\amax=\amax(F,r)$ is defined in~\eqref{eq:amax} below), and that respects the generalized density restriction $(\theta,\beta)$.

\subsection{The pigeonholing}

We will derive Builder's strategy from the algorithm $\CW()$ (Algorithm~\ref{algo:cw} on page~\pageref{algo:cw}) in two steps, using an abstract version of the game as an intermediate step. The reader should not be put off by our introducing yet another game --- this abstract game is merely a convenience to separate a conceptually simple but important pigeonholing argument from the more interesting part of the proof. We give the pigeonholing argument in detail because there are some subtleties involved, and also because we want to derive an \emph{explicit} upper bound $\amax=\amax(F,r)$ (that in particular does not depend on $\theta$) on the number of steps that Builder needs to enforce a copy of $F$ in the original deterministic game.

The abstract game is played by two players AbstractBuilder and AbstractPainter that correspond to the players of the original deterministic game. The state of the abstract game after $t$ steps is described by a list  $(G^1,\dots,G^t)$ of $r$-colored graphs $G^i$, $1\leq i\leq t$, where the same $r$-colored graph may appear several times in the list. (Intuitively, these entries represent $r$-colored graphs of which Builder can enforce isolated copies on the board of the actual deterministic game, where by an isolated copy of some ($r$-colored) graph $G$ on the board we mean a copy of $G$ that is the union of one or several components.) In each step $t+1$ of the abstract game, AbstractBuilder constructs a new graph by choosing an arbitrary subset $\cX$ of the index set $\{1,\ldots,t\}$, and connecting an additional vertex $v$ in an arbitrary way to the disjoint union of the graphs $G^i$, $i\in \cX$. AbstractPainter then chooses a color $s\in[r]$ for $v$, and the resulting $r$-colored graph $G^{t+1}$ is added to the list. The game starts with the empty list (and thus the first graph $G^1$ constructed is simply an isolated vertex), and AbstractBuilder's goal is to create an $r$-colored graph $G^t$ that contains a monochromatic copy of $F$. Similarly to before we say that an AbstractBuilder strategy satisfies the generalized density restriction $(\theta, \beta)$ for given values $\theta>0$ and $\beta$ if, at all times, all subgraphs $H$ with $v(H)\geq 1$ of all graphs $G^i$ in AbstractBuilder's list  satisfy $\mu_\theta(H)\geq \beta$ (recall~\eqref{eq:theta-rho-game}).

The following lemma relates the abstract game to the original deterministic game.

\begin{lemma}[Link between abstract and original deterministic game] \label{lemma:translation}
Let $\ASTRAT$ be an arbitrary AbstractBuilder strategy for the abstract game with $r$ colors. If $\ASTRAT$ enforces a monochromatic copy of $F$ in at most $\tmax$ steps, then it gives rise to a Builder strategy $\STRAT$ for the original deterministic game with $r$ colors that enforces a monochromatic copy of $F$ in at most $(r+1)^\tmax$ steps. Furthermore, if $\ASTRAT$ satisfies the generalized density restriction $(\theta, \beta)$ for given values $\theta>0$ and $\beta\geq 0$, then also $\STRAT$ satisfies the generalized density restriction $(\theta, \beta)$.
\end{lemma}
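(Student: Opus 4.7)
The approach is to construct $\STRAT$ by simulating a run of $\ASTRAT$ on the real board, representing each graph $G^i$ from the abstract list as an \emph{isolated subgraph} (that is, a union of whole connected components) of the real board. Whenever $\ASTRAT$ prescribes that a new vertex $v$ be attached to disjoint copies of graphs $G^{i_1},\dots,G^{i_k}$ from the list, Builder selects previously unused isolated copies of these graphs on the real board, presents $v$ with the prescribed edges, and reads off real Painter's colour choice for $v$ as AbstractPainter's response in the simulated abstract run. Since $\ASTRAT$ is a winning strategy against \emph{any} AbstractPainter, the simulated abstract run must terminate with some list entry $G^i$ containing a monochromatic copy of~$F$, and by construction this copy then sits as an isolated subgraph of the real board, so $\STRAT$ wins as well.

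The technical heart of the argument is maintaining a sufficient supply of disjoint fresh isolated copies of each list entry, since the same $G^i$ may be reused by $\ASTRAT$ in many subsequent abstract steps. My plan is to organise this via a recursive subroutine, which I will call $\BUILD(t,N)$, that enforces $N$ disjoint, identically-coloured isolated copies of~$G^t$ on the real board: for each predecessor $G^{i_j}$ used in the abstract construction of $G^t$ it first invokes $\BUILD(i_j,rN)$ recursively to stock $rN$ disjoint isolated copies; then performs the single-vertex attachment step $rN$ times in parallel using fresh copies drawn from these stocks; and finally applies a pigeonhole argument over the $r$ possible colours assigned by real Painter to the $rN$ new vertices to extract $N$ of them that received a common colour, fixing this colour as AbstractPainter's response to the step being simulated. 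The remaining at most $(r-1)N$ ``wrong-colour'' components are simply abandoned and do not interfere with anything that follows. Unrolling this recursion through the at most $\tmax$ levels of $\ASTRAT$ yields the claimed upper bound of $(r+1)^\tmax$ on the total cost of the top-level call $\BUILD(\tmax,1)$.

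Preservation of the generalised density restriction then follows from a short calculation, and is precisely where the hypothesis $\beta\geq 0$ enters. By construction every connected component of the real board is isomorphic to some graph $G^i$ from the abstract list, so any subgraph $H$ of the board with $v(H)\geq 1$ decomposes as a disjoint union $H=H_1\sqcup\dots\sqcup H_m$ with each $H_j$ contained in a single component and hence in some $G^i$. Since $\ASTRAT$ satisfies $(\theta,\beta)$, every such $H_j$ satisfies $\mu_\theta(H_j)\geq\beta$, and the additivity of $\mu_\theta$ over disjoint unions together with $\beta\geq 0$ gives $\mu_\theta(H)=\sum_{j=1}^m \mu_\theta(H_j)\geq m\beta\geq\beta$, so $\STRAT$ satisfies $(\theta,\beta)$ as well.

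The main obstacle I anticipate is getting the step-count bookkeeping in $\BUILD(t,N)$ to come out cleanly to $(r+1)^\tmax$: the pigeonholing inflates the number of parallel attempts by a factor $r$ at each recursion level, while each attempt contributes one attachment step, and these two contributions have to combine to give base $r+1$ per level rather than merely $O(r^\tmax)$ or worse. Once the recursion is set up carefully, all the remaining verifications---matching up colours consistently along the simulated abstract trajectory, keeping components isolated throughout the construction, and propagating the density restriction---are routine.
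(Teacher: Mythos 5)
Your overall plan --- pigeonhole at each abstract step, a recursive multiplicity count, and a density argument via additivity of $\mu_\theta$ over disjoint components --- is the right shape, and both the density argument and the step-count recurrence check out: in the worst case $\cX_t=\{1,\dots,t-1\}$ at every step, and a short induction gives $C(t,N)\le Nr(r+1)^{t-1}$, so $C(\tmax,1)<(r+1)^{\tmax}$.

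There is, however, a genuine gap in how $\BUILD$ maintains the simulated abstract run, and it is not the step-count bookkeeping you flagged as the main obstacle. When $G^t$ serves as a predecessor for more than one later step (or is reached along more than one path of the unrolled recursion tree), $\BUILD(t,\cdot)$ is invoked several times with separate batches of attachment attempts, and each invocation makes an \emph{independent} pigeonhole commitment on the colour of the step-$t$ vertex. Nothing synchronises these commitments, so the board ends up holding copies of ``$G^t$'' with incompatible colourings, and the final graph assembled by $\BUILD(\tmax,1)$ is a hybrid that need not coincide with $(G^1,\dots,G^{\tmax})$ for \emph{any} single abstract run. The guarantee ``$\ASTRAT$ beats every AbstractPainter'' is a statement about consistent runs and does not apply to such a hybrid. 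The problem is compounded when $\ASTRAT$ is adaptive, since then even the predecessor set $\cX_t$ and the attachment pattern at step $t$ depend on colours that your recursion is still in the process of determining. For a concrete failure, take $F=K_2$, $r=2$, and the three-step adaptive $\ASTRAT$ with $G^1$ a vertex, $G^2$ attaching $v$ to $G^1$ (a win already if Painter reuses the colour), and $G^3$ attaching $w$ to the $G^1$-vertex and to the vertex $v$ of $G^2$ (a win because in a consistent run these two vertices carry \emph{different} colours); this is a legal winning strategy for, say, $(\theta,\beta)=(1,0)$. In your scheme $\BUILD(3,1)$ calls $\BUILD(1,2)$ directly and $\BUILD(1,4)$ inside $\BUILD(2,2)$. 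An adversarial real Painter can colour the two $G^1$-batches oppositely (forcing $a=1$ from the first pigeonhole and $b=2$ from the second), then colour every $v$ with colour $1$ (so no $G^2$-copy is monochromatic, since its internal $G^1$-vertex is coloured $2$) and every $w$ with colour $2$. Now $w$ is attached to two neighbours of the \emph{same} colour $1$ and escapes, and no monochromatic $K_2$ appears anywhere on the board.

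The paper avoids this by first encoding $\ASTRAT$ as an $r$-ary strategy tree $\cT$ whose nodes record the full $r$-coloured history, computing the required multiplicities $f_b(G^i)$ by a \emph{backward} recursion over $\cT$ (so they account in advance for every possible sequence of Painter responses), and then playing \emph{forward} through $\cT$ while maintaining the invariant that at each node $b$ Builder holds at least $f_b(G^i)$ isolated copies of each $G^i$ with exactly the colouring prescribed by $b$'s history. This makes the simulation of one consistent abstract run literal, and the bound $c_t\le(r+1)c_{t+1}$, $c_{\tmax}=1$, recovers the $(r+1)^{\tmax}$ count. Your recursion needs to be reorganised along these lines --- precomputing multiplicities over the strategy tree and sharing colour commitments --- before the enforcement claim holds; the density and step-count parts of your argument then survive unchanged.
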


\begin{proof} We simultaneously capture all possible ways the abstract game may evolve if AbstractBuilder plays according to $\ASTRAT$ by an $r$-ary rooted tree $\cT$ in which a node at depth $t$ is a list $b=(G^1,\ldots,G^t)$ of $r$-colored graphs $G^i$, $1\leq i\leq t$, and has as its $r$ children the nodes $b_s=(G^1,\ldots, G^t,G_s^{t+1})$, $s\in[r]$, where $G^{t+1}_s$ is obtained from $G^1,\ldots,G^t$ by applying the next construction step of $\ASTRAT$ and coloring the new vertex with color $s$. Thus the graphs $G_s^{t+1}$ differ only in the color assigned to the new vertex. 

We assume \wolog that AbstractBuilder stops playing as soon as a monochromatic copy of $F$ is created. Thus if $b=(G^1,\ldots,G^t)$ is a leaf of $\cT$, the graph $G^t$ (the last graph constructed) contains a monochromatic copy of $F$.
Furthermore, by the assumption of the lemma, the depth of the strategy tree $\cT$ is bounded by $\tmax$. In the following we assume \wolog that the depth of $\cT$ is exactly $\tmax$.

To derive $\STRAT$ from $\ASTRAT$, we compute for each node $b=(G^1, \dots, G^t)$ of $\cT$ a function $f_b:\{G^1, \dots, G^t\}\to\NN_0$ that specifies for each of the graphs $G^i$  the number of isolated copies of $G^i$ that are needed to implement the strategy $\ASTRAT$ in the original deterministic game. This can be done recursively as follows.

If $b=(G^1,\ldots,G^t)$ is a leaf of $\cT$, we set
\begin{subequations} \label{eq:multiplicities}
\begin{equation} \label{eq:multiplicities-base}
  f_b(G^t):=1
\end{equation}
and
\begin{equation} \label{eq:multiplicities-base-2}
  f_b(G^i):=0 \enspace, \quad 1\leq i\leq t-1 \enspace.
\end{equation}

If $b=(G^1,\ldots,G^t)$ is an internal node of~$\cT$, then letting~$\cX_b\seq \{1,\ldots,t\}$ denote the index set of the graphs that are used in the construction step corresponding to~$b$, and denoting the descendants of~$b$ by $b_s=(G^1,\ldots,G^t, G^{t+1}_s)$, $s\in[r]$, as before, we define for $1\leq i \leq t$,
\begin{equation} \label{eq:multiplicities-ind}
  f_b(G^i):=\begin{cases} \max_{s\in[r]} f_{b_s}(G^i) \enspace, & \text{if } i\notin \cX_b \enspace, \\
                          \max_{s\in[r]} f_{b_s}(G^i) + \sum_{s\in[r]} f_{b_s}(G_s^{t+1}) \enspace, & \text{if } i\in \cX_b \enspace.
            \end{cases}
\end{equation}
\end{subequations} 

With these definitions, $\STRAT$ is obtained from $\ASTRAT$ by proceeding as described by the strategy tree~$\cT$, and repeating every construction step corresponding to a given node~$b$ exactly $\sum_{s\in[r]} f_{b_s}(G_s^{t+1})$ times, each time connecting a new vertex to (previously unused) isolated copies of the graphs $G^i$, $i\in\cX_b$, on the board as specified by the corresponding step of the abstract game. By the pigeonhole principle, this guarantees that regardless of how Painter plays there is a color $\sigma$ such that at least $f_{b_\sigma}(G_\sigma^{t+1})$ isolated copies of $G_\sigma^{t+1}$ are created, and by our recursive definition in \eqref{eq:multiplicities-ind} it also follows that at least $f_{b_\sigma}(G_\sigma^{i})$ isolated copies of each graph $G_\sigma^{i}$, $1\leq i\leq t$, are left unused. Thus Builder may continue with the construction step corresponding to the node $b_\sigma$. This shows that at every node $b=(G^1, \dots, G^t)$ of $\cT$, Builder has at least $f_{b}(G^{i})$ isolated copies of every graph $G^i$ available. In particular, when he reaches a leaf of $\cT$, due to~\eqref{eq:multiplicities-base} he will have created at least one copy of a graph $G^{t}$ containing a monochromatic copy of $F$.

This shows that $\STRAT$ indeed creates a monochromatic copy of $F$ in the original deterministic game, and it remains to bound the number of steps it needs to do so. For every $t=1,\ldots,\tmax$ we denote by $c_t$ the maximum of $f_b(G^i)$ over all nodes $b=(G^1, \ldots, G^t)$ at depth $t$ in $\cT$ and all $1\leq i\leq t$. It follows from \eqref{eq:multiplicities-ind} that 
\begin{subequations} \label{eq:ct}
\begin{equation}
  c_t\leq (r+1) c_{t+1} \enspace, 
\end{equation}
and by \eqref{eq:multiplicities-base} and  \eqref{eq:multiplicities-base-2} we have
\begin{equation}
  c_\tmax = 1 \enspace.
\end{equation}
\end{subequations}
By definition of the rule how often to repeat each step of $\ASTRAT$ in $\STRAT$, the number of repetitions of a step that corresponds to a node $b$ at depth $t$ in $\cT$ is bounded by $r\cdot c_{t+1}$. It follows that the total number of Builder steps when executing $\STRAT$ is bounded by
\begin{equation*}
  \sum_{t=0}^{\tmax-1} r\cdot c_{t+1}
  \leBy{eq:ct} r \sum_{t=0}^{\tmax-1} (r+1)^t
  \leq (r+1)^\tmax\enspace,
\end{equation*}
as claimed.

Furthermore, as the strategy $\STRAT$ differs from $\ASTRAT$ merely in how often (Abstract)Builder's construction steps are repeated, and because for $\beta\geq 0$ it suffices to check the condition \eqref{eq:theta-rho-game} for all \emph{connected} subgraphs $H$ of the board, it follows that with $\ASTRAT$ also $\STRAT$ satisfies the generalized density restriction $(\theta, \beta)$.
\end{proof}

%\subsection{Builder's strategy and proof of Proposition~\ref{prop:Lambda-Builder}}
\subsection{Builder's strategy and proof of Proposition~\texorpdfstring{\ref{prop:Lambda-Builder}}{6}}

We now present AbstractBuilder's strategy $\ABUILD(F,r,\theta)$ that will yield our final Builder strategy $\BUILD(F,r,\theta)$ via Lemma~\ref{lemma:translation}. Throughout this section, $F$, $r$, and $\theta$ are fixed, and we usually omit these arguments when we refer to $\ABUILD(F,r,\theta)$ or $\CW(F,r,\theta,\alpha)$. 

The strategy $\ABUILD()$ proceeds in rounds along the lines of the algorithm $\CW()$. (As before, the term `round' refers to one iteration of the repeat-loop~(*) of $\CW()$.) 
$\ABUILD()$ maintains, for each color $s\in[r]$, a family $\cG_s\seq \cS(F)$ and a mapping~$G_s$ from $\cG_s$ to the $r$-colored graphs in AbstractBuilder's list.
For any $(H,\pi)\in\cG_s$ the graph $G_s(H,\pi)$ will always contain a distinguished monochromatic copy of $H$ in color $s$ to which we will refer as the \emph{central copy of $H$ in $G_s(H,\pi)$}; it is however possible that this copy was constructed in an order different from $\pi$ (this is where we make crucial use of Lemma~\ref{lemma:partner} proved in Section~\ref{sec:partner}).

At the same time, $\ABUILD()$ extracts a sequence $\alpha\in[r]^{r\cdot |\cS(F)|}$ from AbstractPainter's coloring decisions such that the following holds: After each round, the families $\cG_s$ contain all graphs from the families $\cH_s$ occuring after the same number of rounds of $\CW()$ with input sequence $\alpha$. We will also see that for each graph $(H,\pi)\in\cH_s$, the graph $G_s(H,\pi)$ on AbstractBuilder's list can indeed be used in further construction steps (without violating some given generalized density restriction) as indicated by the weight function $w_{(H,\pi,s)}$ computed by $\CW()$ with input sequence $\alpha$.

In order to construct a sequence $\alpha$ for which the above statements hold, $\ABUILD()$ uses variables defined by the algorithm $\CW()$ for several different input sequences. We will use the following notations: For any sequence $\alpha\in[r]^{i-1}$ and any $s\in[r]$ we let $\alpha\circ s\in[r]^{i}$ denote the concatenation of $\alpha$ with~$s$. When we refer to the algorithm $\CW()$ with some input sequence $\alpha\in[r]^{i}$, we tacitly assume that $\alpha$ is extended arbitrarily to a sequence  $\alpha'\in[r]^{r\cdot|\cS(F)|}$ with prefix~$\alpha$. As we will only use this convention when we refer to variables defined in the first $i$ iterations of the repeat-loop~(*) of $\CW()$, the values of $\alpha'$ beyond the prefix $\alpha$ are irrelevant (recall that the $i$-th iteration reads exactly the $i$-th element of the input sequence $\alpha'$).

A key ingredient in the construction of the sequence $\alpha$ is the following lemma. Recall that for any set $X$ and any integer $r\geq 1$, an \emph{$r$-coloring of $X$} is simply a mapping $f:X\rightarrow[r]$.

\begin{lemma}[Dominating color] \label{lemma:cross-product-coloring}
Let $r\geq 1$ be an integer and $X_1,\ldots,X_r$ finite, nonempty sets. For any $r$-coloring $f$ of $X_1\times\cdots\times X_r$ there is a color $\sigma\in[r]$ such that for every $x_\sigma\in X_\sigma$ there are elements $x_s\in X_s$, $s\in[r]\setminus\{\sigma\}$, with $f(x_1,\ldots,x_r)=\sigma$.
\end{lemma}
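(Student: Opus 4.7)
The plan is to prove this by contradiction via a simple diagonal argument. Suppose the conclusion fails for every color $\sigma\in[r]$. Then for each $\sigma\in[r]$ there exists a ``bad'' element $x_\sigma^*\in X_\sigma$ with the property that no tuple $(x_1,\ldots,x_r)\in X_1\times\cdots\times X_r$ whose $\sigma$-th coordinate equals $x_\sigma^*$ is colored $\sigma$ by $f$. Formally, for all choices of $x_s\in X_s$ with $s\in[r]\setminus\{\sigma\}$ we have $f(x_1,\ldots,x_{\sigma-1},x_\sigma^*,x_{\sigma+1},\ldots,x_r)\neq \sigma$.

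Now consider the single tuple assembled from these bad elements, $(x_1^*,\ldots,x_r^*)\in X_1\times\cdots\times X_r$, and let $\tau:=f(x_1^*,\ldots,x_r^*)\in[r]$ denote its color. The $\tau$-th coordinate of this tuple is $x_\tau^*$, so by the defining property of the bad element $x_\tau^*$ (applied with $x_s:=x_s^*$ for $s\in[r]\setminus\{\tau\}$) we must have $f(x_1^*,\ldots,x_r^*)\neq \tau$, contradicting the choice of $\tau$. Therefore the assumption fails for at least one $\sigma\in[r]$, which is precisely the statement of the lemma.

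I expect no substantive obstacle: the only subtlety is making sure that the bad elements $x_\sigma^*$ can be chosen simultaneously for all $\sigma$, which requires nothing beyond the nonemptiness of each $X_\sigma$ together with the negation of the conclusion for each individual color $\sigma$. The finiteness of the sets $X_s$ is in fact not used; the argument works verbatim for arbitrary nonempty sets.
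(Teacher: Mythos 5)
Your proof is correct: the diagonal argument works cleanly, and you are right that the finiteness assumption is superfluous. The paper proceeds differently, by a double induction on $r$ and on $\sum_{s\in[r]}|X_s|$: fixing an element $x\in X_1$, it applies the inductive hypothesis to the restriction of $f$ to $(X_1\setminus\{x\})\times X_2\times\cdots\times X_r$, and in the case where the dominating color there is $1$ but $f(x,\bullet,\ldots,\bullet)$ never takes value $1$, it reduces to the $(r-1)$-coloring of $X_2\times\cdots\times X_r$. Your approach is substantially shorter and cleaner: rather than peeling off one element at a time, you negate the claim globally, collect one ``bad'' witness $x_\sigma^*$ per color, and observe that the single tuple $(x_1^*,\ldots,x_r^*)$ cannot consistently have any color at all. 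This buys both brevity and extra generality (no finiteness needed), whereas the paper's induction buys nothing here that your argument lacks; it is simply a more roundabout route to the same fact.
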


We defer the proof of Lemma~\ref{lemma:cross-product-coloring} to the next section.

Consider now the pseudocode description of $\ABUILD()$ in Algorithm~\ref{algo:builder}. Note that its loop structure mirrors the structure of $\CW()$, with the crucial difference that while the loop~(**) of $\CW()$ simply focuses on one color $\sigma\in[r]$ (as indicated by the $i$-th entry of the input sequence $\alpha$), for the strategy $\ABUILD()$ the `right' color $\sigma$ depends on the individual decisions of AbstractPainter occuring during the loop~(++), and is therefore not known until this loop terminates.

In the next section we will prove the following two properties of $\ABUILD(F,r,\theta)$.

\begin{lemma}[Well-definedness and duration of AbstractBuilder strategy] \label{lemma:abuild-steps}
For $F$, $r$, and $\theta$ as in Proposition~\ref{prop:Lambda-Builder}, the strategy $\ABUILD(F,r,\theta)$ enforces a monochromatic copy of $F$ in at most $r^2\cdot|\cS(F)|^{r+2}$ steps of the abstract game.
\end{lemma}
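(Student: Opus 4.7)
The plan is to establish two claims in tandem by induction on the rounds of $\ABUILD()$: (i)~$\ABUILD()$ is well-defined, meaning all of its operations are legal moves in the abstract game; and (ii)~after $i$ iterations of its outer repeat-loop, the data $(\cG_s, G_s)_{s\in[r]}$ maintained by $\ABUILD()$ mirrors the families $(\cH_s, w_s)_{s\in[r]}$ produced by $\CW(F,r,\theta,\alpha)$ after $i$ iterations of its repeat-loop~(*), where $\alpha \in [r]^i$ is the sequence of colors that $\ABUILD()$ has extracted so far from AbstractPainter's responses. Throughout, each graph $G_s(H,\pi)$ on AbstractBuilder's list contains a distinguished monochromatic copy of $H$ in color $s$ whose surrounding `history' has density exactly matching the weight function $w_{(H,\pi,s)}$ computed by $\CW()$.

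First, I would describe how $\ABUILD()$ extracts the symbol $\alpha_i$ in round $i$. For each color $s \in [r]$ the algorithm forms the set $X_s \subseteq \cC(\cH_s, F)$ of ordered subgraphs $(H,\pi)$ attaining the maximum $d_s^i$ of line~\ref{cw:least-dangerous-threat} in $\CW()$. For each tuple $(x_1,\ldots,x_r) \in X_1 \times \cdots \times X_r$, AbstractBuilder performs one construction step: he takes disjoint copies of the previously built graphs $G_s(x_s)$, introduces a new vertex $v$, and connects $v$ to their central copies exactly as prescribed by Figure~\ref{fig:recursion}. The color AbstractPainter assigns to $v$ induces an $r$-coloring of $X_1 \times \cdots \times X_r$, and Lemma~\ref{lemma:cross-product-coloring} furnishes a dominating color $\sigma$. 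AbstractBuilder sets $\alpha_i := \sigma$, matching the role of $\sigma := \alpha_i$ in line~\ref{cw:select-color} of $\CW()$.

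Next, I would show that the loops (++) and (+++) of $\ABUILD()$ faithfully reproduce the loops (**) and (***) of $\CW()$, extending $\cG_\sigma$ in lockstep with $\cH_\sigma$. For every graph $(H,\pi)$ added to $\cH_\sigma$ via some $\cC^{i,j}$, the dominating-color property of $\sigma$ guarantees a tuple from the exploration phase for which AbstractPainter has already selected $\sigma$ on a vertex that completes the corresponding copy of $H$; AbstractBuilder records this graph as $G_\sigma(H,\pi)$. For graphs $(H,\pi)$ added via some $\cC^{i,j,k}$ the natural construction produces the copy of $H$ in an ordering $\pi'$ differing from $\pi$. Here the critical tool is Lemma~\ref{lemma:partner}, which provides a sibling ordering $\pi'$ with $(H,\pi') \in \cC^{i,j}$ and $w_{(H,\pi,\sigma)}(u) \le w_{(H,\pi',\sigma)}(u)$ for all $u \in H$; AbstractBuilder may thus set $G_\sigma(H,\pi) := G_\sigma(H,\pi')$, and the density invariant is preserved because the weights for $\pi$ are dominated by those for $\pi'$. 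Iterating (++) and (+++) requires re-running the cross-product exploration each time the families are enlarged, but each re-exploration is again bounded by the product of the current $|X_s|$'s.

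Finally I would count abstract steps. By Lemma~\ref{lemma:algo-well-defined} the outer loop runs at most $r \cdot |\cS(F)|$ times. Within each round, the loops (++) and (+++) each iterate at most $|\cS(F)|$ times (as argued in the proof of Lemma~\ref{lemma:algo-well-defined}), and in each iteration AbstractBuilder carries out at most $|X_1| \cdots |X_r| \le |\cS(F)|^r$ exploration steps plus $O(|\cS(F)|)$ further steps to record the new entries in $\cG_\sigma$. Multiplying these bounds yields a total of at most $r^2 \cdot |\cS(F)|^{r+2}$ abstract game steps, as claimed. The main obstacle, in my view, is ensuring the invariant on $(\cG_s, G_s)$ survives across all three nested loops despite the fact that the `correct' color $\sigma$ is determined only \emph{online} via Lemma~\ref{lemma:cross-product-coloring}; reconciling the `natural' vertex orderings produced by AbstractBuilder with the canonical orderings assigned by $\CW()$ in the innermost loop is precisely where Lemma~\ref{lemma:partner} becomes indispensable.
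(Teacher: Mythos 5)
Your approach is essentially the paper's: establish a correspondence between the families $(\cG_s, G_s)$ maintained by $\ABUILD()$ and the families $(\cH_s, w_s)$ of $\CW(F,r,\theta,\alpha)$ (the paper isolates this as Lemma~\ref{lemma:correspondence}), use Lemma~\ref{lemma:cross-product-coloring} to extract the dominating color that becomes $\alpha_i$, and invoke Lemma~\ref{lemma:partner} to handle the $\cC^{i,j,k}$ entries whose vertex order produced by the construction differs from the canonical one.

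The one place you go astray is the step count. You assert that the loop~(++) iterates at most $|\cS(F)|$ times "as argued in the proof of Lemma~\ref{lemma:algo-well-defined}," but that argument bounds the iterations of~(**) \emph{for a single fixed color} by $|\cS(F)|$. In $\ABUILD()$ the dominating color $\sigmahat$ selected in line~\ref{builder:dominant-color} can vary across iterations of~(++), and each iteration increments exactly one of the $r$ counters $j_s$; the loop only terminates once \emph{some} $j_\sigma$ exceeds $j_{\max,\sigma}$. Thus (++) can run up to $\sum_{s\in[r]} j_{\max,s} \le r\cdot|\cS(F)|$ times, not $|\cS(F)|$. This missing factor of $r$ is precisely where the $r^2$ in the claimed bound comes from: as you have it, your product is only $r\cdot|\cS(F)|^{r+2}$, so your multiplication does not actually reproduce the stated constant. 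Once you correct the bound on~(++) to $r\cdot|\cS(F)|$, the product $(r|\cS(F)|)\cdot(r|\cS(F)|)\cdot|\cS(F)|^r = r^2|\cS(F)|^{r+2}$ falls out. (Also note that the loop~(+++) and the bookkeeping lines defining $G_\sigma(\cdot)$ do not contribute construction steps in the abstract game; only the vertex-adding steps in line~\ref{builder:connect} do, so the "$O(|\cS(F)|)$ further steps" you add are not game steps --- harmless for an upper bound, but worth getting straight.)
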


\begin{lemma}[AbstractBuilder strategy is legal] \label{lemma:abuild-restriction}
For $F$, $r$, $\theta$, and $\beta$ as in Proposition~\ref{prop:Lambda-Builder}, the strategy $\ABUILD(F,r,\theta)$ satisfies the generalized density restriction $(\theta,\beta)$.
\end{lemma}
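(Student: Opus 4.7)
The plan is to establish, by induction on the iterations of $\ABUILD(F,r,\theta)$, the invariant that every graph $G_s(H,\pi)$ placed on AbstractBuilder's list during iteration $i$ (which mirrors the corresponding iteration of $\CW(F,r,\theta,\alpha)$ for the sequence $\alpha$ that $\ABUILD$ produces adaptively from AbstractPainter's decisions) satisfies $\mu_\theta(J)\geq 1+\sum_{s'\in[r]} d_{s'}^i$ for every nonempty subgraph $J\seq G_s(H,\pi)$. Granted the invariant, the lemma follows. Termination of $\ABUILD$ occurs at some critical iteration $\icheck$ where a graph $G_\sigma(F,\pi)$ is produced; combining the invariant with Lemma~\ref{lemma:Lambda-dsi-sum} and the hypothesis $\Lambda_\theta(F,r)\geq \beta$ yields
\begin{equation*}
  \mu_\theta(J)\geq 1+\sum_{s'\in[r]} d_{s'}^\icheck
  = \max_{s',\pi'}\,\min_{H''\seq F}\lambda_\theta(H'', w_{(H'',\pi'|_{H''},s')})
  \geq \Lambda_\theta(F,r) \geq \beta ,
\end{equation*}
and the monotonicity of $d_{s'}^{i'}$ in $i'$ (Lemma~\ref{lemma:di-wi-monotonicity}) extends the bound to all earlier graphs on the list.

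For the inductive step, in iteration $i$ the strategy $\ABUILD$ selects one graph $G_{s'}(H_{s'},\pi_{s'})$ from each color class $s'\in[r]$ realizing the maximum in line~\ref{cw:least-dangerous-threat} of $\CW$, and attaches a new vertex $v$ to the corresponding central copies exactly as the matching construction step of $\CW$ prescribes. After AbstractPainter's choice determines $\sigma$, the graph $G_\sigma(H_\sigma^+,\pi_\sigma^+)$ joins the list. Since by construction of the abstract game these building blocks are vertex-disjoint except at $v$, any subgraph $J$ of the new graph decomposes as $J=(J\cap\{v\})\cup\bigsqcup_{s'}J_{s'}$ with $J_{s'}\seq G_{s'}(H_{s'},\pi_{s'})$. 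If $v\notin J$, then $\mu_\theta(J)=\sum_{s'}\mu_\theta(J_{s'})$, and the inductive hypothesis applied to each nonempty term together with $\beta\geq 0$ and Lemma~\ref{lemma:di-wi-monotonicity} gives the bound. If $v\in J$, letting $E_{s'}$ denote the edges from $v$ to $J_{s'}$, we have
\begin{equation*}
  \mu_\theta(J) = 1+\sum_{s'\in[r]}\bigl(\mu_\theta(J_{s'})-|E_{s'}|\cdot\theta\bigr),
\end{equation*}
so the bound reduces to showing $\mu_\theta(J_{s'})-|E_{s'}|\cdot\theta\geq d_{s'}^i$ for every $s'\in[r]$.

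The main obstacle lies in this last inequality, which is not implied by the invariant in the plain form above but rather by a strengthening: for each $G_{s'}(H_{s'},\pi_{s'})$ and any attachment of an auxiliary vertex $v$ to a subset $U\seq V(H_{s'})$ of the central copy, every subgraph $J_{s'}\seq G_{s'}(H_{s'},\pi_{s'})$ satisfies
\begin{equation*}
  \mu_\theta(J_{s'})-|V(J_{s'})\cap U|\cdot\theta \geq d_\theta\bigl(H_{s'}+v,\,v,\,w_{(H_{s'},\pi_{s'},s')}\bigr),
\end{equation*}
where $H_{s'}+v$ carries the edges $vu$ for $u\in U$ and $w$ is extended to $v$ arbitrarily. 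This stronger statement is proved by the same induction, by unpacking the recursive definition of $d_\theta$ in~\eqref{eq:def-d} and decomposing a minimizing $J_{s'}$ according to the $r$ sub-constructions that make up $G_{s'}(H_{s'},\pi_{s'})$; Lemma~\ref{lemma:irrelevant-context-d} ensures that the weights computed along the way coincide with the weights carried by the relevant ordered subgraphs, so the recursion closes. By $\ABUILD$'s choice of attachments in line~\ref{cw:least-dangerous-threat}, the right-hand side equals $d_{s'}^i$ exactly. A final technicality is absorbed by Lemma~\ref{lemma:partner}: when $\CW$ would register a $\cC^{i,j,k}$-type graph, $\ABUILD$ builds its $\cC^{i,j}$-partner instead, which has pointwise at least as large vertex weights and therefore density bounds at least as strong.
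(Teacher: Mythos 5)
Your overall architecture matches the paper's: induction over rounds of $\ABUILD$, decomposition of subgraphs of the newly built graph into pieces from the building blocks, a strengthening of the target bound serving as inductive hypothesis, and an appeal to Lemma~\ref{lemma:Lambda-dsi-sum}, Lemma~\ref{lemma:di-wi-monotonicity} and $\beta\geq 0$ to finish, with Lemma~\ref{lemma:partner} used for $\cC^{i,j,k}$-type graphs in the same way. The gap is that the strengthened invariant you formulate does not actually close under the induction. Consider establishing it for the newly built graph with central copy $H_\sigmabar$ and youngest vertex $v_{\sigmabar 1}$ (the game vertex $v$); take $J\ni v$ and $U\seq V(H_\sigmabar)$. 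In the decomposition, the $\sigmabar$-piece $J_\sigmabar$ carries \emph{two} $\theta$-deductions: $|V(J_\sigmabar)\cap N|\cdot\theta$ with $N:=N_{H_\sigmabar}(v_{\sigmabar 1})$, coming from the edges $v\to J_\sigmabar$, and $|V(J_\sigmabar)\cap(U\setminus\{v_{\sigmabar 1}\})|\cdot\theta$, inherited from $U$. When $N$ and $U\setminus\{v_{\sigmabar 1}\}$ overlap --- e.g.\ $H_\sigmabar=K_3$ and $U$ the full neighbourhood of a fourth vertex completing a $K_4$ --- their sum strictly exceeds $|V(J_\sigmabar)\cap(N\cup(U\setminus\{v_{\sigmabar 1}\}))|\cdot\theta$, yet your inductive hypothesis allows only a single attachment set $U'$ in the building block's central copy and hence only a single deduction; it cannot supply the missing multiplicity. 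Forcing it to would require multisets $U$ (multi-edges in your $H_{s'}+v$), at which point a $d_\theta$-minimum parametrized by an attachment set is no longer the right quantity to track.

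The paper's invariant \eqref{eq:mu-H'-lambda-J} sidesteps this: it bounds $\mu_\theta(H')$ from below by $\lambda_\theta(J_\sigmabar,\cdot)$ for the \emph{exact} intersection $J_\sigmabar = H'\cap H_\sigmabar$, rather than by a minimum over attachments. Via \eqref{eq:lambda-recursive}, the recursion then produces $\deg_{J_\sigmabar}(v_{\sigmabar 1})\cdot\theta$, which counts precisely the edges present with no duplication, so the induction closes. Your $d_\theta$-bound is a strictly weaker consequence of the paper's invariant (subtract $|V(J)\cap U|\cdot\theta$ from both sides and minimize over subgraphs of $H_\sigmabar$), which explains why it suffices at the outermost step of your argument but is too weak as an inductive hypothesis. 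As a minor remark, Lemma~\ref{lemma:irrelevant-context-d} is not required here; the only weight fact the inductive step needs is that $w_{(H_s,\pi_s,s)}$ and $w_{(H_s\setminus v_{s1},\pi_s\setminus v_{s1},s)}$ agree on their common vertices, which is immediate from \eqref{eq:def-w}.
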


Together with Lemma~\ref{lemma:translation}, the preceding statements about $\ABUILD(F,r,\theta)$ imply Proposition~\ref{prop:Lambda-Builder} straightforwardly.

%\incmargin{1em}
\begin{algorithm} \label{algo:builder}
\DontPrintSemicolon
\LinesNumbered
\renewcommand\theAlgoLine{B\arabic{AlgoLine}}
\SetNlSty{}{}{}
\SetArgSty{}
\caption{AbstractBuilder strategy $\ABUILD(F,r,\theta)$}
\vspace{.2em}
\KwIn{a graph $F$ with at least one edge, an integer $r\geq 2$, a real number $\theta>0$}
\vspace{.2em}
$\alpha:=()$ \;
\ForEach{$s\in[r]$}{
  $\cG_s:=\emptyset$ 
}
$i:=0$ \;
\Repeat({(+)}){$(F,\pi)\in\cG_s$ for some $s\in[r]$ and $\pi\in\Pi(V(F))$ \label{builder:terminate} }{
  $i:=i+1$ \;
  \ForEach{$s\in[r]$}{
    Let $j_{{\max},s}$ denote the total number of iterations of the repeat-loop~(**) in the $i$-th iteration of the repeat-loop~(*) of the algorithm $\CW()$ with input sequence $\alpha\circ s$. \label{builder:jsmax} \;
    For $1\leq j\leq j_{{\max},s}$, let $\cC_s^{i,j}$ and $\cC_s^{i,j,k}$,  $k\geq 1$, denote the sets defined in the algorithm $\CW()$ with input sequence $\alpha\circ s$ in the corresponding iterations of the repeat-loops~(**) and (***) in line~\ref{cw:primary-threats}, or line~\ref{cw:secondary-threats-init} and \ref{cw:secondary-threats}, respectively. \label{builder:C-s} \;    
    $j_s:=1$ \label{builder:js-init} \;
  }
  \Repeat({(++)}){$j_\sigma>j_{{\max},\sigma}$ for some $\sigma\in [r]$ \label{builder:terminate-++} }{
    \ForEach{$((H_1,\pi_1),\ldots,(H_r,\pi_r))\in \cC_1^{i,j_1}\times\cdots\times\cC_r^{i,j_r}$ \label{builder:loop} }{
      For each color $s\in[r]$ let $v_{s1}$ denote the youngest vertex of $(H_s,\pi_s)$. AbstractBuilder constructs a new graph by taking the disjoint union of all graphs $G_s(H_s\setminus v_{s1},\pi_s\setminus v_{s1})$, $s\in[r]$, for which $v(H_s)\geq 2$, and by adding a new vertex $v$ in such a way that for each $s\in[r]$, coloring $v$ in color $s$ will extend the central copy of $H_s\setminus v_{s1}$ in $G_s(H_s\setminus v_{s1},\pi_s\setminus v_{s1})$ to a copy of $H_s$. AbstractPainter chooses a color $\sigmabar\in[r]$ for $v$, the resulting new $r$-colored graph $G$ is added to AbstractBuilder's list, and the newly created copy of $H_\sigmabar$ is designated as the central copy of $G$. \label{builder:connect} \;
    }
    By Lemma~\ref{lemma:cross-product-coloring}, for any combination of colors AbstractPainter chooses in the previous loop (line~\ref{builder:loop} and \ref{builder:connect}), there is a color $\sigmahat\in[r]$ such that for each graph $(H,\pi)\in\cC_\sigmahat^{i,j_\sigmahat}$, in at least one construction step an $r$-colored graph with a central copy of $H$ in color $\sigmahat$ is created. Fix such a color $\sigmahat$. \label{builder:dominant-color} \;
    \ForEach{$(H,\pi)\in\cC_\sigmahat^{i,j_\sigmahat}$}{
      Define $G_\sigmahat{(H,\pi)}$ to be the $r$-colored graph on AbstractBuilder's list resulting from an arbitrary construction step in line~\ref{builder:connect} that created a central copy of $H$ in color~$\sigmahat$. \label{builder:primary-threats} \;
    }
    $\cG_\sigmahat:=\cG_\sigmahat\cup \cC_\sigmahat^{i,j_\sigmahat}$ \label{builder:add-primary-threats} \;
    $k:=0$ \;
    \Repeat({(+++)}){$\cC_\sigmahat^{i,j_\sigmahat,k}=\emptyset$ \label{builder:terminate-+++} }{
      $k:=k+1$ \;
      \ForEach{$(H,\pi)\in\cC_\sigmahat^{i,j_\sigmahat,k}$, $\pi=(v_1,\ldots,v_h)$,}{
        By Lemma~\ref{lemma:partner}, the graph $(H,\pi')$, defined by $\pi':=(v_{k+1},v_1,\ldots,v_k,v_{k+2},\ldots,v_h)$ is contained in $\cC_\sigmahat^{i,j_\sigmahat}$, and consequently  $G_\sigmahat(H,\pi')$ was just defined in line~\ref{builder:primary-threats}. Let $G_\sigmahat(H,\pi):=G_\sigmahat(H,\pi')$. \label{builder:secondary-threats} \;
      }
      $\cG_\sigmahat:=\cG_\sigmahat\cup \cC_\sigmahat^{i,j_\sigmahat,k}$ \label{builder:add-secondary-threats} \;
    } 
    $j_\sigmahat:=j_\sigmahat+1$ \label{builder:js-inc} \;
  }
  $\alpha := \alpha\circ \sigma$ for this $\sigma$ \label{builder:alpha} \;
}
\end{algorithm}
%\decmargin{1em}

\begin{proof}[Proof of Proposition~\ref{prop:Lambda-Builder}]
Using Lemma~\ref{lemma:abuild-steps} and Lemma~\ref{lemma:abuild-restriction}, we may apply Lemma~\ref{lemma:translation} to $\ABUILD(F,r,\theta)$ to obtain a strategy $\BUILD(F,r,\theta)$ which enforces a monochromatic copy of $F$ in the deterministic game with $r$ colors in at most
\begin{equation} \label{eq:amax}
  \amax=\amax(F,r) := (r+1)^{r^2\cdot|\cS(F)|^{r+2}}
\end{equation}
steps, and satisfies the generalized density restriction $(\theta,\beta)$.
\end{proof}

It remains to prove Lemma~\ref{lemma:cross-product-coloring}, Lemma~\ref{lemma:abuild-steps}, and Lemma~\ref{lemma:abuild-restriction}, which we will do in the next section.

\subsection{Analysis of \texorpdfstring{$\ABUILD()$}{AbstractBuild()}}

\begin{proof} [Proof of Lemma~\ref{lemma:cross-product-coloring}]
We refer to a color $\sigma\in[r]$ that satisfies the conditions of the lemma as a \emph{color that is dominating in $X_1\times\cdots\times X_r$}.

We argue by double induction over $r$ and $\sum_{s\in[r]}|X_s|$. To settle the induction base note that the claim is trivially true for $r=1$ and any finite set $X_1$, and also for $r\geq 2$ and $|X_1|=\cdots=|X_r|=1$. For the induction step let $r\geq 2$ and suppose that one of the sets $X_s$, $s\in[r]$, contains at least two elements. We assume \wolog that it is $X_1$, and fix an element $x\in X_1$.

By induction (over the sum of the cardinalities of the sets $X_s$) we know that for the restriction of $f$ to the set $(X_1\setminus\{x\})\times X_2\times\cdots\times X_r$ there is a dominating color $\sigma\in[r]$. If $\sigma\neq 1$, then $\sigma$ is also dominating in $X_1\times\cdots\times X_r$ and we are done. Otherwise we have $\sigma=1$, i.e.\ for all $x_1\in X_1\setminus\{x\}$ there are elements $x_s\in X_s$, $2\leq s\leq r$, with $f(x_1,\ldots,x_r)=1$. Therefore, if $f$ assigns color $1$ to any of the elements in $\{x\}\times X_2\times\cdots\times X_r$, then $\sigma=1$ is dominating in $X_1\times\cdots\times X_r$ and we are done as well. The only remaining case is that $f(x,\bullet,\cdots,\bullet)$ never uses color $1$, and therefore is an $(r-1)$-coloring of $X_2\times\cdots\times X_r$. By induction (over $r$), there is a color $\sigma'\in[r]\setminus\{1\}$ that is dominating in $X_2\times\cdots\times X_r$, and therefore also in $X_1\times\cdots\times X_r$. This settles the last remaining case.
\end{proof}

In order to prove Lemma~\ref{lemma:abuild-steps} and Lemma~\ref{lemma:abuild-restriction} we will make use of the following technical lemma, which relates the evolution of the families $\cG_s$ occurring in $\ABUILD()$ to the evolution of the families $\cH_s$ occurring in $\CW()$.

\begin{lemma} [Evolution of the families $\cG_s$] \label{lemma:correspondence}
At the end of each iteration of the repeat-loop~(++) during some iteration $i$ of the repeat-loop~(+) in $\ABUILD()$, for each $s\in[r]$ we have $\cG_s\supseteq \cH_s$, where $\cH_s$ denotes the value of $\cH_s$ after $j_s-1$ iterations of the repeat-loop (**) during iteration $i$ of the repeat-loop~(*) in $\CW()$ for the input sequence $\alpha\circ s$, for the current value of $j_s$. Here $\alpha\in[r]^{i-1}$ denotes the sequence that has been constructed in previous rounds of $\ABUILD()$ as a result of AbstractPainter's coloring decisions.
\end{lemma}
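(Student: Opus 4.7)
The plan is to prove the lemma by nested induction: outer induction on the iteration index $i$ of the loop~(+) in $\ABUILD()$, and within each such iteration, inner induction on the iteration index of the loop~(++). The core observation is that the sets $\cC_s^{i,j_s}$ and $\cC_s^{i,j_s,k}$, $k\geq 1$, added to $\cG_\sigmahat$ in a single iteration of~(++) are, by definition (lines~\ref{builder:jsmax} and~\ref{builder:C-s}), precisely the sets added to $\cH_\sigmahat$ during the corresponding iteration of~(**) of $\CW()$ with input $\alpha\circ\sigmahat$, so the two families grow in lock-step.

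For the base of the outer induction ($i=0$) all families $\cG_s$ and $\cH_s$ are empty. For the outer inductive step, suppose the claim holds at the end of iteration~$i-1$ of~(+); then $\alpha$ has length $i-1$ and ends in some color $\sigma$ appended in line~\ref{builder:alpha}. At the start of iteration~$i$ of~(+) we need $\cG_s\supseteq\cH_s$ for each $s\in[r]$, where $\cH_s$ is the value at the start of iteration~$i$ of~(*) of $\CW()$ with input $\alpha\circ s$. This value does not depend on $s$, because the $i$-th entry of the input to $\CW()$ is only read in line~\ref{cw:select-color}. For $s=\sigma$, it equals the value at the end of iteration~$i-1$ of~(*) of $\CW()$ with input $\alpha$, which is delivered by the outer inductive hypothesis applied to the final iteration of~(++) in round~$i-1$ (during which $j_\sigma$ was incremented to $j_{\max,\sigma}+1$). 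For $s\neq\sigma$, the family $\cH_s$ does not change during iteration~$i-1$ of~(*) with input~$\alpha$, because only $\cH_\sigma$ is modified there; combined with the outer hypothesis for round $i-1$ applied to $s$ and the monotonicity of $\cG_s$ (it never shrinks), this gives the required containment.

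The inner inductive step is then immediate. At the beginning of the loop~(++) in iteration~$i$, all $j_s=1$ by line~\ref{builder:js-init} and $\cH_s$ has undergone zero iterations of~(**), so the required containment coincides with what was just verified in the outer step. In a generic iteration of~(++), let $\sigmahat$ be the color chosen in line~\ref{builder:dominant-color}. For $s\neq\sigmahat$, neither $j_s$ nor $\cG_s$ changes, so the claim for $s$ is preserved. For $s=\sigmahat$, $j_\sigmahat$ is incremented in line~\ref{builder:js-inc} and $\cG_\sigmahat$ is augmented by $\cC_\sigmahat^{i,j_\sigmahat}$ (line~\ref{builder:add-primary-threats}) together with all the non-empty $\cC_\sigmahat^{i,j_\sigmahat,k}$ (line~\ref{builder:add-secondary-threats}); these are exactly the graphs added to $\cH_\sigmahat$ during the $j_\sigmahat$-th iteration of~(**) of $\CW()$ with input $\alpha\circ\sigmahat$, so the containment is maintained.

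What remains is to verify the well-definedness of $\ABUILD()$ itself, which is where the inductive hypothesis is actually invoked: (i)~the product $\cC_1^{i,j_1}\times\cdots\times\cC_r^{i,j_r}$ in line~\ref{builder:loop} is non-empty for $1\leq j_s\leq j_{\max,s}$, as established inside the proof of Lemma~\ref{lemma:algo-well-defined}; (ii)~in line~\ref{builder:connect} the graph $G_s(H_s\setminus v_{s1},\pi_s\setminus v_{s1})$ must exist for each $s$ with $v(H_s)\geq 2$, which follows from the inner hypothesis together with $(H_s,\pi_s)\in\cC_s^{i,j_s}\seq\cC(\cH_s,F)$, which by~\eqref{eq:def-C} forces $(H_s\setminus v_{s1},\pi_s\setminus v_{s1})\in\cH_s\seq\cG_s$; (iii)~the dominating color $\sigmahat$ in line~\ref{builder:dominant-color} exists by Lemma~\ref{lemma:cross-product-coloring} applied to the non-empty sets $\cC_s^{i,j_s}$; and (iv)~in line~\ref{builder:secondary-threats} the partner $(H,\pi')\in\cC_\sigmahat^{i,j_\sigmahat}$ produced by Lemma~\ref{lemma:partner} has $G_\sigmahat(H,\pi')$ just defined in line~\ref{builder:primary-threats}, making $G_\sigmahat(H,\pi)$ well-defined. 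The main obstacle I anticipate is the bookkeeping: one has to juggle several simultaneous runs of $\CW()$ (one per candidate color $s$) and keep the indices $(i,j,k)$ aligned between $\CW()$ and $\ABUILD()$.
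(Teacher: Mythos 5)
Your proof is correct and takes essentially the same approach as the paper: the core observation — that the sets added to $\cG_\sigmahat$ in one iteration of the repeat-loop~(++) coincide, by the definitions in line~\ref{builder:C-s}, with the sets added to $\cH_\sigmahat$ in the corresponding iteration of the repeat-loop~(**) of $\CW()$ with input $\alpha\circ\sigmahat$ — is exactly the paper's. You are somewhat more explicit than the paper in two places: you spell out the nested (outer over rounds, inner over iterations of~(++)) induction rather than compressing it to ``it follows inductively,'' and you explicitly note that the state of $\cH_s$ at the start of round~$i$ of $\CW()$ is independent of which extension $\alpha\circ s$ is used, because the $i$-th entry of the input is only read in line~\ref{cw:select-color}; the paper leaves that implicit. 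The well-definedness remarks at the end are not strictly part of this lemma (the paper handles them inside the proof of Lemma~\ref{lemma:abuild-steps}, using the present lemma as a tool), but including them does no harm and correctly identifies where the inductive hypothesis is doing work.
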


\begin{proof}
Consider the $i$-th iteration of the repeat-loop~(+), and let $\sigmahat$ be the color defined in line~\ref{builder:dominant-color} in some iteration of the repeat-loop~(++). Note that the set of graphs $(H,\pi)$ that are added to $\cG_\sigmahat$ in line~\ref{builder:add-primary-threats} and line~\ref{builder:add-secondary-threats} in this iteration is
\begin{equation} \label{eq:cup-T-i-j-a}
  \cH_\sigmahat^{i,j_\sigmahat} := \cC_\sigmahat^{i,j_\sigmahat}\cup\bigcup_{k\geq 1}\cC_\sigmahat^{i,j_\sigmahat,k}
\end{equation}
for the current value of $j_\sigmahat$, where the union in \eqref{eq:cup-T-i-j-a} is over all $k\geq 1$ for which $\cC_\sigmahat^{i,j_\sigmahat,k}$ is defined in line~\ref{builder:C-s}.
By the definitions in line~\ref{builder:C-s} these are exactly the graphs that are added to the family $\cH_\sigmahat$ in the $j_\sigmahat$-th iteration of the repeat-loop~(**) in the $i$-th iteration of the repeat-loop~(*) of the algorithm $\CW()$ for the input sequence $\alpha\circ\sigmahat$.
It follows inductively that at the end of the $i$-th iteration of the repeat-loop~(+), the set of graphs $(H,\pi)$ that have been added to $\cG_\sigma$ for the color $\sigma\in[r]$ satisfying the termination condition in line~\ref{builder:terminate-++} is
\begin{equation*}
  \cH_\sigma^i := \bigcup_{j_\sigma=1}^{j_{{\max},\sigma}} \cH_\sigma^{i,j_\sigma}\enspace. 
\end{equation*}
By the definition of $j_{{\max},s}$ in line~\ref{builder:jsmax}, these are exactly the graphs added to the family $\cH_\sigma$ in the $i$-th iteration of the repeat-loop~(*) of the algorithm $\CW()$ for the input sequence $\alpha\circ\sigma$. As moreover no graphs are added to the families $\cH_s$, $s\in[r]\setminus\{\sigma\}$, during the $i$-th round of $\CW()$ with input sequence $\alpha\circ\sigma$, it follows inductively that throughout, the families $\cG_s$ occuring in $\ABUILD()$ are related to the families $\cH_s$ occurring in $\CW()$ as claimed.
\end{proof}

\begin{proof}[Proof of Lemma~\ref{lemma:abuild-steps}]
We will first argue that $\ABUILD()$ is a well-defined winning strategy.

Note that whenever a graph $(H,\pi)\in\cS(F)$ is added to one of the families $\cG_s$, $s\in[r]$, in line~\ref{builder:add-primary-threats} or \ref{builder:add-secondary-threats}, then $G_s(H,\pi)$ is defined in \ref{builder:primary-threats} or in line~\ref{builder:secondary-threats}, respectively, and in either case this $r$-colored graph was added to AbstractBuilder's list in line~\ref{builder:connect}. Thus throughout the strategy $\ABUILD()$, for every $s\in[r]$ and all graphs $(H,\pi)\in\cG_s$, the graph $G_s(H,\pi)$ is well-defined and exists on AbstractBuilder's list. With the termination condition in line~\ref{builder:terminate} this implies in particular that when $\ABUILD()$ terminates, AbstractBuilder's list indeed contains a graph containing a monochromatic copy of $F$.

Next we show that whenever the construction step in line~\ref{builder:connect} is executed, all involved graphs $(H_s\setminus v_{s1},\pi_s\setminus v_{s1})$ are in the respective families $\cG_s$ at this point, and thus the graphs $G_s(H_s\setminus v_{s1},\pi_s\setminus v_{s1})$ used for the construction step are indeed on AbstractBuilder's list. It follows from the definition of $\cC^{i,j}$ as a subset of $\cC(\cH_s,F)$ (recall~\eqref{eq:def-C}) in line~\ref{cw:primary-threats} of~$\CW()$ that for each $s\in[r]$ and each $(H_s,\pi_s)\in\cC_s^{i,j_s}$ with $v(H_s)\geq 2$ in line~\ref{builder:loop}, the graph $(H_s\setminus v_{s1},\pi_s\setminus v_{s1})$ is contained in $\cH_s$ as defined in the $i$-th iteration of the repeat-loop~(*) at the beginning of the $j_s$-th iteration of the repeat-loop~(**) of $\CW()$ for the input sequence $\alpha\circ s$. Thus by Lemma~\ref{lemma:correspondence}, the graph $(H_s\setminus v_{s1},\pi_s\setminus v_{s1})$ is in the corresponding family $\cG_s$, as claimed.

Also note that whenever the construction step in line~\ref{builder:connect} is executed, the involved entries $G_s(H_s\setminus v_{s1},\pi_s\setminus v_{s1})$, $s\in[r]$, on AbstractBuilder's list are different from each other, as the corresponding central copies of $H_s\setminus v_{s1}$, $s\in[r]$, are all in different colors.

Together the above arguments show that $\ABUILD()$ is indeed a well-defined strategy for the abstract game, and it remains to bound the number of construction steps $\ABUILD()$ needs to enforce a monochromatic copy of $F$. By Lemma~\ref{lemma:correspondence} and the termination condition in line~\ref{builder:terminate}, the number of iterations of the repeat-loop~(+) until $\ABUILD()$ terminates is bounded by the number of iterations of the repeat-loop~(*) in $\CW()$ until the first of the families $\cH_s$, $s\in[r]$, contains the graph $(F,\pi)$ for some vertex-ordering $\pi\in\Pi(V(F))$. The termination condition in line~\ref{cw:terminate} and Lemma~\ref{lemma:algo-well-defined} therefore show that $\ABUILD()$ terminates after at most $r\cdot|\cS(F)|$ iterations of the repeat-loop~(+). In each iteration $i$, the number of iterations of the repeat-loop~(++) is at most $\sum_{s\in[r]}j_{\max,s}\leq r\cdot |\cS(F)|$, as the values $j_{\max,s}$ are bounded by $|\cS(F)|$ as argued in the proof of Lemma~\ref{lemma:algo-well-defined} on page~\pageref{page:j-loop}. Lastly, the number of iterations of the loop in line~\ref{builder:loop} is $|\cC_1^{i,j_1}|\cdots|\cC_r^{i,j_r}|\leq |\cS(F)|^r$, as the sets $\cC_s^{i,j_s}$ are subsets of $\cS(F)$. Multiplying those numbers yields the claimed bound on the total number of construction steps throughout the strategy.
\end{proof}

\begin{proof}[Proof of Lemma~\ref{lemma:abuild-restriction}]
For the reader's convenience, Figure~\ref{fig:ubvars} illustrates the notations used throughout the proof.

\begin{figure}
\centering
\PSforPDF{
 \psfrag{sinr}{$s\in[r]\setminus\{\sigmabar\}$}
 \psfrag{v}{$v$}
 \psfrag{vdots}{$\vdots$}
 \psfrag{Gs}{$G_s(H_s\setminus v_{s1},\pi_s\setminus v_{s1})$}
 \psfrag{Gsigma}{$G_\sigmabar(H_\sigmabar\setminus v_{\sigmabar1},\pi_\sigmabar\setminus v_{\sigmabar1})$}
 \psfrag{G}{\Large $G$}
 \psfrag{Hp}{\Large $H'$}
 \psfrag{Hsp}{$H_s'$}
 \psfrag{Hsigma}{$H_\sigmabar$}
 \psfrag{Jsigma}{$J_\sigmabar$}
 \psfrag{Js}{$J_s$}
 \psfrag{Cs}{$C_s$}
 \includegraphics{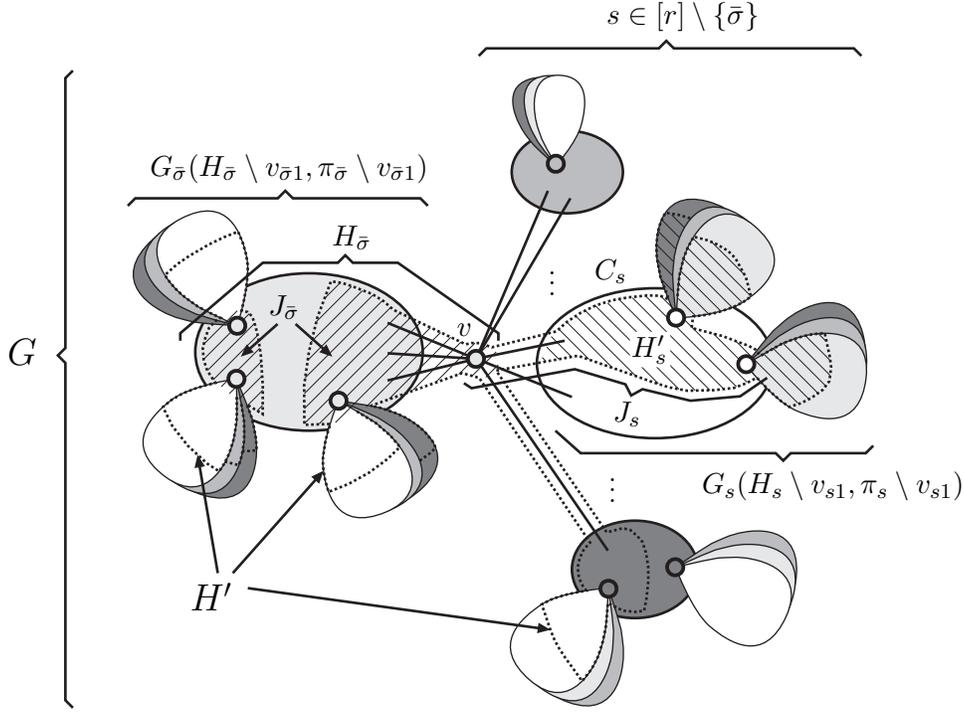}
}
\caption{Notations used in the proof of Lemma~\ref{lemma:abuild-restriction}.} \label{fig:ubvars}
\end{figure}

We prove inductively that for each construction step in line~\ref{builder:connect} in some round $i$ of $\ABUILD()$ the following holds:
Recall the notations from the algorithm, and let $H'$, $v(H')\geq 1$, be a subgraph of the newly constructed graph $G$ such that each component of $H'$ shares at least one vertex with the central copy of $H_\sigmabar$ in $G$. Letting $J_\sigmabar$ denote the intersection of $H'$ with this central copy, we have 
\begin{equation} \label{eq:mu-H'-lambda-J}
  \mu_\theta(H') \geq \lambda_\theta(J_\sigmabar,w_{(H_{\sigmabar},\pi_\sigmabar,\sigmabar),\sigmabar})\enspace,
\end{equation}
where here and throughout we denote for any $s\in[r]$ by $w_{(H,\pi,s),s}$ the weight function $w_{(H,\pi,s)}$ computed by $\CW()$ for the input sequence $\alpha \circ s$. (Recall the definitions in \eqref{eq:def-mu}, \eqref{eq:def-lambda} and \eqref{eq:def-w}, and that during the $i$-th round of $\ABUILD()$, the sequence $\alpha$ has length $i-1$.)

For subgraphs $H'\seq G$ that do not contain the new vertex $v$, the claim follows by induction if $G_\sigmabar(H_\sigmabar\setminus v_{\sigmabar 1},\pi_\sigmabar\setminus v_{\sigmabar 1})$ was defined in line~\ref{builder:primary-threats} (either in the same or in an earlier iteration of the repeat-loop~(+)), or by induction and by Lemma~\ref{lemma:partner} if $G_\sigmabar(H_\sigmabar\setminus v_{\sigmabar 1},\pi_\sigmabar\setminus v_{\sigmabar 1})$ was defined in line~\ref{builder:secondary-threats} (either in the same or in an earlier iteration of the repeat-loop~(+)), recalling the definition~\eqref{eq:def-lambda} and the fact that the functions $w_{(H_\sigmabar\setminus v_{\sigmabar 1},\pi_\sigmabar\setminus v_{\sigmabar 1},\sigmabar),\sigmabar}$ and $w_{(H_\sigmabar,\pi_\sigmabar,\sigmabar),\sigmabar}$ assign the same weight to all vertices of $H_\sigmabar$ different from $v_{\sigmabar 1}$ (recall \eqref{eq:def-w}).

It remains to prove \eqref{eq:mu-H'-lambda-J} for subgraphs $H'$ that contain the vertex $v$. Note that throughout the $i$-th iteration of the repeat-loop~(+), whenever the construction step in line~\ref{builder:connect} is executed, by the definition of the sets $\cC^{i,j_s}_s$ (see line~\ref{builder:C-s} of $\ABUILD()$ and line~\ref{cw:primary-threats} of $\CW()$), all graphs $(H_s,\pi_s)\in\cC^{i,j_s}_s$ used for the construction step satisfy
\begin{equation} \label{eq:d-H-s}
  d_\theta(H_s,v_{s1},w_{(H_s,\pi_s,s),s})=d_s^i \enspace, \quad s\in[r] \enspace,
\end{equation}
where the values $d_s^i$, $s\in[r]$, are defined in line~\ref{cw:least-dangerous-threat} of $\CW()$ with input sequence $\alpha$. Note that these values depend only on the first $i-1$ entries of $\alpha$, i.e., they are the same during the $i$-th iteration of the repeat-loop~(*) of $\CW()$ for each input sequence $\alpha\circ s$, $s\in[r]$.

For the weight assigned to the youngest vertex $v_{\sigmabar 1}$ of $(H_\sigmabar,\pi_\sigmabar)$ by $\CW()$ with input sequence $\alpha\circ\sigmabar$, we thus obtain by combining \eqref{eq:def-w} with the definitions in line~\ref{cw:primary-weight} and \ref{cw:set-primary-weight} that
\begin{equation} \label{eq:weight-v1}
  w_{(H_\sigmabar,\pi_\sigmabar,\sigmabar),\sigmabar}(v_{\sigmabar 1}) = \sum_{s\in[r]\setminus\{\sigmabar\}} d_s^i
 \eqBy{eq:d-H-s} \sum_{s\in[r]\setminus\{\sigmabar\}} d_\theta(H_s,v_{s1},w_{(H_s,\pi_s,s),s}) \enspace.
\end{equation}

For each $s\in[r]$ with $v(H_s)\geq 2$ we define the graph $H_s'$ as the intersection of $H'$ with the copy of $G_s(H_s\setminus v_{s1},\pi_s\setminus v_{s1})$ used for the construction of $G$. Furthermore, for each such $s\in[r]$ we define a subgraph $J_s\seq H_s$ with $v_{s1}\in J_s$ as follows:
Let $C_s$ denote the central copy of $H_s\setminus v_{s1}$ in the copy of $G_s(H_s\setminus v_{s1},\pi_s\setminus v_{s1})$ used for the construction of $G$, and recall that the new vertex $v$ completes $C_s$ to a copy of $H_s$. Let $J_s\seq H_s$ denote the graph that is isomorphic to the intersection of $H'$ with this copy of $H_s$, and note that $H'_s$ intersects $C_s$ in a copy of $J_s\setminus v_{s1}$. For all $s\in[r]$ with $v(H_s)=1$ (i.e., $H_s$ consists only of an isolated vertex) we define $H_s'$ as the null graph (the graph whose vertex set is empty) and set $J_s:=H_s$.
Using these definitions we obtain
\begin{subequations} \label{eq:v-e-H'-2}
\begin{align}
  v(H') &= \sum_{s\in[r]} v(H_s') + 1 \enspace, \\
  e(H') &= \sum_{s\in[r]} \big(e(H_s')+\deg_{J_s}(v_{s1})\big) \enspace.
\end{align}
\end{subequations}
Furthermore, for every $s\in[r]$ we have
\begin{equation} \label{eq:mu-H-s'-lambda-J-s}
  \mu_\theta(H_s') \geq \lambda_\theta(J_s\setminus v_{s1},w_{(H_s,\pi_s,s),s})
\end{equation}
(this holds trivially if $v(H_s)=1$; otherwise, similarly to before, if $G_s(H_s\setminus v_{s1},\pi_s\setminus v_{s1})$ was defined in line~\ref{builder:primary-threats} then this follows by induction, whereas if $G_s(H_s\setminus v_{s1},\pi_s\setminus v_{s1})$ was defined in line~\ref{builder:secondary-threats} then this follows by induction and by Lemma~\ref{lemma:partner}). 

Combining our previous observations we obtain
\begin{align*}
  \mu_\theta(H') &\eqByM{\eqref{eq:def-mu},\eqref{eq:v-e-H'-2}} \sum_{s\in[r]} \big(\mu_\theta(H_s')-\deg_{J_s}(v_{s1})\cdot\theta \big) + 1 \\
                 &\geBy{eq:mu-H-s'-lambda-J-s} \lambda_\theta(J_\sigmabar\setminus v_{\sigmabar 1},w_{(H_\sigmabar,\pi_\sigmabar,\sigmabar),\sigmabar})-\deg_{J_\sigmabar}(v_{\sigmabar 1})\cdot\theta
                 \\ &\phantom{{}\geBy{eq:mu-H-s'-lambda-J-s}{}} {}+
                    \sum_{s\in[r]\setminus\{\sigmabar\}} \underbrace{
                      \big(\lambda_\theta(J_s\setminus v_{s1},w_{(H_s,\pi_s,s),s})-\deg_{J_s}(v_{s1})\cdot\theta\big)
                      }_{\geByM{\eqref{eq:def-d},\eqref{eq:def-lambda}} d_\theta(H_s,v_{s1},w_{(H_s,\pi_s,s),s})} + 1 \\
                 &\geBy{eq:weight-v1} \lambda_\theta(J_\sigmabar\setminus v_{\sigmabar 1},w_{(H_\sigmabar,\pi_\sigmabar,\sigmabar),\sigmabar})-\deg_{J_\sigmabar}(v_{\sigmabar 1})\cdot\theta+ w_{(H_\sigmabar,\pi_\sigmabar,\sigmabar),\sigmabar}(v_{\sigmabar 1}) + 1 \\
                 &\eqBy{eq:lambda-recursive} \lambda_\theta(J_\sigmabar,w_{(H_\sigmabar,\pi_\sigmabar,\sigmabar),\sigmabar}) \enspace,
\end{align*}
completing the inductive proof of \eqref{eq:mu-H'-lambda-J}.

From \eqref{eq:mu-H'-lambda-J} it follows in particular that for every graph $G$ that is added to AbstractBuilder's list during the $i$-th iteration of the repeat-loop~(+), every connected subgraph $H'\seq G$ containing the last added vertex $v$ satisfies
\begin{equation} \label{eq:mu-sum-d}
  \mu_\theta(H')\geq \min_{J\seq H:v_1\in J} \lambda_\theta(J,w_{(H,\pi,s),s})
\end{equation}
for some $s\in[r]$ and some $(H,\pi)$, $\pi=(v_1, \dots, v_h)$, from one of the sets $\cC^{i,j}$ defined in the $i$-th iteration of $\CW()$ when called with input sequence $\alpha\circ s$.

As argued in the proof of Lemma~\ref{lemma:Lambda-dsi-sum} (see \eqref{eq:min-lambda-pt}), the right hand side of \eqref{eq:mu-sum-d} equals $1+\sum_{s\in[r]}d_s^i$ for the values $d_s^i$ defined by $\CW()$ with input sequence $\alpha$.
Regardless of how the sequence $\alpha$ constructed by $\ABUILD()$ evolves in further iterations of the repeat-loop~(+), this quantity is decreasing in $i$ by the first part of Lemma~\ref{lemma:di-wi-monotonicity}.
Moreover, by Lemma~\ref{lemma:correspondence} and the termination condition in line~\ref{builder:terminate}, $\ABUILD()$ terminates after at most $\icheck$ iterations, for $\icheck$ as defined in Lemma~\ref{lemma:Lambda-dsi-sum}.
It follows that all connected subgraphs $H'$ with $v(H')\geq 1$ of all graphs $G$ added to AbstractBuilder's list in the course of $\ABUILD()$ satisfy
\begin{equation} \label{eq:mu-sum-d-2}
 \mu_\theta(H') \geq 1+\sum_{s\in[r]}d_s^\icheck \enspace,
 \end{equation}
where $\icheck$ and $d_s^\icheck$, $s\in[r]$, are defined in $\CW()$ for the input sequence $\alpha$ constructed by $\ABUILD()$.

By Lemma~\ref{lemma:Lambda-dsi-sum} and the definition of $\Lambda_\theta()$ in~\eqref{eq:def-Lambda}, we thus obtain from \eqref{eq:mu-sum-d-2} that
\begin{equation*}
  \mu_\theta(H')\geq \Lambda_\theta(F,r)\geBy{eq:Lambda-geq-beta} \beta
\end{equation*}
for all connected subgraphs $H'$ with $v(H')\geq 1$ of all graphs $G$ appearing on AbstractBuilder's list. Due to the assumption that $\beta\geq 0$, the same statement also follows for all disconnected subgraphs $H'\seq G$ with $v(H')\geq 1$, concluding the proof that the strategy $\ABUILD(F,r,\theta)$ respects the generalized density restriction $(\theta,\beta)$ throughout.
\end{proof}

\section{Painter in the deterministic game} \label{sec:lower-bound}

In this section we prove Proposition~\ref{prop:Lambda-Painter} by explicitly constructing, for $F$, $r$, $\theta$ and $\beta$ as in the proposition, a Painter strategy that avoids creating a monochromatic copy of $F$ in the deterministic game with $r$ colors and generalized density restriction $(\theta,\beta)$.

%\subsection{Painter's strategy and proof of Proposition~\ref{prop:Lambda-Painter}} \label{sec:strategy}
\subsection{Painter's strategy and proof of Proposition~\texorpdfstring{\ref{prop:Lambda-Painter}}{7}} \label{sec:strategy}

Consider the following Painter strategy, which has four parameters: a graph $F$ with at least one edge, an integer $r\geq 2$, a real number $\theta>0$ and a sequence $\alpha\in[r]^{r\cdot|\cS(F)|}$. The strategy uses the output of Algorithm~\ref{algo:cw}:  $\big((\cH_s,w_s)\big)_{s\in[r]}:=\CW(F,r,\theta,\alpha)$. In each step of the game, Painter picks a color as follows: Let $v$ denote the vertex added in the current step, and for each $s\in[r]$, define
\begin{equation} \label{eq:def-D-s}
  \cD_s:=\left\{(H,\pi)\in\cS(F) \bigmid \parbox{0.47\displaywidth}{assigning color $s$ to $v$ would create a copy of $(H,\pi)$ in color $s$ on the board} \right\} \enspace.
\end{equation}
(Note that this requires Painter to memorize the order in which the vertices on the board arrived.) 
Calculate for each color $s\in[r]$ the value
\begin{equation} \label{eq:def-d-s}
  d(s) := \min_{(H,\pi)\in\cD_s} \lambda_\theta(H,w_{(H,\pi,s)}) \enspace,
\end{equation}
where $\lambda_\theta()$ is defined in \eqref{eq:def-lambda}, and $w_{(H,\pi,s)}()$ is defined in \eqref{eq:def-w} using $\cH_s\seq \cS(F)$ and $w_s:\cH_s\to\RR$ as returned by Algorithm~\ref{algo:cw}. (It is possible that $d(s)=-\infty$ for some colors $s\in[r]$.) Then select $\sigma\in[r]$ as the color for which this value is maximal, and assign color $\sigma$ to the vertex $v$.

Intuitively, the parameter $\lambda_\theta(H,w_{(H,\pi,s)})$ measures the `level of danger' that the Painter strategy encoded by $\alpha$ assigns to copies of the ordered graph $(H,\pi)$ in color $s$, where a graph is considered the more dangerous the smaller its $\lambda_\theta()$-value is. Thus the definition of $d(s)$ in \eqref{eq:def-d-s} corresponds to determining the most dangerous graph in color~$s$ that would be created by assigning color $s$ to $v$, and our strategy selects $\sigma$ as the color for which this most dangerous graph is least dangerous.

If several colors have the same maximal value of $d(s)$, the above rule does not determine a color $\sigma$ uniquely.  Such ties are broken as follows: Consider the families
\begin{equation} \label{eq:tie-break-graphs}
  \cH_s'=\cH_s'(F,r,\theta,\alpha):=\bigcup_{\alpha_i=s\wedge (i=1\vee \alpha_i\neq \alpha_{i-1})} \cC^{i,1} \enspace,
\end{equation}
$s\in{r}$, where $\cC^{i,j}$ are the sets defined in line~\ref{cw:primary-threats} of the algorithm $\CW(F,r,\theta,\alpha)$. Note that these families are fixed throughout Painter's strategy. In Lemma~\ref{lemma:tie-breaking} below we will show that ties can arise only between \emph{two} different colors, and that whenever such a tie arises, then for exactly one of the two colors the set
\begin{equation} \label{eq:def-J-s}
  \cJ_s:= \argmin_{(H,\pi)\in\cD_s} \lambda_\theta(H,w_{(H,\pi,s)})
\end{equation}
contains an ordered graph from the corresponding family $\cH'_s$. Our tie-breaking rule is to then pick the \emph{other} color, i.e., the color $\sigma\in[r]$ for which $\cJ_\sigma$ contains no graph from $\cH'_\sigma$. (Intuitively, Painter considers the ordered graphs in the families $\cH'_s$, $s\in[r]$, as slightly more dangerous than other ordered graphs with the same $\lambda_\theta()$-value.)

In the following we denote the Painter strategy defined above by $\PAINT(F,r,\theta,\alpha)$. Note that this strategy can be employed both in the deterministic two-player game and in the original probabilistic process.

\begin{remark} \label{remark:priority-list-from-strategy}
Note that the actual $\lambda_\theta()$-values of monochromatic ordered subgraphs of $F$ are not relevant in the above strategy --- all that matters is the partial order on the set $\cS(F)\times [r]$ induced by the $\lambda_\theta()$-values and our tie-breaking rule. This partial order can be extended arbitrarily to a total order by defining an arbitrary order among all elements of $\cS(F)\times [r]$ that have the same $\lambda_\theta()$-value and are in one of the sets $\cH'_s$, and among all elements that have the same $\lambda_\theta()$-value and are \emph{not} in one of the sets $\cH'_s$. Thus the strategy $\PAINT(F,r,\theta,\alpha)$ can indeed be represented as a priority list of ordered monochromatic subgraphs of $F$, as described in Section~\ref{sec:algorithms}.
\end{remark}

A careful analysis of the strategy $\PAINT(F,r,\theta,\alpha)$ will eventually yield the following key lemma. As its statement is purely deterministic, it is applicable to both the deterministic game and the probabilistic process. Note that the lemma does not assume any density restrictions for the evolving board.

\begin{lemma}[Witness graph invariant] \label{lemma:witness-graphs} For $F$, $r$, $\theta$, and $\alpha$ as specified in Algorithm~\ref{algo:cw} there is a constant $\vmax=\vmax(F,r,\theta,\alpha)$ such that if Painter plays according to the strategy $\PAINT(F,r,\theta,\alpha)$ then the following invariant is maintained throughout: \\
The board contains a graph $K'$ with $v(K')\leq \vmax$ and
\begin{equation} \label{eq:mu-negative}
  \mu_\theta(K')<0\enspace,
\end{equation}
or for every $s\in[r]$ and every $(H,\pi)\in\cS(F)$ we have that every copy of $(H,\pi)$ in color $s$ on the board is contained in a graph $H'$ with $v(H')\leq \vmax$ and
\begin{equation} \label{eq:mu-H'-lambda-H}
  \mu_\theta(H')\leq \lambda_\theta(H,w_{(H,\pi,s)}) \enspace,
\end{equation}
where $\mu_\theta()$, $\lambda_\theta()$, and $w_{(H,\pi,s)}$ are defined in \eqref{eq:def-mu}, \eqref{eq:def-lambda}, and \eqref{eq:def-w} (using $\cH_s\seq \cS(F)$ and $w_s:\cH_s\to\RR$ as returned by Algorithm~\ref{algo:cw}), respectively.
\end{lemma}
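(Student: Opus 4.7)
\textbf{Proof plan for Lemma~\ref{lemma:witness-graphs}.} The plan is to proceed by induction on the number of steps of the game, with the trivial empty board as the base case. Throughout, I fix the algorithmic output $\big((\cH_s,w_s)\big)_{s\in[r]}:=\CW(F,r,\theta,\alpha)$ and work with the derived weight functions $w_{(H,\pi,s)}$. For the inductive step, suppose the invariant holds through step $t$; in step $t{+}1$ Builder presents a new vertex $v$, to which Painter assigns a color $\sigma$ according to $\PAINT(F,r,\theta,\alpha)$. Monochromatic copies of some $(H,\pi)$ not involving $v$ are preserved, and their witness graphs continue to work by the inductive hypothesis (noting that new edges are added only incident to $v$, so the $\mu_\theta$-value of an old witness graph is unchanged). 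It therefore suffices to construct witness graphs for every new monochromatic copy of some $(H,\pi)$ in color $\sigma$ whose youngest vertex is $v$; the case where $\mu_\theta(K')<0$ already holds for some small $K'$ is absorbed without further work.

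The central construction is to combine $H$ with auxiliary witness graphs drawn from the other colors at the shared vertex $v$. For each $s \in [r]\setminus\{\sigma\}$, let $(H_s,\pi_s) \in \cJ_s$ be a most dangerous graph in $\cD_s$, with youngest vertex $v_{s,1}=v$. Because $\sigma$ was selected, $(H_s \setminus v_{s,1}, \pi_s \setminus v_{s,1})$ appears as a monochromatic copy in color $s$ on the board \emph{before} $v$ is added, and the inductive hypothesis supplies a witness graph $H'_s$ of size at most $\vmax$ satisfying $\mu_\theta(H'_s) \leq \lambda_\theta(H_s \setminus v_{s,1}, w_{(H_s \setminus v_{s,1},\pi_s\setminus v_{s,1},s)})$. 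I then define the witness graph $H'$ as the union of $H$ with each $H'_s$, extended by the edges that $v$ already has on the board into $H_s \setminus v_{s,1}$. Because all of the $H'_s$ together with $H$ are glued only at $v$, a direct computation gives
\[
\mu_\theta(H') = \mu_\theta(H) + \sum_{s\neq\sigma}\bigl(\mu_\theta(H'_s) - 1 - \deg_{H_s}(v_{s,1})\theta\bigr).
\]
Applying the recursion \eqref{eq:lambda-recursive} to expand $\lambda_\theta(H,w_{(H,\pi,\sigma)})$ at $v$, and substituting the inductive bounds on the $\mu_\theta(H'_s)$, the desired inequality \eqref{eq:mu-H'-lambda-H} reduces to the algorithmic identity $w_{(H,\pi,\sigma)}(v) \geq \sum_{s\neq\sigma} d_\theta(H_s,v_{s,1},w_{(H_s,\pi_s,s)})$. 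This in turn will follow from the defining property of the weight $w_\sigma(H,\pi) = w^i = \sum_{t\in[r]\setminus\{\sigma\}} d_t^i$ together with the Painter strategy's guarantee $d(\sigma)\geq d(s)$, which forces each $(H_s,\pi_s)$ to attain a $d_\theta$-value at most $d_s^i$ in the round $i$ where $(H,\pi)$ was processed by $\CW()$. The special tie-breaking rule of \eqref{eq:tie-break-graphs} is exactly what is needed to upgrade a non-strict inequality $d(\sigma)=d(s)$ to a usable relation at this step; invoking the (as-yet-unproven) Lemma~\ref{lemma:tie-breaking} will close this gap. When any recursively produced witness already satisfies $\mu_\theta(\cdot)<0$, the first clause of the invariant takes over and no further construction is needed in that branch.

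The main obstacle will be the size bound $v(H') \leq \vmax$. A naive reading of the construction gives $v(H') \leq 1 + r\cdot\vmax$, which does not close. The remedy is to show that the recursion is essentially driven by the algorithm $\CW()$: the witness graphs needed for Painter's strategy are structured as fixed templates, one per triple $(H,\pi,s)$, built by following the dependency structure defined by the $d_s^i$ and $w^i$ values (each $(H_s,\pi_s)$ chosen in the construction corresponds to an entry of $\cC^{i,j}$ or $\cC^{i,j,k}$ from an \emph{earlier} round of $\CW()$, in the sense of the partial order induced by $d_s^i$). Because $\CW()$ terminates in a bounded number of iterations (Lemma~\ref{lemma:algo-well-defined}), and because the weights $w_{(H,\pi,s)}(v)$ take only finitely many values across $(H,\pi,s)\in\cS(F)\times[r]$, this dependency digraph is finite, acyclic, and independent of $t$. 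Unfolding it yields a fixed template with size depending only on $F$, $r$, $\theta$, and $\alpha$; I will take $\vmax$ to be the maximum template size and verify that every monochromatic copy of $(H,\pi)$ in color $s$ on the actual board embeds into a copy of the corresponding template (this uses that the $H_s$ graphs that Painter would like to use are guaranteed to be present on the board, since every new monochromatic copy triggered their creation in earlier steps).

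Finally, degenerate cases must be addressed: if $(H,\pi)\notin\cH_\sigma$ then $\lambda_\theta(H,w_{(H,\pi,\sigma)})=-\infty$ by Lemma~\ref{lemma:finite-weights}. In this situation $d(\sigma)=-\infty$, forcing $d(s)=-\infty$ for every other color as well (by the maximality of $d(\sigma)$), and a direct inspection of the construction shows that the resulting combined graph already witnesses $\mu_\theta(H')<0$, sending us into the first clause of the invariant with $K':=H'$. This dovetails with the overall size bound and completes the inductive step.
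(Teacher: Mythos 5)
Your central construction has a fundamental flaw. You build the witness for a new copy of $(H,\pi)$ in color $\sigma$ by taking \emph{the copy of $H$ itself} union the witness graphs $H'_s$ (for $(H_s\setminus v_{s,1},\pi_s\setminus v_{s,1})$ in the other colors $s\neq\sigma$), glued only at $v$. But the target bound $\mu_\theta(H')\leq\lambda_\theta(H,w_{(H,\pi,\sigma)})$ has, on its right side, $\lambda_\theta(H,w)=\mu_\theta(H)+\sum_{u\in H}w_{(H,\pi,\sigma)}(u)$, and the weights $w_{(H,\pi,\sigma)}(u)$ are nonpositive for \emph{every} vertex $u\in H$, not just the youngest $u=v$. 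Your construction only lowers $\mu_\theta$ of the ambient graph by the auxiliary witnesses attached at $v$, which (after applying the sufficient-weight identity) can at best account for $w(v)$ — it has nothing that absorbs $\sum_{u\in H\setminus v}w(u)$. Concretely, when you expand $\lambda_\theta$ via \eqref{eq:lambda-recursive} and substitute your inductive bounds, you are left needing $\sum_{u\in H\setminus v}w_{(H,\pi,\sigma)}(u)\geq 0$ (or $\geq -(r-1)$ with your $-1$ terms, which appear to be a miscount: if $H'_s$ witnesses $H_s\setminus v_{s,1}$ it does not contain $v$, so gluing gives $v(H')=v(H)+\sum_s v(H'_s)$ with no $-1$). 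These weights are typically strictly negative, so the inequality fails. The paper avoids this by invoking the inductive hypothesis also \emph{in color $\sigma$}: it takes a witness $H'_\sigma$ for the already-present copy of $(H\setminus v_1,\pi\setminus v_1)$, which satisfies $\mu_\theta(H'_\sigma)\leq\lambda_\theta(H\setminus v_1,w_{(H,\pi,\sigma)})$ and hence carries the accumulated negative weight along the entire ordering of $H$, not just at $v$. Your $H'$ must be the union of $H'_\sigma$, the $J'_s$'s, and $v$ with its edges, not of $H$ itself with the $J'_s$'s.

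A second, independent gap: you assert the pieces are "glued only at $v$," but witness graphs for different colors may overlap arbitrarily on the board. The paper handles this by introducing the intersection graphs $K'_s$ and splitting into two cases — if some $K'_s$ has $\mu_\theta(K'_s)<0$ you are done via the first clause of the invariant (with $K':=K'_s$), otherwise $\mu_\theta(K'_s)\geq 0$ and the overlaps only help the bound. Some version of this case split is unavoidable; without it your $\mu_\theta(H')$ formula double-counts nothing but can be \emph{larger} than intended if overlaps are present. Your size-bound sketch (a dependency digraph unfolded into a constant-size template) is plausible in spirit but is not developed enough to check, and since it rests on the flawed construction its correctness is moot; the paper instead works with a rooted "history tree" for $H'$ whose nodes are flagged when the sufficient-weight inequality is strict, and uses the walk $\cW$ to bound the length of unflagged descending paths. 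You would need to rebuild this carefully after fixing the construction.
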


\begin{remark} \label{remark:vmax}
As we shall see shortly, the statement that the size of the graphs $K'$ and $H'$ in Lemma~\ref{lemma:witness-graphs} is bounded by some constant $\vmax=\vmax(F,r,\theta,\alpha)$ is not needed to prove Proposition~\ref{prop:Lambda-Painter}. However, it will be crucial for proving the lower bound part of Theorem~\ref{thm:main-2} in Section \ref{sec:lower-bound-probabilistic} below (recall the remarks in Section~\ref{sec:proof-thm1} and Section~\ref{sec:proof-thm2}). In fact, the proof of the existence of the bound $\vmax$ relies primarily on our tie-breaking rule described above; a version of Lemma~\ref{lemma:witness-graphs} \emph{without} a bound on the size of the graphs $K'$ and $H'$ (which suffices to infer Proposition~\ref{prop:Lambda-Painter}) can also be proven if ties are broken arbitrarily. In this case the proof of Lemma~\ref{lemma:witness-graphs} can be simplified considerably; in particular, Lemma~\ref{lemma:argmins-Hsp}, Lemma~\ref{lemma:forward-walk}, Lemma~\ref{lemma:tie-breaking} and the second part of Lemma~\ref{lemma:sufficient-weight} below are not needed.
The reader might want to skip those parts on his first read-through, or if he is only interested in the deterministic game.
\end{remark}

With Lemma~\ref{lemma:witness-graphs} in hand, the proof of Proposition~\ref{prop:Lambda-Painter} is straightforward.

\begin{proof}[Proof of Proposition~\ref{prop:Lambda-Painter}]
Let $\alpha\in[r]^{r\cdot|\cS(F)|}$ be a sequence for which the minimum in the definition of $\Lambda_\theta(F,r)$ in \eqref{eq:def-Lambda} is attained. By the definition in \eqref{eq:def-Lambda}, for all colors $s\in[r]$ and all vertex orderings $\pi\in\Pi(V(F))$ there is a subgraph $H\seq F$ with
\begin{equation} \label{eq:lambda-leq-beta}
  \lambda_\theta(H,w_{(H,\pi|_H,s)})\leq \Lambda_\theta(F,r) \lBy{eq:Lambda-less-beta}\beta \enspace.
\end{equation}
Suppose now that Painter plays according to the strategy $\PAINT(F,r,\theta,\alpha)$ and that for some $\pi\in\Pi(V(F))$ a copy of $(F,\pi)$ in some color $s\in[r]$ appears on the board. Choose $H\seq F$ such that \eqref{eq:lambda-leq-beta} holds. Then, by Lemma~\ref{lemma:witness-graphs}, the board contains a graph $K'$ with
\begin{equation*}
  \mu_\theta(K') \lBy{eq:mu-negative} 0\leq \beta \enspace,
\end{equation*}
or the copy of $(H,\pi|_H)$ in color $s$ that is contained in the copy of $(F,\pi)$ is contained in a graph $H'$ with
\begin{equation*}
  \mu_\theta(H')\leBy{eq:mu-H'-lambda-H} \lambda_\theta(H,w_{(H,\pi|_H,s)}) \lBy{eq:lambda-leq-beta} \beta \enspace.
\end{equation*}
None of the two cases can occur if Builder adheres to the generalized density restriction $(\theta,\beta)$, and consequently Painter can avoid creating a monochromatic copy of $F$ in the deterministic $F$-avoidance game with $r$ colors and generalized density restriction $(\theta,\beta)$ by playing according to the strategy $\PAINT(F,r,\theta,\alpha)$.
\end{proof}

The rest of this section is devoted to proving Lemma~\ref{lemma:witness-graphs}. To do so we will need a number of technical lemmas. Throughout the following, $F$, $r$, $\theta$ and $\alpha$ are fixed, and we usually omit these arguments when we refer to $\CW(F,r,\theta,\alpha)$ or $\PAINT(F,r,\theta,\alpha)$. We let $\cH_s\seq \cS(F)$ and $w_s:\cH_s\to\RR$ denote the return values of $\CW()$, and $w_{(H,\pi,s)}$ the weight function defined in \eqref{eq:def-w} with respect to these return values.

\subsection{A geometric viewpoint}

We begin by relating the strategy $\PAINT()$ and many of the quantities defined in previous parts of this paper to a simple geometric object. This geometric viewpoint will be a key ingredient in our proof of Lemma~\ref{lemma:witness-graphs}. 

\begin{definition}[Axis-parallel decreasing walk]
We say that $(x_\nu)_{1\leq \nu\leq k}$, $x_\nu\in\RR^r$, is a \emph{decreasing axis-parallel walk in $\RR^r$} if for any two subsequent elements $x_\nu$ and $x_{\nu+1}$ there is a coordinate $s\in[r]$ such that $x_{\nu+1,s}<x_{\nu,s}$ and $x_{\nu+1,t}=x_{\nu,t}$ for all $t\in[r]\setminus\{s\}$.
\end{definition}

The following lemma is an immediate consequence of this definition.

\begin{lemma}[Order on the walk] \label{lemma:walk-order}
Let $(x_\nu)_{1\leq \nu\leq k}$, $x_\nu\in\RR^r$, be a decreasing axis-parallel walk in $\RR^r$. For any two elements $x_\mu$, $x_\nu$ we have $1+\sum_{t\in[r]} x_{\nu,t}\leq 1+\sum_{t\in[r]} x_{\mu,t}$ if and only if\/ $x_{\nu,t}\leq x_{\mu,t}$ for all\/ $t\in[r]$.
\end{lemma}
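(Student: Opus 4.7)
The plan is to prove both implications of the equivalence by exploiting the monotonicity inherent to a decreasing axis-parallel walk along each individual coordinate and along the total sum.

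The $(\Leftarrow)$ direction I would dispatch immediately: if $x_{\nu,t}\leq x_{\mu,t}$ holds coordinatewise, summing over $t\in[r]$ and adding $1$ to both sides gives the claimed inequality.

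For the $(\Rightarrow)$ direction, the key observation is that each step of a decreasing axis-parallel walk modifies exactly one coordinate and strictly decreases it, while leaving the other $r-1$ coordinates unchanged. By a trivial induction on the number of steps this has two consequences: first, each coordinate function $\nu\mapsto x_{\nu,t}$ is non-increasing in $\nu$; second, the sum $\nu\mapsto\sum_{t\in[r]} x_{\nu,t}$ is \emph{strictly} decreasing in $\nu$, since every single step decreases the sum by a positive amount. From the second consequence it follows that the hypothesis $1+\sum_{t\in[r]} x_{\nu,t}\leq 1+\sum_{t\in[r]} x_{\mu,t}$ forces $\mu\leq\nu$ (otherwise strict monotonicity of the sum would give the opposite strict inequality). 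Applying the first consequence then yields $x_{\nu,t}\leq x_{\mu,t}$ for every $t\in[r]$, as desired.

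Since each of these two monotonicity properties is a direct consequence of the definition, I do not expect any real obstacle; the proof is essentially a two-line bookkeeping argument, which is consistent with the lemma being advertised as an immediate consequence of the definition.
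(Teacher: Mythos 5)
Your proof is correct and uses exactly the two monotonicity observations that the paper intends when it calls the lemma an immediate consequence of the definition (the paper itself gives no explicit proof). Both directions are handled cleanly: the $(\Leftarrow)$ direction by summation, and the $(\Rightarrow)$ direction by observing that strict decrease of the sum pins down $\mu\leq\nu$, after which coordinatewise non-increase finishes the argument.
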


We can think of the points $(x_\nu)_{1\leq \nu\leq k}$ of a decreasing axis-parallel walk as lying on a sequence of consecutive axis-parallel line segments. For technical reasons we also specify a direction $\sigma\in[r]$ in which, intuitively, the walk continues beyond the point $x_k$. Moreover, we sometimes allow the last segment of this walk to degenerate into the single point $x_k$. This is made precise in the following definition.

\begin{definition}[Extended walk, turning point, starting point/endpoint, segment, order]
Given some decreasing axis-parallel walk $(x_\nu)_{1\leq \nu\leq k}$, $x_\nu\in\RR^r$, and some $\sigma\in[r]$, we say that the pair $\big((x_\nu)_{1\leq \nu\leq k},\sigma\big)$ is an \emph{extended decreasing axis-parallel walk in $\RR^r$}.
We refer to a point $x_\nu$, $2\leq\nu\leq k-1$, on this extended walk as a \emph{turning point} if the coordinate in which $x_\nu$ differs from $x_{\nu-1}$ is different from the coordinate in which $x_\nu$ differs from $x_{\nu+1}$. We also define $x_1$ to be a turning point. Moreover, $x_k$ is a turning point if and only if the coordinate in which $x_k$ differs from $x_{k-1}$ is different from $\sigma$.

Given two consecutive turning points $x_\mu$ and $x_\nu$, $\mu<\nu$, that differ in some coordinate $s\in[r]$, we refer to the line segment that connects them as an \emph{$s$-segment}, and we call $x_\mu$ the \emph{starting point} and $x_\nu$ the \emph{endpoint} of this segment. Furthermore, we refer to the line segment that connects the last turning point $x_\mu$ to the last point $x_k$ of the walk as a $\sigma$-segment (if $\mu=k$, this segment degenerates to a single point $x_k$). We call $x_\mu$ the starting point and $x_k$ the endpoint of this segment.

For two points $x_\mu$ and $x_\nu$, we say that \emph{$x_\mu$ is higher on the walk than $x_\nu$} (or equivalently, $x_\nu$ is lower on the walk than $x_\mu$) if $\mu<\nu$. We extend this notion to segments on the extended walk by saying that an $s$-segment $\Gamma$ is higher on the walk than an $s'$-segment $\Gamma'$ (or equivalently, $\Gamma'$ is lower than $\Gamma$) if the starting point of $\Gamma$ is higher on the walk than the starting point of $\Gamma'$.
\end{definition}

Note that according to Lemma~\ref{lemma:di-wi-monotonicity}, the points $(d_1^i,\ldots,d_r^i)$, $i\geq 1$, form a decreasing axis-parallel walk in~$\RR^r$. In the following we define, for every $s\in[r]$ and every $(H,\pi)\in\cH_s$, a point $x_{(H,\pi,s)}\in\RR^r$ on one of the line segments of this walk.

Let $d_s^i$, $s\in[r]$, denote the values defined in line~\ref{cw:least-dangerous-threat} of the algorithm $\CW()$, and $\cC^{i,j}$ and $\cC^{i,j,k}$ the sets defined in line~\ref{cw:primary-threats}, or line~\ref{cw:secondary-threats-init} and \ref{cw:secondary-threats}, respectively. Recall from Section~\ref{sec:algorithm} that for each $s\in[r]$, the sets $\cC^{i,j}$ and $\cC^{i,j,k}$ for which $\alpha_i=s$ form a partition of the family $\cH_s$.

\begin{definition}[$x$-points] \label{def:x-H-pi-s}
For every $s\in[r]$ and every graph $(H,\pi)\in\cH_s$, $\pi=(v_1,\ldots,v_h)$, we define a point $x_{(H,\pi,s)}\in\RR^r$ as follows:
\begin{subequations} \label{eq:def-x-H-pi-s}
\begin{enumerate}[(a)]
\item 
If $(H,\pi)\in\cC^{i,j}$ for some $i,j\geq 1$ with $\alpha_i=s$, then we define
\begin{equation} \label{eq:vec-pt}
  x_{(H,\pi,s)} := \big(d_1^i,\ldots,d_{s-1}^i,d_\theta(H,v_1,w_{(H,\pi,s)})=d_s^i,d_{s+1}^i,\ldots,d_r^i\big)\in\RR^r \enspace.
\end{equation}

\item
If $(H,\pi)\in\cC^{i,j,k}$ for some $i,j,k\geq 1$ with $\alpha_i=s$, then we define
\begin{equation} \label{eq:vec-st}
  x_{(H,\pi,s)} := \big(d_1^\ihat,\ldots,d_{s-1}^\ihat,d_\theta(H,v_1,w_{(H,\pi,s)}),d_{s+1}^\ihat,\ldots,d_r^\ihat\big)\in\RR^r \enspace,
\end{equation}
where $\ihat$ is the value defined either in line~\ref{cw:set-weight-index-lower} or in line~\ref{cw:set-weight-index-higher} for this $(H,\pi)$.
\end{enumerate}
\end{subequations}
\end{definition}

The fact that $d_\theta(H,v_1,w_{(H,\pi,s)})=d_s^i$ in part~(a) of the previous definition follows directly from the definition in line~\ref{cw:primary-threats} of the algorithm $\CW()$.

\begin{figure}
\centering
\PSforPDF{
 \psfrag{col1}{Color 1}
 \psfrag{col2}{Color 2}
 \psfrag{hsp}{$\cH_s',s\in\{1,2\}$}
 \psfrag{hsmhsp}{$\cH_s\setminus\cH_s',s\in\{1,2\}$}
 \psfrag{h1}{$\cH_1=\cS(F)$}
 \psfrag{h2}{$\cH_2$}
 \psfrag{0}{0}
 \psfrag{alpha2}{$\alpha=(1,1,2,1,2,\ldots,2,1,\ldots)$}
 \psfrag{th2}{$\cC(\cH_2,F)$}
 \psfrag{doo}{$d_1^1$}
 \psfrag{d12}{$d_1^2$}
 \psfrag{d13}{$d_1^3=d_1^4$}
 \psfrag{d15}{$d_1^5$}
 \psfrag{d21}{$d_2^1=d_2^2=d_2^3$}
 \psfrag{d24}{$d_2^4=d_2^5$}
 \psfrag{xhpi1}{$x_{(H,\pi,1)}$}
 \psfrag{dtheta}[c][c][1][90]{$d_\theta(H,v_1,w_{(H,\pi,1)})$}
 \psfrag{walk}{\Large $\cW=\cW(F,r,\theta,\alpha)$}
 \psfrag{ldots}{$\ldots$}
 \psfrag{vdots}{$\vdots$}
 \psfrag{ddots}{\reflectbox{$\ddots$}}
 \includegraphics[width=\textwidth]{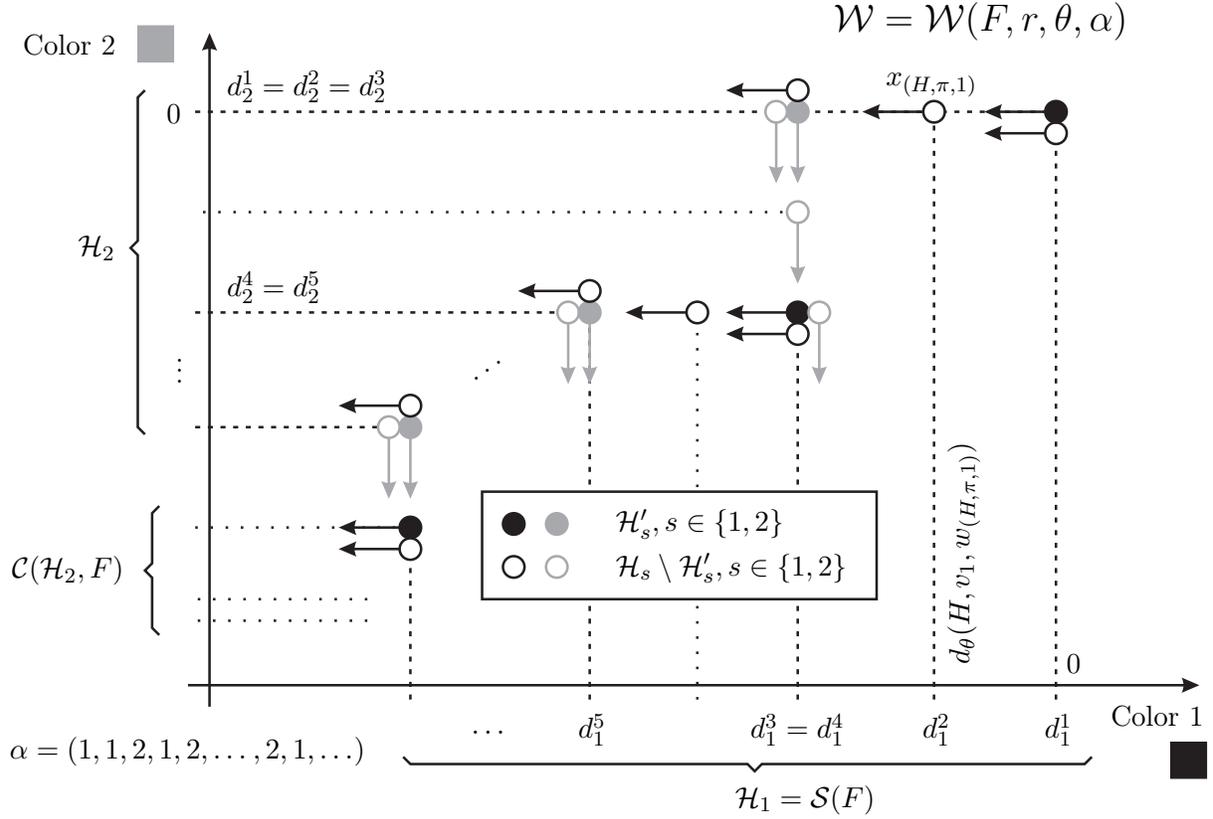}
}
\caption{Illustration of Definition~\ref{def:x-H-pi-s} and Lemma~\ref{lemma:walk}. The figure shows how certain variables of the algorithm $\CW(F,r,\theta,\alpha)$ might evolve for some graph $F$, some value of $\theta$, $r=2$ and $\alpha=(1,1,2,1,2,\ldots,2,1,\ldots)$ (where the algorithm terminates in the round corresponding to the last $1$-entry shown). For any graph $(H,\pi)\in\cH_s$, $s\in\{1,2\}$, a bullet shows the location of the point $x_{(H,\pi,s)}$, and the arrow attached to the bullet points along the $s$-axis. One bullet may represent multiple points at the same location if the corresponding graphs are of the same type (see legend). If graphs with the same associated point are of different type, then the bullets are drawn directly adjacent to each other (instead of on top of each other) to maintain readability.} \label{fig:walk}
\end{figure}

The following lemma states that the points $x_{(H,\pi,s)}$ defined above indeed form an (extended) decreasing axis-parallel walk. We point out that the order in which the points $x_{(H,\pi,s)}$ appear on this walk is not necessarily the order in which the corresponding graphs $(H,\pi)$ are added to the families $\cH_s$ in the course of the algorithm $\CW()$. To be more precise, such a statement is true for the graphs that are added via one of the sets $\cC^{i,j}$, but not for the graphs that are added via one of the sets $\cC^{i,j,k}$.

Of particular importance are the points $x_{(H,\pi,s)}$ for the graphs $(H,\pi)\in\cH_s'$, where $\cH_s'\seq \cH_s$ are the families defined in \eqref{eq:tie-break-graphs} for our tie-breaking rule. As it turns out, those points are always turning points of the walk. Figure~\ref{fig:walk} illustrates Definition~\ref{def:x-H-pi-s} and the different statements of Lemma~\ref{lemma:walk}. 

\begin{lemma}[Walk formed by $x$-points] \label{lemma:walk}
Let $\imax$ denote the total number of iterations of the repeat-loop~(*) of $\CW()$.
The elements in the set $\{x_{(H,\pi,s)}\mid s\in[r] \wedge (H,\pi)\in\cH_s\}$ can be ordered to form a decreasing axis-parallel walk $W=W(F,r,\theta,\alpha)$ in $\RR^r$ such that the extended walk $\cW=\cW(F,r,\theta,\alpha):=(W,\alpha_\imax)$ satisfies the following properties:
\begin{enumerate}[(i)]
\item For any $s\in[r]$ and any graph $(H,\pi)\in\cH_s$, the point $x_{(H,\pi,s)}$ is contained in an $s$-segment.
\item For any $s$-segment $\Gamma$, there is a graph $(H,\pi)\in\cH_s$ such that $x_{(H,\pi,s)}$ is the starting point of $\Gamma$.
\item For any $s$-segment $\Gamma$, if $x_{(H,\pi,s)}$, $\pi=(v_1,\ldots,v_h)$, is the starting point of $\Gamma$, then there is some $J\seq H$ with $v_1\in J$ such that $x_{(J,\pi|_J,s)}=x_{(H,\pi,s)}$ and $(J,\pi|_J)\in\cH_s'$.
\item For any $s$-segment $\Gamma$, if $x_{(H,\pi,s)}\in\Gamma$ is not the starting point of\/ $\Gamma$, then $(H,\pi)\notin\cH_s'$.
\item The lowest segment of $\cW$ is an $\alpha_\imax$-segment and $\cH_{\alpha_\imax}=\cS(F)$.
\end{enumerate}
\end{lemma}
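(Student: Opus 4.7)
The plan is to construct $W$ from the algorithm's data in two stages. First, consider the skeleton of points $P_i:=(d_1^i,\ldots,d_r^i)\in\RR^r$ for $1\leq i\leq\imax$. By the first part of Lemma~\ref{lemma:di-wi-monotonicity}, each step $P_i\to P_{i+1}$ changes only the $\alpha_i$-th coordinate and strictly decreases it, so $(P_i)$ is a decreasing axis-parallel walk; with the extension direction $\sigma:=\alpha_\imax$, the turning points of the extended walk are exactly those indices $i$ with $i=1$ or $\alpha_i\neq\alpha_{i-1}$. Each maximal run of $\alpha$ with value $s$, say spanning indices $i_0,\ldots,i_1$, thus yields an $s$-segment with starting point $P_{i_0}$ and endpoint $P_{i_1+1}$ (extending past $P_\imax$ when $i_1=\imax$). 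In the second stage the remaining $x$-points are inserted: for $(H,\pi)\in\cC^{i,j,k}$ with $\alpha_i=s$ (case (b) of Definition~\ref{def:x-H-pi-s}), Lemma~\ref{lemma:di-wi-monotonicity} gives $d_t^\ihat=d_t^{\ihat+1}$ for all $t\neq s$ while Lemma~\ref{lemma:d-sandwiched} gives $d_s^{\ihat+1}\leq d_\theta(H,v_1,w_{(H,\pi,s)})\leq d_s^\ihat$, so $x_{(H,\pi,s)}$ lies on the straight segment from $P_\ihat$ to $P_{\ihat+1}$ (which is part of the $s$-segment for the run containing $\ihat$, using $\alpha_\ihat=s$). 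Inserting these points in any order compatible with their positions yields the required ordering of the set $\{x_{(H,\pi,s)}\}$.

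Properties (i), (ii), (iv), (v) are direct consequences. (i) follows from the construction above. For (ii), the starting point of every $s$-segment equals $P_{i_0}$ for some $i_0$ at the start of a maximal $s$-run; the set $\cC^{i_0,1}$ is nonempty because the maximum in line~\ref{cw:least-dangerous-threat} is attained over the finite nonempty family $\cC(\cH_s,F)$, and every $(H,\pi)\in\cC^{i_0,1}$ satisfies $x_{(H,\pi,s)}=P_{i_0}$. For (iv), every $(H,\pi)\in\cH_s'$ lies in some $\cC^{i_0,1}$ with $i_0$ the start of an $s$-run, and hence $x_{(H,\pi,s)}=P_{i_0}$ is a starting point; the contrapositive is (iv). Property (v) is immediate: the extension direction is $\alpha_\imax$, and by the termination condition in line~\ref{cw:terminate} the algorithm stops once some $\cH_s=\cS(F)$, which must be $s=\alpha_\imax$ since that is the only family modified in the final iteration.

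The main work is property (iii). Suppose $x_{(H,\pi,s)}$ is the starting point $P_{i_0}$ of an $s$-segment, so $i_0=1$ or $\alpha_{i_0-1}\neq s$. If $(H,\pi)$ was added via case (a), that is, $(H,\pi)\in\cC^{i,j}$ with $\alpha_i=s$, then $x_{(H,\pi,s)}=P_i=P_{i_0}$ forces $i=i_0$; for $j=1$ take $J:=H\in\cC^{i_0,1}\seq\cH_s'$, and for $j\geq 2$ the second part of Lemma~\ref{lemma:end-**} supplies $J\seq H$ with $v_1\in J$ and $(J,\pi|_J)\in\cC_s(d_s^{i_0})=\cC^{i_0,1}\seq\cH_s'$ (the first equality being set in line~\ref{cw:forward-threats}), which satisfies $x_{(J,\pi|_J,s)}=P_{i_0}$ by case (a). The harder case is (b), where $(H,\pi)\in\cC^{i,j,k}$ with $\alpha_i=s$. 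Matching the non-$s$ coordinates of $x_{(H,\pi,s)}$ with those of $P_{i_0}$ yields $d_t^\ihat=d_t^{i_0}$ for all $t\neq s$; since $\alpha_{i_0-1}\neq s$ when $i_0>1$, having $\ihat<i_0$ would alter some non-$s$ coordinate, giving $\ihat\geq i_0$. Combined with the sandwich $d_s^{\ihat+1}\leq d_\theta(H,v_1,w_{(H,\pi,s)})=d_s^{i_0}\leq d_s^\ihat$ from Lemma~\ref{lemma:d-sandwiched} and the monotonicity of $d_s^{(\cdot)}$, this forces $\ihat=i_0$. The resulting equality $d_\theta=d_s^\ihat$ excludes the strict inequality required by line~\ref{cw:set-weight-index-lower}, so $\ihat$ must have been defined in line~\ref{cw:set-weight-index-higher}; hence the condition in line~\ref{cw:condition-lower-index} fails, producing $J\seq H$ with $v_1\in J$ and $(J,\pi|_J)\in\cC_s(d_\theta)=\cC_s(d_s^{i_0})=\cC^{i_0,1}\seq\cH_s'$, for which $x_{(J,\pi|_J,s)}=P_{i_0}=x_{(H,\pi,s)}$. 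The main obstacle is this last case, specifically pinning down $\ihat=i_0$ and ruling out the line~\ref{cw:set-weight-index-lower} branch so that the fallback in line~\ref{cw:condition-lower-index} provides the desired subgraph in $\cH_s'$.
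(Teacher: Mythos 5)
Your proposal is correct and follows essentially the same route as the paper's own proof: build the skeleton walk from the points $(d_1^i,\ldots,d_r^i)$, identify it with the set of case~(a) $x$-points, insert the case~(b) points via Lemma~\ref{lemma:d-sandwiched}, and handle property~(iii) by separating cases (a) and (b), using the second part of Lemma~\ref{lemma:end-**} for the former and line~\ref{cw:condition-lower-index} for the latter. Your derivation of $\ihat=i_0$ in case~(b) is a minor cosmetic reorganization of the paper's argument that the point can only be a starting point when $d_\theta(H,v_1,w_{(H,\pi,s)})=d_s^\ihat$ and $\ihat$ begins an $s$-run, which again forces the line~\ref{cw:set-weight-index-higher} branch.
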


\begin{proof}
By the first part of Lemma~\ref{lemma:di-wi-monotonicity}, the sequence $\Wbar:=(\Wbar^i)_{1\leq i\leq \imax}$, $\Wbar^i:=(d_1^i,\ldots,d_r^i)$, is a decreasing axis-parallel walk with the property that $\Wbar^i$ and $\Wbar^{i+1}$ differ exactly in the coordinate $\alpha_i$.
We first show that the set
\begin{equation} \label{eq:partial-union-x-H-pi-s}
  \{x_{(H,\pi,s)} \mid s\in[r] \; \wedge \; \text{$(H,\pi)\in\cC^{i,j}$ for some $i,j\geq 1$ with $\alpha_i=s$} \}
\end{equation}
coincides exactly with the set of elements of this walk, and that the extended walk $\cWbar:=(\Wbar,\alpha_\imax)$ satisfies the properties of the lemma. In the second part of the proof we argue that for any $s\in[r]$ and any graph $(H,\pi)\in\cC^{i,j,k}$ for some $i,j,k\geq 1$ with $\alpha_i=s$, the point $x_{(H,\pi,s)}$ lies on one of the segments of $\cWbar$, and that the subdivided walk obtained by inserting all those points into the walk $\cWbar$ still satisfies the claimed properties.

First note that on the extended walk $\cWbar$, every point $\Wbar^i$, $1\leq i\leq \imax$, is contained in an $\alpha_i$-segment.

For any $s\in[r]$, any $i,j\geq 1$ with $\alpha_i=s$ and any graph $(H,\pi)\in\cC^{i,j}$, by the definition in \eqref{eq:vec-pt} we have
\begin{equation} \label{eq:x-H-pi-s-d-vec}
  x_{(H,\pi,s)}=(d_1^i,\ldots,d_r^i)=\Wbar^i
\end{equation}
(independently of $j$). Thus property~(i) is satisfied for the elements in the set \eqref{eq:partial-union-x-H-pi-s} and the walk $\cWbar$.

Recall that for each $1\leq i\leq \imax$ the set $\cC^{i,1}$ is nonempty (see the definitions in line~\ref{cw:least-dangerous-threat} and line~\ref{cw:primary-threats}), implying that there is a graph $(H,\pi)\in\cC^{i,1}$ which for $s:=\alpha_i$ satisfies $x_{(H,\pi,s)}=\Wbar^i$, proving in particular property~(ii) for the walk $\cWbar$.

To prove properties~(iii) and (iv), we fix some $s\in[r]$, some $i,j\geq 1$ with $\alpha_i=s$ and some graph $(H,\pi)\in\cC^{i,j}$, $\pi=(v_1,\ldots,v_h)$. Let $\Gamma$ denote the $s$-segment of $\cWbar$ containing the point $x_{(H,\pi,s)}$. We distinguish two cases depending on whether $x_{(H,\pi,s)}$ is the starting point of $\Gamma$ or not. Note that by the first part of Lemma~\ref{lemma:di-wi-monotonicity}, $x_{(H,\pi,s)}=\Wbar^i$ is the starting point of $\Gamma$ if and only if $i=1$ or $\alpha_i\neq \alpha_{i-1}$. 

We first consider the case that $x_{(H,\pi,s)}$ is the starting point of $\Gamma$, i.e., we have
\begin{equation} \label{eq:i-sp-condition}
  i=1 \;\;\text{or}\;\; \alpha_i\neq \alpha_{i-1} \enspace.
\end{equation}
If $j=1$, then by \eqref{eq:tie-break-graphs} and \eqref{eq:i-sp-condition} we have $(H,\pi)\in\cH_s'$. If $j>1$, then by the second part of Lemma~\ref{lemma:end-**} (recall that by the definition in line~\ref{cw:primary-threats} we have $d_\theta(H,v_1,w_{(H,\pi,s)})=d_s^i$)
there is a subgraph $J\seq H$ with $v_1\in J$ for which $(J,\pi|_J)$ is contained in $\cC_s(d_s^i)$. By the definition in line~\ref{cw:forward-threats} we have $\cC_s(d_s^i)=\cC^{i,1}$, implying that $(J,\pi|_J)\in\cC^{i,1}$ and
\begin{equation*}
  x_{(J,\pi|_J,s)}\eqBy{eq:x-H-pi-s-d-vec} x_{(H,\pi,s)} \enspace.
\end{equation*}
Furthermore, using \eqref{eq:i-sp-condition} it follows from the definition in \eqref{eq:tie-break-graphs} that $(J,\pi|_J)\in\cH_s'$, proving that property~(iii) holds for the walk $\cWbar$.

If on the other hand $x_{(H,\pi,s)}$ is not the starting point of $\Gamma$, i.e., $i>1$ and $\alpha_i=\alpha_{i-1}$, then by the definition in \eqref{eq:tie-break-graphs} we have $(H,\pi)\notin\cH_s'$, proving property~(iv) for the walk $\cWbar$.

Note that by the definition of $\cWbar$, the lowest segment of this walk is indeed an $\alpha_\imax$-segment. By the termination condition in line~\ref{cw:terminate} and the observation that during the $i$-th iteration of the repeat-loop~(*), none of the families $\cH_s$, $s\in[r]\setminus\{\alpha_i\}$, is modified, we have $\cH_{\alpha_\imax}=\cS(F)$. Together this proves property~(v) for the walk $\cWbar$.

To complete the proof of the lemma we fix some $s\in[r]$, some $i,j,k\geq 1$ with $\alpha_i=s$ and some graph $(H,\pi)\in\cC^{i,j,k}$, $\pi=(v_1,\ldots,v_h)$, and show that the point $x_{(H,\pi,s)}$ lies on some $s$-segment of the walk $\cWbar$ (possibly in between two points $\Wbar^\ibar$ and $\Wbar^{\ibar+1}$), and that by including all such points $x_{(H,\pi,s)}$ into the walk $\cWbar$ we obtain a subdivided walk $\cW$ that still satisfies the claimed properties (note that beside~(i) we only need to verify that properties~(iii) and (iv) are maintained).

Note that by the definitions in line~\ref{cw:set-weight-index-lower} and line~\ref{cw:set-weight-index-higher} we have $\alpha_\ihat=\alpha_i=s$ for $\ihat$ as in part~(b) of Definition~\ref{def:x-H-pi-s}. Using this relation, the definition in \eqref{eq:vec-st}, and Lemma~\ref{lemma:d-sandwiched} we obtain that $x_{(H,\pi,s)}$ lies on the $s$-segment $\Gamma$ that contains $\Wbar^\ihat$ and $\Wbar^{\ihat+1}$ on the walk $\cWbar$, showing that the walk $\cW$ satisfies property~(i).

By the strict inequality in~\eqref{eq:d-between-leq-l}, $x_{(H,\pi,s)}$ can not be the starting point of $\Gamma$ if $\ihat$ was defined in line~\ref{cw:set-weight-index-lower}. Moreover, by \eqref{eq:d-between-l-leq} the point $x_{(H,\pi,s)}$ is the starting point of $\Gamma$ if and only if
\begin{equation} \label{eq:d-H-d-s-ihat}
  d_\theta(H,v_1,w_{(H,\pi,s)})=d_s^\ihat
\end{equation}
and
\begin{equation} \label{eq:ihat-sp-condition}
  \ihat=1 \;\;\text{or}\;\; \alpha_\ihat\neq \alpha_{\ihat-1} \enspace.
\end{equation}
In this case, by the condition in line~\ref{cw:condition-lower-index} there is a subgraph $J\seq H$ with $v_1\in J$ such that $(J,\pi|_J)$ is contained in $\cC_s(d_\theta(H,v_1,w_{(H,\pi,s)}))$. Using \eqref{eq:d-H-d-s-ihat} and the definition in line~\ref{cw:forward-threats} shows that $(J,\pi|_J)\in\cC^{\ihat,1}$, implying that
\begin{equation*}
  x_{(J,\pi|_J,s)}\eqBy{eq:x-H-pi-s-d-vec} (d_1^\ihat,\ldots,d_r^\ihat)
  \eqByM{\eqref{eq:vec-st},\eqref{eq:d-H-d-s-ihat}} x_{(H,\pi,s)} \enspace.
\end{equation*}
Furthermore, using \eqref{eq:ihat-sp-condition} it follows from the definition in \eqref{eq:tie-break-graphs} that $(J,\pi|_J)\in\cH_s'$, proving that property~(iii) holds for the walk $\cW$.

As none of the graphs in the sets $\cC^{i,j,k}$ with $\alpha_i=s$ is contained in $\cH_s'$ (recall~\eqref{eq:tie-break-graphs}), the walk $\cW$ trivially satisfies property~(iv). This completes the proof.
\end{proof}

\subsection{Relation of the walk to other quantities}
In the following lemmas we establish several relations between the walk $\cW$ defined in Lemma~\ref{lemma:walk}, the parameters $d_\theta()$ and $w_{(H,\pi,s)}$ used in the algorithm $\CW()$, and the parameter $\lambda_\theta()$ and the families $\cH_s'$ used in the definition of the strategy $\PAINT()$. We will see that for the ordered monochromatic subgraphs of $F$ that are relevant for the strategy $\PAINT()$, the order of the corresponding $x$-points on the walk $\cW$ coincides with the ordering given by the $\lambda_\theta()$-values --- the lower on the walk the point $x_{(H,\pi,s)}$ appears, the lower the value $\lambda(H,w_{(H,\pi,s)})$, i.e., the more dangerous a copy of $(H,\pi)$ in color $s$ is considered (see Lemma \ref{lemma:argmin-lambda-x-vec} below).

\begin{lemma}[$d_\theta()$-value and weight from $x$-point] \label{lemma:w-sum-x}
For any $s\in[r]$ and any graph $(H,\pi)\in\cH_s$, $\pi=(v_1,\ldots,v_h)$, we have
\begin{equation*}
  d_\theta(H,v_1,w_{(H,\pi,s)}) = x_{(H,\pi,s),s}
\end{equation*}
and
\begin{equation} \label{eq:w-sum-x}
  w_{(H,\pi,s)}(v_1)=\sum_{t\in[r]\setminus\{s\}} x_{(H,\pi,s),t} \enspace.
\end{equation}
\end{lemma}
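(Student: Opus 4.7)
The proof is essentially a direct unpacking of Definition~\ref{def:x-H-pi-s} together with the definition of $w_{(H,\pi,s)}$ in~\eqref{eq:def-w}, split into the two cases corresponding to how $(H,\pi)$ was added to $\cH_s$. Recall that for each $s\in[r]$ the sets $\cC^{i,j}$ (with $\alpha_i = s$) and $\cC^{i,j,k}$ (with $\alpha_i = s$) partition $\cH_s$, so exactly one of the two cases in Definition~\ref{def:x-H-pi-s} applies. Moreover, since $\pi = (v_1,\ldots,v_h)$, we have $w_{(H,\pi,s)}(v_1) = w_s(H,\pi)$ directly from~\eqref{eq:def-w}.

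The plan is to handle both cases in parallel. In case~(a), i.e.\ $(H,\pi)\in\cC^{i,j}$ with $\alpha_i=s$, the first equality $d_\theta(H,v_1,w_{(H,\pi,s)}) = x_{(H,\pi,s),s}$ is immediate from~\eqref{eq:vec-pt} (both sides equal $d_s^i$). For the second equality, I will trace through the algorithm: $w_s(H,\pi)$ is set in line~\ref{cw:set-primary-weight} to $w^i$, which by line~\ref{cw:primary-weight} equals $\sum_{t\in[r]\setminus\{s\}} d_t^i$; on the other hand, the off-diagonal coordinates of $x_{(H,\pi,s)}$ in~\eqref{eq:vec-pt} are precisely the values $d_t^i$ for $t\in[r]\setminus\{s\}$, so the two sums agree.

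In case~(b), i.e.\ $(H,\pi)\in\cC^{i,j,k}$ with $\alpha_i = s$, the first equality is again immediate, this time by the definition in~\eqref{eq:vec-st}. For the second equality, note that $w_s(H,\pi)$ is set in line~\ref{cw:set-secondary-weight} to $w^{\ihat}$ where $\ihat$ is defined in line~\ref{cw:set-weight-index-lower} or line~\ref{cw:set-weight-index-higher}; in either definition we have $\alpha_{\ihat} = \sigma = s$, so line~\ref{cw:primary-weight} gives $w^{\ihat} = \sum_{t\in[r]\setminus\{s\}} d_t^{\ihat}$. The off-diagonal coordinates of $x_{(H,\pi,s)}$ in~\eqref{eq:vec-st} are exactly $d_t^{\ihat}$ for $t\in[r]\setminus\{s\}$, so the two sums agree in this case as well.

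There is no real obstacle here; the lemma is essentially a bookkeeping statement that confirms the construction in Definition~\ref{def:x-H-pi-s} has the intended meaning, namely that the $s$-th coordinate of $x_{(H,\pi,s)}$ records the $d_\theta()$-value of the youngest vertex of $(H,\pi)$ while the remaining coordinates record the contributions that make up its weight. The only mild care needed is to observe, in case~(b), that $\alpha_{\ihat}=s$ (so that line~\ref{cw:primary-weight} indeed subtracts the term $d_s^{\ihat}$), which is built into the definitions in lines~\ref{cw:set-weight-index-lower} and~\ref{cw:set-weight-index-higher}.
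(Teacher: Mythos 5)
Your proof is correct and follows the same case split and the same chain of equalities (via $w_s(H,\pi)$, line~\ref{cw:primary-weight}, and Definition~\ref{def:x-H-pi-s}) as the paper's own proof. Your extra remark that $\alpha_{\ihat}=s$ in case~(b) is a small but welcome justification that the paper leaves implicit.
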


\begin{proof}
The first part of the lemma is an immediate consequence of the definition in \eqref{eq:def-x-H-pi-s}.

For any $s\in[r]$ and any graph $(H,\pi)\in\cH_s$ as in part~(a) of Definition~\ref{def:x-H-pi-s} we obtain, using the definitions in line~\ref{cw:primary-weight} and line~\ref{cw:set-primary-weight},
\begin{equation} \label{eq:primary-weight}
  w_{(H,\pi,s)}(v_1) \eqBy{eq:def-w} w_s(H,\pi)=w^i= \sum_{t\in[r]\setminus\{s\}} d_t^i \eqBy{eq:vec-pt} \sum_{t\in[r]\setminus\{s\}} x_{(H,\pi,s),t} \enspace.
\end{equation}

For any $s\in[r]$ and any graph $(H,\pi)\in\cH_s$ as in part~(b) of Definition~\ref{def:x-H-pi-s} we obtain, using the definitions in line~\ref{cw:primary-weight} and line~\ref{cw:set-secondary-weight},
\begin{equation} \label{eq:secondary-weight}
  w_{(H,\pi,s)}(v_1) \eqBy{eq:def-w} w_s(H,\pi)=w^\ihat=\sum_{t\in[r]\setminus\{s\}} d_t^\ihat \eqBy{eq:vec-st} \sum_{t\in[r]\setminus\{s\}} x_{(H,\pi,s),t} \enspace.
\end{equation}

Together \eqref{eq:primary-weight} and \eqref{eq:secondary-weight} prove the second part of the lemma.
\end{proof}

For the next lemma, recall the definition of $\cC(\cH,F)$ in~\eqref{eq:def-C}.

\begin{lemma}[Graphs in $\cC(\cH_s,F)$ have smallest $d_\theta()$-value] \label{lemma:d-CHs-dHs}
Let $\sigma\in[r]$ and $(H,\pi)\in\cH_\sigma$, $\pi=(v_1,\ldots,v_h)$, and let $s\in[r]$ and $(J,\tau)\in\cC(\cH_s,F)$, $\tau=(u_1,\ldots,u_c)$. \\
If $s=\sigma$, then we have
\begin{align*}
  d_\theta(J,u_1,w_{(J,\tau,\sigma)}) &< x_{(H,\pi,\sigma),\sigma} = d_\theta(H,v_1,w_{(H,\pi,\sigma)}) \enspace.
\intertext{If $s\neq \sigma$, then we have}
  d_\theta(J,u_1,w_{(J,\tau,s)}) &\leq x_{(H,\pi,\sigma),s} \enspace.
\end{align*}
\end{lemma}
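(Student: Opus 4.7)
The plan begins by noting that the equality $x_{(H,\pi,\sigma),\sigma}=d_\theta(H,v_1,w_{(H,\pi,\sigma)})$ is immediate from the first part of Lemma~\ref{lemma:w-sum-x}. For the inequalities, the main strategy is to locate two iterations: one ($i_H$, or a related index $i'$) at which $(H,\pi)$ is added to $\cH_\sigma$ and which governs $x_{(H,\pi,\sigma),\cdot}$, and one ($i_J$) at which the parent $(J\setminus u_1,\tau\setminus u_1)$ is added to $\cH_s$. Combining the termination condition of loop~(**) in line~\ref{cw:this-round-complete} with the monotonicity of the sequences $(d_s^i)$ from Lemma~\ref{lemma:di-wi-monotonicity} will then yield the bounds. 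Lemma~\ref{lemma:closure-Hs} guarantees that all ancestors of $(J,\tau)$ in $\cT(F)$ are in $\cH_s$ as soon as the parent is, and since weights in $\CW()$ are assigned exactly once upon entering $\cH_s$, the value $d_\theta(J,u_1,w_{(J,\tau,s)})$ is well-defined and does not change afterwards, making iteration-by-iteration comparisons sound.

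For the case $s=\sigma$, let $i_H$ be the iteration at which $(H,\pi)$ was added, so $\alpha_{i_H}=\sigma$ and $d_\theta(H,v_1,w_{(H,\pi,\sigma)})\geq d_\sigma^{i_H}$; equality holds if $(H,\pi)\in\cC^{i_H,j}$ (line~\ref{cw:primary-threats}), and the $\geq$ follows if $(H,\pi)\in\cC^{i_H,j,k}\seq\cT^{i_H,j,k}$ (line~\ref{cw:potential-secondary-threats}). Letting $i_J$ denote the iteration at which the parent of $(J,\tau)$ entered $\cH_\sigma$ and setting $i^*:=\max(i_H,i_J)$ (so $\alpha_{i^*}=\sigma$), the assumption $(J,\tau)\notin\cH_\sigma$ together with the fact that $\cH_\sigma$ only grows places $(J,\tau)$ in $\cC(\cH_\sigma,F)$ at the end of iteration $i^*$, and the termination condition of loop~(**) then yields $d_\theta(J,u_1,w_{(J,\tau,\sigma)})<d_\sigma^{i^*}$. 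Monotonicity yields $d_\sigma^{i^*}\leq d_\sigma^{i_H}\leq d_\theta(H,v_1,w_{(H,\pi,\sigma)})$, producing the desired strict inequality.

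For the case $s\neq\sigma$, Definition~\ref{def:x-H-pi-s} gives $x_{(H,\pi,\sigma),s}=d_s^{i'}$ where $i'=i_H$ in sub-case~(a) and $i'=\ihat\leq i_H$ in sub-case~(b); in both situations $\alpha_{i'}=\sigma\neq s$, so Lemma~\ref{lemma:di-wi-monotonicity} gives $d_s^{i'+1}=d_s^{i'}$. Let $i_J$ denote the iteration at which the parent of $(J,\tau)$ entered $\cH_s$, necessarily with $\alpha_{i_J}=s\neq\sigma$, hence $i_J\neq i'$. If $i_J<i'$, then $(J,\tau)$ already lies in $\cC(\cH_s,F)$ at the start of iteration $i'$, and the definition of $d_s^{i'}$ in line~\ref{cw:least-dangerous-threat} directly gives $d_\theta(J,u_1,w_{(J,\tau,s)})\leq d_s^{i'}$. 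If $i_J>i'$, then $(J,\tau)\in\cC(\cH_s,F)$ at the end of iteration $i_J$ and the termination condition yields $d_\theta(J,u_1,w_{(J,\tau,s)})<d_s^{i_J}\leq d_s^{i'+1}=d_s^{i'}$ by monotonicity. Either sub-case gives $d_\theta(J,u_1,w_{(J,\tau,s)})\leq x_{(H,\pi,\sigma),s}$. The main point to watch is the sub-case $(H,\pi)\in\cC^{i_H,j,k}$, in which the bound $d_\theta(H,v_1,w_{(H,\pi,\sigma)})\geq d_\sigma^{i_H}$ need not be tight; fortunately the non-strict version combined with the monotonicity of $(d_\sigma^i)$ is already enough, so no ingredient beyond Lemma~\ref{lemma:di-wi-monotonicity} and the loop~(**) termination condition is needed.
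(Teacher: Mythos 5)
Your proof is correct and follows essentially the same route as the paper's: both rest on the definitions at the iteration where $(H,\pi)$ is added, the termination condition of loop~(**) in line~\ref{cw:this-round-complete}, the definition of $d_s^i$ in line~\ref{cw:least-dangerous-threat}, and the monotonicity from Lemma~\ref{lemma:di-wi-monotonicity}. The only difference is bookkeeping: the paper jumps directly to the last $\sigma$-iteration (resp.\ the last $s$-iteration before $\imax$) to compare values, whereas you work with $\max(i_H,i_J)$ and a case split on $i_J$ versus $i'$; both reach the same chain of inequalities.
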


\begin{proof}
Let $\imax$ denote the total number of iterations of the repeat-loop~(*) in $\CW()$.

First suppose that $s=\sigma$. Denoting by $\ihat$ the largest index $\ibar\leq \imax$ for which $\alpha_\ibar=\sigma$, the first part of Lemma~\ref{lemma:di-wi-monotonicity} and the definitions in line~\ref{cw:primary-threats} and line~\ref{cw:potential-secondary-threats} show that $d_\theta(H,v_1,w_{(H,\pi,\sigma)})\geq d_\sigma^\ihat$. By the termination condition in line~\ref{cw:this-round-complete} we also have $d_\theta(J,u_1,w_{(J,\tau,\sigma)})<d_\sigma^\ihat$. We thus obtain $d_\theta(J,u_1,w_{(J,\tau,\sigma)})<d_\theta(H,v_1,w_{(H,\pi,\sigma)})$. By the definition in \eqref{eq:def-x-H-pi-s} the right hand side of this last inequality equals $x_{(H,\pi,\sigma),\sigma}$, proving the first part of the lemma.

Now suppose that $s\neq \sigma$. By the definition in \eqref{eq:def-x-H-pi-s} we have
\begin{equation} \label{eq:x-H-pi-sigma}
  x_{(H,\pi,\sigma),s}=d_s^i
\end{equation}
for some $1\leq i\leq \imax$.
By the termination condition in line~\ref{cw:terminate} and the observation that during the $\ibar$-th iteration of the repeat-loop~(*), none of the families $\cH_t$, $t\in[r]\setminus\{\alpha_\ibar\}$, is modified, we must have $\alpha_\imax\neq s$, as we would have $\cH_s=\cS(F)$ otherwise, implying that $\cC(\cH_s,F)$ would be empty.
So let $\ihat$ be the largest index $\ibar\leq \imax-1$ for which $\alpha_\ibar=s$.
By the definition in line~\ref{cw:least-dangerous-threat} we have
\begin{equation} \label{eq:d-J-d-s-ihatp1}
  d_\theta(J,u_1,w_{(J,\tau,s)})\leq d_s^{\ihat+1} \enspace.
\end{equation}
Using the first part of Lemma~\ref{lemma:di-wi-monotonicity} twice we obtain
\begin{equation*}
  d_s^{\ihat+1}=\cdots=d_s^\imax \quad \text{and} \quad d_s^\imax \leq d_s^i \enspace,
\end{equation*}
which together with \eqref{eq:x-H-pi-sigma} and \eqref{eq:d-J-d-s-ihatp1} yields the second part of the lemma.
\end{proof}

\begin{lemma}[Relation between $\lambda_\theta()$-value and $x$-point] \label{lemma:lambda-x-vec}
Let $s\in[r]$. For any graph $(H,\pi)\in\cH_s$, $\pi=(v_1,\ldots,v_h)$, we have
\begin{equation*}
  \lambda_\theta(H,w_{(H,\pi,s)}) \geq 1+\sum_{t\in[r]} x_{(H,\pi,s),t} \enspace.
\end{equation*}
Moreover, there is a subgraph $\Jhat\seq H$ with $v_1\in\Jhat$ satisfying
\begin{equation*}
  \lambda_\theta(\Jhat,w_{(\Jhat,\pi|_\Jhat,s)}) = 1+\sum_{t\in[r]} x_{(\Jhat,\pi|_\Jhat,s),t} = 1+\sum_{t\in[r]} x_{(H,\pi,s),t} \enspace.
\end{equation*}
\end{lemma}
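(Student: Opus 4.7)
The plan is to reduce both parts of the lemma to a single computation, using the recursive form of $\lambda_\theta$ together with Lemma~\ref{lemma:w-sum-x} to translate between the abstract weight quantities and the coordinates of the walk point $x_{(H,\pi,s)}$. The first step is to rewrite
\[
  \min_{J\seq H:\,v_1\in J}\lambda_\theta(J,w_{(H,\pi,s)})
\]
using~\eqref{eq:lambda-recursive} applied to the vertex $v_1$ of each candidate $J$: the recursion separates off the contribution $1+w_{(H,\pi,s)}(v_1)-\deg_J(v_1)\cdot\theta$, and the remaining term $\lambda_\theta(J\setminus v_1,w_{(H,\pi,s)})=\sum_{u\in J\setminus v_1}(1+w_{(H,\pi,s)}(u))-e(J\setminus v_1)\cdot\theta$ combines with $-\deg_J(v_1)\cdot\theta$ to give exactly $\sum_{u\in J\setminus v_1}(1+w_{(H,\pi,s)}(u))-e(J)\cdot\theta$. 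Taking the minimum then yields $d_\theta(H,v_1,w_{(H,\pi,s)})+1+w_{(H,\pi,s)}(v_1)$, which by Lemma~\ref{lemma:w-sum-x} equals $1+\sum_{t\in[r]}x_{(H,\pi,s),t}$.

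Since $H$ itself is a candidate in the above minimization, this immediately gives $\lambda_\theta(H,w_{(H,\pi,s)})\geq 1+\sum_{t\in[r]}x_{(H,\pi,s),t}$, proving the first assertion. For the second assertion I would then fix a subgraph $\Jhat\seq H$ with $v_1\in \Jhat$ that attains the minimum, so that
\[
  \lambda_\theta(\Jhat,w_{(H,\pi,s)})=1+\sum_{t\in[r]}x_{(H,\pi,s),t}\enspace.
\]
By Lemma~\ref{lemma:subgraph-monotonicity} we have $(\Jhat,\pi|_\Jhat)\in\cH_s$, and (as observed in the proof of Lemma~\ref{lemma:irrelevant-context-lambda}) the minimizer $\Jhat$ is automatically a member of the family in~\eqref{eq:Jhat-argmin2}. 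Hence Lemma~\ref{lemma:irrelevant-context-d} applies and yields $w_{(H,\pi,s)}(u)=w_{(\Jhat,\pi|_\Jhat,s)}(u)$ for every $u\in\Jhat$ (including $v_1$, by~\eqref{eq:w-H-w-Jhat-v1}); in particular $\lambda_\theta(\Jhat,w_{(H,\pi,s)})=\lambda_\theta(\Jhat,w_{(\Jhat,\pi|_\Jhat,s)})$, giving the left equality claimed.

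For the right equality I would simply rerun the first paragraph's calculation with $(H,\pi)$ replaced by $(\Jhat,\pi|_\Jhat)$: the same manipulation shows that $\min_{J\seq\Jhat:\,v_1\in J}\lambda_\theta(J,w_{(\Jhat,\pi|_\Jhat,s)})=1+\sum_{t\in[r]}x_{(\Jhat,\pi|_\Jhat,s),t}$. Since the weight functions $w_{(H,\pi,s)}$ and $w_{(\Jhat,\pi|_\Jhat,s)}$ agree on all of $\Jhat$, the two minimizations (over $J\seq\Jhat$ with $v_1\in J$) have identical values for every candidate $J$, so $\Jhat$ itself attains the minimum on the $(\Jhat,\pi|_\Jhat)$-side, delivering $\lambda_\theta(\Jhat,w_{(\Jhat,\pi|_\Jhat,s)})=1+\sum_{t\in[r]}x_{(\Jhat,\pi|_\Jhat,s),t}$. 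I do not foresee any genuine obstacle here: all the heavy lifting (existence of a minimizer whose context does not matter) is already carried out by Lemma~\ref{lemma:irrelevant-context-d}, and the translation between $d_\theta(),w_{(H,\pi,s)}(v_1)$ and the coordinates of $x_{(H,\pi,s)}$ is provided by Lemma~\ref{lemma:w-sum-x}.
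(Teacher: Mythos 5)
Your proof is correct and follows essentially the same route as the paper's: both rely on the identity $\min_{J\seq H:v_1\in J}\lambda_\theta(J,w_{(H,\pi,s)})=d_\theta(H,v_1,w_{(H,\pi,s)})+1+w_{(H,\pi,s)}(v_1)$, translate to walk coordinates via Lemma~\ref{lemma:w-sum-x}, pick $\Jhat$ from the argmin family in \eqref{eq:Jhat-argmin2}, and invoke Lemma~\ref{lemma:irrelevant-context-d} to replace $w_{(H,\pi,s)}$ by $w_{(\Jhat,\pi|_\Jhat,s)}$ on $\Jhat$. The only cosmetic difference is that you organize the second equality by observing that $\Jhat$ attains its own minimum (exploiting the weight-function agreement once more), whereas the paper writes out the chain $d_\theta(H,v_1,w_{(H,\pi,s)})=d_\theta(\Jhat,v_1,w_{(\Jhat,\pi|_\Jhat,s)})$ directly; the underlying computations coincide.
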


\begin{proof}
We clearly have
\begin{equation} \label{eq:lambda-w-d-H}
  \lambda_\theta(H,w_{(H,\pi,s)})
  \eqBy{eq:def-lambda} \sum_{u\in H} \big(1+w_{(H,\pi,s)}(u)\big)-e(H)\cdot\theta
  \geBy{eq:def-d} 1+w_{(H,\pi,s)}(v_1) + d_\theta(H,v_1,w_{(H,\pi,s)}) \enspace.
\end{equation}
By Lemma~\ref{lemma:w-sum-x} the right hand side of \eqref{eq:lambda-w-d-H} equals $1+\sum_{t\in[r]} x_{(H,\pi,s),t}$, proving the first part of the lemma.

Now consider a graph $\Jhat$ from the family 
\begin{equation} \label{eq:J-hat}
  \argmin_{J\seq H:v_1\in J} \Big(\sum_{u\in J\setminus v_1} \big(1+w_{(H,\pi,s)}(u)\big)-e(J)\cdot\theta\Big) \enspace.
\end{equation}
Using the definition of $d_\theta()$ in \eqref{eq:def-d} we obtain
\begin{equation} \label{eq:d-H-d-Jhat}
  d_\theta(H,v_1,w_{(H,\pi,s)})
  \eqByM{\eqref{eq:def-d},\eqref{eq:J-hat}} \sum_{u\in \Jhat\setminus v_1} \big(1+w_{(H,\pi,s)}(u)\big) - e(\Jhat)\cdot\theta
  \eqByM{\eqref{eq:def-d},\eqref{eq:J-hat}} d_\theta(\Jhat,v_1,w_{(H,\pi,s)}) \enspace.
\end{equation}
Furthermore, Lemma~\ref{lemma:irrelevant-context-d} yields that
\begin{equation} \label{eq:w-H-w-Jhat-v1-vj}
  w_{(H,\pi,s)}(u)=w_{(\Jhat,\pi|_\Jhat,s)}(u) \quad \text{for all $u\in \Jhat$} \enspace.
\end{equation}
Recall from the first part of the proof that the right hand side of \eqref{eq:lambda-w-d-H} equals $1+\sum_{t\in[r]} x_{(H,\pi,s),t}$.
Applying \eqref{eq:d-H-d-Jhat}, \eqref{eq:w-H-w-Jhat-v1-vj} and Lemma~\ref{lemma:w-sum-x} shows that the right hand side of \eqref{eq:lambda-w-d-H} also equals $1+\sum_{t\in[r]} x_{(\Jhat,\pi|_\Jhat,s),t}$. Furthermore, applying the first equality in \eqref{eq:d-H-d-Jhat}, \eqref{eq:w-H-w-Jhat-v1-vj} and the definition of $\lambda_\theta()$ in \eqref{eq:def-lambda} shows that the right hand side of \eqref{eq:lambda-w-d-H} equals $\lambda_\theta(\Jhat,w_{(\Jhat,\pi|_\Jhat,s)})$, completing the proof of the second part of the lemma.
\end{proof}

We say that a family $\cD$ of ordered graphs is \emph{closed under taking subgraphs that contain the youngest vertex} if for any $(H,\pi)\in\cD$, $\pi=(v_1,\ldots,v_h)$, we have that for every $J\seq H$ with $v_1\in J$ the ordered graph $(J,\pi|_J)$ is also contained in $\cD$.

Note that the families $\cD_s$, $s\in[r]$, used by the strategy $\PAINT()$ and defined in \eqref{eq:def-D-s} are nonempty and closed under taking subgraphs that contain the youngest vertex.

\begin{lemma}[$x$-point of $\lambda_\theta()$-minimizing graphs] \label{lemma:argmin-lambda-x-vec}
Let $s\in[r]$ and $\cD_s\seq\cH_s$ a nonempty family of ordered graphs that is closed under taking subgraphs that contain the youngest vertex. For any graph $(J,\tau)$ from the family
\begin{equation*}
  \argmin_{(H,\pi)\in\cD_s} \lambda_\theta(H,w_{(H,\pi,s)})
\end{equation*}
we have
\begin{equation*}
  \lambda_\theta(J,w_{(J,\tau,s)}) = 1+\sum_{t\in[r]} x_{(J,\tau,s),t} \enspace.
\end{equation*}
\end{lemma}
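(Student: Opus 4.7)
My plan is to deduce this from Lemma~\ref{lemma:lambda-x-vec} by using the closure property of $\cD_s$ to sandwich $\lambda_\theta(J,w_{(J,\tau,s)})$ between matching upper and lower bounds.

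Fix $(J,\tau)\in\argmin_{(H,\pi)\in\cD_s}\lambda_\theta(H,w_{(H,\pi,s)})$ with $\tau=(u_1,\ldots,u_c)$. The first part of Lemma~\ref{lemma:lambda-x-vec}, applied to $(J,\tau)\in\cH_s$, immediately gives the lower bound
\begin{equation*}
  \lambda_\theta(J,w_{(J,\tau,s)}) \;\geq\; 1+\sum_{t\in[r]} x_{(J,\tau,s),t} \enspace.
\end{equation*}
For the matching upper bound I would invoke the second part of Lemma~\ref{lemma:lambda-x-vec}: there is a subgraph $\Jhat\seq J$ with $u_1\in\Jhat$ satisfying
\begin{equation*}
  \lambda_\theta(\Jhat,w_{(\Jhat,\tau|_\Jhat,s)}) \;=\; 1+\sum_{t\in[r]} x_{(\Jhat,\tau|_\Jhat,s),t} \;=\; 1+\sum_{t\in[r]} x_{(J,\tau,s),t} \enspace.
\end{equation*}

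Here is the one step that uses the hypothesis on $\cD_s$: since $\Jhat\seq J$ contains the youngest vertex $u_1$ of $(J,\tau)$ and $\cD_s$ is closed under taking subgraphs containing the youngest vertex, we have $(\Jhat,\tau|_\Jhat)\in\cD_s$. The assumption that $(J,\tau)$ minimizes $\lambda_\theta$ over $\cD_s$ therefore yields
\begin{equation*}
  \lambda_\theta(J,w_{(J,\tau,s)}) \;\leq\; \lambda_\theta(\Jhat,w_{(\Jhat,\tau|_\Jhat,s)}) \;=\; 1+\sum_{t\in[r]} x_{(J,\tau,s),t} \enspace,
\end{equation*}
and combining with the lower bound above proves the claim.

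There is no real obstacle here; the proof is essentially a bookkeeping argument that bolts together the two halves of Lemma~\ref{lemma:lambda-x-vec}. The only point that needs attention is verifying that $(\Jhat,\tau|_\Jhat)$ actually lies in $\cD_s$ so that the minimization hypothesis applies, which is exactly the reason the closure assumption on $\cD_s$ is imposed in the statement of the lemma.
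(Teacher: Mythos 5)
Your proof is correct and is exactly the argument the paper has in mind: the paper's proof is the one-liner ``The claim follows immediately from Lemma~\ref{lemma:lambda-x-vec}, using the closure property of the family $\cD_s$ and the choice of $(J,\tau)$,'' which unpacks into precisely the lower-bound/upper-bound sandwich you wrote out, with the closure property needed to place $(\Jhat,\tau|_\Jhat)$ in $\cD_s$.
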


\begin{proof}
The claim follows immediately from Lemma~\ref{lemma:lambda-x-vec}, using the closure property of the family $\cD_s$ and the choice of $(J,\tau)$.
\end{proof}

\begin{lemma}[$d_\theta()$-value of $\lambda_\theta()$-minimizing graphs] \label{lemma:argmin-d-lambda}
Let $s\in[r]$ and let $\cD_s\seq\cH_s\cup\cC(\cH_s,F)$ be a nonempty family of ordered graphs that is closed under taking subgraphs that contain the youngest vertex. Furthermore, let $(J,\tau)$, $\tau=(u_1,\ldots,u_c)$, be an inclusion-minimal graph from the family
\begin{equation*}
  \argmin_{(H,\pi)\in\cD_s} \lambda_\theta(H,w_{(H,\pi,s)}) \enspace.
\end{equation*}
Then we have
\begin{equation} \label{eq:d-J-tau}
  \lambda_\theta(J\setminus u_1,w_{(J,\tau,s)})-\deg_J(u_1)\cdot\theta = d_\theta(J,u_1,w_{(J,\tau,s)}) \enspace.
\end{equation}
\end{lemma}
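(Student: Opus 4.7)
The equation sought is equivalent to asserting that the minimum in the definition \eqref{eq:def-d} of $d_\theta(J, u_1, w_{(J, \tau, s)})$ is attained at $K = J$: indeed, using $e(J) = e(J \setminus u_1) + \deg_J(u_1)$, the expression $\sum_{u \in J \setminus u_1}(1 + w_{(J, \tau, s)}(u)) - e(J) \cdot \theta$ equals $\lambda_\theta(J \setminus u_1, w_{(J, \tau, s)}) - \deg_J(u_1) \cdot \theta$. The inequality $d_\theta(J, u_1, w_{(J,\tau,s)}) \leq \lambda_\theta(J \setminus u_1, w_{(J, \tau, s)}) - \deg_J(u_1) \cdot \theta$ is therefore immediate from the definition, and the substance of the lemma is the reverse inequality.

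For the reverse direction, I will pick an inclusion-minimal element $\Jhat$ of the family
\[
  \argmin_{K \seq J: u_1 \in K} \Big( \sum_{u \in K \setminus u_1}(1 + w_{(J, \tau, s)}(u)) - e(K) \cdot \theta \Big) \enspace,
\]
and argue by contradiction that $\Jhat = J$. Since $(J, \tau) \in \cH_s \cup \cC(\cH_s, F)$, Lemma~\ref{lemma:irrelevant-context-d} applies and gives $w_{(J, \tau, s)}(u) = w_{(\Jhat, \tau|_\Jhat, s)}(u)$ for all $u \in \Jhat \setminus u_1$, and moreover $w_{(J, \tau, s)}(u_1) = w_{(\Jhat, \tau|_\Jhat, s)}(u_1)$ in the case $(J, \tau) \in \cH_s$. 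Substituting these equalities and recalling that $\Jhat$ attains the minimum, I can rewrite
\[
  d_\theta(J, u_1, w_{(J, \tau, s)}) = \lambda_\theta(\Jhat \setminus u_1, w_{(\Jhat, \tau|_\Jhat, s)}) - \deg_\Jhat(u_1) \cdot \theta \enspace.
\]

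Adding $1 + w_{(\Jhat, \tau|_\Jhat, s)}(u_1)$ to both sides and applying the recursive identity \eqref{eq:lambda-recursive} yields $\lambda_\theta(\Jhat, w_{(\Jhat, \tau|_\Jhat, s)}) = d_\theta(J, u_1, w_{(J,\tau,s)}) + 1 + w_{(\Jhat, \tau|_\Jhat, s)}(u_1)$. Combining this with the already-established forward inequality $d_\theta(J, u_1, w_{(J, \tau, s)}) \leq \lambda_\theta(J \setminus u_1, w_{(J, \tau, s)}) - \deg_J(u_1) \cdot \theta$ and using that $w_{(J, \tau, s)}(u_1) = w_{(\Jhat, \tau|_\Jhat, s)}(u_1)$ (in the case $(J,\tau) \in \cH_s$), I obtain $\lambda_\theta(\Jhat, w_{(\Jhat, \tau|_\Jhat, s)}) \leq \lambda_\theta(J, w_{(J, \tau, s)})$ via another use of \eqref{eq:lambda-recursive} for $\lambda_\theta(J, \cdot)$. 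Since $\cD_s$ is closed under taking subgraphs containing the youngest vertex, $(\Jhat, \tau|_\Jhat) \in \cD_s$ and hence also lies in the $\lambda_\theta$-argmin. Inclusion-minimality of $(J, \tau)$ then forces $\Jhat = J$, so the value at $K = J$ attains the minimum in \eqref{eq:def-d}, giving the desired equality.

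The main obstacle will be the edge case $(J, \tau) \in \cC(\cH_s, F) \setminus \cH_s$, where $w_{(J, \tau, s)}(u_1) = -\infty$ and hence $\lambda_\theta(J, w_{(J, \tau, s)}) = -\infty$, so the $u_1$-weight identification from the second part of Lemma~\ref{lemma:irrelevant-context-d} is unavailable. Here I will observe that inclusion-minimality of $(J, \tau)$ in the argmin (whose common value is $-\infty$) forces every proper subgraph $(K, \tau|_K) \in \cD_s$ with $u_1 \in K$ to satisfy $(K, \tau|_K) \in \cH_s$ (equivalently, to have finite $\lambda_\theta$), so in particular $(\Jhat, \tau|_\Jhat) \in \cC(\cH_s, F) \setminus \cH_s$ is impossible. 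I will then rule out $\Jhat \sneq J$ with $(\Jhat, \tau|_\Jhat) \in \cH_s$ directly by tracking that, in the chain of identities above, the formally infinite contribution from $u_1$ drops out of $d_\theta$ (which only involves weights on $J \setminus u_1$), so the comparison of values at $J$ and at any candidate $K \sneq J$ only depends on the finite weights on $J \setminus u_1$ and reduces to the $(J, \tau) \in \cH_s$ analysis applied to the ancestor $(J \setminus u_1, \tau \setminus u_1) \in \cH_s$.
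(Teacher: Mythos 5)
Your treatment of the case $(J,\tau)\in\cH_s$ is correct, and it is a genuinely different route from the paper's. Where the paper simply substitutes using Lemma~\ref{lemma:argmin-lambda-x-vec} and Lemma~\ref{lemma:w-sum-x} (the walk/\mbox{$x$-point} machinery) to write $\lambda_\theta(J,w_{(J,\tau,s)}) = 1+\sum_t x_{(J,\tau,s),t}$ and then unpacks both sides via \eqref{eq:lambda-recursive}, you instead take an inclusion-minimal $\Jhat$ in the defining $d_\theta$-argmin, apply both parts of Lemma~\ref{lemma:irrelevant-context-d}, and use the inclusion-minimality of $(J,\tau)$ in the $\lambda_\theta$-argmin to force $\Jhat=J$. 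This is elementary in the sense that it avoids the geometric walk apparatus entirely, at the cost of running two minimality arguments (one in the $d_\theta$-argmin and one in the $\lambda_\theta$-argmin) against each other. Both proofs are about equally long; yours is arguably more self-contained since it relies only on Lemma~\ref{lemma:irrelevant-context-d} rather than on Lemmas~\ref{lemma:w-sum-x} and \ref{lemma:argmin-lambda-x-vec}.

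The edge case $(J,\tau)\in\cC(\cH_s,F)$, however, has a genuine gap. Your observation that inclusion-minimality forces every proper subgraph $(K,\tau|_K)\in\cD_s$ with $u_1\in K$ to lie in $\cH_s$ is correct, and so is the conclusion that a hypothetical $\Jhat\sneq J$ in the $d_\theta$-argmin would satisfy $(\Jhat,\tau|_\Jhat)\in\cH_s$. But the claimed ``reduction to the $(J,\tau)\in\cH_s$ analysis applied to the ancestor $(J\setminus u_1,\tau\setminus u_1)$'' does not go through: that analysis concerns minimizations over subgraphs of $J\setminus u_1$ containing $u_2$, whereas $d_\theta(J,u_1,\cdot)$ minimizes over subgraphs of $J$ containing $u_1$, which brings in the degree of $u_1$ in a way that has no counterpart in the ancestor. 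Nor does the ``infinite contribution drops out'' observation yield a contradiction by itself, because after dropping $u_1$ you only learn $d_\theta(J,u_1,w_{(J,\tau,s)}) = d_\theta(\Jhat,u_1,w_{(\Jhat,\tau|_\Jhat,s)})$, which is consistent with $\Jhat\sneq J$ in the absence of further information. The ingredient you are missing is precisely the first part of Lemma~\ref{lemma:d-CHs-dHs}: since $(J,\tau)\in\cC(\cH_s,F)$ while $(\Jhat,\tau|_\Jhat)\in\cH_s$, that lemma gives $d_\theta(J,u_1,w_{(J,\tau,s)}) < d_\theta(\Jhat,u_1,w_{(\Jhat,\tau|_\Jhat,s)})$, a strict inequality that contradicts the equality derived from the weight identification, forcing $\Jhat=J$. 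This is exactly how the paper closes this case. Without invoking that lemma (or proving an equivalent separation of $d_\theta$-values between $\cH_s$ and $\cC(\cH_s,F)$), the argument does not terminate.
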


\begin{proof}
We distinguish two cases depending on whether $(J,\tau)\in\cD_s\seq \cH_s\cup\cC(\cH_s,F)$ is contained in $\cH_s$ or in $\cC(\cH_s,F)$.

If $(J,\tau)\in\cH_s$, then by Lemma~\ref{lemma:argmin-lambda-x-vec} we have
\begin{equation} \label{eq:lambda-J-tau}
  \lambda_\theta(J,w_{(J,\tau,s)})=1+\sum_{t\in[r]} x_{(J,\tau,s),t} \enspace.
\end{equation}
Rewriting the left hand side of \eqref{eq:lambda-J-tau} according to \eqref{eq:lambda-recursive} and the right hand side according to Lemma~\ref{lemma:w-sum-x} yields the desired equality \eqref{eq:d-J-tau}.

We now consider the case $(J,\tau)\in\cC(\cH_s,F)$ (in this case we have $\lambda_\theta(J,w_{(J,\tau,s)})=-\infty$ by Lemma~\ref{lemma:finite-weights}). We clearly have
\begin{equation} \label{eq:lambda-J-minus-deg}
  \lambda_\theta(J\setminus u_1,w_{(J,\tau,s)})-\deg_J(u_1)\cdot\theta
  \eqBy{eq:def-lambda} \sum_{u\in J\setminus u_1} \big(1+w_{(J,\tau,s)}(u)\big)-e(J)\cdot\theta
  \geBy{eq:def-d} d_\theta(J,u_1,w_{(J,\tau,s)}) \enspace,
\end{equation}
and it remains to show that this inequality is in fact an equality. If the last inequality in \eqref{eq:lambda-J-minus-deg} were strict, then, as in the proof of Lemma~\ref{lemma:lambda-x-vec} (cf.~\eqref{eq:J-hat}, \eqref{eq:d-H-d-Jhat} and \eqref{eq:w-H-w-Jhat-v1-vj}), there would be a proper subgraph $\Jhat\sneq J$ with $u_1\in\Jhat$ satisfying
\begin{equation} \label{eq:dJ-dJhat}
  d_\theta(J,u_1,w_{(J,\tau,s)})=d_\theta(\Jhat,u_1,w_{(\Jhat,\tau|_\Jhat,s)}) \enspace.
\end{equation}
As $(J,\tau)\in\cC(\cH_s,F)$ we have $(J\setminus u_1,\tau\setminus u_1)\in\cH_s$, which by Lemma~\ref{lemma:subgraph-monotonicity} implies that $(\Jhat\setminus u_1,\tau|_{\Jhat\setminus u_1})\in\cH_s$ as well. Hence $(\Jhat,\tau|_\Jhat)$ must be in $\cH_s\cup\cC(\cH_s,F)$. Using \eqref{eq:dJ-dJhat} and the first part of Lemma~\ref{lemma:d-CHs-dHs} shows that $(\Jhat,\tau|_\Jhat)$ must be contained in $\cC(\cH_s,F)$. But then we have $\lambda_\theta(\Jhat,w_{(\Jhat,\tau|_\Jhat,s)})=-\infty$ by Lemma~\ref{lemma:finite-weights}, a contradiction to the inclusion-minimality of $(J,\tau)$ (here we used again the closure property of the family $\cD_s$). Therefore the last inequality in \eqref{eq:lambda-J-minus-deg} holds with equality, proving the lemma also in this case.
\end{proof}

\begin{lemma}[$\lambda_\theta()$-minimizing graphs in $\cH_s'$] \label{lemma:argmins-Hsp}
Let $s\in[r]$ and let $\cD_s\seq\cH_s$ be a nonempty family of ordered graphs that is closed under taking subgraphs that contain the youngest vertex. Furthermore, let $(J,\tau)$ be an inclusion-minimal graph from the family
\begin{equation*}
  \argmin_{(H,\pi)\in\cD_s} \lambda_\theta(H,w_{(H,\pi,s)})
\end{equation*}
and suppose that\/ $x_{(J,\tau,s)}$ is the starting point of some $s$-segment of the walk $\cW$ defined in Lemma~\ref{lemma:walk}. Then $(J,\tau)$ is contained in $\cH_s'$.
\end{lemma}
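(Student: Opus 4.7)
The plan is to sandwich $(J,\tau)$ between two subgraphs of itself that both lie in the argmin, and then use inclusion-minimality to collapse the sandwich. Let $v_1$ denote the youngest vertex of $(J,\tau)$.

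First I would apply Lemma~\ref{lemma:walk}(iii) to $(J,\tau)$: since by hypothesis $x_{(J,\tau,s)}$ is the starting point of some $s$-segment of $\cW$, there is a subgraph $J'\seq J$ with $v_1\in J'$ such that
\begin{equation*}
  x_{(J',\tau|_{J'},s)}=x_{(J,\tau,s)}
  \qquad\text{and}\qquad
  (J',\tau|_{J'})\in\cH_s' \enspace.
\end{equation*}
The aim is to prove $J'=J$; then the second conclusion gives $(J,\tau)\in\cH_s'$ as required. Note that $(J',\tau|_{J'})\in\cH_s'\seq\cH_s$, and that the youngest vertex of $(J',\tau|_{J'})$ is also $v_1$.

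Next I would apply the second part of Lemma~\ref{lemma:lambda-x-vec} to $(J',\tau|_{J'})$ to obtain a subgraph $\Jhat\seq J'$ with $v_1\in\Jhat$ satisfying
\begin{equation*}
  \lambda_\theta(\Jhat,w_{(\Jhat,\tau|_\Jhat,s)})
  =1+\sum_{t\in[r]} x_{(J',\tau|_{J'},s),t}
  =1+\sum_{t\in[r]} x_{(J,\tau,s),t} \enspace,
\end{equation*}
where the second equality uses the vector identity above. Since $(J,\tau)$ lies in the argmin and $\cD_s$ is closed under taking subgraphs containing the youngest vertex, Lemma~\ref{lemma:argmin-lambda-x-vec} applied to $(J,\tau)$ gives $\lambda_\theta(J,w_{(J,\tau,s)})=1+\sum_{t\in[r]} x_{(J,\tau,s),t}$. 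Combining these identities yields
\begin{equation*}
  \lambda_\theta(\Jhat,w_{(\Jhat,\tau|_\Jhat,s)})=\lambda_\theta(J,w_{(J,\tau,s)}) \enspace,
\end{equation*}
i.e., $(\Jhat,\tau|_\Jhat)$ attains the minimum $\lambda_\theta$-value defining the argmin.

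Finally, to conclude I would exploit the closure hypothesis on $\cD_s$: since $\Jhat\seq J'\seq J$ and $v_1\in\Jhat$, we have $(\Jhat,\tau|_\Jhat)\in\cD_s$. Together with the equality of $\lambda_\theta$-values above, this shows $(\Jhat,\tau|_\Jhat)$ is itself an element of the argmin that defines $(J,\tau)$. By the inclusion-minimality of $(J,\tau)$ in that argmin, $\Jhat$ cannot be a proper subgraph of $J$, so $\Jhat=J$. The chain $J=\Jhat\seq J'\seq J$ then forces $J'=J$, and hence $(J,\tau)=(J',\tau|_{J'})\in\cH_s'$.

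The argument is essentially a chaining of earlier results; the only real subtlety — which would be the place to be careful in writing up — is keeping track of which vertex plays the role of the youngest vertex at each stage (it is always $v_1$, since $v_1\in J'$ implies $v_1$ is the youngest vertex of $(J',\tau|_{J'})$ and of $(\Jhat,\tau|_\Jhat)$), so that the closure property of $\cD_s$ may be legitimately invoked for both $(J',\tau|_{J'})$ and $(\Jhat,\tau|_\Jhat)$.
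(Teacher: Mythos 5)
Your argument is correct and uses the same ingredients as the paper's proof --- Lemma~\ref{lemma:walk}(iii), the second part of Lemma~\ref{lemma:lambda-x-vec}, Lemma~\ref{lemma:argmin-lambda-x-vec}, and the inclusion-minimality of $(J,\tau)$ --- merely reordered: you first invoke property~(iii) to obtain $J'$ and then shrink it to $J$ via the argmin machinery, whereas the paper first shows that every proper subgraph containing $v_1$ has a strictly larger walk-coordinate sum (hence a different $x$-point) and then lets property~(iii) force the subgraph it produces to be $J$ itself. The two presentations are logically isomorphic, so this is essentially the paper's proof.
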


\begin{proof}
By Lemma~\ref{lemma:argmin-lambda-x-vec} we have
\begin{equation} \label{eq:lambda-J-x-J-tau-s}
  \lambda_\theta(J,w_{(J,\tau,s)}) = 1+\sum_{t\in[r]} x_{(J,\tau,s),t} \enspace.
\end{equation}
Let $u_1$ denote the youngest vertex of $(J,\tau)$ and let $\tJ\sneq J$ be any proper subgraph of $J$ with $u_1\in\tJ$. By Lemma~\ref{lemma:lambda-x-vec} there is a subgraph $\Jhat\seq \tJ$ with $u_1\in\Jhat$ satisfying
\begin{equation} \label{eq:lambda-tJ-x-tJ-tau-s}
  \lambda_\theta(\Jhat,w_{(\Jhat,\tau|_\Jhat,s)}) = 1+\sum_{t\in[r]} x_{(\tJ,\tau|_\tJ,s),t} \enspace.
\end{equation}
By the inclusion-minimal choice of $(J,\tau)$, \eqref{eq:lambda-tJ-x-tJ-tau-s} must be strictly larger than \eqref{eq:lambda-J-x-J-tau-s}, i.e., we have
\begin{equation*}
  1+\sum_{t\in[r]} x_{(J,\tau,s),t} < 1+\sum_{t\in[r]} x_{(\tJ,\tau|_\tJ,s),t} \enspace,
\end{equation*}
in particular
\begin{equation*}
  x_{(J,\tau,s)}\neq x_{(\tJ,\tau|_\tJ,s)} \enspace.
\end{equation*}
Using this observation together with the assumption that $x_{(J,\tau,s)}$ is the starting point of some $s$-segment of $\cW$, it follows from property~(iii) in Lemma~\ref{lemma:walk} that $(J,\tau)$ must be contained in $\cH_s'$.
\end{proof}

\begin{lemma}[$x$-points of graphs from $\cH_s'$ on the walk] \label{lemma:forward-walk}
Let $s\in[r]$ and $(J,\tau)\in\cH_s'$, $\tau=(u_1,\ldots,u_c)$. Moreover, let\/ $1\leq b\leq c-1$ and define $(J^{-b},\tau^{-b}):=(J\setminus\{u_1,\ldots,u_b\},\tau\setminus\{u_1,\ldots,u_b\})$. \\
Then $x_{(J,\tau,s)}$ is lower than $x_{(J^{-b},\tau^{-b},s)}$ on the walk $\cW$ defined in Lemma~\ref{lemma:walk} and both points are contained in different $s$-segments of this walk.
\end{lemma}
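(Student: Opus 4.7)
The plan is to show that $x_{(J^{-b},\tau^{-b},s)}$ lies on an $s$-segment of $\cW$ that ends strictly before the $s$-segment starting at $x_{(J,\tau,s)}$ begins; this simultaneously yields that the two $x$-points lie on different $s$-segments and that $x_{(J,\tau,s)}$ is the lower of the two on $\cW$.

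First, the statement is vacuous if $c=1$, so I assume $c\geq 2$. By the definition~\eqref{eq:tie-break-graphs} of $\cH_s'$, there is an index $i$ with $(J,\tau)\in\cC^{i,1}$, $\alpha_i=s$, and $i=1$ or $\alpha_{i-1}\neq s$. As noted on page~\pageref{page:K1}, the only graph in $\cC^{1,1}$ is $(K_1,(v_1))$; since $c\geq 2$, we must have $i\geq 2$ and hence $\alpha_{i-1}\neq s$. By Definition~\ref{def:x-H-pi-s}(a) we then obtain $x_{(J,\tau,s)}=(d_1^i,\dots,d_r^i)$, and by the proof of Lemma~\ref{lemma:walk} this point is the starting point of some $s$-segment $\Gamma$ of $\cW$.

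Next I would locate $(J^{-b},\tau^{-b})$ in the families maintained by $\CW()$. Since $(J,\tau)\in\cC^{i,1}\seq\cC(\cH_s,F)$ at the start of round $i$, the definition~\eqref{eq:def-C} gives $(J^{-1},\tau^{-1})\in\cH_s$ at that point, and iterating Lemma~\ref{lemma:closure-Hs} yields $(J^{-b},\tau^{-b})\in\cH_s$ at the start of round $i$ for every $1\leq b\leq c-1$. Hence $(J^{-b},\tau^{-b})$ was added to $\cH_s$ in some earlier round $i'<i$ with $\alpha_{i'}=s$, via some set $\cC^{i',j'}$ or $\cC^{i',j',k'}$. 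I would then split into two cases. If $(J^{-b},\tau^{-b})\in\cC^{i',j'}$, Definition~\ref{def:x-H-pi-s}(a) gives $x_{(J^{-b},\tau^{-b},s)}=(d_1^{i'},\dots,d_r^{i'})$. If instead $(J^{-b},\tau^{-b})\in\cC^{i',j',k'}$, then Definition~\ref{def:x-H-pi-s}(b) together with Lemma~\ref{lemma:d-sandwiched} shows that $x_{(J^{-b},\tau^{-b},s)}$ lies on the line segment from $(d_1^{\ihat},\dots,d_r^{\ihat})$ to $(d_1^{\ihat+1},\dots,d_r^{\ihat+1})$ for some $\ihat<i'$ with $\alpha_\ihat=s$. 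In either case, the $s$-segment $\Gamma'$ of $\cW$ containing $x_{(J^{-b},\tau^{-b},s)}$ passes through $(d_1^j,\dots,d_r^j)$ for some $j<i$.

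Finally I would verify $\Gamma'\neq\Gamma$ by showing $\Gamma'$ terminates before index $i$. If $\Gamma'$ spans the indices $j_1\leq\dots\leq j_2$, then by definition of an $s$-segment we have $\alpha_{j_1}=\alpha_{j_1+1}=\dots=\alpha_{j_2-1}=s$; since $j_1\leq j<i$, assuming $j_2\geq i$ would force $\alpha_{i-1}=s$, contradicting $\alpha_{i-1}\neq s$. Hence $j_2<i$, so $\Gamma'$ ends strictly before $\Gamma$ begins; in particular $\Gamma'\neq\Gamma$ and $\Gamma'$ lies strictly higher on $\cW$ than $\Gamma$, yielding both assertions. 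The only mildly delicate point will be case~(b), where $x_{(J^{-b},\tau^{-b},s)}$ may lie in the interior of a walk segment rather than at an actual walk vertex; Lemma~\ref{lemma:d-sandwiched} is precisely what ensures that this interior point still sits on an $s$-segment whose flanking walk indices are both smaller than $i$.
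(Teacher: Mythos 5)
Your proof is correct. You reach the same key intermediate facts as the paper — namely that $(J^{-b},\tau^{-b})$ was already in $\cH_s$ at the start of round~$i$, and that $x_{(J,\tau,s)}=(d_1^i,\dots,d_r^i)$ is the starting point of an $s$-segment — but you get from there to the conclusion by a noticeably different route. The paper invokes Lemma~\ref{lemma:beginning-loop-*} to obtain $x_{(J^{-b},\tau^{-b},s),s}=d_\theta(J^{-b},u_{b+1},w_{(J^{-b},\tau^{-b},s)})>d_s^i$ in a single stroke, which via Lemma~\ref{lemma:walk-order} immediately places $x_{(J^{-b},\tau^{-b},s)}$ strictly higher, and then uses property~(iv) of Lemma~\ref{lemma:walk} (starting-point status of $x_{(J,\tau,s)}$) to split them into distinct $s$-segments. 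You instead case-analyze whether $(J^{-b},\tau^{-b})$ entered $\cH_s$ via some $\cC^{i',j'}$ or some $\cC^{i',j',k'}$, read off the exact walk location from Definition~\ref{def:x-H-pi-s} and Lemma~\ref{lemma:d-sandwiched}, and then argue about segment boundaries directly from the $\alpha$-indices (using $\alpha_{i-1}\neq s$, which you first have to establish via the $c\geq 2\Rightarrow i\geq 2$ step; the paper never needs this because property~(iv) already encodes it). Both routes are sound; the paper's is shorter because it reuses lemmas that were proved for exactly this purpose, while yours re-derives the relevant walk geometry from the ground up. One small presentational quibble: the claim "the only graph in $\cC^{1,1}$ is $(K_1,(v_1))$" is true but what you actually need is the version for $\cC^{\icheck,1}$ with $\icheck$ the first round in which color $s$ is used (which is what page~\pageref{page:K1} states); your conclusion $i\geq 2\Rightarrow\alpha_{i-1}\neq s$ then needs the intermediate observation that $c\geq 2$ also rules out $i$ being this first round $\icheck$ for color $s$ — fortunately that follows for free from $(J^{-1},\tau^{-1})\in\cH_s$, which your next sentence establishes anyway, so the gap closes itself.
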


\begin{proof}
By the definition of $\cH_s'$ in \eqref{eq:tie-break-graphs} we have $(J,\tau)\in\cC^{i,1}$ for some $i\geq 1$ with $\alpha_i=s$, i.e., $(J,\tau)$ was added to the family $\cH_s$ in the first iteration of the repeat-loop~(**) in the $i$-th iteration of the repeat-loop~(*) of $\CW()$.
By the definition in \eqref{eq:vec-pt} we have
\begin{equation} \label{eq:x-J-s}
  x_{(J,\tau,s),s} = d_\theta(J,u_1,w_{(J,\tau,s)}) = d_s^i \enspace.
\end{equation}
By Lemma~\ref{lemma:subgraph-monotonicity}, the graph $(J^{-b},\tau^{-b})$ was added to the family $\cH_s$ either before the graph $(J,\tau)$ or together with it. But as $(J^{-b},\tau^{-b})$ is a predecessor of $(J,\tau)$ in the tree $\cT(F)$ defined after~\eqref{eq:def-S}, it follows from the definition in line~\ref{cw:primary-threats} that $(J^{-b},\tau^{-b})$ must have already been contained in $\cH_s$ at the beginning of the $i$-th iteration of the repeat-loop~(*). Applying Lemma~\ref{lemma:beginning-loop-*} yields
\begin{equation} \label{eq:x-J-minus-s}
  x_{(J^{-b},\tau^{-b},s),s} \eqBy{eq:def-x-H-pi-s} d_\theta(J^{-b},u_{b+1},w_{(J^{-b},\tau^{-b},s)}) > d_s^i \enspace.
\end{equation}
Combining \eqref{eq:x-J-s} and \eqref{eq:x-J-minus-s} shows that $x_{(J,\tau,s)}$ is lower than $x_{(J^{-b},\tau^{-b},s)}$ on the walk $\cW$. As by the assumption $(J,\tau)\in\cH_s'$ and property~(iv) from Lemma~\ref{lemma:walk} the point $x_{(J,\tau,s)}$ is the starting point of an $s$-segment of $\cW$, this implies that both points must be contained in different $s$-segments of this walk.
\end{proof}

\subsection{Analysis of \texorpdfstring{$\PAINT()$}{Paint()}}
We are now in a position to actually analyze our Painter strategy $\PAINT()$.
Recall from Section~\ref{sec:strategy} that the parameter $d(s)$ defined in \eqref{eq:def-d-s} might be equal to $-\infty$ for some colors $s\in[r]$ (intuitively, Painter considers such a color extremely dangerous). The following lemma shows that $\PAINT()$ never chooses such a color.

\begin{lemma}[Painter strategy creates only graphs from $\cH_s$] \label{lemma:Dsigma-seq-Hsigma}
Consider a fixed step of the game, and let the families $\cD_s\seq \cS(F)$, $s\in[r]$, and the values $d(s)\in\mathbb{R}\cup\{-\infty\}$ be defined as in \eqref{eq:def-D-s} and \eqref{eq:def-d-s}, respectively. For any $\sigma\in\argmax_{s\in[r]} d(s)$ the value $d(\sigma)$ is finite and we have $\cD_\sigma\seq\cH_\sigma$.

Consequently, playing according to the strategy $\PAINT()$ throughout ensures that for all $s\in[r]$ we always have $\cD_s\seq\cH_s\cup\cC(\cH_s,F)$ (even if ties are broken arbitrarily).
\end{lemma}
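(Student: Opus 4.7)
My plan is to observe first that Lemma~\ref{lemma:finite-weights} pins down exactly when $d(s)$ is finite. Since $\cD_s$ always contains the trivial ordered graph $(K_1,(v_1))$ corresponding to the newly added vertex, it is never empty, and the minimum in~\eqref{eq:def-d-s} is over a nonempty finite set. By Lemma~\ref{lemma:finite-weights}, $\lambda_\theta(H,w_{(H,\pi,s)})$ is finite precisely when $(H,\pi)\in\cH_s$ and equals $-\infty$ otherwise. Hence $d(s)\in\RR$ if and only if $\cD_s\seq\cH_s$, and $d(s)=-\infty$ if and only if $\cD_s\not\seq\cH_s$. In particular, the first claim of the lemma (for $\sigma\in\argmax_s d(s)$ we have $\cD_\sigma\seq\cH_\sigma$) reduces to showing that $\max_s d(s)>-\infty$.

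To see this, I would invoke the termination condition in line~\ref{cw:terminate} of $\CW()$: the algorithm only stops when some family $\cH_s$ equals $\cS(F)$. Let $\smax\in[r]$ be such a color (concretely, $\smax=\alpha_\imax$, using the notation from Lemma~\ref{lemma:walk}(v)). Then $\cD_{\smax}\seq\cS(F)=\cH_{\smax}$, so $d(\smax)$ is finite and $\max_s d(s)\geq d(\smax)>-\infty$. Consequently any $\sigma\in\argmax_s d(s)$ also has $d(\sigma)$ finite, which by the equivalence above yields $\cD_\sigma\seq\cH_\sigma$.

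For the consequence in the second paragraph, I would argue by induction on the steps of the game, maintaining the stronger invariant that for every color $s\in[r]$, each monochromatic copy in color $s$ already present on the board has its ordered isomorphism type in $\cH_s$. This is vacuously true before the first step. Suppose it holds at the start of some step, and let $v$ be the vertex Builder presents. Then for every $s\in[r]$ and every $(H,\pi)\in\cD_s$ with $\pi=(v_1,\ldots,v_h)$, the copy of $(H\setminus v_1,\pi\setminus v_1)$ obtained by deleting $v$ is already on the board in color $s$, so by the inductive hypothesis $(H\setminus v_1,\pi\setminus v_1)\in\cH_s$. By the definition of $\cC(\cH_s,F)$ in~\eqref{eq:def-C} this gives $(H,\pi)\in\cH_s\cup\cC(\cH_s,F)$, i.e.\ the invariant $\cD_s\seq\cH_s\cup\cC(\cH_s,F)$ stated in the lemma. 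Now $\PAINT()$ picks a color $\sigma\in\argmax_s d(s)$ (with arbitrary or specified tie-breaking), and by the first part of the lemma $\cD_\sigma\seq\cH_\sigma$; hence every newly created monochromatic copy in color $\sigma$ has type in $\cH_\sigma$, while in the other colors the set of monochromatic copies on the board is unchanged. This preserves the inductive invariant and completes the proof.

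I do not see any real obstacle here: the argument is essentially a bookkeeping exercise tying together the definition of $\lambda_\theta()$-values, Lemma~\ref{lemma:finite-weights}, and the termination condition of $\CW()$. The only point that requires a moment of care is the tie-breaking rule for $\PAINT()$, but since the statement quantifies over \emph{any} $\sigma\in\argmax_s d(s)$, the specific rule is irrelevant for this lemma (which is consistent with the parenthetical remark ``even if ties are broken arbitrarily'').
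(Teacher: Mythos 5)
Your proof is correct and follows essentially the same route as the paper: use Lemma~\ref{lemma:finite-weights} to characterize finiteness of $d(s)$ via $\cD_s\seq\cH_s$, use the termination condition of $\CW()$ to exhibit a color with finite $d(s)$, and then propagate $\cD_s\seq\cH_s\cup\cC(\cH_s,F)$ by induction over the steps of the game. Your write-up fills in a few details the paper leaves implicit (the explicit inductive invariant on the board and the observation that $\cD_s$ is nonempty), but there is no material difference in the argument.
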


\begin{proof}
By the definition in \eqref{eq:def-d-s} and Lemma~\ref{lemma:finite-weights} the value $d(s)$ is finite if and only if $\cD_s\seq \cH_s$.
By the termination condition in line~\ref{cw:terminate} there is some color $s\in[r]$ for which $\cH_s=\cS(F)$. For this color we therefore have $\cD_s\seq\cH_s$, implying that the corresponding value $d(s)$ is finite. This shows that for any $\sigma\in\argmax_{s\in[r]} d(s)$, the value $d(\sigma)$ is finite and therefore $\cD_\sigma\seq\cH_\sigma$, proving the first part of the lemma. The second part follows inductively by observing that the strategy $\PAINT()$ in each step picks a color $\sigma\in\argmax_{s\in[r]} d(s)$ (regardless of the tie-breaking rule), showing that in this step only graphs from the family $\cD_\sigma\seq\cH_\sigma$ in color $\sigma$ are created on the board. 
\end{proof}

The following lemma shows that the tie-breaking rule of the strategy $\PAINT()$, which uses the families $\cH_s'$ and $\cJ_s$ defined in \eqref{eq:tie-break-graphs} and \eqref{eq:def-J-s}, is indeed well-defined.

\begin{lemma}[Well-definedness of Painter strategy] \label{lemma:tie-breaking}
Ties in the strategy $\PAINT()$ can arise only between two different colors, and if they arise then for exactly one of the two colors (say $\sigma$) we have $\cJ_\sigma\cap\cH_\sigma'=\emptyset$, and for the other color (say $s$) we have $\cJ_s\cap\cH_s'\neq\emptyset$. (Thus the tie-breaking rule will decide for color $\sigma$.)

If such a tie arises, the walk $\cW$ defined in Lemma~\ref{lemma:walk} contains a $\sigma$-segment $\Gamma$ whose endpoint $x\in\mathbb{R}^r$ is also the starting point of an $s$-segment $\Gamma'$, and for any $(J_\sigma,\tau_\sigma)\in\cJ_\sigma$ and any $(J_s,\tau_s)\in\cJ_s$ we have $x_{(J_\sigma,\tau_\sigma,\sigma)}=x_{(J_s,\tau_s,s)}=x$.
\end{lemma}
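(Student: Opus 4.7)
The plan is to translate the entire tie-breaking analysis into the geometric language of the walk $\cW$ from Lemma~\ref{lemma:walk}, which reduces the claim to a few already established structural properties of $\cW$. First, suppose that some collection of colors achieves the same maximal value of $d(s)$. By Lemma~\ref{lemma:Dsigma-seq-Hsigma}, each tied color satisfies $\cD_s\seq\cH_s$, so the family $\cD_s$ (which is nonempty and closed under taking subgraphs containing the youngest vertex) fulfils the hypotheses of Lemma~\ref{lemma:argmin-lambda-x-vec}. That lemma yields, for any tied color $s$ and any $(J_s,\tau_s)\in\cJ_s$,
\[
  d(s)=\lambda_\theta(J_s,w_{(J_s,\tau_s,s)})=1+\sum_{t\in[r]}x_{(J_s,\tau_s,s),t}\enspace.
\]
Thus all tied $x$-points lie on the same affine hyperplane $1+\sum_t x_t=d(s)$, and Lemma~\ref{lemma:walk-order} forces them to coincide as points in $\RR^r$, because a decreasing axis-parallel walk cannot contain two distinct points with equal coordinate sum. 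Call this common point $x$.

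Next, I would exploit the segment structure of $\cW$ to cap the number of tied colors at two. By property~(i) of Lemma~\ref{lemma:walk}, $x$ lies on an $s$-segment of $\cW$ for every tied color $s$. But any single point of $\cW$ belongs to at most two segments, and if it belongs to two, their colors are distinct: a non-turning point lies in the interior of a unique segment, whereas a turning point is incident to at most one ending segment and at most one starting segment, whose colors differ by the definition of turning point. Hence at most two colors can tie; and if two colors $\sigma,s$ do tie, then $x$ is necessarily a turning point that is simultaneously the endpoint of a $\sigma$-segment $\Gamma$ and the starting point of an $s$-segment $\Gamma'$ (up to swapping the labels). The possibility that $\Gamma'$ is the degenerate terminal segment in direction $\alpha_{\imax}$ does not affect the subsequent reasoning, because the definitions still record $x$ as the starting point of $\Gamma'$.

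Finally, the asymmetric conclusion is read off from properties~(iii)--(iv) of Lemma~\ref{lemma:walk}. For color $s$, I pick an inclusion-minimal element $(J_s,\tau_s)$ of $\cJ_s$; since $x_{(J_s,\tau_s,s)}=x$ is the starting point of the $s$-segment $\Gamma'$, Lemma~\ref{lemma:argmins-Hsp} applies and yields $(J_s,\tau_s)\in\cH_s'$, hence $\cJ_s\cap\cH_s'\neq\emptyset$. For color $\sigma$, every $(J_\sigma,\tau_\sigma)\in\cJ_\sigma$ satisfies $x_{(J_\sigma,\tau_\sigma,\sigma)}=x$, which lies on the $\sigma$-segment $\Gamma$ but is \emph{not} its starting point (it is the endpoint of $\Gamma$), so property~(iv) of Lemma~\ref{lemma:walk} forces $(J_\sigma,\tau_\sigma)\notin\cH_\sigma'$, giving $\cJ_\sigma\cap\cH_\sigma'=\emptyset$. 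The main subtlety in this plan lies in the middle paragraph: one must correctly identify $x$ as a turning point of $\cW$ and distinguish the \emph{incoming} segment $\Gamma$ (whose color becomes $\sigma$) from the \emph{outgoing} segment $\Gamma'$ (whose color becomes $s$), including the harmless case of a degenerate terminal segment.
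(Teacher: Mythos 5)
Your proposal is correct and follows essentially the same route as the paper's proof: you use Lemma~\ref{lemma:Dsigma-seq-Hsigma} to place $\cD_s\seq\cH_s$, Lemma~\ref{lemma:argmin-lambda-x-vec} to convert equal $\lambda_\theta()$-values into equal coordinate sums, Lemma~\ref{lemma:walk-order} to conclude the associated $x$-points coincide, the segment structure of $\cW$ from Lemma~\ref{lemma:walk} to cap the number of tied colors at two and force $x$ to be a turning point shared by consecutive segments of different colors, and then property~(iv) of Lemma~\ref{lemma:walk} together with Lemma~\ref{lemma:argmins-Hsp} to distinguish the endpoint color $\sigma$ (where $\cJ_\sigma\cap\cH_\sigma'=\emptyset$) from the starting-point color $s$ (where $\cJ_s\cap\cH_s'\neq\emptyset$). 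The only presentational difference is that you front-load the ``each walk point lies on at most two segments'' observation, while the paper phrases it as ``only pairs of consecutive segments share a point''; this is the same fact.
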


\begin{proof}
Recall that the families $\cD_s$, $s\in[r]$, defined in \eqref{eq:def-D-s} are nonempty and closed under taking subgraphs that contain the youngest vertex. Fix some color $\sigma\in[r]$ such that
\begin{equation} \label{eq:d-s-d-sigma}
  d(s)\leq d(\sigma) \quad, \enspace s\in[r]\setminus\{\sigma\} \enspace,
\end{equation}
for the values $d(s)$, $s\in[r]$, defined in \eqref{eq:def-d-s}.
The tie-breaking rule of the strategy $\PAINT()$ is only considered if the inequality in \eqref{eq:d-s-d-sigma} is tight for some color different from $\sigma$. We fix such a color $s\in[r]\setminus\{\sigma\}$ for which
\begin{equation} \label{eq:same-d-value}
  d(s)=d(\sigma) \enspace.
\end{equation}
The first part of Lemma~\ref{lemma:Dsigma-seq-Hsigma} yields with \eqref{eq:d-s-d-sigma} and \eqref{eq:same-d-value} that $\cD_s\seq \cH_s$ and $\cD_\sigma\seq\cH_\sigma$ (and that $d(s)$ and $d(\sigma)$ are finite values). Thus by the definition in \eqref{eq:def-J-s} we also have $\cJ_s\seq \cH_s$ and $\cJ_\sigma\seq\cH_\sigma$.
Fix some $(J_s,\tau_s)\in\cJ_s$ and some $(J_\sigma,\tau_\sigma)\in\cJ_\sigma$. By the definition in \eqref{eq:def-J-s} and Lemma~\ref{lemma:argmin-lambda-x-vec} we have
\begin{equation} \label{eq:lambda-Js-Jsigma}
  \lambda_\theta(J_s,w_{(J_s,\tau_s,s)}) = 1+\sum_{t\in[r]} x_{(J_s,\tau_s,s),t} \;\;\quad \text{and} \quad\;\;
  \lambda_\theta(J_\sigma,w_{(J_\sigma,\tau_\sigma,\sigma)}) = 1+\sum_{t\in[r]} x_{(J_\sigma,\tau_\sigma,\sigma),t} \enspace.
\end{equation}
Furthermore, using \eqref{eq:same-d-value} and the definitions in \eqref{eq:def-d-s} and \eqref{eq:def-J-s} shows that
\begin{equation} \label{eq:lambda-Js-eq-Jsigma}
  \lambda_\theta(J_s,w_{(J_s,\tau_s,s)})=\lambda_\theta(J_\sigma,w_{(J_\sigma,\tau_\sigma,\sigma)}) \enspace.
\end{equation}
Combining \eqref{eq:lambda-Js-Jsigma} and \eqref{eq:lambda-Js-eq-Jsigma} we obtain
\begin{equation} \label{eq:sum-x-Js-x-Jsigma}
  1+\sum_{t\in[r]} x_{(J_s,\tau_s,s),t} = 1+\sum_{t\in[r]} x_{(J_\sigma,\tau_\sigma,\sigma),t} \enspace.
\end{equation}
Note that the points $x_{(J_s,\tau_s,s)}$ and $x_{(J_\sigma,\tau_\sigma,\sigma)}$ are elements of the walk $\cW$ defined in Lemma~\ref{lemma:walk}. By Lemma~\ref{lemma:walk-order} the relation \eqref{eq:sum-x-Js-x-Jsigma} implies that $x_{(J_s,\tau_s,s)}=x_{(J_\sigma,\tau_\sigma,\sigma)}$, i.e., the graphs $(J_s,\tau_s)$ and $(J_\sigma,\tau_\sigma)$ (and all other graphs in the families $\cJ_s$ and $\cJ_\sigma$) have the same associated point on the walk $\cW$.
By property~(i) in Lemma~\ref{lemma:walk}, $x_{(J_s,\tau_s,s)}$ is contained in an $s$-segment and $x_{(J_\sigma,\tau_\sigma,\sigma)}$ in a $\sigma$-segment of $\cW$, implying that $x_{(J_s,\tau_s,s)}=x_{(J_\sigma,\tau_\sigma,\sigma)}$ must be the endpoint of some segment and the starting point of the next lower segment. As on the walk $\cW$, only pairs of consecutive segments have a point in common, this shows that the inequality \eqref{eq:d-s-d-sigma} can be tight for at most one color different from $\sigma$, proving that ties can arise only between two different colors.

Assume \wolog that for all $(J_\sigma,\tau_\sigma)\in\cJ_\sigma$, the point $x_{(J_\sigma,\tau_\sigma,\sigma)}$ is the endpoint of some $\sigma$-segment $\Gamma$ and for all $(J_s,\tau_s)\in\cJ_s$ the point $x_{(J_s,\tau_s,s)}$ is the starting point of the next lower $s$-segment $\Gamma'$. By property~(iv) from Lemma~\ref{lemma:walk} we have $\cJ_\sigma\cap\cH_\sigma'=\emptyset$, and by Lemma~\ref{lemma:argmins-Hsp} we have $\cJ_s\cap\cH_s'\neq \emptyset$. This proves the first part of the lemma and shows that our tie-breaking rule is well-defined.

Note that the segment $\Gamma$ is higher on the walk $\cW$ than $\Gamma'$, and that the tie-breaking rule decides for the color $\sigma$ corresponding to the higher of the two segments. Together with our previous observations about the location of the points $x_{(J_\sigma,\tau_\sigma,\sigma)}$ for all $(J_\sigma,\tau_\sigma)\in\cJ_\sigma$ and $x_{(J_s,\tau_s,s)}$ for all $(J_s,\tau_s)\in\cJ_s$ this proves the second part of the lemma.
\end{proof}

The following lemma will be the key to proving Lemma~\ref{lemma:witness-graphs}, our main strategy invariant based on witness graphs.

\begin{lemma}[Painter strategy ensures sufficient weight] \label{lemma:sufficient-weight}
There is a constant $\eps=\eps(F,r,\theta,\alpha)>0$ such that the following holds:
Let $\sigma\in[r]$ denote the color selected by the strategy $\PAINT()$ in a certain step of the game given the families $\cD_s$, $s\in[r]$, defined in \eqref{eq:def-D-s}. For every $s\in[r]\setminus\{\sigma\}$, let $(J_s,\tau_s)$ be an inclusion-minimal graph from the family
\begin{equation*}
  \cJ_s \eqBy{eq:def-J-s} \argmin_{(H,\pi)\in\cD_s} \lambda_\theta(H,w_{(H,\pi,s)}) \enspace,
\end{equation*}
and let $u_{s1}$ denote the youngest vertex of $(J_s,\tau_s)$.

Then for any graph $(H,\pi)\in\cD_\sigma$, $\pi=(v_1,\ldots,v_h)$, we have
\begin{equation} \label{eq:sufficient-weight}
  \sum_{s\in[r]\setminus\{\sigma\}} \big(\lambda_\theta(J_s\setminus u_{s1},w_{(J_s,\tau_s,s)})-\deg_{J_s}(u_{s1})\big) \leq w_{(H,\pi,\sigma)}(v_1) \enspace.
\end{equation}

If the inequality \eqref{eq:sufficient-weight} is strict, then the difference between the right and left hand side is at least $\eps$.

If on the other hand the inequality \eqref{eq:sufficient-weight} is tight, then for every $s\in[r]\setminus\{\sigma\}$ we have $(J_s,\tau_s)\in\cH_s'\cup\cC(\cH_s,F)$. Moreover, denoting by $\cW$ the walk defined in Lemma~\ref{lemma:walk} and by $\Gamma$ the $\sigma$-segment containing $x_{(H,\pi,\sigma)}$ on this walk, we have the following: if $(J_s,\tau_s)\in\cH_s'$, then $x_{(J_s,\tau_s,s)}$ is the starting point of the next $s$-segment on $\cW$ that is lower than\/ $\Gamma$, whereas if $(J_s,\tau_s)\in\cC(\cH_s,F)$, then there is no $s$-segment on $\cW$ lower than\/ $\Gamma$.
\end{lemma}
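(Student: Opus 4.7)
The plan is to build \eqref{eq:sufficient-weight} as a chain consisting of one algebraic identity and $r-1$ componentwise inequalities, and then to extract the strict and tight cases from where the chain is loose or tight.

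First I would apply Lemma~\ref{lemma:Dsigma-seq-Hsigma} to conclude $\cD_s\subseteq\cH_s\cup\cC(\cH_s,F)$ for all $s$ (and $\cD_\sigma\subseteq\cH_\sigma$ for the selected color $\sigma$), noting that each $\cD_s$ is nonempty and closed under taking subgraphs containing the youngest vertex. Since each $(J_s,\tau_s)$ is an inclusion-minimal element of $\cJ_s$, Lemma~\ref{lemma:argmin-d-lambda} rewrites each summand on the left-hand side of \eqref{eq:sufficient-weight} as $d_\theta(J_s,u_{s1},w_{(J_s,\tau_s,s)})$. The second part of Lemma~\ref{lemma:d-CHs-dHs} then gives the componentwise bound $d_\theta(J_s,u_{s1},w_{(J_s,\tau_s,s)})\leq x_{(H,\pi,\sigma),s}$ for every $s\neq\sigma$; summing and using the identity $w_{(H,\pi,\sigma)}(v_1)=\sum_{t\neq\sigma}x_{(H,\pi,\sigma),t}$ from Lemma~\ref{lemma:w-sum-x} yields \eqref{eq:sufficient-weight}.

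For the $\varepsilon$-gap when \eqref{eq:sufficient-weight} is strict, I would take $\varepsilon$ to be the minimum positive difference among the finite set of values $\{\,d_\theta(H',v',w_{(H',\pi',s)}),\,x_{(H',\pi',s),t}:s,t\in[r],\,(H',\pi')\in\cS(F)\,\}$, a quantity depending only on $F,r,\theta,\alpha$. Strictness of \eqref{eq:sufficient-weight} forces strictness in at least one of the $r-1$ componentwise bounds from Lemma~\ref{lemma:d-CHs-dHs}; by the choice of $\varepsilon$, this contributes a gap of at least $\varepsilon$ to the final sum, while no other term can contribute negatively.

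For the tight case, componentwise tightness gives $d_\theta(J_s,u_{s1},w_{(J_s,\tau_s,s)})=x_{(H,\pi,\sigma),s}$ for each $s\neq\sigma$. Fix $s\neq\sigma$, let $\Gamma$ be the $\sigma$-segment of $\cW$ containing $x_{(H,\pi,\sigma)}$, and set $c:=x_{(H,\pi,\sigma),s}$, which is constant along $\Gamma$. If $(J_s,\tau_s)\in\cC(\cH_s,F)$, I would combine the componentwise bound with monotonicity of the $s$-coordinate along $\cW$: the proof of Lemma~\ref{lemma:d-CHs-dHs} actually shows $d_\theta(J_s,u_{s1},w_{(J_s,\tau_s,s)})\leq d_s^{i_{\max}}$, while the non-increasing behavior of the $s$-coordinate past $\Gamma$ gives $d_s^{i_{\max}}\leq c$; tightness thus forces $d_s^{i_{\max}}=c$, which holds precisely when no $s$-segment of $\cW$ lies below $\Gamma$. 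Otherwise $(J_s,\tau_s)\in\cH_s$, whence Lemma~\ref{lemma:w-sum-x} gives $x_{(J_s,\tau_s,s),s}=c$, and $x_{(J_s,\tau_s,s)}$ lies on some $s$-segment $\Gamma_s$. I would then argue that $x_{(J_s,\tau_s,s)}$ is the starting point of $\Gamma_s$ and that $\Gamma_s$ lies immediately below $\Gamma$, which gives $(J_s,\tau_s)\in\cH_s'$ via Lemma~\ref{lemma:argmins-Hsp}.

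The main obstacle is this last assertion: ruling out that $\Gamma_s$ lies strictly above $\Gamma$. The interior subcase (where $\Gamma_s$ starts at $s$-coordinate $>c$ and passes through $c$ in its interior) is excluded immediately because $x_{(J_s,\tau_s,s)}$ would then be the unique walk-point with $s$-coordinate $c$, contradicting the existence of $\Gamma$ as a separate non-degenerate region with that same $s$-coordinate. The endpoint subcase (where $\Gamma_s$ ends at $c$ just above $\Gamma$) requires $\PAINT()$'s tie-breaking rule: by Lemma~\ref{lemma:argmin-lambda-x-vec} one has $d(s)=1+\sum_t x_{(J_s,\tau_s,s),t}$, and a comparison against $d(\sigma)$ via Lemma~\ref{lemma:walk-order} (applied to an inclusion-minimal $(H_\sigma,\pi_\sigma)\in\cJ_\sigma$, for which the same identity holds) forces either $d(s)>d(\sigma)$ or coincidence $x_{(J_s,\tau_s,s)}=x_{(H_\sigma,\pi_\sigma,\sigma)}$ at a shared turning point. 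The first case contradicts $\PAINT()$'s maximality choice of $\sigma$; in the second case, Lemma~\ref{lemma:tie-breaking} precisely characterizes the resulting tie and dictates that the tie-breaking rule selects $\sigma$ exactly when this shared turning point is the endpoint of $\Gamma$ (so the starting point of $\Gamma_s$ lies below $\Gamma$), contradicting the endpoint subcase's hypothesis that $\Gamma_s$ lies above $\Gamma$.
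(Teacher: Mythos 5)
Your plan for proving~\eqref{eq:sufficient-weight} has a genuine gap. After rewriting each summand as $d_\theta(J_s,u_{s1},w_{(J_s,\tau_s,s)})$ via Lemma~\ref{lemma:argmin-d-lambda}, you invoke the second part of Lemma~\ref{lemma:d-CHs-dHs} to conclude $d_\theta(J_s,u_{s1},w_{(J_s,\tau_s,s)})\leq x_{(H,\pi,\sigma),s}$ for every $s\neq\sigma$. But that lemma's hypothesis requires $(J,\tau)\in\cC(\cH_s,F)$, whereas Lemma~\ref{lemma:Dsigma-seq-Hsigma} only guarantees $\cD_s\seq\cH_s\cup\cC(\cH_s,F)$; so your application covers only the case $(J_s,\tau_s)\in\cC(\cH_s,F)$. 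For $(J_s,\tau_s)\in\cH_s$ the conclusion is simply false in general: a graph added to $\cH_s$ in an early round has a large $d_\theta()$-value (near $0$), which can exceed the coordinate $x_{(H,\pi,\sigma),s}=d_s^i$ of a graph $(H,\pi)$ added later when the $s$-coordinate of the walk has already dropped. The bound you want in this case is \emph{not} a structural fact about the algorithm's output; it only holds because the strategy $\PAINT()$ selected $\sigma$ over $s$, i.e.\ because $d(s)\leq d(\sigma)$. You never use this for the main inequality.

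The fix is exactly the chain the paper uses for $(J_s,\tau_s)\in\cH_s$: from $d(s)\leq d(\sigma)$ one gets $\lambda_\theta(J_s,w_{(J_s,\tau_s,s)})\leq\lambda_\theta(J_\sigma,w_{(J_\sigma,\tau_\sigma,\sigma)})$; rewrite the left side as $1+\sum_t x_{(J_s,\tau_s,s),t}$ via Lemma~\ref{lemma:argmin-lambda-x-vec}, bound the right side from above by $1+\sum_t x_{(H,\pi,\sigma),t}$ via Lemma~\ref{lemma:lambda-x-vec} and the closure property of $\cD_\sigma$; then Lemma~\ref{lemma:walk-order} upgrades the sum inequality to the componentwise inequality $x_{(J_s,\tau_s,s),t}\leq x_{(H,\pi,\sigma),t}$ for all $t$, and taking $t=s$ yields the desired bound. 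Your treatment of the tight case does bring in the strategy choice and the tie-breaking rule via Lemma~\ref{lemma:tie-breaking}, and the "interior vs. endpoint" split there is a workable paraphrase of the paper's reasoning; but you bring the strategy choice in too late. It is needed already to establish~\eqref{eq:sufficient-weight} itself. The remainder (the $\eps$-gap and the segment characterization) inherits the same hole, since you infer strictness or tightness from a componentwise bound you haven't actually established in the $\cH_s$ subcase.
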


\begin{proof}
Let
\begin{multline} \label{eq:eps}
  \eps=\eps(F,r,\theta,\alpha):=\min\{\enspace
  |d_\theta(H,v_1,w_{(H,\pi,s)})-d_\theta(J,u_1,w_{(J,\tau,s)})| \enspace\mid\enspace
  s\in[r] \;\wedge\; \\
  (H,\pi=(v_1,\ldots,v_h)),(J,\tau=(u_1,\ldots,u_c))\in \cH_s\cup\cC(\cH_s,F) \;\wedge\; \\
  d_\theta(H,v_1,w_{(H,\pi,s)})\neq d_\theta(J,u_1,w_{(J,\tau,s)}) \enspace\}>0
\end{multline}
(recall that for all $s\in[r]$ the $d_\theta()$-value of all graphs in $\cH_s\cup\cC(\cH_s,F)$ is a finite real number).
Note that in the boundary case that for all $s\in[r]$ and all $(H,\pi)\in\cH_s\cup\cC(\cH_s,F)$, $\pi=(v_1,\ldots,v_h)$, the value $d_\theta(H,v_1,w_{(H,\pi,s)})$ is the same (i.e., the walk $\cW$ defined in Lemma~\ref{lemma:walk} degenerates to a single point), the minimum in \eqref{eq:eps} is over an empty set. We will see that in this case the inequality \eqref{eq:sufficient-weight} is never strict. Therefore we may set $\eps$ to an arbitrary positive constant in this case, $\eps:=1$, say.

Recall that the families $\cD_s$, $s\in[r]$, are nonempty and closed under taking subgraphs that contain the youngest vertex. By the second part of Lemma~\ref{lemma:Dsigma-seq-Hsigma} we have $\cD_s\seq \cH_s\cup\cC(\cH_s,F)$ for all $s\in[r]$.

We first prove that \eqref{eq:sufficient-weight} holds.
By the definition of the strategy, the selected color $\sigma\in[r]$ satisfies
\begin{equation} \label{eq:d-s-leq-d-sigma}
  d(s)\leq d(\sigma) \quad, \enspace s\in[r]\setminus\{\sigma\}
\end{equation}
for the values $d(s)$, $s\in[r]$, defined in \eqref{eq:def-d-s}.
Let $(J_\sigma,\tau_\sigma)$ be an arbitrary graph from the family
\begin{equation*}
  \cJ_\sigma \eqBy{eq:def-J-s} \argmin_{(H,\pi)\in\cD_\sigma} \lambda_\theta(H,w_{(H,\pi,\sigma)}) \enspace.
\end{equation*}
By the definition in \eqref{eq:def-d-s} and the choice of $(J_s,\tau_s)$, $s\in[r]$, we have
\begin{equation} \label{eq:lambda-J-s-d-s}
  \lambda_\theta(J_s,w_{(J_s,\tau_s,s)})=d(s) \quad, \enspace \text{$s\in[r]$} \enspace.
\end{equation}
Combining \eqref{eq:d-s-leq-d-sigma} and \eqref{eq:lambda-J-s-d-s} yields
\begin{equation} \label{eq:least-dangerous-color}
  \lambda_\theta(J_s,w_{(J_s,\tau_s,s)}) \leq \lambda_\theta(J_\sigma,w_{(J_\sigma,\tau_\sigma,\sigma)}) \quad, \enspace \text{$s\in[r]\setminus\{\sigma\}$} \enspace.
\end{equation}
By \eqref{eq:d-s-leq-d-sigma} and the first part of Lemma~\ref{lemma:Dsigma-seq-Hsigma} we have $\cD_\sigma\seq\cH_\sigma$. We fix some graph $(H,\pi)\in\cD_\sigma$, $\pi=(v_1,\ldots,v_h)$. By Lemma~\ref{lemma:lambda-x-vec} there is a subgraph $\Jhat\seq H$ with $v_1\in\Jhat$ satisfying
\begin{equation} \label{eq:lambda-Jhat-x-H}
  \lambda_\theta(\Jhat,w_{(\Jhat,\pi|_\Jhat,\sigma)}) = 1+\sum_{t\in[r]} x_{(H,\pi,\sigma),t} \enspace.
\end{equation}
By the closure property of the family $\cD_\sigma$ and the choice of $(J_\sigma,\tau_\sigma)$ we have
\begin{equation} \label{eq:lambda-J-sigma-Jhat}
  \lambda_\theta(J_\sigma,w_{(J_\sigma,\tau_\sigma,\sigma)}) \leq \lambda_\theta(\Jhat,w_{(\Jhat,\pi|_\Jhat,\sigma)}) \enspace.
\end{equation}
By Lemma~\ref{lemma:argmin-lambda-x-vec} we have for every $s\in[r]\setminus\{\sigma\}$ for which $(J_s,\tau_s)$ is contained in $\cH_s$ that
\begin{equation} \label{eq:lambda-Js-x-Js}
  \lambda_\theta(J_s,w_{(J_s,\tau_s,s)}) = 1+\sum_{t\in[r]} x_{(J_s,\tau_s,s)} \enspace.
\end{equation}
For those $s\in[r]\setminus\{\sigma\}$ we thus obtain
\begin{equation} \label{eq:sum-x-J-s-sum-x-H-s}
  1+\sum_{t\in[r]} x_{(J_s,\tau_s,s)}
  \eqBy{eq:lambda-Js-x-Js} \lambda_\theta(J_s,w_{(J_s,\tau_s,s)})
  \leBy{eq:least-dangerous-color} \lambda_\theta(J_\sigma,w_{(J_\sigma,\tau_\sigma,\sigma)})
  \leByM{\eqref{eq:lambda-Jhat-x-H},\eqref{eq:lambda-J-sigma-Jhat}} 1+\sum_{t\in[r]} x_{(H,\pi,\sigma),t} \enspace.
\end{equation}
Note that if $(J_s,\tau_s)\in\cH_s$, then $x_{(J_s,\tau_s,s)}$ is an element of the walk $\cW$ defined in Lemma~\ref{lemma:walk} (the point $x_{(H,\pi,\sigma)}$ is clearly also an element of this walk as $\cD_\sigma\seq\cH_\sigma$).
Using \eqref{eq:sum-x-J-s-sum-x-H-s} and Lemma~\ref{lemma:walk-order} yields that for every $s\in[r]\setminus\{\sigma\}$ for which $(J_s,\tau_s)$ is contained in $\cH_s$ we have 
\begin{equation} \label{eq:x-J-s-x-H-all-coords}
  x_{(J_s,\tau_s,s),t} \leq x_{(H,\pi,\sigma),t} \quad \text{for all $t\in[r]$} \enspace,
\end{equation}
from which we conclude using
\begin{equation} \label{eq:x-J-s-d-J-s}
  x_{(J_s,\tau_s,s),s} \eqBy{eq:def-x-H-pi-s} d_\theta(J_s,u_{s1},w_{(J_s,\tau_s,s)})
\end{equation}
that
\begin{equation} \label{eq:d-J-s-x-H}
  d_\theta(J_s,u_{s1},w_{(J_s,\tau_s,s)}) \leq x_{(H,\pi,\sigma),s} \enspace.
\end{equation}
For all $s\in[r]\setminus\{\sigma\}$ for which $(J_s,\tau_s)$ is not contained in $\cH_s$ but in $\cC(\cH_s,F)$, the relation \eqref{eq:d-J-s-x-H} follows from the second part of Lemma~\ref{lemma:d-CHs-dHs}.

Combining our previous observations and applying Lemma~\ref{lemma:w-sum-x} and Lemma~\ref{lemma:argmin-d-lambda}, we thus obtain
\begin{equation} \label{eq:sum-lambda-x-wv1}
\begin{split}
  \sum_{s\in[r]\setminus\{\sigma\}} \big(\lambda_\theta(J_s\setminus u_{s1},w_{(J_s,\tau_s,s)})-\deg_{J_s}(u_{s1})\big)
  &\eqBy{eq:d-J-tau}
  \sum_{s\in[r]\setminus\{\sigma\}} d_\theta(J_s,u_{s1},w_{(J_s,\tau_s,s)}) \\
  &\leBy{eq:d-J-s-x-H}
  \sum_{s\in[r]\setminus\{\sigma\}} x_{(H,\pi,\sigma),s}
  \eqBy{eq:w-sum-x}
   w_{(H,\pi,\sigma)}(v_1) \enspace,
\end{split}
\end{equation}
proving \eqref{eq:sufficient-weight}.

If the inequality \eqref{eq:sum-lambda-x-wv1} is strict, then by \eqref{eq:d-J-s-x-H} we have
\begin{equation} \label{eq:ineq-strict}
  d_\theta(J_s,u_{s1},w_{(J_s,\tau_s,s)})<x_{(H,\pi,\sigma),s}\quad \text{for some $s\in[r]\setminus\{\sigma\}$} \enspace.
\end{equation}
By the definition in \eqref{eq:def-x-H-pi-s} and the definition in line~\ref{cw:least-dangerous-threat}, the right hand side of \eqref{eq:ineq-strict} equals $d_\theta(\Jbar,\ubar_1,w_{(\Jbar,\taubar,s)})$ for some $(\Jbar,\taubar)\in\cH_s\cup\cC(\cH_s,F)$, where $\ubar_1$ denotes the youngest vertex of $(\Jbar,\taubar)$. With the definition in \eqref{eq:eps} it follows that the difference between the right and left hand side of \eqref{eq:ineq-strict} and therefore also the difference between the right and left hand side of \eqref{eq:sum-lambda-x-wv1} is at least $\eps$.

If the inequality in \eqref{eq:sum-lambda-x-wv1} is tight, then by \eqref{eq:d-J-s-x-H} we have
\begin{equation} \label{eq:x-J-s-x-H-all-equal}
  d_\theta(J_s,u_{s1},w_{(J_s,\tau_s,s)}) = x_{(H,\pi,\sigma),s} \quad \text{for all $s\in[r]\setminus\{\sigma\}$} \enspace.
\end{equation}
Let $\Gamma$ denote the $\sigma$-segment on the walk $\cW$ that contains the point $x_{(H,\pi,\sigma)}$. We fix some $s\in[r]\setminus\{\sigma\}$ and distinguish the cases whether $(J_s,\tau_s)$ is contained in $\cH_s$ or in $\cC(\cH_s,F)$.

We first consider the case that $(J_s,\tau_s)\in\cH_s$. We claim that the $s$-segment $\Gamma'$ on the walk $\cW$ that contains the point $x_{(J_s,\tau_s,s)}$ is lower on the walk $\cW$ than $\Gamma$: This is trivially true if one of the inequalities in \eqref{eq:x-J-s-x-H-all-coords} is strict. If on the other hand \eqref{eq:x-J-s-x-H-all-coords} holds with equality for all $t\in[r]$, then also all inequalities in \eqref{eq:sum-x-J-s-sum-x-H-s} are tight, from which we conclude using \eqref{eq:lambda-J-s-d-s} that $d(s)=d(\sigma)$, i.e., we have a tie between the colors $s$ and $\sigma$. In this case, by the second part of Lemma~\ref{lemma:tie-breaking}, our tie-breaking rule ensures that $\Gamma'$ is lower on the walk $\cW$ than $\Gamma$.
From \eqref{eq:x-J-s-d-J-s} and \eqref{eq:x-J-s-x-H-all-equal} it follows that $\Gamma'$ must be the next $s$-segment on $\cW$ that is lower than $\Gamma$ and that $x_{(J_s,\tau_s,s)}$ must be the starting point of $\Gamma'$. Applying Lemma~\ref{lemma:argmins-Hsp} shows that $(J_s,\tau_s)\in\cH_s'$ (note the inclusion-minimal choice of $(J_s,\tau_s)$), completing the proof in this case.

It remains to consider the case $(J_s,\tau_s)\in\cC(\cH_s,F)$. Suppose for the sake of contradiction that there was some $s$-segment $\Gamma'$ that is lower than $\Gamma$ on the walk $\cW$. By property~(v) in Lemma~\ref{lemma:walk} the segment $\Gamma'$ can not be lowest segment of $\cW$, as we would otherwise have $\cH_s=\cS(F)$ and therefore $\cC(\cH_s,F)=\emptyset$. So $\Gamma'$ has an endpoint $x\in\mathbb{R}^r$ which clearly satisfies
\begin{equation} \label{eq:x-s-x-H-pi-sigma-s}
  x_s<x_{(H,\pi,\sigma),s}
\end{equation}
and which is also the starting point of the next lower segment $\Gammabar$ (the segment $\Gammabar$ is an $\sbar$-segment for some $\sbar\in[r]\setminus\{s\}$). By property~(ii) of Lemma~\ref{lemma:walk} there is some $(\Jbar,\taubar)\in\cH_\sbar$ such that
\begin{equation} \label{eq:sp-ep}
  x_{(\Jbar,\taubar,\sbar)}=x \enspace.
\end{equation}
Applying the second part of Lemma~\ref{lemma:d-CHs-dHs} we thus obtain
\begin{equation*}
  d_\theta(J_s,u_{s1},w_{(J_s,\tau_s,s)}) \leq x_{(\Jbar,\taubar,\sbar),s} \eqBy{eq:sp-ep} x_s \lBy{eq:x-s-x-H-pi-sigma-s} x_{(H,\pi,\sigma),s} \enspace,
\end{equation*}
contradicting \eqref{eq:x-J-s-x-H-all-equal}. This completes the proof also in this case.
\end{proof}

%\subsection{Proof of Lemma~\ref{lemma:witness-graphs}}
\subsection{Proof of Lemma~\texorpdfstring{\ref{lemma:witness-graphs}}{35}}

We are now ready to prove Lemma~\ref{lemma:witness-graphs}, our main strategy invariant.

\begin{proof}[Proof of Lemma~\ref{lemma:witness-graphs}]
For the reader's convenience, Figure~\ref{fig:lbvars1} illustrates the notations used in the first part of the proof.

\begin{figure}
\centering
\PSforPDF{
 \psfrag{sinr}{$s\in[r]\setminus\{\sigma\}$}
 \psfrag{v}{$v$}
 \psfrag{vdots}{$\vdots$}
 \psfrag{EH}{$E_\sigma^v$}
 \psfrag{EJ}{$E_s^v$}
 \psfrag{H}{$(H,\pi)$}
 \psfrag{Hm}{$(H\setminus v_1,\pi\setminus v_1)$}
 \psfrag{J}{$(J_s,\tau_s)$}
 \psfrag{Jm}{$(J_s\setminus u_{s1},\tau_s\setminus u_{s1})$}
 \psfrag{Hsp}{$H_\sigma'=(V_\sigma,E_\sigma)$}
 \psfrag{Jsp}{$J_s'=(V_s,E_s)$}
 \psfrag{Kp}{$K_s'$}
 \psfrag{Hp}{\Large $H'=(V',E')$}
 \includegraphics{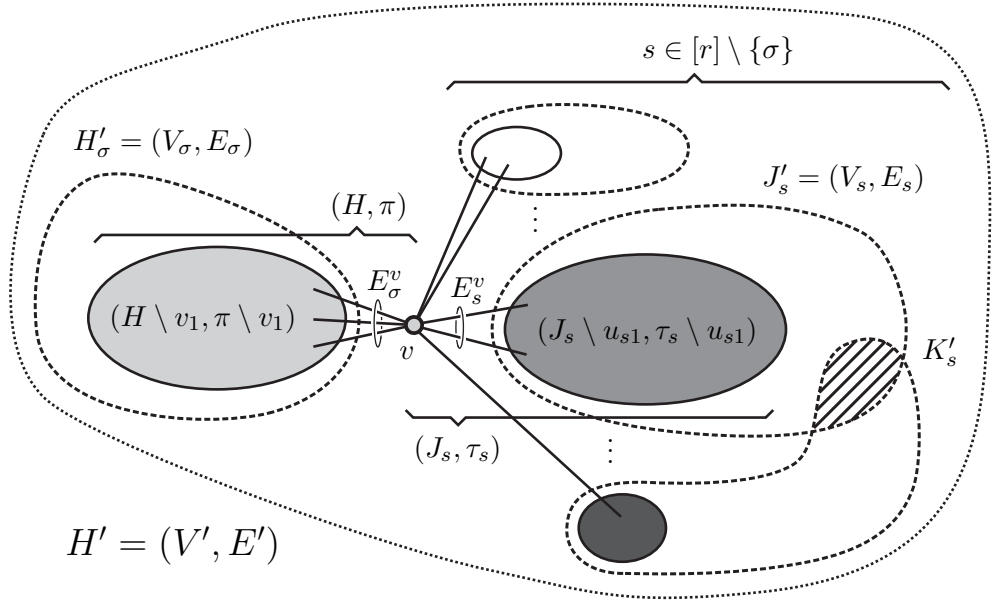}
}
\caption{Notations used in the first part of the proof of Lemma~\ref{lemma:witness-graphs}.} \label{fig:lbvars1}
\end{figure}

Let
\begin{equation} \label{eq:vmax}
  \vmax = \vmax(F,r,\theta,\alpha) := r^{v(F)/\eps+(v(F)/\eps+1)(r\cdot|\cS(F)|+1)(v(F)+1)+2}\cdot v(F)+1 \enspace,
\end{equation}
where $\eps=\eps(F,r,\theta,\alpha)$ is the constant guaranteed by Lemma~\ref{lemma:sufficient-weight} (and explicitly defined in \eqref{eq:eps}).

We argue by induction over the number of vertices of the board. For the induction base consider the board at the beginning of the game when no vertex is added yet. It is convenient for the proof to extend the statement of the lemma to $(H,\pi)$ being the null graph (the graph whose vertex set is empty). For this graph we define $H'$ to be the null graph as well. Clearly, for every $s\in[r]$, every copy of the null graph $(H,\pi)$ `in color $s$' on the board is contained in this subgraph $H'$ of the board, and we have $\mu_\theta(H')=0=\lambda_\theta(H,w_{(H,\pi,s)})$ and $v(H')=0\leq \vmax$. This shows that the second condition of the lemma holds at the beginning of the game and settles the induction base.

For the induction step, let $v$ denote the vertex added in the current step of the game, $\cD_s$, $s\in[r]$, the families defined in \eqref{eq:def-D-s}, and $\sigma$ the color the strategy $\PAINT()$ assigns to the vertex $v$. By the first part of Lemma~\ref{lemma:Dsigma-seq-Hsigma}, we have $\cD_\sigma\seq\cH_\sigma$.

For a fixed graph $(H,\pi)\in\cD_\sigma$, $\pi=(v_1,\ldots,v_h)$, we consider a fixed copy of $(H\setminus v_1,\pi\setminus v_1)$ in color $\sigma$ that is completed by $v$ to a copy of $(H,\pi)$ in this color. Denoting by $E_\sigma^v$ the corresponding set of edges incident to $v$, we clearly have
\begin{equation} \label{eq:E-sigma-v-deg}
  |E_\sigma^v|=\deg_H(v_1) \enspace.
\end{equation}
By induction, we know that this copy of $(H\setminus v_1,\pi\setminus v_1)$ is contained in a graph $H_\sigma'=(V_\sigma,E_\sigma)$ with
\begin{equation} \label{eq:mu-H-sigma'}
  \mu_\theta(H_\sigma')\leq \lambda_\theta(H\setminus v_1,w_{(H,\pi,\sigma)})
\end{equation}
(recall from~\eqref{eq:def-w} that $w_{(H,\pi,\sigma)}(u)=w_{(H\setminus v_1,\pi\setminus v_1,\sigma)}(u)$ for all $u\in H\setminus v_1$) and
\begin{equation} \label{eq:size-H-sigma'}
  v(H_\sigma')\leq \vmax \enspace.
\end{equation}
For every $s\in[r]\setminus\{\sigma\}$, let $(J_s,\tau_s)$ be an inclusion-minimal graph from the family $\cJ_s\seq\cD_s$ defined in \eqref{eq:def-J-s}, and let $u_{s1}$ denotes the youngest vertex of $(J_s,\tau_s)$. For each $s\in[r]\setminus\{\sigma\}$ we consider a fixed copy of $(J_s\setminus u_{s1},\tau_s\setminus u_{s1})$ in color $s$ that is completed by $v$ to a copy of $(J_s,\tau_s)$ (the vertex $v$ has the color $\sigma\neq s$, so the resulting copy is not monochromatic). Denoting by $E_s^v$ the corresponding set of edges incident to $v$, we clearly have
\begin{equation} \label{eq:E-s-v-deg}
  |E_s^v|=\deg_{J_s}(u_{s1}) \quad, \enspace s\in[r]\setminus\{\sigma\} \enspace.
\end{equation}
By induction, those copies of $(J_s\setminus u_{s1},\tau_s\setminus u_{s1})$ are contained in graphs $J_s'=(V_s,E_s)$ with
\begin{equation} \label{eq:mu-J-s'}
  \mu_\theta(J_s') \leq \lambda_\theta(J_s\setminus u_{s1},w_{(J_s,\tau_s,s)}) \quad, \enspace s\in[r]\setminus\{\sigma\} \enspace,
\end{equation}
and
\begin{equation} \label{eq:size-J-s'}
  v(J_s')\leq \vmax \quad, \enspace s\in[r]\setminus\{\sigma\} \enspace.
\end{equation}

Applying Lemma~\ref{lemma:sufficient-weight} shows that the graphs $(H,\pi)$ and $(J_s,\tau_s)$, $s\in[r]\setminus\{\sigma\}$, satisfy
\begin{equation} \label{eq:sufficient-weight-behind-v1}
  \sum_{s\in[r]\setminus\{\sigma\}} \big(\lambda_\theta(J_s\setminus u_{s1},w_{(J_s,\tau_s,s)})-\deg_{J_s}(u_{s1})\big) \leq w_{(H,\pi,\sigma)}(v_1) \enspace.
\end{equation}

If $\mu_\theta(H_\sigma')<0$ or $\mu_\theta(J_s')<0$ for some $s\in[r]\setminus\{\sigma\}$,  we have found a graph $K'$ with $\mu_\theta(K')<0$ and $v(K')\leq \vmax$ (see \eqref{eq:size-H-sigma'} and \eqref{eq:size-J-s'}). Otherwise we have $\mu_\theta(H_\sigma')\geq 0$ and $\mu_\theta(J_s')\geq 0$ for all $s\in[r]\setminus\{\sigma\}$. We will argue later that this implies even stronger bounds on the number of vertices of $H_\sigma'$ and $J_s'$, namely
\begin{equation}
\begin{split} \label{eq:size-H-sigma'-J-s'}
    v(H_\sigma') &\leq (\vmax-1)/r \enspace,\\
    v(J_s')      &\leq (\vmax-1)/r \enspace, \quad s\in[r]\setminus\{\sigma\} \enspace. 
\end{split}
\end{equation}

We define the graph $H'=(V',E')$ as
\begin{equation} \label{eq:def-H'}
\begin{split}
  V' &:= \{v\} \cup \bigcup_{s\in[r]} V_s  \enspace, \\
  E' &:= \bigcup_{s\in[r]} (E_s \cup E_s^v)
\end{split}
\end{equation}
(see Figure~\ref{fig:lbvars1}).
This graph clearly contains the copy of $(H,\pi)$ in color $\sigma$ we are considering.

Furthermore, we define for $2\leq s\leq r$ the graphs
\begin{equation} \label{eq:def-K'}
  K_s' := \Big(V_s\cap\bigcup_{1\leq t\leq s-1}V_t,E_s\cap\bigcup_{1\leq t\leq s-1}E_t\Big) \enspace.
\end{equation}
From \eqref{eq:size-H-sigma'-J-s'} and \eqref{eq:def-K'} we conclude that $v(K_s')\leq (\vmax-1)/r\leq \vmax$, $2\leq s\leq r$. Therefore, if $\mu_\theta(K_s')<0$ for some $2\leq s\leq r$, then we have found a graph $K'$ with $\mu_\theta(K')<0$ and $v(K')\leq \vmax$. Otherwise we have
\begin{equation} \label{eq:mu-K-s'}
  \mu_\theta(K_s') \geq 0 \quad, \enspace 2\leq s\leq r \enspace.
\end{equation}

With \eqref{eq:E-sigma-v-deg} and \eqref{eq:E-s-v-deg} we obtain from \eqref{eq:def-H'} and \eqref{eq:def-K'} that
\begin{equation} \label{eq:v-e-H'}
\begin{split}
  v(H') &= 1+v(H_\sigma')+\sum_{s\in[r]\setminus\{\sigma\}} v(J_s') - \sum_{2\leq s\leq r} v(K_s') \enspace, \\
  e(H') &= e(H_\sigma')+\deg_H(v_1) + \sum_{s\in[r]\setminus\{\sigma\}} \big(e(J_s')+\deg_{J_s}(u_{s1})\big) - \sum_{2\leq s\leq r} e(K_s') \enspace.
\end{split}
\end{equation}

Combining our previous observations yields
\begin{equation} \label{eq:mu-H'-leq-lambda-H}
\begin{split}
  \mu_\theta(H')
    &\eqByM{\eqref{eq:def-mu},\eqref{eq:v-e-H'}}
     1+\mu_\theta(H_\sigma')-\deg_H(v_1)\cdot\theta
     +\sum_{s\in[r]\setminus\{\sigma\}} \big(\mu_\theta(J_s')-\deg_{J_s}(u_{s1})\cdot\theta\big)
     -\sum_{2\leq s\leq r}  \mu_\theta(K_s') \\
    &\leByM{\mathclap{\eqref{eq:mu-H-sigma'},\eqref{eq:mu-J-s'},\eqref{eq:mu-K-s'}}} \qquad
     1+\lambda_\theta(H\setminus v_1,w_{(H,\pi,\sigma)})-\deg_H(v_1)\cdot\theta
     +\underbrace{\sum_{s\in[r]\setminus\{\sigma\}} \big(\lambda_\theta(J_s\setminus u_{s1},w_{(J_s,\tau_s,s)})-\deg_{J_s}(u_{s1})\cdot\theta\big)}_{\leBy{eq:sufficient-weight-behind-v1} w_{(H,\pi,\sigma)}(v_1)} \\
    &\leq \lambda_\theta(H\setminus v_1,w_{(H,\pi,\sigma)}) + 1 + w_{(H,\pi,\sigma)}(v_1) -\deg_H(v_1)\cdot\theta
     \eqBy{eq:lambda-recursive} \lambda_\theta(H,w_{(H,\pi,\sigma)})
\end{split}
\end{equation}
which proves \eqref{eq:mu-H'-lambda-H}.
From \eqref{eq:size-H-sigma'-J-s'} and \eqref{eq:def-H'} we conclude that $v(H')\leq \vmax$.

It remains to show \eqref{eq:size-H-sigma'-J-s'}, i.e.\ that for every graph $H'$ as defined in \eqref{eq:def-H'} with $\mu_\theta(H')\geq 0$ we have $v(H')\leq (\vmax-1)/r$. For the reader's convenience, the notations used in this part of the proof are illustrated in Figure~\ref{fig:lbvars2}.

\begin{figure}
\centering
\PSforPDF{
 \psfrag{sinr}{$s\in[r]\setminus\{\sigma\}$}
 \psfrag{vdots}{$\vdots$}
 \psfrag{ldots}{$\ldots$}
 \psfrag{ddots}{\reflectbox{$\ddots$}}
 \psfrag{H}{$(H,\pi)$}
 \psfrag{Hm}{$(H\setminus v_1,\pi\setminus v_1)$}
 \psfrag{J}{$(J_s,\tau_s)$}
 \psfrag{Jm}{$(J_s\setminus u_{s1},\tau_s\setminus u_{s1})$}
 \psfrag{Thp}{\Large $\cT(H')$}
 \psfrag{Thsp}{$\cT(H_\sigma')$}
 \psfrag{Tjsp}{$\cT(J_s')$}
 \psfrag{P}{\Large $P$}
 \psfrag{Z}{$Z$}
 \psfrag{Z1b}{$\Zbar_1$}
 \psfrag{Zbb}{$\Zbar_b$}
 \psfrag{Zh}{$\Zhat$}
 \psfrag{Jsm1}{$(J_s^{-1},\tau_s^{-1})$}
 \psfrag{Jsmb}{$(J_s^{-b},\tau_s^{-b})$}
 \psfrag{colsigma}{color $\sigma$}
 \psfrag{cols}[c][c][1][90]{color $s$}
 \psfrag{colss}{color $s$}
 \psfrag{colsp}{color $s'$}
 \psfrag{walk}{\Large $\cW$}
 \psfrag{G}{$\Gamma$}
 \psfrag{Gp}{$\Gamma'$}
 \psfrag{xH}{$x_{(H,\pi,\sigma)}$}
 \psfrag{xJ}{$x_{(J_s,\tau_s,s)}$}
 \psfrag{xJmb}{$x_{(J_s^{-b},\tau_s^{-b},s)}$}
 \psfrag{xZ}{$x(Z)$}
 \psfrag{xZb}{$x(\Zbar_b)$}
 \includegraphics{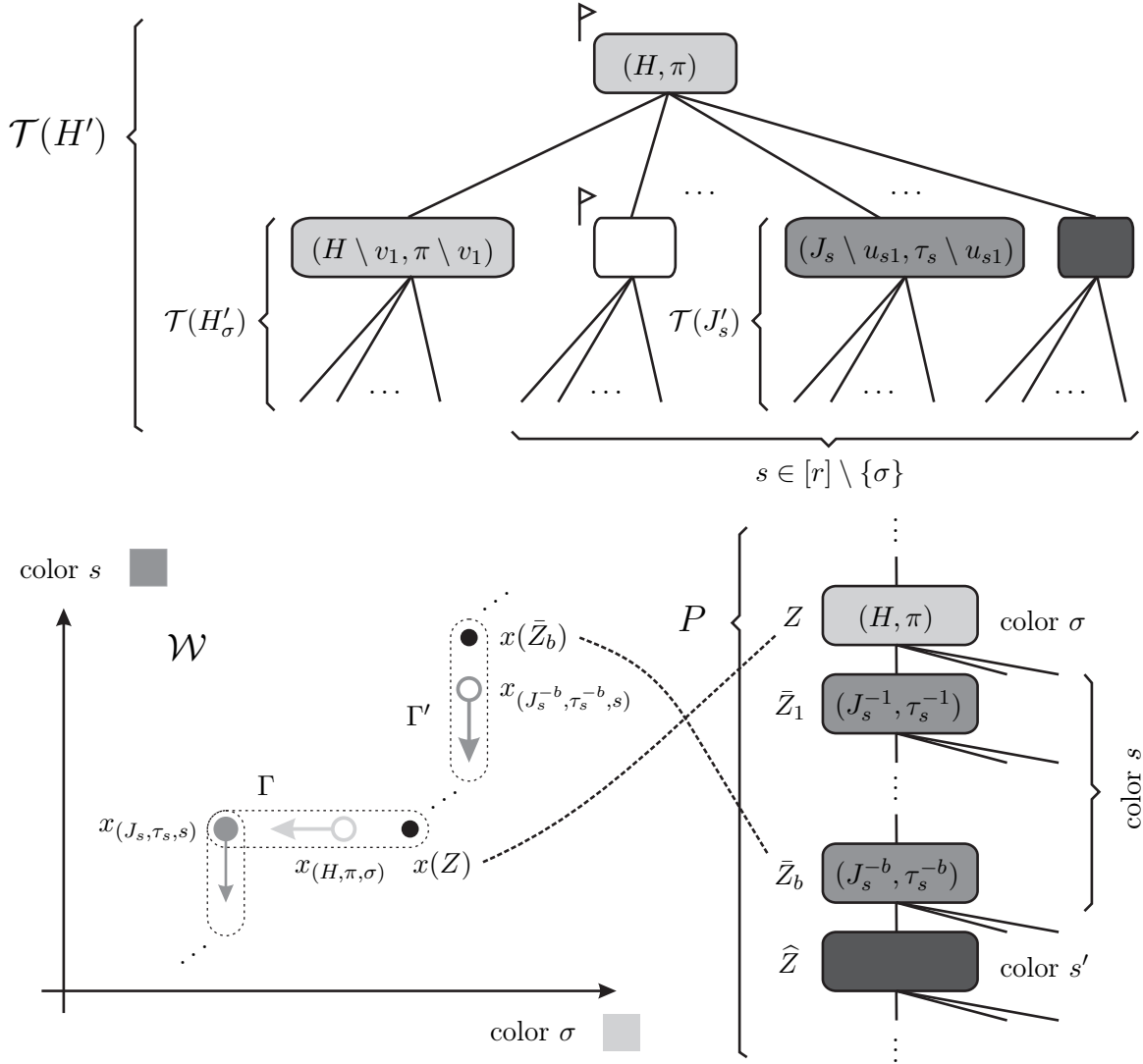}
}
\caption{Notations used in the second part of the proof of Lemma~\ref{lemma:witness-graphs}.} \label{fig:lbvars2}
\end{figure}

In the above argument we constructed the graph $H'$ containing the copy of $(H,\pi)$ in color $\sigma$ inductively from the graph $H_\sigma'$ containing the copy of $(H\setminus v_1, \pi\setminus v_1)$ in color $\sigma$ and the graphs $J_s'$ containing the copies of $(J_s\setminus u_{s1}, \tau_s\setminus u_{s1})$ in color $s$, $s\in[r]\setminus\{\sigma\}$.
We associate this inductive construction with a node-colored rooted tree $\cT(H')$, some of whose non-leaf nodes receive a special marking (we refer to it as a \emph{flag}), as follows (see the upper part of Figure~\ref{fig:lbvars2}): The nodes of $\cT(H')$ correspond to monochromatic copies of graphs from $\cS(F)$ on the board (the same copy may appear as a node multiple times). If $(H,\pi)$ is the null graph `in color $\sigma$' (recall that in this case $H'$ is the null graph as well), $\cT(H')$ consists only of this copy of $(H,\pi)$ as an isolated node which receives the color $\sigma$. 
Otherwise $\cT(H')$ consists of the copy of $(H,\pi)$ as the root node joined to $r$ subtrees, $\cT(H_\sigma')$ and $\cT(J_s')$ for all $s\in[r]\setminus\{\sigma\}$. The root node receives the color $\sigma$, and it is flagged if and only if the instance of the inequality \eqref{eq:sufficient-weight-behind-v1} corresponding to this induction step is strict. Note that the tree $\cT(H')$ captures only the logical structure of the inductive history of $H'$. Overlappings (captured by the graphs $K_s'$, $2\leq s\leq r$) are completely neglected.

Every flagged node of $\cT(H')$ corresponds to a strict inequality in \eqref{eq:sufficient-weight-behind-v1}. In this case, inequality~\eqref{eq:mu-H'-leq-lambda-H} is strict as well, with a difference of at least $\eps$ between the right and left hand side, where $\eps$ is the constant guaranteed by Lemma~\ref{lemma:sufficient-weight}. Consequently, each flagged node contributes a term of $-\eps$ to the right hand side of~\eqref{eq:mu-H'-leq-lambda-H} in the corresponding induction step. Accumulating these terms along the induction yields that
\begin{equation} \label{eq:accumulate-eps}
  \mu_\theta(H') \leq \lambda_\theta(H,w_{(H,\pi,\sigma)})-f(H')\cdot\eps\enspace,
\end{equation}
where $f(H')$ denotes the number of flagged nodes in $\cT(H')$.

By Lemma~\ref{lemma:finite-weights} we have $\lambda_\theta(H,w_{(H,\pi,\sigma)})\leq v(F)$. Thus if $\mu_\theta(H')\geq 0$, then by \eqref{eq:accumulate-eps} the tree $\cT(H')$ has at most $\lambda_\theta(H,w_{(H,\pi,\sigma)})/\eps\leq v(F)/\eps$ many flagged nodes. We will show that this bound on the number of flagged nodes of $\cT(H')$ implies the claimed bound of $(\vmax-1)/r$ on the number of vertices of $H'$. To that end, we first show that the length of any descending path in $\cT(H')$ that consists only of non-flagged nodes is bounded by a constant depending only on $F$ and $r$. We will do so by showing that any descending sequence of non-flagged nodes in $\cT(H')$ corresponds to an ascending sequence of points on the walk $\cW$ defined in Lemma~\ref{lemma:walk}.

Specifically, we assign to every non-leaf node $Z$ in $\cT(H')$ a point $x(Z)\in\RR^r$ on the walk $\cW$ as follows: Let $\sigma$ denote the color of $Z$ and $(H,\pi)$ the graph for which a copy in color $\sigma$ is represented by the node $Z$. We define $x(Z)$ as the starting point of the $\sigma$-segment that contains the point $x_{(H,\pi,\sigma)}$ on the walk $\cW$.

Consider a descending sequence $Z,\Zbar_1,\ldots,\Zbar_b,\Zhat$ of consecutive non-flagged nodes in $\cT(H')$, where $Z$ has some color $\sigma\in[r]$, all nodes $\Zbar_1,\ldots,\Zbar_b$ have the same color $s\in[r]\setminus\{\sigma\}$ and $\Zhat$ has some color $s'\in[r]\setminus\{s\}$ (see the lower right part of Figure~\ref{fig:lbvars2}). Let $(H,\pi)$ be the graph for which a copy in color $\sigma$ is represented by the node $Z$, and $(J_s,\tau_s)$, $\tau_s=(u_{s1},\ldots,u_{sc})$, the graph for which a copy of $(J_s\setminus u_{s1},\tau_s\setminus u_{s1})$ in color $s$ is represented by the node $\Zbar_1$.
Using these definitions, clearly the nodes $\Zbar_1,\ldots,\Zbar_b$ represent a sequence of nested copies of $(J_s^{-a},\tau_s^{-a})$, $a=1,\ldots,b$, in color $s$, where $(J_s^{-a},\tau_s^{-a}):=(J_s\setminus\{u_{s1},\ldots,u_{sa}\},\tau_s\setminus \{u_{s1},\ldots,u_{sa}\})$.
Let $\Gamma$ denote the $\sigma$-segment of the walk $\cW$ that contains the point $x_{(H,\pi,\sigma)}$ and $\Gamma'$ the $s$-segment that contains the point $x_{(J_s^{-b},\tau_s^{-b},s)}$ (the starting points of these segments are $x(Z)$ and $x(\Zbar_b$), respectively; see the lower left part of Figure~\ref{fig:lbvars2}). As the node $Z$ is not flagged, the corresponding instance of the inequality \eqref{eq:sufficient-weight-behind-v1} is tight. Hence, by Lemma~\ref{lemma:sufficient-weight}, we have $(J_s,\tau_s)\in\cH_s'\cup\cC(\cH_s,F)$, and if $(J_s,\tau_s)\in\cH_s'$, then $x_{(J_s,\tau_s,s)}$ is the starting point of the next $s$-segment on $\cW$ that is lower than $\Gamma$, whereas if $(J_s,\tau_s)\in\cC(\cH_s,F)$, then there is no $s$-segment on $\cW$ lower than $\Gamma$. In the first case we apply Lemma~\ref{lemma:forward-walk} to conclude that $\Gamma$ is lower than $\Gamma'$ on the walk $\cW$, in the second case this conclusion is trivially true. It follows that in any case $x(Z)$ is lower on the walk than $x(\Zbar_b)$.

Let now $P$ be a descending path in $\cT(H')$ that consists only of non-flagged nodes, and recall that our goal is to bound the length of $P$ by a constant depending only on $F$ and $r$.
We refer to a maximal sequence of consecutive nodes of the same color along $P$ as a \emph{section} in this color (in the above argument, $\Zbar_1,\ldots,\Zbar_b$ is a section in color $s$). Moreover, we call a section of $P$ \emph{internal} if it is neither the first nor the last section on $P$. By the argument above, for the last node $\Zbar$ of an internal section and the last node $Z$ of the preceding section on $P$, $x(\Zbar)$ is higher on the walk $\cW$ then $x(Z)$. As the walk $\cW$ contains at most $r\cdot |\cS(F)|$ different elements, the path $P$ can have at most $r\cdot |\cS(F)|-1$ internal sections, and at most $r\cdot |\cS(F)|+1$ sections in total (including the first and last section). As each section consists of at most $v(F)+1$ nodes, $P$ consists of at most
\begin{equation} \label{eq:length-P}
  (r\cdot |\cS(F)|+1)(v(F)+1)=:p
\end{equation}
nodes, proving that the length of $P$ is indeed bounded by a constant depending only on $F$ and $r$.

Since in total there are at most $v(F)/\eps$ many flagged nodes in $\cT(H')$, the depth of $\cT(H')$ is bounded by
\begin{equation} \label{eq:depth-T-H'}
  v(F)/\eps+(v(F)/\eps+1)p=:t \enspace,
\end{equation}
where the first term bounds the number of flagged nodes and the second term the number of non-flagged nodes.
Consequently, we have
\begin{equation*}
  v(\cT(H')) \leq 1+r+r^2+\cdots+r^t\leq r^{t+1} \enspace.
\end{equation*}
Observing that every node of $\cT(H')$ corresponds to at most $v(F)$ vertices of $H'$, we finally obtain that
\begin{equation*}
  v(H')\leq r^{t+1}\cdot v(F) \eqByM{\eqref{eq:vmax},\eqref{eq:length-P},\eqref{eq:depth-T-H'}} (\vmax-1)/r \enspace.
\end{equation*}
This justifies~\eqref{eq:size-H-sigma'-J-s'} and concludes the proof.
\end{proof}

%\section{Proof of Theorem~\ref{thm:main-2}} \label{sec:proof-thm-main-2}
\section{Proof of Theorem~\texorpdfstring{\ref{thm:main-2}}{4}} \label{sec:proof-thm-main-2}

We denote the board of the probabilistic process after $i$ steps by $G_i$, where $0\leq i\leq n$. We take the alternative view mentioned in Section~\ref{sec:introduction-online-setting}, in which the random edges leading from a newly added vertex to previous vertices are generated at the moment this vertex is revealed instead of at the beginning of the process. (Recall that each edge is inserted with probability $p=p(n)$ independently from all other edges.) Thus $G_i$ is an $r$-colored graph on $i$ vertices, and the underlying
uncolored graph is distributed as $G_{i,p}$.

Recall that all our asymptotic results are with respect to $n$, the number of vertices of $G_n$ or $\Gnp$. We write $f(n)\ll g(n)$ if $f(n)=o(g(n))$, $f(n)\gg g(n)$ if $f(n)=\omega(g(n))$, and $f(n)\asymp g(n)$ if $f(n)=\Theta(g(n))$.

\subsection{Lower bound} \label{sec:lower-bound-probabilistic}

The crucial ingredient for the proof of the lower bound part of Theorem~\ref{thm:main-2} is Lemma~\ref{lemma:witness-graphs} from Section~\ref{sec:lower-bound}.

\begin{proof}[Proof of Theorem~\ref{thm:main-2} (lower bound)]
Let $\theta^*=\theta^*(F,r)$ be defined as in Theorem~\ref{thm:main-1'}, and let $\alpha^*=\alpha^*(F,r)$ be a sequence from the set $[r]^{r\cdot|\cS(F)|}$ for which the minimum in the definition of $\Lambda_{\theta^*}(F,r)$ in \eqref{eq:def-Lambda} is attained. We show that the strategy $\PAINT(F,r,\theta^*,\alpha^*)$ defined in Section~\ref{sec:strategy} \aas avoids $F$ for all $n$ steps of the process if
\begin{equation} \label{eq:p-ll-p0}
  p\ll p_0(F,r,n)=n^{-1/{m_1^*(F,r)}} \eqBy{eq:m1*-via-theta*} n^{-\theta^*} \enspace.
\end{equation}
By the choice of $\alpha^*$ and the definition in \eqref{eq:def-Lambda} we have that for all colors $s\in[r]$ and all vertex orderings $\pi\in\Pi(V(F))$ there is a subgraph $H\seq F$ such that
\begin{equation} \label{eq:lb-prob-finds-unlikely-subgraph}
  \lambda_{\theta^*}(H,w_{(H,\pi|_H,s)}) \leq \Lambda_{\theta^*}(F,r) \eqBy{eq:root-of-Lambda} 0 \enspace.
\end{equation}

According to Lemma~\ref{lemma:witness-graphs} we then have for each such $(H,\pi|_H)$: if $G_n$ contains a copy of $(H,\pi|_H)$ in color $s$, then it contains a graph $K'$ with $v(K')\leq \vmax$ and $\mu_{\theta^*}(K')<0$, or a graph $H'$ with $v(H')\leq \vmax$ and
\begin{equation*}
  \mu_{\theta^*}(H')
  \leBy{eq:mu-H'-lambda-H}
  \lambda_{\theta^*}(H,w_{(H,\pi|_H,s)})
  \leBy{eq:lb-prob-finds-unlikely-subgraph}
  0 \enspace.
\end{equation*}

This yields a family $\cW=\cW(F,\pi,s,r)$ of graphs $W'$ satisfying $\mu_{\theta^*}(W')\leq 0$ and $v(W')\leq \vmax$ such that, deterministically, $G_n$ contains a graph from $\cW$ if it contains a copy of $(F,\pi)$ in color $s$.  It follows that $G_n$ contains a graph from
\begin{equation*}
  \cW^*=\cW^*(F,r):=\bigcup_{\substack{s\in[r] \\ \pi\in\Pi(V(F))}} \cW(F,\pi,s,r)
\end{equation*}
if it contains a monochromatic copy of $F$.

Moreover, since no graph in $\cW^*$ has more than $\vmax(F,r,\theta^*(F,r),\alpha^*(F,r))$ vertices, the size of $\cW^*$ is bounded by a constant only depending on $F$ and $r$.
By the definition of $\mu_{\theta^*}()$ in \eqref{eq:def-mu} and the fact that $\mu_{\theta^*}(W')\leq 0$ for all $W'\in\cW^*$, the expected number of copies of the (underlying uncolored) graphs from $\cW^*$ in $\Gnp$ is of order
\begin{equation*}
  \sum_{W'\in\cW^*} n^{v(W')} p^{e(W')}
  \llBy{eq:p-ll-p0} \sum_{W'\in\cW^*} n^{\mu_{\theta^*}(W')}
  \leq \abs{\cW^*}\cdot n^0 = \Theta(1) \enspace.
\end{equation*}
It follows with Markov's inequality that \aas $\Gnp$ contains no copy of any of the (underlying uncolored) graphs from $\cW^*$. Consequently, \aas $G_n$ contains no copy of any of the graphs from $\cW^*$ and hence no monochromatic copy of $F$. This proves the claimed lower bound on the threshold of the probabilistic process.

To prove the second part of Theorem~\ref{thm:main-2} it suffices to show that the strategy $\PAINT(F,r,\theta^*,\alpha^*)$ is an optimal strategy for Painter in the deterministic two-player game, i.e., that it is a winning strategy in the game with density restriction $d$ for any $d<m_1^*(F,r)=1/\theta^*$ (we have already argued that this strategy can be implemented as a polynomial-time algorithm in Section~\ref{sec:algorithms} and Remark~\ref{remark:priority-list-from-strategy}). Fix some $0<d<1/\theta^*$ and define $\theta:=1/d>\theta^*$. Suppose Painter plays according to the strategy $\PAINT(F,r,\theta^*,\alpha^*)$ in the game with density restriction $d$ and suppose for the sake of contradiction that the game ends with a monochromatic copy of $F$.
Then as before it follows from Lemma~\ref{lemma:witness-graphs} that the board contains a graph $K'$ with
\begin{equation} \label{eq:mu-K'-neg}
  \mu_{\theta^*}(K')<0
\end{equation}
or a graph $H'$ with
\begin{equation} \label{eq:mu-H'-nonpos}
  \mu_{\theta^*}(H')\leq 0
\end{equation}
(note that $H'$ contains at least one vertex and as a consequence of \eqref{eq:mu-H'-nonpos} and the definition in \eqref{eq:def-mu} also at least one edge; similarly, $K'$ contains at least one edge as a consequence of \eqref{eq:mu-K'-neg}). Using that $\theta>\theta^*$ it follows from \eqref{eq:mu-K'-neg}, \eqref{eq:mu-H'-nonpos} and the definition in \eqref{eq:def-mu} that in any case the board contains a graph $W'$ (with $v(W')\geq 1$) satisfying $\mu_{\theta}(W')<0$, or equivalently, $e(W')/v(W')>1/\theta=d$, violating the given density restriction.
\end{proof}

\subsection{Upper bound} \label{sec:upper-bound-probabilistic}

As in the proof of Lemma~\ref{lemma:beta*} we identify Builder's
strategies in the deterministic two-player game with $r$ colors with
finite $r$-ary rooted trees, where each node at depth $k$ of such a
tree is an $r$-colored graph on $k$ vertices, representing the board
after the $k$-th step of the
game.

Note that in this formalization, a given tree $\cT$ represents a
generic strategy for Builder (in the deterministic game with $r$
colors) that may or may not satisfy a given density restriction $d$,
and that can be thought of as a strategy for the~`$F$-avoidance' game
for any given graph~$F$. We say that $\cT$ is a \emph{winning
strategy} for Builder in a specific $F$-avoidance game if and only
if every leaf of~$\cT$ contains a monochromatic copy of $F$. We say
that a Builder strategy $\cT$ is a \emph{legal strategy} in the game
with density restriction $d$ if and only if $e(H)/v(H)\leq d$ for every
subgraph $H$ of every node $B$ in $\cT$.

When we say that $G_i$, the board of the probabilistic process after $i$ steps,
contains a copy of some $r$-colored graph $B$ (e.g.\ a node of some Builder
strategy $\cT$) we mean that there is a subgraph of $G_i$ that is
isomorphic to $B$ as a \emph{colored} graph.

The upper bound part of Theorem~\ref{thm:main-2} is an immediate
consequence of the following lemma.

\begin{lemma}[Random process reproduces Builder strategy] \label{lemma:ub-prob-induction-done} Let $r\geq 2$ be
  a fixed integer, let $d>0$ be a fixed real number, and let $\cT$ represent
  an arbitrary legal strategy for Builder in the deterministic game
  with $r$ colors and density restriction $d$.

  If\/ $p \gg n^{-1/d}$, then regardless of the online coloring strategy employed, \aas $G_n$ contains a copy of a leaf of~$\cT$.
\end{lemma}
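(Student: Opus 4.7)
I would proceed by induction on the depth $k$ of $\cT$. The base case $k=0$ is trivial: the only leaf of $\cT$ is its root, which represents the null graph and is vacuously contained in $G_n$.

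For the inductive step I would use a multi-round exposure: split the vertices of $G_n$ into $k$ consecutive batches of sizes $n_1,\ldots,n_k$ with each $n_i=\Theta(n)$, and reveal one batch at a time (Painter colors each new vertex online as it appears). The inductive invariant is that after batch $i$, for every node $B$ of $\cT$ at depth $i$ the currently exposed colored board contains at least $f_i(n)$ copies of $B$ as a colored subgraph, where $f_i(n)\to\infty$. The base case holds trivially with one copy of the root, and applying the invariant at $i=k$---whose nodes are exactly the leaves of $\cT$---yields the lemma.

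For the inductive step $i\to i+1$, fix any node $B$ at depth $i$ and let $B^+$ denote the (uncolored) graph obtained by adding Builder's next vertex with its prescribed edges to some $S\seq V(B)$; the $r$ children of $B$ in $\cT$ share this underlying uncolored graph $B^+$ and differ only in the color of the new vertex. By the induction hypothesis the current board contains many copies of $B$. For each such copy and each vertex $u$ of batch $i+1$, the event that $u$ sends edges to the prescribed vertices of $S$ has probability $p^{|S|}$, independent of everything revealed in batches $\leq i$ and hence of Painter's past decisions. Since $\cT$ is legal we have $m(B^+)\leq d$, and combined with $p\gg n^{-1/d}$ this yields $n^{v(B^+)}p^{e(B^+)}\to\infty$; a standard second moment argument (or Janson's inequality) then shows that the number of copies of $B^+$ extending a copy of $B$ in the board after batch $i+1$ is \aas of the order of its expectation, and therefore tends to infinity. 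Painter assigns one of $r$ colors to each new vertex, so pigeonhole produces at least a $1/r$ fraction of these extensions for each specific child $B_s$ of $B$, giving the required $f_{i+1}(n)\to\infty$ copies of each $B_s$ and maintaining the invariant.

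The main obstacle will be the variance computation underlying the concentration step: copies of $B^+$ may overlap in many ways, so the number of copies is a sum of dependent indicators. The standard remedy is to expand the variance as a sum over overlap patterns, each contributing a term of the form $n^{v(H)}p^{e(H)}$ for some graph $H$ obtained by gluing two copies of $B^+$ along a common subgraph. The legality condition $m(B')\leq d$ for every subgraph $B'$ of every node of $\cT$ controls each such term, and because only constantly many overlap patterns exist Chebyshev yields the required concentration. The adaptive nature of Painter's online coloring is handled automatically by the multi-round exposure: his coloring in batches $\leq i$ is measurable with respect to the randomness revealed by then, independent of the fresh edges exposed in batch $i+1$, so the pigeonhole on colors of newly revealed vertices applies uniformly no matter how Painter adapts within the batch.
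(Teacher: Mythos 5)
Your overall approach---multi-round exposure, second moment on extensions, pigeonhole on the newly colored vertex, and using legality to control the variance terms---is the same one the paper uses. But there are two linked errors in the inductive invariant that make the argument as written incorrect.

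First, the invariant ``after batch $i$, for \emph{every} node $B$ of $\cT$ at depth $i$ the board contains $f_i(n)\to\infty$ copies of $B$'' is false, because Painter is adversarial. Already at depth~$1$ the nodes are isolated vertices in each of the $r$ colors, and Painter may simply color the entire first batch with a single color, so the other $r-1$ nodes get zero copies. The pigeonhole gives you many copies of \emph{some} child of $B$, not all of them; your later claim ``pigeonhole produces at least a $1/r$ fraction of these extensions for each specific child $B_s$'' overstates what the pigeonhole principle yields. The correct invariant (and the one the paper proves in its auxiliary lemma) is existential: there exists a node $B$ at depth $i$ with many copies, \emph{or} the board already contains a leaf of $\cT$. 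Then each application of the inductive step propagates a single ``witness path'' down $\cT$, with the color chosen by pigeonhole at each level.

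Second, tracking only ``$f_i(n)\to\infty$'' is too weak to close the induction. The expected number of extensions of a copy of $B$ to a copy of $B^+$ in the next batch is of order $f_i(n)\cdot n\,p^{\deg_{B^+}(v)}$, and the factor $n\,p^{\deg_{B^+}(v)}$ can tend to zero (take $B^+$ a tree where $v$ has degree $\geq 2$ and $p$ just above $n^{-1/d}$ with $d$ close to $1$). You then can only conclude $\EE[Z]\to\infty$ if $f_i(n)$ is large enough to dominate this decay---which is precisely why the paper's invariant keeps the sharp count $\Theta(n^{v(B)}p^{e(B)})$. With that count, $\EE[Z]\asymp n^{v(B^+)}p^{e(B^+)}$, and legality gives $n^{v(B^+)}p^{e(B^+)}\gg 1$. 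Your own text invokes $n^{v(B^+)}p^{e(B^+)}\to\infty$ as if it were $\EE[Z]$, but without the sharp count on $f_i$ these are different quantities. So the fix is to strengthen the invariant to: a.a.s.\ either $G$ contains a leaf of $\cT$, or there is \emph{some} node $B$ at depth $i$ with $\Omega(n^{v(B)}p^{e(B)})$ copies on the board. With that change the rest of your outline (variance bounded via legality, Chebyshev, pigeonhole) matches the paper's proof.
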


\begin{proof}[Proof of Theorem~\ref{thm:main-2} (upper bound)]
  By Theorem~\ref{thm:main-1} there exists a legal winning strategy
  $\cT$ for Builder in the deterministic $F$-avoidance game with $r$
  colors and density restriction $d=m^*_1(F,r)$. As each leaf of $\cT$
  contains a monochromatic copy of $F$, applying
  Lemma~\ref{lemma:ub-prob-induction-done} to~$\cT$ yields that if
  $p\gg p_0(F,r,n)=n^{-1/m^*_1(F,r)}$, then \aas $G_n$ contains a monochromatic copy
  of $F$, regardless of the online coloring strategy employed, which is exactly the upper
  bound statement of Theorem~\ref{thm:main-2}.
\end{proof}

In order to prove Lemma~\ref{lemma:ub-prob-induction-done}, we shall
show the following more technical statement by induction on~$k$.

\begin{lemma}[Random process reproduces Builder strategy step by step] \label{lemma:ub-prob-induction} Let $r\geq 2$ be a fixed
  integer, let $d>0$ be a fixed real number, and let $\cT$ represent
  an arbitrary legal strategy for Builder in the deterministic game
  with $r$ colors and density restriction $d$.

  If\/ $p\gg n^{-1/d}$, then for any integer $k\geq 1$ the
  following is true. Regardless of the online coloring strategy employed, \aas one of the following two
  statements holds:
  \begin{itemize}
  \item $G_n$ contains a copy of a leaf of~$\cT$, or
  \item there is a node $B$ at depth $k$ in~$\cT$ such that $G_n$
    contains\/ $\Omega(n^{v(B)}p^{e(B)})$ many copies of $B$.
  \end{itemize}
\end{lemma}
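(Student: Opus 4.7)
The plan is to prove Lemma~\ref{lemma:ub-prob-induction} by induction on $k$. The base case $k=1$ is pigeonhole: regardless of the online strategy, at least $\lceil n/r\rceil$ of the $n$ vertices of $G_n$ receive a common color $\sigma$, giving $\Omega(n)=\Omega(n^{v(B_\sigma)}p^{e(B_\sigma)})$ copies of the single-vertex depth-$1$ node $B_\sigma$. For the inductive step, fix a non-leaf node $B$ at depth $k$ with children $B_1,\ldots,B_r$; these all agree as uncolored graphs (namely $B$ plus a new vertex $v$ joined by a fixed set of $\Delta e$ edges to specified vertices of $B$) and differ only in the color of $v$. I would use a two-round vertex-exposure: partition $[n]$ into $V_1:=[1,\lfloor n/2\rfloor]$ and $V_2:=[\lfloor n/2\rfloor+1,n]$, apply the inductive hypothesis to the process restricted to $V_1$ (legitimate since $(\lfloor n/2\rfloor)^{-1/d}\asymp n^{-1/d}$), and thereby assume that $\Omega(n^{v(B)}p^{e(B)})$ colored copies of $B$ appear within $V_1$, since otherwise a leaf is already present and we are done.

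Let $\mathcal{B}^*$ denote this set of copies. Because the strategy is online, colors on $V_1$ depend only on edges within $V_1$, so $\mathcal{B}^*$ is independent of the edges between $V_1$ and $V_2$. Let $Z$ count the pairs $(B^*,v)$ with $B^*\in\mathcal{B}^*$ and $v\in V_2$ such that all $\Delta e$ required edges from $v$ into $B^*$ are present, extending $B^*$ to an uncolored copy of $B_1$. Conditional on $\mathcal{B}^*$, each pair contributes a Bernoulli indicator of mean $p^{\Delta e}$, so $\mathbb{E}[Z\mid\mathcal{B}^*]=\Omega(n^{v(B)+1}p^{e(B)+\Delta e})=\Omega(n^{v(B_1)}p^{e(B_1)})$. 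For concentration I would carry out the standard second-moment calculation for subgraph counts, grouping dependent pairs by their overlap graph $H\subseteq B_1$ with $H$ containing the new vertex: the variance contribution of overlap type $H$ is of order $n^{2v(B_1)-v(H)+1}p^{2e(B_1)-e(H)+\Delta e}$. The density restriction on $\cT$ gives $e(H')/v(H')\leq d$ for every $H'\subseteq B_1$; combined with $p\gg n^{-1/d}$ this forces each such term to be $o(\mathbb{E}[Z\mid\mathcal{B}^*]^2)$, so Chebyshev yields $Z=(1-o(1))\mathbb{E}[Z\mid\mathcal{B}^*]$ \aas.

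Finally, each of the $Z$ extending pairs gives a copy of $B_{\sigma(v)}$, where $\sigma(v)\in[r]$ is the color assigned to $v$ by the online strategy; pigeonhole produces some $\sigma$ accounting for at least $Z/r$ of the pairs, hence $\Omega(n^{v(B_\sigma)}p^{e(B_\sigma)})$ copies of $B_\sigma$, completing the inductive step. The main obstacle will be the second-moment step: one must enumerate the overlap graphs $H$ carefully and argue that the density restriction on \emph{every} subgraph of every node of $\cT$ (and not merely on $B_1$ itself) is exactly what makes each variance term subdominant; this is the place where $p\gg n^{-1/d}$ is used, and it mirrors the classical subgraph-count second-moment computation. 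A secondary subtlety, harmless but worth flagging, is that Painter's colors on $V_2$ are correlated with the very edges that define the extensions --- but the pigeonhole step is deterministic and the count $Z$ ignores colors on $V_2$, so no independence between coloring and edges is actually needed there.
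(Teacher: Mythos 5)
Your proposal matches the paper's proof essentially step for step: two-round vertex-exposure, induction with a pigeonhole base case, a second-moment calculation for the extension count $Z$ conditional on the first-round copies, and a final pigeonhole over the $r$ children. The one small slip is in the variance term: with $H\subseteq B_1$ containing the new vertex $v$, the contribution of overlap type $H$ should come out as $\Theta(n^{2v(B_1)-v(H)}p^{2e(B_1)-e(H)})$ rather than $n^{2v(B_1)-v(H)+1}p^{2e(B_1)-e(H)+\Delta e}$ — your expression carries an extra factor $np^{\Delta e}$, most likely from mixing up $B$ with $B_1$ (equivalently $H$ with $H\setminus v$) mid-derivation; the key observation you correctly isolate, namely that $e(H')/v(H')\leq d$ on every subgraph of every node of $\cT$ together with $p\gg n^{-1/d}$ forces $n^{v(H)}p^{e(H)}\gg 1$ and hence subdominant variance, is exactly what the paper uses (cf.~\eqref{eq:J} and~\eqref{eq:MJ}).
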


The second property of Lemma~\ref{lemma:ub-prob-induction} is meaningful since,
due to the assumption that $\cT$ is a legal strategy for Builder in the game with
density restriction $d$, we have
\begin{equation*}
  e(B)/v(B) \leq m(B)\leq d\enspace,
\end{equation*}
which yields with $p\gg n^{-1/d}\geq n^{-v(B)/e(B)}$ that
\begin{equation*}
  n^{v(B)}p^{e(B)} \gg 1\enspace.
\end{equation*}

\begin{proof}[Proof of Lemma~\ref{lemma:ub-prob-induction-done}]
  Set $k:=\depth(\cT)+1$ in Lemma~\ref{lemma:ub-prob-induction}.
\end{proof}

It remains to prove Lemma~\ref{lemma:ub-prob-induction}.

\begin{proof} [Proof of Lemma~\ref{lemma:ub-prob-induction}]
  We proceed by induction on $k$. For the induction base $k=1$, note that each of the $r$ nodes
  $B$ at depth 1 in $\cT$ consists simply of an isolated vertex, colored in one of the
  $r$ available colors. Clearly, $G_n$ contains
  at least $n/r=\Omega(n)$ copies of one of these by the pigeonhole principle.

  For the induction step we employ a two-round approach. That is, we
  divide the process into two rounds of equal length $n/2$ (\wolog
  we assume $n$ to be even) and analyze these two rounds
  separately. Denoting the vertices added throughout the process by
  $v_1,\ldots,v_n$, the first round consists of adding the vertices
  $v_1,\ldots,v_{n/2}$ together with the corresponding random edges.
  At the end of the first round, we thus obtain
  a graph $G_{n/2}$, to which we can apply the induction hypothesis
  and some standard random graph arguments. The second round consists
  of adding the vertices $v_{n/2+1},\ldots,v_n$ (together with the
  corresponding random edges).
  Using a variance calculation, we show that conditional on a `good'
  first round, the second round turns out as claimed. (In fact, our
  argument does not make use of any edges added between vertices
  of the set $\{v_{n/2+1},\dotsc,v_n\}$.)

  By the induction hypothesis, if the graph $G_{n/2}$ does not contain
  a copy of a leaf of $\cT$ (in which case we are done), \aas it
  contains a family of
  \begin{equation} \label{eq:M} M\asymp n^{v(B^\circ)}p^{e(B^\circ)}
  \end{equation}
  copies of some graph $B^\circ$ corresponding to a non-leaf node at
  depth $k-1$ in $\cT$. We label these copies $B^\circ_i$, $1\leq i\leq
  M$. Let $B$ denote the graph obtained from $B^\circ$ by adding a new
  vertex $v$ to it together with edges connecting $v$ to $B^\circ$ as
  prescribed by Builder's next move specified by $\cT$ (so $v$ is uncolored
  in $B$, but assigning it one of the $r$ available colors yields exactly
  one of the children of $B^\circ$ in $\cT$).

  For each copy $B_i^\circ$, $1\leq i\leq M$, and each vertex $v_\ell$,
  $n/2+1\leq\ell\leq n$, we fix a set $E_{i,\ell}$ of $\deg_B(v)$ many vertex pairs
   such that if the elements of $E_{i,\ell}$ are actual edges
  generated in the second round, then $v_\ell$ together with those edges
  completes $B_i^\circ$ to a copy of $B$.
  We let $Z_{i,\ell}$ be the indicator variable for the event that the elements
  of $E_{i,\ell}$ are generated as edges in the second round. Let
  \begin{equation*}
    Z:=\sum_{i=1}^{M} \sum_{\ell=n/2+1}^{n} Z_{i,\ell}\enspace,
  \end{equation*}
  and note that by the pigeonhole principle at least $Z/r$ many
  copies of one of the children of $B^\circ$ in $\cT$ are created.
  Thus the second condition of the lemma is satisfied if we show that \aas
  \begin{equation} \label{eq:Z} Z \asymp n^{v(B)} p^{e(B)} \enspace.
  \end{equation}
  We will do so by the methods of first and second moment.

  We clearly have
  \begin{align*}
    \Pr[Z_{i,\ell}=1] =
    p^{\abs{E_{i,\ell}}} =
    p^{\deg_B(v)}\enspace,
  \end{align*}
  and, conditioning on the first round satisfying the induction
  hypothesis,
  \begin{equation} \label{eq:EZ} \EE[Z] = M\cdot n/2 \cdot p^{\deg_B(v)}
    \asBy{eq:M} n^{v(B)} p^{e(B)} \enspace.
  \end{equation}

  In the following, we slightly abuse notation and write $B$ also for the
  \emph{uncolored} graph underlying $B$.
  Let~$\cD$ denote the family of all (uncolored) graphs $D$ that can
  be constructed by considering the union of two copies of $B$ intersecting
  in at least two vertices, one of which must be the vertex $v$ (we again
  slightly abuse notation in the following and refer to the corresponding
  vertex in each such graph $D$ as $v$).
  For any $D\in\cD$, we denote by $D^\circ$ the graph obtained by removing
  $v$ from~$D$.

  To calculate the variance of $Z$, observe that the variables
  $Z_{i,\ell}$ and $Z_{j,\ell'}$ are independent whenever $\ell\neq \ell'$
  or $B^\circ_i\cap B^\circ_j=\emptyset$. Hence such pairs can
  be omitted, and we have
  \begin{equation} \label{eq:variance1}
    \begin{split}
      \var[Z] &= \sum_{i,j=1}^{M} \, \sum_{\ell,\ell'=n/2+1}^{n}(\EE[Z_{i,\ell} Z_{j,\ell'}]-\EE[Z_{i,\ell}]\EE[Z_{j,\ell'}]) \\
      &\leq \sum_{\begin{subarray}{l}i,j=1,\dotsc,M\colon\\B^\circ_i\cap B^\circ_j\neq\emptyset\end{subarray}} \sum_{\ell=n/2+1}^{n} \Pr[Z_{i,\ell}=1\wedge Z_{j,\ell}=1] \\
      &= \sum_{D\in\cD} \, \sum_{\begin{subarray}{l}i,j=1,\dotsc,M\colon\\B^\circ_i\cap B^\circ_j=D^\circ\end{subarray}} \smash[t]{\sum_{\ell=n/2+1}^{n}}\, p^{\abs{E_{i,\ell}\cup E_{j,\ell}}} \\
      &\leq \sum_{D\in\cD} M_{D^\circ} \cdot \Theta(1) \cdot np^{\deg_D(v)}
      \enspace,
    \end{split}
  \end{equation}
  where $M_{D^\circ}$ denotes the total number of copies of
  \(D^\circ\) in (the underlying uncolored graph of) $G_{n/2}$.
  By definition of $\cD$, each $D\in\cD$ satisfies
  \begin{equation} \label{eq:evD}
    \begin{split}
      v(D^\circ) &= 2 v(B) - v(J) - 1\enspace,\\
      e(D^\circ) &= 2 e(B) - e(J) - \deg_D(v)
    \end{split}
  \end{equation}
  for some subgraph $J\seq B$. Moreover, since we assumed that $\cT$
  is a legal strategy for Builder in the game with density restriction
  $d$, we have
  \begin{equation*}
    e(J)/v(J)\leq m(B)\leq d\enspace,
  \end{equation*}
  which yields with $p\gg n^{-1/d}\geq n^{-v(J)/e(J)}$ that
  \begin{equation} \label{eq:J}
    \begin{split}
      n^{v(J)}p^{e(J)} \gg 1\enspace.
    \end{split}
  \end{equation}

  Thus the expected number of copies of $D^\circ$ in (the underlying
  uncolored graph of) $G_{n/2}$ is
  \begin{equation*}
    \binom{n}{v(D^\circ)}\cdot \Theta(1)\cdot p^{e(D^\circ)}
    \asBy{eq:evD} n^{2v(B)-v(J)-1} p^{2e(B) - e(J) - \deg_D(v)} \llBy{eq:J} n^{2v(B)-1} p^{2e(B) -\deg_D(v)}\enspace.
  \end{equation*}
  and Markov's inequality implies that
  \begin{equation} \label{eq:MJ}
    M_{D^\circ}\ll n^{2v(B)-1} p^{2e(B) - \deg_D(v)}
  \end{equation}
  a.a.s. As moreover the number of graphs in $\cD$ is bounded by a constant
  depending only on \(\cT\), \aas \eqref{eq:MJ} holds for all $D\in\cD$
  simultaneously.

  Thus, conditioning on the first round satisfying the
  induction hypothesis (cf.~\eqref{eq:M}), and
  \eqref{eq:MJ} for all $D\in\cD$, we obtain from \eqref{eq:variance1} that
  \begin{align*}
    \var[Z] & \llBy{eq:MJ} \sum_{D\in\cD} \left(n^{v(B)} p^{e(B)}\right)^2
    \stackrel{\eqref{eq:EZ}}{\asymp} \EE[Z]^2\enspace.
  \end{align*}
  Chebyshev's inequality now yields that \aas the second round
  satisfies \eqref{eq:Z}. This implies that there is at least the
  claimed number of copies of one of the children of $B^\circ$ in $G_n$,
  as discussed.
\end{proof}

\section*{Acknowledgement}
We thank Michael Belfrage for many inspiring discussions about online Ramsey games.

\bibliographystyle{plain}
\bibliography{refs}

\end{document}